\newcommand{\Xbar}{{\mathchoice
     {\smash@bar\textfont\displaystyle{0.55}{2.5}\mathscr{X}}
     {\smash@bar\textfont\textstyle{0.55}{2.5}\mathscr{X}}
     {\smash@bar\scriptfont\scriptstyle{0.55}{2.5}\mathscr{X}}
     {\smash@bar\scriptscriptfont\scriptscriptstyle{0.55}{2.5}\mathscr{X}}
          }}
\newcommand{\smash@bar}[4]{
     \smash{\rlap{\raisebox{-#3\fontdimen5#10}{$\m@th#2\mkern#4mu\mathchar'26$}}}          }
\newcommand{\X}[1]{\mathscr{\Xbar}_{#1}}
\newcommand{\A}{\mathscr{A}}
\newcommand{\D}{\mathscr{D}}
\newcommand{\K}{\mathscr{K}}
\newcommand{\LL}{\mathscr{L}}
\newcommand{\R}[1]{\mathbb{R}^{#1}}
\newcommand{\T}{\mathsf{T}}
\newcommand{\PP}{\mathscr{P}}
\newcommand{\dd}{\mathrm{d}}
\newcommand{\pr}{\mathrm{pr}}
\newcommand{\dlie}[1]{\mathrm{L}_{#1}}
\newcommand{\cSch}[1]{[\hspace{-0.065cm}[ #1 ]\hspace{-0.065cm}]}
\newcommand{\Cinf}[1]{\mathbf{\mathit{C}}^{\infty}_{#1}}
\newcommand{\ec}[1]{\mbox{$#1$}}
\newcommand{\fiteq}[1]{\resizebox{\textwidth}{!}{\text{$\displaystyle #1$}}}
\newcommand{\orcid}[1]{\href{https://orcid.org/#1}{\includegraphics[width=0.02\textwidth]{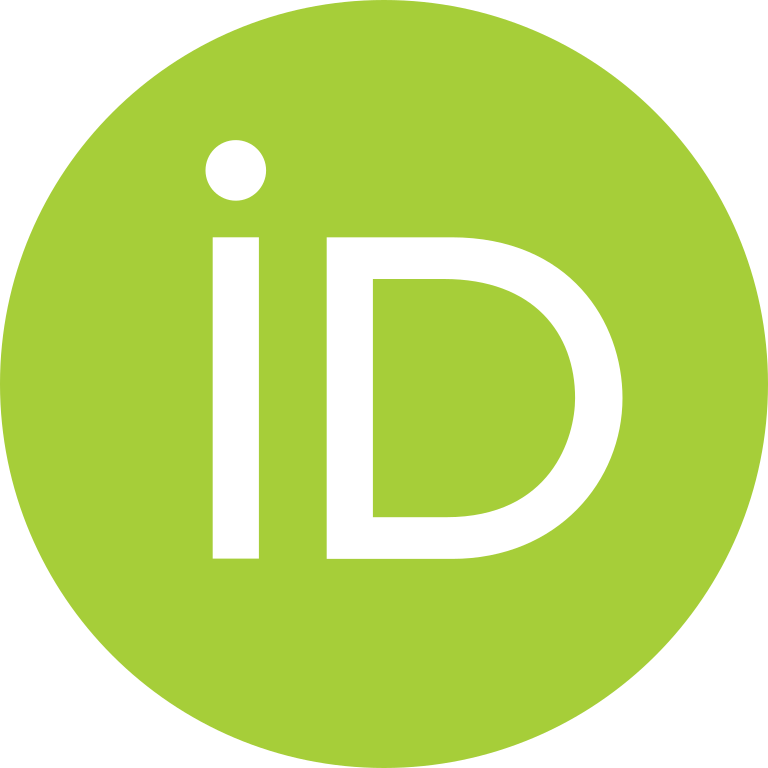}}}
\declaretheorem[name=Example,style=definition, sibling=theorem, qed=$\triangleleft$]{examplex}
\numberwithin{equation}{section}
\begin{document}

\renewcommand{\PaperNumber}{***}

\thispagestyle{empty}

\ArticleName{Poisson Structures on Trivial Extension Algebras}
\ShortArticleName{Poisson Structures on Trivial Extension Algebras}

\Author{D. Garc\'ia-Beltr\'an~${}^{1^\dag}$ \orcid{0000-0001-9149-8200},\footnote{\textbf{Acknowledgements:} we are very grateful to an anonymous referee for this helpful comments and observations, which improve the presentation of the manuscript. DGB thanks to the National Council of Science and Technology (CONACyT) for a research fellowship held during the work on the manuscript.}
J. C. Ru\'iz-Pantale\'on~${}^{2^\ddag}$ \orcid{0000-0001-7642-8298}
and Yu. Vorobiev~${}^{3^\ddag}$ \orcid{0000-0002-1076-4544}}
\AuthorNameForHeading{Garc\'ia-Beltr\'an, Ru\'iz-Pantale\'on, Vorobiev}

\Address{$^{\dag}$~CONACyT Research--Fellow, Departamento de Matem\'aticas, Universidad de Sonora, M\'exico}
\Address{$^{\ddag}$~Departamento de Matem\'aticas, Universidad de Sonora, M\'exico}
\EmailD{$^{1}$\href{mailto:email@address}{dennise.garcia\,@unison.mx},
        $^{2}$\href{mailto:email@address}{jose.ruiz\,@unison.mx},
        $^{3}$\href{mailto:email@address}{yurimv\,@guaymas.uson.mx}}

\Abstract{We present a class of Poisson	structures on trivial extension algebras which generalizes some known structures induced by Poisson modules. We show that there exists a one--to--one correspondence between such a class of Poisson structures and some data involving (not necessarily flat) contravariant derivatives, and then we give a formulation of this result in terms of Lie algebroids. Some properties of the first Poisson cohomology are presented. Examples coming from Poisson modules and Poisson submanifolds are given.}

\Keywords{Poisson algebra, trivial extension algebra, Poisson module, contravariant derivative, Poisson cohomology, Poisson submanifold, Lie algebroid}

\Classification{17B63, 17B60, 53D17, 16W25}

    \section{Introduction}\label{sec:introdution}

In this paper, we introduce and study a class of Poisson algebra structures on a given trivial extension algebra which generalizes the well--known Poisson structures induced by Poisson modules \cite{ReVoWe96, Bur01, Car03, Zhu19, ZhuGa20}. Another important origin of such structures is given by the so--called infinitesimal Poisson algebras on the space of fiberwise affine functions on the normal bundle of a Poisson submanifold \cite{Mar12,RuGaVo20}. Our goal is to develop further the results of \cite{RuGaVo20} and give a uniform approach for the study of this class of Poisson algebras of geometric nature in a general algebraic framework which is more convenient for the treatment of some cohomological aspects. In particular, we obtain some generalizations of known results on first Poisson cohomology associated to Poisson modules \cite{Zhu19}.

A \emph{trivial extension algebra} \ec{P_{0}\ltimes P_{1}} of a commutative \ec{R}--algebra \ec{P_{0}} by a \ec{P_{0}}--(bi)module \ec{P_{1}} is the (commutative) $R$--algebra consisting of the \ec{R}--module \ec{P_{0} \oplus P_{1}} with multiplication defined by
    \begin{equation}\label{EcPproduct}
        (f \oplus \eta) \cdot (g \oplus \xi) := fg \oplus (f\xi + g\eta), \quad f,g \in P_{0},\ \eta,\xi \in P_{1}.
    \end{equation}
Here, $R$ is a commutative ring with unit. As is well--known, if \ec{P_{0}} is endowed with a Poisson algebra structure, then every Poisson module \ec{P_1} over \ec{P_{0}} induces a Poisson structure on \ec{P_{0}\ltimes P_{1}}. In this case, the natural projection \ec{P_{0}\ltimes P_{1} \to P_{0}} is a Poisson morphism. This property is our starting point for constructing a more general class of Poisson structures on trivial extension algebras.

Given a Poisson algebra \ec{P_{0}}, for each \ec{P_{0}}--module \ec{P_{1}}, we consider Poisson structures on \ec{P_{0}\ltimes P_{1}} such that the natural projection \ec{P_{0}\ltimes P_{1} \to P_{0}} is a Poisson morphism, which we call \emph{admissible Poisson structures}. The corresponding Poisson algebras are called \emph{admissible Poisson algebras}. In particular, every admissible Poisson structure is compatible with multiplication \eqref{EcPproduct} by the Leibniz rule.

Our approach is based on an algebraic version of the notion of contravariant derivative \cite{Vais91,Vais94}. We show that there exists a one--to--one correspondence between admissible Poisson algebras  and some data \ec{([\,,\,]_{1},\D,\K)} consisting of a Lie bracket \ec{[\,,\,]_{1}} on \ec{P_{1}}, a (not necessarily flat) contravariant derivative $\D$ and a skew--symmetric mapping \ec{\K:\Omega^{1}_{P_0} \times \Omega^{1}_{P_0}\to P_{1}} satisfying some compatibility conditions representing a factorization of the Jacobi identity (Theorem \ref{teo:correspondenceGPA-PT-LA}). The data \ec{([\,,\,]_{1},\D,\K)} is called a \emph{Poisson triple} of \ec{P_{0}\ltimes P_{1}}. Therefore, Poisson triples parameterize admissible Poisson structures and allow us to formulate the main results. Another important ingredient is the notion of Lie algebroid in an algebraic framework \cite{Kos90,Mac95}. We show that there exists a one--to--one correspondence between admissible Poisson algebras and a class of Lie algebroids structures on \ec{\Omega^{1}_{P_0}\oplus P_1} satisfying that the natural projection \ec{\Omega^{1}_{P_0} \oplus P_1 \to \Omega^{1}_{P_{0}}} is a Lie algebroid morphism (Theorem \ref{teo:correspondenceGPA-PT-LA}). Here, \ec{(\Omega^{1}_{P_0},\cSch{\,,\,}_{P_{0}},\varrho_{P_{0}})} is the Lie algebroid of the Poisson algebra \ec{P_{0}} \cite{Kos90, GaVaVo12}.

    In the framework of our approach, the family of admissible Poisson algebras associated to Poisson modules are parameterized by Poisson triples of the form \ec{(0,\D,0)}, where $\D$ is a flat contravariant derivative (Corollary \ref{cor:APAbyPM}). A natural deformation of this family is related to the abelian case, that is, to admissible Poisson algebras defined by Poisson triples of the form \ec{(0,\D,\K)}, where $\K$ is a 2-cocycle of a coboundary operator associated to a flat contravariant derivative $\D$. Then, the cohomology class of $\K$ controls the non-triviality of the corresponding deformation (Theorem \ref{teo:PisoPlambda}).

    To formulate a Poisson equivalence criterion, we describe a family of Poisson algebra transformations that preserve the admissibility property (Definition \ref{def:gauget}). In particular, a way of constructing new one-parametric admissible Poisson structures from given ones is provided (Theorem \ref{teo:NewAdmissible}).

    We are also interested in the study of the first cohomology of admissible Poisson algebras, which is a quite difficult task, in general (see, for example \cite{Zhu19,ZhuGa20}). First, we describe the Casimir elements and the Poisson and Hamiltonian derivations of a given admissible Poisson algebra $\PP=(P_{0}\ltimes P_{1}, \{\,,\,\})$ in terms of the corresponding Poisson triple (Propositions \ref{prop:Casim}, \ref{prop:PoissDer} and \ref{prop:hamalgebra}). Then,
taking into account a cochain complex \ec{(\Gamma^{\ast}_{\PP},\partial_{\D})} associated to \ec{\PP}, we deduce some information on the first Poisson cohomology \ec{\mathscr{H}^{1}(\PP)} of $\PP$. One of our main results is the following (Theorem \ref{teo:RestrictedCoho}):

\paragraph{Claim.}
\emph{There exists a short exact sequence}
    \begin{equation*}
        0 \,\longrightarrow\, \frac{\mathrm{H}_{\partial_{\D}}^{1}\big( \Gamma^{\ast}_{\PP} \big)}{\ker{\mathrm{J}}}
        \,\longrightarrow\, \mathscr{H}^{1}_{\mathrm{rest}}(\PP)
        \,\longrightarrow\, \frac{\mathfrak{M}(\PP)}{\mathscr{C}(\PP) + \mathrm{Inn}(P_{1},[\,,\,]_{1})}
        \,\longrightarrow\, 0.
    \end{equation*}

Here, \ec{\mathscr{H}^{1}_{\mathrm{rest}}(\PP) \subseteq \mathscr{H}^{1}(\PP)} is the so--called \emph{restricted first Poisson cohomology of \ec{\PP}} and \ec{\mathrm{J}} is an $R$--linear mapping from the first cohomology group \ec{\mathrm{H}_{\partial_{\D}}^{1}(\Gamma^{\ast}_{\PP})} of \ec{(\Gamma^{\ast}_{\PP},\partial_{\D})} to \ec{\mathscr{H}^{1}(\PP)}. The sets \ec{\mathfrak{M}(\PP)}, \ec{\mathscr{C}(\PP)} and \ec{\mathrm{Inn}(P_{1},[\,,\,]_{1})} are special (sub)modules of generalized derivations of \ec{P_{1}}.

Using the claim above, we deduce some consequences in particular cases. For example, if the Lie algebra \ec{(P_{1},[\,,\,]_{1})} is perfect and centerless, then we have (Theorem \ref{teo:H1MCI})
\begin{equation}\label{eq:H1-Intro}
     \mathscr{H}^{1}(\PP) \simeq
        \frac{\mathfrak{M}(\PP)}{\mathscr{C}(\PP) + \mathrm{Inn}(P_{1},[\,,\,]_{1})}.
\end{equation}
Furthermore, if additionally the first Poisson cohomology of the given Poisson algebra \ec{P_{0}} is trivial, then \ec{\mathscr{H}^{1}(\PP)} can be characterized in terms of an ideal \ec{\mathfrak{M}_{0}(\PP)} of \ec{\mathfrak{M}(\PP)} and a submodule \ec{\mathscr{C}_{0}(\PP)} of \ec{\mathfrak{M}_{0}(\PP)}  (Theorem \ref{teo:H1M0}),
 \begin{equation}\label{eq:H1Particular-Intro}
        \mathscr{H}^{1}(\PP) \simeq \frac{\mathfrak{M}_{0}(\PP)}{\mathscr{C}_{0}(\PP)+\mathrm{Inn}(P_{1},[\,,\,]_{1})}.
    \end{equation}

On the other hand, in the case when the Lie algebra \ec{(P_{1},[\,,\,]_{1})} is abelian, there exists an exact sequence
 \begin{equation*}
        0 \,\longrightarrow\, \mathscr{H}^{1}_{\mathrm{rest}}(\PP)
        \,\longrightarrow\, \mathscr{H}^{1}(\PP)
        \,\longrightarrow\, \mathscr{E}(\PP),
    \end{equation*}
where \ec{\mathscr{E}(\PP)} is a family of \ec{P_{0}}--linear mappings from $P_{1}$ to \ec{P_{0}} (Theorem \ref{teo:secAbelian}). We show that if the cohomology class of the $2$--cocycle \ec{\K} is trivial, then this exact sequence is short and splits (Theorem \ref{teo:SplitDeltaK}). Consequently,
\begin{equation}\label{eq:Rest1Split-Intro}
        \mathscr{H}^{1}(\PP) \simeq
        \mathrm{H}_{\partial_{\D}}^{1}\big( \Gamma^{\ast}_{\PP} \big) \oplus \frac{\widetilde{\mathfrak{M}}(\PP)}{\mathscr{C}(\PP)} \oplus
        \mathscr{E}(\PP),
    \end{equation}
where \ec{\widetilde{\mathfrak{M}}(\PP)} is a submodule of generalized derivations of \ec{P_{1}}. We remark that splitting \eqref{eq:Rest1Split-Intro} recovers some of the results in \cite{Zhu19} for admissible Poisson algebras associated to Poisson modules since, in this case, \ec{[\,,\,]_{1}=0} and \ec{\K=0}.

Our algebraic approach is also applied to some examples of admissible Poisson algebras of geometric nature \cite{RuGaVo20}. More precisely, we applied the general results to the description of the so--called infinitesimal (first) Poisson cohomology around Poisson submanifolds \cite{KaVo98, RuGaVo20}. In particular, in the case of a symplectic leaf, we adapt formulas \eqref{eq:H1-Intro}--\eqref{eq:H1Particular-Intro} to the computation of the infinitesimal first Poisson cohomology under some assumptions on the isotropy of the leaf \cite{Mac95, Duf05, VeVo18}.

The paper is organized as follows.
Section \ref{sec:preliminaries} recalls basic definitions.
In Section \ref{sec:GPA}, we define admissible Poisson structures on trivial extension algebras and present some properties.
In Section \ref{sec:poissontriples}, a one-to-one correspondence is shown to exist between admissible Poisson algebras, Poisson triples and a class of Lie algebroids.
In Section \ref{sec:ComplexAPA}, two natural cochain complexes associated to admissible Poisson algebras are presented.
In Section \ref{sec:equivalenceGPA}, the so-called gauge transformations are defined and equivalence criteria for admissible Poisson algebras are given.
Section \ref{sec:hamder} is devoted to the study of Poisson and Hamiltonian derivations of admissible Poisson algebras.
Then, in Section \ref{sec:FirstCohomology}, we apply the previous results to describe some properties of the first Poisson cohomology of admissible Poisson algebras.
Finally, in Section \ref{sec:PoissonModules}, admissible Poisson algebras induced by Poisson modules are studied and, in Section \ref{sec:PoissonSub}, admissible Poisson algebras arising in the infinitesimal geometry of Poisson submanifolds are described.

    \section{Preliminaries}\label{sec:preliminaries}

Here, we recall some basic facts about Poisson algebras, Lie algebroids structures, and contravariant derivatives (for more details, see also \cite{Kos90, Vais91, Mac95, GaVaVo12}).

\paragraph{Poisson Algebras.} Throughout this section $P$ denotes an associative and commutative unital algebra, with product $\cdot$, over a commutative ring $R$ with unit.

\begin{definition}
A \emph{Poisson structure} on $P$ is a Lie bracket \ec{\{\,,\,\}:P \times P \to P} compatible with the product by the so--called Leibniz rule,
    \begin{equation*}
        \{a, b \cdot c\} = \{a, b\} \cdot c + b \cdot \{a, c\},
    \end{equation*}
for all \ec{a,b,c \in P}.
\end{definition}

The triple \ec{\PP = (P,\cdot,\{\,,\,\})} is called a \emph{Poisson ($R$--)algebra}. A \emph{Poisson subalgebra} of a Poisson algebra $\PP$ is an commutative and associative subalgebra $S$ of $P$ which is closed under the Poisson structure, \ec{\{S,S\} \subseteq S}. An associative ideal $I$ of $P$ is called \emph{Poisson ideal} if it is also a Lie ideal with respect to the Poisson structure, \ec{\{P,I\} \subseteq I}.

A \emph{Casimir} element of $\PP$ is an element \ec{k \in P} satisfying \ec{\{k,a\} = 0}, for all \ec{a \in P}. The Poisson subalgebra of such elements is denoted by \ec{\mathrm{Casim}(\PP)}.

Recall that a \emph{derivation} of the commutative algebra $P$ is an $R$--linear mapping \ec{D:P \to P} that satisfies the Leibniz rule: \ec{D(ab) = D(a)b + aD(b)}, for all \ec{a,b \in P}. We denote the $P$--module of all derivations of $P$ by \ec{\mathrm{Der}(P)}. A \emph{derivation of a Poisson algebra $\PP$}, or Poisson derivation, is a derivation \ec{D \in \mathrm{Der}(P)} which is also a derivation of the Poisson structure, \ec{D\{a,b\} = \{D a,b\} + \{a,D b\}}, for all \ec{a,b \in P}. We denote by \ec{\mathrm{Poiss}(\PP)} the Lie subalgebra of all derivations of $\PP$. Given an element \ec{h \in P}, a derivation of the form \ec{D = \{h, \cdot\}} is called a \emph{Hamiltonian derivation} of $\PP$, associated to the Hamiltonian element $h$ and with respect to the Poisson structure \ec{\{\,,\,\}}. By \ec{\mathrm{Ham}(\PP)} we denote the Lie ideal of all Hamiltonian derivations of $\PP$, $\pmb{[} \mathrm{Poiss}(\PP), \mathrm{Ham}(\PP)\pmb{]} \subseteq \mathrm{Ham}(\PP)$. Here and in the remainder of the paper, the bracket
    \begin{equation*}\label{eq:Commutator}
        \pmb{[}\,,\,\pmb{]}: \mathrm{End}_{R}(P) \times \mathrm{End}_{R}(P) \longrightarrow \mathrm{End}_{R}(P)
    \end{equation*}
will denote the commutator of $R$--linear mappings from $P$ into itself: \ec{\pmb{[}D_{1},D_{2}\pmb{]} := D_{1} \circ D_{2} - D_{2} \circ D_{1}}, for all \ec{D_{1},D_{2} \in \mathrm{End}_{R}(P)}.

An $R$--linear mapping \ec{\phi: (P', \cdot', \{\,,\,\}') \mapsto (P, \cdot, \{\,,\,\})} between two Poisson algebras is called a \emph{Poisson morphism} if it is a morphism for both the commutative and Poisson structures,
    \begin{equation}\label{EcPhiAlg}
        \phi\{a,b\}' = \{\phi(a),\phi(b)\} \quad \text{and} \quad \phi(a \cdot' b) = \phi(a) \cdot \phi(b),
    \end{equation}
for all \ec{a,b \in P}. A \emph{Poisson isomorphism} is an invertible Poisson morphism. Two Poisson algebras are said to be \emph{Poisson isomorphic} or \emph{equivalent} if there exists a Poisson isomorphism $\phi$ between them. In this case, we say that they are Poisson \emph{$\phi$--equivalent} to specify a particular isomorphism $\phi$.

\begin{examplex}
The standard example of a Poisson algebra is given by a Poisson bivector field \ec{\Pi \in \Gamma \wedge^2 \T{M}} on a smooth manifold $M$. In this case, we have that \ec{P = \Cinf{M}} is the commutative algebra of smooth functions on $M$ with pointwise multiplication and \ec{\{f,g\}_{\Pi} := \Pi(\dd f, \dd g)} is the Poisson structure on \ec{M}.
\end{examplex}

\paragraph{Cohomology of a Poisson Algebra.} Let \ec{\PP = (P,\cdot,\{\,,\,\})} be a Poisson algebra. If we define \break \ec{\mathfrak{X}^{k}_{P} := \{\text{$R$--multilinear skew--symmetric derivations}\ X:P \times \cdots \times P \to P\}}, then the Poisson structure \ec{\{\,,\,\}} induces a coboundary operator \ec{\delta_{\PP}:\mathfrak{X}^{k}_{P} \to \mathfrak{X}^{k+1}_{P}} defined by
    \begin{equation}\label{eq:DeltaP}
        \fiteq{\big( \delta_{\PP}X \big)(\pi_0,\dots,\pi_{k}):=\sum_{i=0}^{k}(-1)^{i} \big\{ \pi_{i},X(\pi_{0},\dots,\widehat{\pi}_{i},\dots,\pi_{k}) \big\} + \sum_{0\leq i < j\leq k}(-1)^{i+j} X\big( \{\pi_{i},\pi_{j}\},\pi_{0},\dots,\widehat{\pi}_{i},\dots,\widehat{\pi}_{j},\dots,\pi_{k} \big)},
    \end{equation}
with \ec{X \in \mathfrak{X}^{k}_{P}} and \ec{\pi_{0},\dots,\pi_{k} \in P}. Here, and in the remainder of the paper, the symbol \ $\widehat{}$ \ denotes omission. The cohomology induced by \ec{\delta_{\PP}} is called the \emph{cohomology of the Poisson algebra} $\PP$.

\begin{lemma}\label{lema:IsoCohomo}
Equivalent Poisson algebras have isomorphic Poisson cohomologies.
\end{lemma}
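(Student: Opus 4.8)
The plan is to check that a Poisson isomorphism $\phi\colon \PP' = (P',\cdot',\{\,,\,\}') \to \PP = (P,\cdot,\{\,,\,\})$ induces an isomorphism between the cochain complexes $(\mathfrak{X}^{\ast}_{P},\delta_{\PP})$ and $(\mathfrak{X}^{\ast}_{P'},\delta_{\PP'})$, so that their cohomologies coincide degree by degree. Note first that the inverse $\phi^{-1}$ of a Poisson isomorphism is again a Poisson isomorphism, since $\phi$ preserves both structures and is bijective; this will be used repeatedly. I would define the pullback $\phi^{\ast}\colon \mathfrak{X}^{k}_{P}\to \mathfrak{X}^{k}_{P'}$ by
\[
(\phi^{\ast}X)(\pi'_{1},\dots,\pi'_{k}) := \phi^{-1}\big( X(\phi(\pi'_{1}),\dots,\phi(\pi'_{k})) \big),
\]
for $X\in\mathfrak{X}^{k}_{P}$ and $\pi'_{1},\dots,\pi'_{k}\in P'$.

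The first step is to verify that $\phi^{\ast}X\in\mathfrak{X}^{k}_{P'}$. Skew-symmetry and $R$-multilinearity are inherited from $X$ through the $R$-linearity of $\phi$ and $\phi^{-1}$. For the derivation property in each argument, one expands $(\phi^{\ast}X)(\dots,a\cdot' b,\dots)$, uses $\phi(a\cdot' b)=\phi(a)\cdot\phi(b)$ (the multiplicative half of \eqref{EcPhiAlg}), the Leibniz rule for $X$, and finally $\phi^{-1}(x\cdot y)=\phi^{-1}(x)\cdot'\phi^{-1}(y)$ --- which holds because $\phi$ is an algebra isomorphism --- to obtain the Leibniz rule for $\phi^{\ast}X$.

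The main step is the intertwining identity $\phi^{\ast}\circ\delta_{\PP}=\delta_{\PP'}\circ\phi^{\ast}$. Starting from $\delta_{\PP'}(\phi^{\ast}X)$ evaluated on $\pi'_{0},\dots,\pi'_{k}$ via formula \eqref{eq:DeltaP}: in the first sum I would use that $\phi^{-1}$ is a morphism of Poisson brackets, in the form $\{\pi'_{i},\phi^{-1}(y)\}' = \phi^{-1}\{\phi(\pi'_{i}),y\}$, and in the second sum that $\phi$ is a morphism of Poisson brackets, $\phi(\{\pi'_{i},\pi'_{j}\}')=\{\phi(\pi'_{i}),\phi(\pi'_{j})\}$. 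Pulling the common $\phi^{-1}$ out front, the bracketed expression becomes precisely $(\delta_{\PP}X)$ evaluated at $\phi(\pi'_{0}),\dots,\phi(\pi'_{k})$, i.e.\ $\delta_{\PP'}(\phi^{\ast}X)=\phi^{\ast}(\delta_{\PP}X)$.

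Finally, from functoriality of the pullback --- $(\psi\circ\phi)^{\ast}=\phi^{\ast}\circ\psi^{\ast}$ and $\mathrm{id}^{\ast}=\mathrm{id}$ --- the map $(\phi^{-1})^{\ast}$ is a two-sided inverse of $\phi^{\ast}$, so $\phi^{\ast}$ is an isomorphism of cochain complexes; passing to cohomology gives isomorphisms $\mathscr{H}^{k}(\PP)\simeq\mathscr{H}^{k}(\PP')$ for all $k$. I do not anticipate a genuine obstacle: the only points that need care are the two computations above and the remark that the inverse of a Poisson isomorphism is again one, which together guarantee that $\phi^{\ast}$ both lands in $\mathfrak{X}^{\ast}_{P'}$ and commutes with the coboundary operator.
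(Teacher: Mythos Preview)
Your proof is correct and follows the standard argument. The paper itself states this lemma without proof, treating it as a well-known fact, so there is no paper proof to compare against; your write-up supplies exactly the expected verification that the pullback $\phi^{\ast}$ is a well-defined cochain map with inverse $(\phi^{-1})^{\ast}$.
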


In particular, the zero Poisson cohomology of $\PP$ is \ec{\mathrm{Casim}(\PP)} and the first Poisson cohomology is the quotient \ec{\mathrm{Poiss}(\PP) / \mathrm{Ham}(\PP)}.

\paragraph{Lie Algebroids.} Let $P$ be the associative and commutative $R$--algebra with unit and $A$ a faithful $P$--(bi)module.

\begin{definition}\label{def:LieAlgebroid}
A \emph{Lie algebroid structure} on $A$ over $P$ is a tuple \ec{(\cSch{\,,\,}, \varrho)}, where \ec{\cSch{\,,\,}} is a Lie bracket on $A$ and \ec{\varrho:A \to \mathrm{Der}(P)} is a morphism of $P$--modules, called the anchor map, satisfying the following ``Leibniz rule'' condition
	\begin{equation}\label{EcLeibnizAnchor}
	   \cSch{X, aY} = a\cSch{X, Y} + \varrho(X)(a)\,Y, \quad a \in P;
	\end{equation}
for all \ec{X, Y \in A}.
\end{definition}

It is well--known that, since $A$ is faithful, condition \eqref{EcLeibnizAnchor} implies that the anchor map is a morphism of Lie algebras: \ec{\varrho\cSch{X,Y} = \pmb{[}\varrho(X), \varrho(Y) \pmb{]}}.
The triple \ec{\A = (A,\cSch{\,,\,}, \varrho)} is called a \emph{Lie algebroid on $A$ over $P$}.

\begin{remark}\label{rmrk:AnclaCero}
If \ec{\varrho = 0}, then a Lie algebroid structure on $A$ over $P$ is just a $P$--linear Lie bracket on $A$, by \eqref{EcLeibnizAnchor}. If \ec{\cSch{\,,\,} = 0} and \ec{\varrho = 0}, then $A$ is said to be endowed with the \emph{trivial} Lie algebroid.
\end{remark}

\begin{remark}
The definition of Lie algebroid in an algebraic framework (Definition \ref{def:LieAlgebroid}) coincides with the equivalent notions of Lie pseudoalgebra \cite{Mac95} and Lie-Rinehart algebra \cite{Hue98}, which are frequently used by different authors. In this paper, we continue to use the term Lie algebroid following \cite{GaVaVo12}.
\end{remark}

A \emph{morphism of Lie algebroids}, between two Lie algebroids \ec{(A',\cSch{\,,\,}',\varrho')} and \ec{(A,\cSch{\,,\,},\varrho)} over the \emph{same} commutative $R$--algebra $P$, is a morphism of $P$--modules \ec{\varphi: A' \rightarrow A} over the identity such that
    \begin{equation}\label{ecMorphismAlgbd}
         \varphi\cSch{X',Y'}' = \cSch{\varphi(X'),\varphi(Y')} \quad \text{and} \quad \varrho' = \varrho \circ \varphi,
    \end{equation}
for all \ec{X',Y' \in A'}.

\paragraph{The Lie Algebroid of a Poisson Algebra.} For the associative and commutative $R$--algebra $P$ with unit \ec{1_{P}}, we denote by \ec{\Omega^{1}_{P}} the (free) $P$--submodule of \ec{{\mathrm{Der}(P)}^{\ast}} consisting of the kernel of the multiplication \ec{P\otimes_{R} P \to P}, and equipped with the $R$--derivation \ec{\dd:P \to \Omega^{1}_{P}} defined by \ec{\dd{f} := 1_{P} \otimes_{R} f - f \otimes_{R} 1_{P}}, for all \ec{f\in P}. By construction, we have
    \begin{equation*}
        \Omega^{1}_{P} = \mathrm{span}_{P}\{\dd{f} \mid f \in P\},
    \end{equation*}
(for more details, see \cite{DuMi94, GaVaVo12}). Moreover, given \ec{\alpha \in \Omega^1_{P}}, we define \ec{\dd\alpha} by the usual formula: $(\dd\alpha)(D_{1},D_{2}) := D_{1}(\alpha(D_{2})) - D_{2}(\alpha(D_{1})) - \alpha(\pmb{[}D_{1},D_{2}\pmb{]})$, for all \ec{D_{1},D_{2} \in \mathrm{Der}(P)}.

Now, suppose that \ec{(P,\cdot,\{\,,\,\})} is a Poisson algebra. Extending the mapping \ec{\dd h \to \{h, \cdot\}} by $P$--linearity, we get a morphism \ec{\varrho_{P}:\Omega^{1}_{P} \to \mathrm{Der}(P)} uniquely defined by
    \begin{equation*}
        \varrho_{P}(\dd h) := \{h, \cdot\} , \quad h \in P.
    \end{equation*}

\begin{definition}\label{def:algebroidpoisson}
The triple \ec{(\Omega^{1}_{P}, \cSch{\,,\,}_{P}, \varrho_{P})} is called the \emph{Lie algebroid of the Poisson algebra} \ec{(P,\cdot,\{\,,\,\})}, where the Lie bracket is defined by \ec{\cSch{\alpha,\beta}_{P} := (\dd\beta)(\varrho_{P}(\alpha), \cdot) - (\dd\alpha)(\varrho_{P}(\beta), \cdot) + \dd(\beta( \varrho_{P}(\alpha) ))}, for all \break \ec{\alpha,\beta \in \Omega^{1}_{P}} \cite{Hue90, GaVaVo12}.
\end{definition}

Note that if \ec{\alpha = \dd f} and \ec{\beta = \dd g}, for some \ec{f,g \in P}, then \ec{\cSch{\dd f,\dd g}_{P} = \dd\{f,g\}}.

\begin{examplex}
Every Poisson manifold \ec{(M,\Pi)} induces the Lie algebroid on \ec{\Gamma\,\T^{\ast}M} over \ec{\Cinf{M}} given by \ec{(\Omega_{P}^{1} = \Gamma\,\T^{\ast}M, \cSch{\,,\,}_{P} = \{\,,\,\}_{\T^{\ast}M}, \varrho_{P} = \Pi^{\natural})}, called the cotangent Lie algebroid \cite{Vais94, Mac95, Duf05}.
\end{examplex}

\paragraph{Contravariant Differentials.} We recall the definition of contravariant derivative \cite{Vais91} and introduce some natural coboundaries operators (see also \cite{ReVoWe96, Car03,Zhu19}).

Let \ec{\PP_{0} = (P_{0},\cdot,\{\,,\,\})} be a Poisson algebra and $P_{1}$ a faithful $P_{0}$--(bi)module. A \emph{contravariant derivative (connection)} on \ec{P_{1}} is an $R$--bilinear mapping \ec{\D:\Omega^{1}_{P_{0}} \times P_{1} \to P_{1}}, \ec{(\alpha, \eta) \mapsto \D_{\alpha}\eta}, satisfying
        \begin{equation*}
            \D_{f\alpha}\eta = f\D_{\alpha}\eta \quad \text{and} \quad \D_{\alpha}(f\eta) = f\D_{\alpha}\eta + \varrho_{P}(\alpha)(f)\,\eta, \quad f \in P_{0}.
        \end{equation*}
Here, \ec{(\Omega^{1}_{P_{0}}, \cSch{\,,\,}_{P_{0}}, \varrho_{P_{0}})} is the Lie algebroid of the Poisson algebra $\PP_{0}$.

Let $\D$ be a contravariant derivative on $P_{1}$. The \emph{curvature} of $\D$ is the $P_{0}$--linear endomorphism of \ec{P_{1}} defined by
    \begin{equation}\label{EcCurvD}
         \mathrm{Curv}^{\D}(\alpha,\beta) := \pmb{[}\D_{\alpha},\D_{\beta}\pmb{]} - \D_{\cSch{\alpha, \beta}_{P_{0}}}, \quad \alpha,\beta \in \Omega^{1}_{P_{0}}.
    \end{equation}
In particular, $\D$ is said to be \emph{flat} if its curvature is zero, \ec{\mathrm{Curv}^{\D} = 0}.

\begin{remark}
For two contravariant derivatives $\D$ and $\D'$ on $P_{1}$, the difference \ec{\D_{\alpha}-\D_{\alpha}'} is a $P_{0}$--linear endomorphism of \ec{P_{1}}, for all \ec{\alpha \in \Omega^{1}_{P_{0}}}. Then, given a \ec{P_{0}}--bilinear mapping \ec{\Xi: \Omega_{P_{0}}^{1} \times P_{1} \to P_{1}}, the sum \ec{\D + \Xi} defines a (new) contravariant derivative on \ec{P_{1}}.
\end{remark}

We define \ec{\chi^{k}_{P_{0}}(\Omega^{1}_{P_{0}}; P_{1}) := \{\text{$P_{0}$--multilinear skew--symmetric mappings}\ \Omega_{P_{0}}^{1} \times \cdots \times \Omega_{P_{0}}^{1} \to P_{1}\}}, which is a $P_{0}$--module. Then, $\D$ induces a contravariant differential \ec{\overline{\dd}_{\D}:\chi^{k}_{P_{0}}(\Omega^{1}_{P_{0}}; P_{1}) \to \chi^{k+1}_{P_{0}}(\Omega^{1}_{P_{0}}; P_{1})} by
    \begin{equation}\label{eq:dD}
        \fiteq{(\overline{\dd}_{\D}\overline{Q})(\alpha_{0},\ldots,\alpha_{k}) := \sum_{i=0}^{k}(-1)^{i}\,\D_{\alpha_{i}} \big( \overline{Q}(\alpha_{0}, \ldots, \widehat{\alpha}_{i},\ldots,\alpha_{k}) \big) + \sum_{0 \leq i < j \leq k} (-1)^{i+j} \overline{Q}\big( \cSch{\alpha_{i},\alpha_{j}}_{P},\alpha_{0} \ldots, \widehat{\alpha}_{i},\ldots,\widehat{\alpha}_{j}, \ldots \alpha_{k} \big)},
    \end{equation}
for \ec{\overline{Q} \in \chi^{k}_{P_{0}}(\Omega^{1}_{P_{0}}; P_{1})}, with \ec{\alpha_{0},\ldots,\alpha_{k} \in \Omega^{1}_{P_{0}}}. Moreover, $\overline{\dd}_{\D}$ is a coboundary operator if and only if the contravariant derivative $\D$ is flat since
    \begin{equation}\label{eq:dD2}
        \big( \overline{\dd}_{\D}^{2}\overline{Q} \big)(\alpha_{0},\ldots,\alpha_{k+1}) = \sum_{0 \leq i < j \leq k+1} (-1)^{i+j+1}\, \mathrm{Curv}^{\D}(\alpha_{i},\alpha_{j})\big( \overline{Q}(\alpha_{0} \ldots, \widehat{\alpha}_{i},\ldots,\widehat{\alpha}_{j}, \ldots \alpha_{k+1}) \big).
    \end{equation}

Similarly, we define \ec{\X{R}^{k}(P_{0}; P_{1}) := \{\text{$R$--multilinear skew--symmetric derivations}\ P_{0} \times \cdots \times P_{0} \to P_{1}\}}, which is a $P_{0}$--module. Then, $\D$ induces a contravariant differential \ec{\dd_{\D}:\X{R}^{k}(P_{0}; P_{1}) \to \X{R}^{k+1}(P_{0}; P_{1})} by
    \begin{equation}\label{eq:dDbar}
        \fiteq{(\dd_{\D}Q)(f_{0},\ldots,f_{k}) := \sum_{i=0}^{k}(-1)^{i}\,\D_{\dd f_{i}} \big( Q(f_{0}, \ldots, \widehat{f}_{i},\ldots,f_{k}) \big) + \sum_{0 \leq i < j \leq k} (-1)^{i+j}\, Q\big( \{f_{i},f_{j}\},f_{0} \ldots, \widehat{f}_{i},\ldots,\widehat{f}_{j}, \ldots f_{k} \big)},
    \end{equation}
for \ec{Q \in \X{R}^{k}(P_{0}; P_{1})}, with \ec{f_{0},\ldots,f_{k} \in P_{0}}. We have that $\dd_{\D}$ is a coboundary operator if and only if the contravariant derivative $\D$ is flat since
    \begin{equation}\label{eq:dD2bar}
        \big( \dd_{\D}^{2}Q \big)(f_{0},\ldots,f_{k+1}) = \sum_{0 \leq i < j \leq k+1} (-1)^{i+j+1}\, \mathrm{Curv}^{\D}(\dd f_{i},\dd f_{j})\big( Q(f_{0} \ldots, \widehat{f}_{i},\ldots,\widehat{f}_{j}, \ldots f_{k+1}) \big).
    \end{equation}

Now, we present cochain complexes associated to a given Poisson algebra $\PP_{0}$ and some data \ec{([\,,\,]_{1}, \D)} on $P_{1}$, consisting of a $P$--linear Lie bracket \ec{[\,,\,]_{1}} preserved by the contravariant derivative $\D$,
    \begin{equation}\label{eq:Dpreserves}
        \D_{\alpha}[\eta, \xi]_{1} = [\D_{\alpha}\eta, \xi]_{1} + [\eta, \D_{\alpha}\xi]_{1}, \quad \alpha \in \Omega_{P_{0}}^{1},\ \eta, \xi \in P_{1}.
    \end{equation}
Moreover, the curvature of $\D$ is of adjoint type with respect to \ec{[\,,\,]_{1}},
    \begin{equation}\label{eq:CurvAd}
        \mathrm{Curv}^{\D}(\alpha,\beta) = [\eta, \cdot\,]_{1}, \quad \text{for some}\ \eta \in P_{1}; \quad \text{with}\ \alpha,\beta \in \Omega_{P_{0}}^{1}.
    \end{equation}

Note that $\D$ leaves invariant the center \ec{Z(P_{1})} of the Lie algebra \ec{(P_{1},[\,,\,]_{1})}, which is also a $P_{0}$--module.

Set \ec{\overline{\Gamma}_{P_{0}}^{k}(\Omega_{P_{0}}^{1}; Z(P_{1})) := \{\text{$P_{0}$--\emph{multilinear skew--symmetric mappings}}\ \Omega_{P_{0}}^{1} \times \cdots \times \Omega_{P_{0}}^{1} \to Z(P_{1})\}}. Then, by \eqref{eq:dD2}, \eqref{eq:Dpreserves} and \eqref{eq:CurvAd}, the contravariant differential $\overline{\dd}_{\D}$ in \eqref{eq:dD} restricts to the \emph{coboundary operator},
    \begin{equation}\label{eq:Partial}
        \overline{\partial}_{\D} \equiv \overline{\dd}_{\D}|_{\Gamma^{k}_{P_{0}}}:\overline{\Gamma}_{P_{0}}^{k}\big( \Omega_{P_{0}}^{1}; Z(P_{1}) \big) \longrightarrow \overline{\Gamma}_{P_{0}}^{k+1}\big( \Omega_{P_{0}}^{1}; Z(P_{1}) \big) \qquad (\overline{\partial}_{\D}^{2} = 0).
    \end{equation}

Similarly, set \ec{\Gamma_{R}^{k}(P_{0}; Z(P_{1})) := \{\text{$R$--\emph{multilinear skew--symmetric derivations}}\ P_{0} \times \cdots \times P_{0} \to Z(P_{1})\}}. Then, by \eqref{eq:dD2bar}, \eqref{eq:Dpreserves} and \eqref{eq:CurvAd}, the contravariant differential $\dd_{\D}$ in \eqref{eq:dDbar} restricts to the \emph{coboundary operator},
    \begin{equation}\label{eq:PartialBar}
        \partial_{\D} \equiv \dd_{\D}|_{\Gamma^{k}_{R}}:\Gamma_{R}^{k}\big( P_{0}; Z(P_{1}) \big) \longrightarrow \Gamma_{R}^{k+1}\big( P_{0}; Z(P_{1}) \big) \qquad (\partial_{\D}^{2} = 0).
    \end{equation}

\begin{proposition}\label{prop:TwoCochainCmplx}
Every data \ec{(\PP_{0}, (P_{1}, [\,,\,]_{1}, \D))} satisfying \eqref{eq:Dpreserves} and \eqref{eq:CurvAd} induce two cochain complexes associated to the coboundaries operators \eqref{eq:Partial} and \eqref{eq:PartialBar}.
\end{proposition}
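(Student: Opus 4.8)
The plan is to check, for each of the two operators in the statement, the only facts not already displayed in the preceding discussion: that the prescription \eqref{eq:Partial} (respectively \eqref{eq:PartialBar}) does define an $R$-linear map $\overline{\Gamma}_{P_{0}}^{k}(\Omega_{P_{0}}^{1};Z(P_{1}))\to\overline{\Gamma}_{P_{0}}^{k+1}(\Omega_{P_{0}}^{1};Z(P_{1}))$ (respectively $\Gamma_{R}^{k}(P_{0};Z(P_{1}))\to\Gamma_{R}^{k+1}(P_{0};Z(P_{1}))$), and that the operator so obtained squares to zero. All remaining ingredients are furnished by formulas \eqref{eq:dD}--\eqref{eq:dD2bar} together with the two standing hypotheses \eqref{eq:Dpreserves} and \eqref{eq:CurvAd}.

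\emph{Well-definedness.} First recall, as noted just before the statement, that \eqref{eq:Dpreserves} forces each $\D_{\alpha}$ to leave the centre $Z(P_{1})$ invariant: for $\eta\in Z(P_{1})$ and any $\xi\in P_{1}$ one has $[\D_{\alpha}\eta,\xi]_{1}=\D_{\alpha}[\eta,\xi]_{1}-[\eta,\D_{\alpha}\xi]_{1}=0$. Since $Z(P_{1})$ is moreover a $P_{0}$-submodule of $P_{1}$, inspection of the defining formulas \eqref{eq:dD} and \eqref{eq:dDbar} shows that they send $Z(P_{1})$-valued cochains to $Z(P_{1})$-valued cochains: in each formula the first sum applies only the operators $\D_{\alpha_{i}}$ (respectively $\D_{\dd f_{i}}$) to values of the input cochain, and the second sum is a $P_{0}$-linear combination of such values. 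Combining this with the facts already recorded above, namely that $\overline{\dd}_{\D}$ preserves the module $\chi^{k}_{P_{0}}(\Omega^{1}_{P_{0}};P_{1})$ of $P_{0}$-multilinear skew-symmetric maps and that $\dd_{\D}$ preserves the module $\X{R}^{k}(P_{0};P_{1})$ of $R$-multilinear skew-symmetric derivations, we conclude that $\overline{\partial}_{\D}$ and $\partial_{\D}$ are well defined on the corresponding $Z(P_{1})$-valued subcomplexes.

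\emph{Square-zero.} Here I would simply invoke the curvature identities \eqref{eq:dD2} and \eqref{eq:dD2bar}. Evaluated on a $Z(P_{1})$-valued cochain $\overline{Q}$ (respectively $Q$), they express $\overline{\dd}_{\D}^{2}\overline{Q}$ (respectively $\dd_{\D}^{2}Q$) as an alternating sum of terms of the form $\pm\,\mathrm{Curv}^{\D}(\alpha_{i},\alpha_{j})(\overline{Q}(\dots))$ (respectively $\pm\,\mathrm{Curv}^{\D}(\dd f_{i},\dd f_{j})(Q(\dots))$), in which every value $\overline{Q}(\dots)$, $Q(\dots)$ lies in $Z(P_{1})$. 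By hypothesis \eqref{eq:CurvAd} each curvature endomorphism equals $[\eta,\cdot\,]_{1}$ for a suitable $\eta\in P_{1}$, and such an operator kills central elements; hence every summand vanishes and $\overline{\partial}_{\D}^{2}=0$, $\partial_{\D}^{2}=0$.

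I do not anticipate a genuine obstacle: the proposition is essentially a bookkeeping consequence of the computations already on display. The one point deserving a little care is the well-definedness step — confirming that the restricted differentials really take values in the $Z(P_{1})$-valued subspaces and, for $\partial_{\D}$, that $\dd_{\D}Q$ is still a derivation in each argument when $Q$ is. The latter is exactly the already-cited property $\dd_{\D}:\X{R}^{k}(P_{0};P_{1})\to\X{R}^{k+1}(P_{0};P_{1})$, so the whole argument reduces to the elementary invariance of $Z(P_{1})$ under the operators $\D_{\alpha}$.
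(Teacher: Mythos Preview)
Your proposal is correct and follows essentially the same approach as the paper: the paper's argument is the brief clause preceding the proposition, which cites \eqref{eq:dD2} (resp.\ \eqref{eq:dD2bar}), \eqref{eq:Dpreserves} and \eqref{eq:CurvAd} together with the observation that $\D$ leaves $Z(P_{1})$ invariant. You have spelled out exactly these ingredients in the intended way.
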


These cochain complexes play an important role in Sections \ref{sec:ComplexAPA}, \ref{sec:equivalenceGPA} and \ref{sec:FirstCohomology}.

    \section{Admissible Poisson Algebras}\label{sec:GPA}

In this section, we present a natural class of Poisson structures on the trivial extension algebra of a given Poisson algebra which generelizes the known class of Poisson structures induced by Poisson modules.

Let $R$ be a commutative ring with unit. We assume here, and in the rest of the sections, that
    \begin{itemize}
      \item a Poisson $R$--algebra \ec{\PP_{0} = (P_{0}, \cdot, \{\,,\,\}_{0})};
      \item a faithful \ec{P_0}--(bi)module \ec{P_{1}}.
    \end{itemize}

We denote by
    \begin{equation*}
        P = P_{0} \ltimes P_{1}
    \end{equation*}
the trivial extension algebra of the commutative algebra \ec{P_{0}} by \ec{P_{1}} and by
    \begin{equation*}
        \pr_0: P \rightarrow P_{0}, \quad \pr_1: P \rightarrow P_{1} \quad \text{and} \quad \iota_{0}: P_{0} \hookrightarrow P_{0} \oplus \{0\}, \quad \iota_{1}: P_{1} \hookrightarrow \{0\} \oplus P_{1},
    \end{equation*}
the canonical projections and the inclusion maps, respectively.

\begin{definition}\label{def:GPA}
A Poisson structure \ec{\{\,,\,\}} on the trivial extension algebra $P$ is said to be \emph{admissible} if the natural projection \ec{P \to P_{0}} is a Poisson morphism,
    \begin{equation}\label{EcGPA}
        \pr_{0}\{f \oplus \eta, g \oplus \xi\} = \{f,g\}_{0},
    \end{equation}
for all \ec{f \oplus \eta, g \oplus \xi \in P}.
\end{definition}

In particular, an admissible Poisson structure on $P$ is compatible with the multiplication \eqref{EcPproduct} by the Leibniz rule. The pair \ec{\PP = (P, \{\,,\,\})} will be called an \emph{admissible Poisson algebra} (APA).

\begin{remark}
By definition of trivial extension algebra, the projection \ec{\pr_{0}} is a morphism of commutative algebras: \ec{\pr_{0}[(f \oplus \eta) \cdot (g \oplus \xi)] = fg = \pr_{0}(f \oplus \eta) \cdot \pr_{0}(g \oplus \xi)}.
\end{remark}

\begin{examplex}
Let \ec{(A, \cdot, \{\,,\,\}_{0})} be a Poisson algebra and \ec{P_{1} \subset A} a Poisson ideal. If \ec{P_{0}= A / P_{1}} is the quotient Poisson algebra, then the trivial extension algebra of \ec{P_{0}} by \ec{P_{1}} is an admissible Poisson algebra \cite{Car03} with (admissible) Poisson structure defined by $\big\{ (f + P_{1}) \oplus f', (g + P_{1}) \oplus g' \big\} := \big( \{f,g\}_{0} + P_{1} \big) \oplus \big( \{f,g'\}_{0} - \{g,f'\}_{0} \big)$, for all \ec{f,g \in A} and \ec{f',g' \in P_{1}}.
\end{examplex}

\begin{examplex}
Let \ec{(A, \cdot, \{\,,\,\})} be a Poisson algebra. If there exists a Poisson ideal \ec{P_{1} \subset A} such that \ec{P_{1}^{2}=0} and a complementary subalgebra \ec{P_{0} \subset A} to \ec{P_{1}}, \ec{A=P_{0} \oplus P_{1}}, then \ec{(A, \cdot, \{\,,\,\})} is an admissible Poisson algebra.
\end{examplex}

\begin{examplex}\label{exm:APAmatrix}
Let \ec{\PP_{0}=(P_{0}, \cdot, \{\,,\,\}_{0})} be a Poisson algebra and \ec{P_{1}} the \ec{P_{0}}--module of (\ec{k \times k})--matrices with entries in \ec{P_{0}}. Then, the tuple \ec{(P = P_{0}\ltimes P_{1}, \{\,,\,\})} is an admissible Poisson algebra with (admissible) Poisson structure given by
    \begin{equation}\label{eq:APmatrix}
        \{f\oplus M, g\oplus N\} = \{f,g\}_{0} \oplus \big( D_{\dd{f}}N- D_{\dd{g}}M + [|M,N|] \,\big),
    \end{equation}
for all \ec{f\oplus M, g\oplus N \in P}. Here, we define
    \begin{equation}\label{eq:Dmatrix}
        \big( D_{g\dd{f}}M \big)_{ij} := g\{f, M_{ij}\}_{0}, \quad f,g \in P_{0};
    \end{equation}
and \ec{[|\,,\,|]} denotes the commutator of matrices.
\end{examplex}

\begin{remark}\label{remark:admissible}
An admissible Poisson structure on $P$ does not always exist. A necessary condition is that the \ec{P_{0}}--module \ec{P_{1}} admits a contravariant derivative (Theorem \ref{teo:correspondenceGPA-PT-LA}), which is generally not the case \cite{Car03}.
\end{remark}

Let \ec{\PP = (P, \{\,,\,\})} be an admissible Poisson algebra. Definition \ref{def:GPA} implies that the admissible Poisson structure of $\PP$ can be represented as
    \begin{equation*}
        \{P,P\} = \{P_{0},P_{0}\}_{0} \oplus \pr_{1}\{P,P\}.
    \end{equation*}
So, it is completely determined by the second factor \ec{\pr_{1}\{P,P\}}. In particular, the ideal \ec{\{0\}\oplus P_1} of the commutative algebra $P$ is also a Poisson ideal of $\PP$.

\begin{proposition}
The $P_{0}$--module \ec{P_1} inherits from $\PP$ a \ec{P_0}--linear Lie bracket \ec{[\,,\,]_{1}} defined by
    \begin{equation}\label{EcLieBraP1}
        [\eta,\xi]_{1} := \pr_{1}\{\iota_{1} \eta,\iota_{1} \xi\}, \quad \eta,\xi \in P_{1}.
    \end{equation}
\end{proposition}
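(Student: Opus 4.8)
The claim has two parts: first, that $[\,,\,]_1$ as defined in \eqref{EcLieBraP1} is a well-defined Lie bracket on $P_1$ (i.e.\ $R$-bilinear, skew-symmetric, and satisfies the Jacobi identity), and second, that it is $P_0$-linear. Both assertions should follow by pulling back the corresponding properties of the ambient admissible Poisson bracket $\{\,,\,\}$ on $P=P_0\ltimes P_1$ along the inclusion $\iota_1$ and the projection $\pr_1$, using the observation already recorded in the excerpt that $\{0\}\oplus P_1$ is a Poisson ideal of $\PP$.

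First I would verify that $[\,,\,]_1$ is $R$-bilinear and skew-symmetric: this is immediate because $\iota_1$ and $\pr_1$ are $R$-linear and $\{\,,\,\}$ is $R$-bilinear and skew-symmetric. Next, for the Jacobi identity, the key point is that the restriction of $\{\,,\,\}$ to the Poisson ideal $\{0\}\oplus P_1$ takes values in $\{0\}\oplus P_1$ again; indeed, $\pr_0\{\iota_1\eta,\iota_1\xi\}=\{0,0\}_0=0$ by the admissibility condition \eqref{EcGPA}, so $\{\iota_1\eta,\iota_1\xi\}=\iota_1[\eta,\xi]_1$. Consequently $\iota_1$ intertwines $[\,,\,]_1$ with $\{\,,\,\}$ on the subspace $\{0\}\oplus P_1$, and the Jacobi identity for $[\,,\,]_1$ is exactly the Jacobi identity for $\{\,,\,\}$ restricted to three elements of $\{0\}\oplus P_1$, which holds since $\{\,,\,\}$ is a Poisson (hence Lie) bracket. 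More precisely, $\iota_1\big([[\eta,\xi]_1,\zeta]_1\big)=\{\{\iota_1\eta,\iota_1\xi\},\iota_1\zeta\}$ and cyclic permutations, so the cyclic sum for $[\,,\,]_1$ maps under the injective $\iota_1$ to the Jacobiator of $\{\,,\,\}$, which vanishes; injectivity of $\iota_1$ then gives the Jacobi identity downstairs.

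For $P_0$-linearity, the plan is to compute $[f\eta,\xi]_1=\pr_1\{\iota_1(f\eta),\iota_1\xi\}$ and exploit the Leibniz rule for $\{\,,\,\}$ on the commutative algebra $P$. Writing $\iota_1(f\eta)=(\iota_0 f)\cdot(\iota_1\eta)$ in $P$ (which holds by the definition \eqref{EcPproduct} of the trivial extension product, since $f\cdot(0\oplus\eta)=0\oplus f\eta$), the Leibniz rule gives $\{\iota_1\xi,(\iota_0 f)\cdot\iota_1\eta\}=\{\iota_1\xi,\iota_0 f\}\cdot\iota_1\eta+(\iota_0 f)\cdot\{\iota_1\xi,\iota_1\eta\}$. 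The first term lies in $(\{0\}\oplus P_1)\cdot(\{0\}\oplus P_1)$; but $P_1$ appears here only through products of the form $P_1\cdot P_1$ inside the trivial extension, and by \eqref{EcPproduct} such products vanish — more carefully, $\{\iota_1\xi,\iota_0 f\}$ has its $P_1$-component some element of $P_1$ and its $P_0$-component zero (again by admissibility), so when multiplied by $\iota_1\eta$ the product is $(0\oplus\,\ast)\cdot(0\oplus\eta)=0$ by \eqref{EcPproduct}. Hence $\{\iota_1\xi,\iota_1(f\eta)\}=(\iota_0 f)\cdot\{\iota_1\xi,\iota_1\eta\}=\iota_1\big(f\,[\xi,\eta]_1\big)$, and applying $\pr_1$ and using skew-symmetry yields $[f\eta,\xi]_1=f[\eta,\xi]_1$, as desired.

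I expect the only subtlety — the "main obstacle," though it is mild — to be the careful bookkeeping showing that the cross term $\{\iota_1\xi,\iota_0 f\}\cdot\iota_1\eta$ vanishes: one must note that $\{\iota_1\xi,\iota_0 f\}$ need not lie in $\{0\}\oplus P_1$ a priori, but its $P_0$-component is $\{0,f\}_0=0$ by admissibility \eqref{EcGPA}, so it does lie in $\{0\}\oplus P_1$, and then the relation $(\{0\}\oplus P_1)^2=0$ from \eqref{EcPproduct} finishes the argument. Everything else is a routine transport of structure along the maps $\iota_1$ and $\pr_1$ together with the fact, already noted before the proposition, that $\{0\}\oplus P_1$ is a Poisson ideal.
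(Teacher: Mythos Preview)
Your proof is correct and is exactly the direct verification the paper has in mind; the paper does not spell out an argument, treating the result as an immediate consequence of the admissibility condition \eqref{EcGPA}, the Leibniz rule, and the observation (stated just before the proposition) that \ec{\{0\}\oplus P_{1}} is a Poisson ideal of $\PP$. Your handling of the only nontrivial point, namely that the cross term \ec{\{\iota_{1}\xi,\iota_{0}f\}\cdot\iota_{1}\eta} vanishes because admissibility forces \ec{\{\iota_{1}\xi,\iota_{0}f\}\in\{0\}\oplus P_{1}} and \ec{(\{0\}\oplus P_{1})^{2}=0}, is precisely what is needed.
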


We say that the tuple \ec{(P_{1},[\,,\,]_{1})} is the Lie algebra associated to $\PP$, or induced by $\PP$. Note that, taking into account Remark \ref{rmrk:AnclaCero}, the above proposition can be reformulated as follows.

\begin{corollary}\label{cor:P1algebroid}
Every admissible Poisson algebra induces a Lie algebroid structure on \ec{P_{1}} over \ec{P_{0}} with Lie bracket defined in \eqref{EcLieBraP1} and the zero anchor map.
\end{corollary}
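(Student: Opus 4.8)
The plan is to verify directly that the formula \eqref{EcLieBraP1} defines a $P_0$-linear Lie bracket on $P_1$, drawing everything from the fact that $\{\,,\,\}$ is an admissible Poisson structure on $P$. First I would check skew-symmetry: this is immediate from the skew-symmetry of $\{\,,\,\}$ on $P$, since $[\eta,\xi]_1 = \pr_1\{\iota_1\eta,\iota_1\xi\} = -\pr_1\{\iota_1\xi,\iota_1\eta\} = -[\xi,\eta]_1$. Next, for the Jacobi identity, I would apply $\pr_1$ to the Jacobi identity $\{\iota_1\eta,\{\iota_1\xi,\iota_1\zeta\}\} + \text{cyclic} = 0$ in $P$. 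The key point here is that $\{0\}\oplus P_1$ is a Poisson ideal (noted just before the statement), so $\{\iota_1\xi,\iota_1\zeta\}$ lies in $\{0\}\oplus P_1$, hence equals $\iota_1\pr_1\{\iota_1\xi,\iota_1\zeta\} = \iota_1[\xi,\zeta]_1$; therefore $\pr_1\{\iota_1\eta,\{\iota_1\xi,\iota_1\zeta\}\} = \pr_1\{\iota_1\eta,\iota_1[\xi,\zeta]_1\} = [\eta,[\xi,\zeta]_1]_1$. Summing the three cyclic terms and using that $\pr_1$ is $R$-linear yields the Jacobi identity for $[\,,\,]_1$.

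The remaining point is $P_0$-linearity, i.e.\ $[f\eta,\xi]_1 = f[\eta,\xi]_1$ for $f\in P_0$. Here I would use the Leibniz rule for $\{\,,\,\}$ together with the admissibility condition \eqref{EcGPA}. Writing $\iota_1(f\eta) = (f\oplus 0)\cdot(0\oplus\eta)$ in the trivial extension algebra, the Leibniz rule gives
\[
\{\iota_1(f\eta),\iota_1\xi\} = \{(f\oplus 0)\cdot\iota_1\eta,\ \iota_1\xi\} = -\{\iota_1\xi,(f\oplus 0)\cdot\iota_1\eta\} = -\big(\{\iota_1\xi,f\oplus 0\}\cdot\iota_1\eta + (f\oplus 0)\cdot\{\iota_1\xi,\iota_1\eta\}\big).
\]
Now $\{\iota_1\xi,\iota_1\eta\}\in\{0\}\oplus P_1$, so $(f\oplus 0)\cdot\{\iota_1\xi,\iota_1\eta\} = \iota_1\big(f\,\pr_1\{\iota_1\xi,\iota_1\eta\}\big) = -\iota_1(f[\eta,\xi]_1)$ by the module action in $P_0\ltimes P_1$. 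For the first term, by admissibility $\pr_0\{\iota_1\xi,f\oplus 0\} = \{0,f\}_0 = 0$, so $\{\iota_1\xi,f\oplus 0\}\in\{0\}\oplus P_1$, and the product $\{\iota_1\xi,f\oplus 0\}\cdot\iota_1\eta$ lands in $\{0\}\oplus(P_1\cdot P_1)$; but in a \emph{trivial} extension algebra $P_1\cdot P_1 = 0$, so this term vanishes. Applying $\pr_1$ to what remains gives $[f\eta,\xi]_1 = f[\eta,\xi]_1$, and skew-symmetry then gives linearity in the second slot as well.

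I expect the main obstacle to be purely bookkeeping: keeping track of which summands land in $\{0\}\oplus P_1$ versus $P_0\oplus\{0\}$, and correctly invoking $P_1\cdot P_1 = 0$ (the defining feature of the \emph{trivial} extension, visible in \eqref{EcPproduct}) to kill the cross term in the $P_0$-linearity computation — without this, one would only get a bracket linear up to a lower-order correction. Everything else (skew-symmetry, Jacobi) follows formally from the Poisson-ideal property of $\{0\}\oplus P_1$ and the $R$-linearity of $\pr_1$, which are already established in the excerpt.
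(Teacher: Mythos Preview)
Your proof is correct and follows the same approach the paper has in mind: the corollary is simply a reformulation (via Remark~\ref{rmrk:AnclaCero}) of the preceding proposition asserting that $[\,,\,]_{1}$ is a $P_{0}$-linear Lie bracket, and you have supplied exactly the direct verification of that proposition which the paper leaves implicit. Your use of the Poisson-ideal property of $\{0\}\oplus P_{1}$ for Jacobi and of $P_{1}\cdot P_{1}=0$ together with admissibility for $P_{0}$-linearity is precisely what is needed.
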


In addition to a \ec{P_{0}}--linear Lie bracket on \ec{P_{1}}, every admissible Poisson algebra induces two other data which are presented in the next section. Moreover, in Section \ref{sec:PoissonModules} we show that admissible Poisson structures extend the well--known class of Poisson structures induced by Poisson modules. In Section \ref{sec:PoissonSub}, we present a class of admissible Poisson algebras coming from the infinitesimal geometry of Poisson submanifolds.

    \section{Poisson Triples}\label{sec:poissontriples}

Here, we show that admissible Poisson algebras are parameterized by some data, that we call Poisson triples, involving not necessarily flat contravariant derivatives. Moreover, they are in correspondence with a special class of Lie algebroids.

Let \ec{(\Omega^{1}_{P_{0}}, \cSch{\,,\,}_{P_{0}}, \varrho_{P_{0}})} be the Lie algebroid of the given Poisson algebra $\PP_{0}$ (Definition \ref{def:algebroidpoisson}). Suppose that we have a triple \ec{([\,,\,]_{1},\D,\K)} consisting of
    \begin{itemize}
    \item a \ec{P_{0}}--linear Lie bracket \ec{[\,,\,]_{1}} on \ec{P_{1}};
    \item a contravariant derivative $\D$ on \ec{P_{1}};
    \item a \ec{P_{0}}--linear skew--symmetric mapping \ec{\mathscr{K}: \Omega^{1}_{P_{0}} \times \Omega^{1}_{P_{0}} \to P_{1}};
    \end{itemize}
satisfying the following conditions:
    \begin{align}
            & \hspace{0.88cm} \pmb{[}\D_{\alpha},\mathrm{ad}_{\eta}\pmb{]} = \mathrm{ad}_{\D_{\alpha}\eta}, \label{EcPT1} \\[0.15cm]
            & \hspace{0.89cm} \pmb{[}\D_{\alpha},\D_{\beta}\pmb{]} - \D_{\cSch{\alpha, \beta}_{P_{0}}} = \operatorname{ad}_{\K(\alpha,\beta)}, \label{EcPT2} \\[0.15cm]
            & \underset{(\alpha,\beta,\gamma)}{\mathfrak{S}} \D_{\alpha}\,\mathscr{K}(\beta,\gamma) + \mathscr{K}(\alpha,\cSch{\beta,\gamma}_{P_{0}}) = 0, \label{EcPT3}
    \end{align}
for all \ec{\alpha,\beta,\gamma \in P_{0}} and \ec{\eta \in P_{1}}. Here, \ec{\mathrm{ad}_{\eta} := [\eta,\cdot]_{1}} and ${\mathfrak{S}}$ denotes the cyclic sum.

\begin{definition}\label{def:PoissonTrip}
A triple \ec{([\,,\,]_{1},\D,\mathscr{K})} satisfying \eqref{EcPT1}--\eqref{EcPT3} is said to be a \emph{Poisson triple} of the trivial extension algebra $P$.
\end{definition}

Conditions for Poisson triples of $P$ admit the following interpretation: \eqref{EcPT1} says that $\D$ derives the Lie bracket \ec{[\,,\,]_{1}}, see formula \eqref{eq:Dpreserves}. Condition \eqref{EcPT2} states that \ec{\K \in \chi^{2}_{P_{0}}(\Omega^{1}_{P_{0}}; P_{1})} controls the curvature \eqref{EcCurvD} of $\D$ in the sense that \ec{\mathrm{Curv}^{\D} = \mathrm{ad} \circ {\K}}. In particular, this means that condition \eqref{eq:CurvAd} holds for \ec{\eta = \K(\alpha,\beta)}. Moreover, we have that $\D$ is \emph{flat} if and only if $\K$ takes values in the center of the Lie algebra \ec{(P_{1},[\,,\,]_{1})}. Condition \eqref{EcPT3} is a Bianchi identity type for $\K$, saying that
   \begin{equation}\label{eq:dK0}
        \dd_{\D}\K = 0,
    \end{equation}
where \ec{\dd_{\D}} is the contravariant differential \eqref{eq:dD} on \ec{\chi^{2}_{P_{0}}(\Omega^{1}_{P_{0}}; P_{1})} associated to $\D$ and $\PP_{0}$.

\begin{examplex}\label{exm:TripleMatrix}
Let \ec{\PP_{0}=(P_{0}, \cdot, \{\,,\,\}_{0})} be a Poisson algebra and \ec{P_{1}} the \ec{P_{0}}--module of (\ec{k \times k})--matrices with entries in \ec{P_{0}}. Then, a Poisson triple of \ec{P_{0} \ltimes P_{1}} is given by
    \begin{equation}\label{eq:PTripleMatrix}
        \big( [\,,\,]_{1} = [|\,,\,|], \D = D, \K = 0 \big),
    \end{equation}
where \ec{[|\,,\,|]} is the commutator of matrices and $D$ is defined in \eqref{eq:Dmatrix}.
\end{examplex}

Now, consider the $P_{0}$--module given by the direct sum of the \ec{P_{0}}--modules \ec{\Omega_{P_{0}}^{1}} and \ec{P_{1}},
    \begin{equation}\label{A}
        A = \Omega^1_{P_0} \oplus P_1.
    \end{equation}
Let \ec{(\cSch{\,,\,}, \varrho)} be a Lie algebroid structure on $A$ over \ec{P_{0}}. By formula \eqref{ecMorphismAlgbd}, the canonical projection \ec{\mathrm{proj}_{0}: A \to \Omega^{1}_{P_0}} is a morphism of Lie algebroids if
    \begin{equation}\label{EcA}
        \mathrm{proj}_{0}\cSch{\alpha \oplus \eta, \beta \oplus \xi} = \cSch{\alpha,\beta}_{P_{0}},
    \end{equation}
for all \ec{\alpha \oplus \eta, \beta \oplus \xi \in A}, and
    \begin{equation}\label{eq:Rho0Proj0}
        \varrho = \varrho_{P_{0}} \circ \mathrm{proj}_{0}.
    \end{equation}

We arrive at the main result of this section.

\begin{theorem}\label{teo:correspondenceGPA-PT-LA}
There is a one--to--one correspondence between the sets of
    \begin{enumerate}[label=(\roman*)]
      \item \label{Correspondence1} (admissible) Poisson structures on $P$ satisfying \eqref{EcGPA};
      \item \label{Correspondence2} Poisson triples of $P$;
      \item \label{Correspondence3} Lie algebroid structures \ec{(\cSch{\,,\,}, \varrho_{P_{0}} \circ \mathrm{proj}_{0})} on $A$ over \ec{P_{0}} satisfying \eqref{EcA}.
    \end{enumerate}
\end{theorem}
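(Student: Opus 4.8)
The plan is to establish the correspondence by constructing explicit bijections between the three sets and then checking that they are mutually inverse. I would organize the argument around the pivot object, the bracket on $A = \Omega^1_{P_0} \oplus P_1$, and pass through it in both directions.

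\medskip

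\emph{From an admissible Poisson structure to a Poisson triple.} Given an admissible Poisson structure $\{\,,\,\}$ on $P$, the Lie bracket $[\,,\,]_1$ is already defined by \eqref{EcLieBraP1} and shown to be $P_0$-linear. For the contravariant derivative, I would set $\D_{\dd f}\eta := \pr_1\{\iota_0 f, \iota_1 \eta\}$ and extend $P_0$-linearly in the first slot; one checks using the Leibniz rule \eqref{EcPproduct} and admissibility \eqref{EcGPA} that this is well defined on $\Omega^1_{P_0}$ (i.e. respects the relations defining $\Omega^1_{P_0}$, using that $\pr_0\{\iota_0 f, \iota_0 g\} = \{f,g\}_0$ and the derivation property) and that it satisfies the two contravariant-derivative axioms, the second one producing the anchor $\varrho_{P_0}(\dd f) = \{f,\cdot\}_0$. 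Finally $\K$ is the ``pure $P_1$'' part: $\K(\dd f, \dd g) := \pr_1\{\iota_0 f, \iota_0 g\}$, extended $P_0$-bilinearly and skew-symmetrically; well-definedness again uses admissibility plus the Leibniz rule. Then the Jacobi identity for $\{\,,\,\}$, evaluated on the three types of triples of arguments $(\iota_0 f, \iota_0 g, \iota_0 h)$, $(\iota_0 f, \iota_0 g, \iota_1 \eta)$, and $(\iota_0 f, \iota_1 \eta, \iota_1 \xi)$ (the remaining case $(\iota_1\eta,\iota_1\xi,\iota_1\zeta)$ gives precisely the Jacobi identity for $[\,,\,]_1$, already known), unwinds exactly into conditions \eqref{EcPT1}, \eqref{EcPT2}, \eqref{EcPT3} respectively — this ``factorization of the Jacobi identity'' is the heart of the theorem.

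\medskip

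\emph{From a Poisson triple to an admissible Poisson structure (and back).} Conversely, given $([\,,\,]_1,\D,\K)$ I define
\begin{equation*}
    \{f \oplus \eta, g \oplus \xi\} := \{f,g\}_0 \oplus \big( \D_{\dd f}\xi - \D_{\dd g}\eta + \K(\dd f, \dd g) + [\eta,\xi]_1 \big),
\end{equation*}
extended so as to be $R$-bilinear — note each term on the right is $R$-bilinear in $(f\oplus\eta, g\oplus\xi)$ once one observes $\D_{\dd f}\xi$ is $R$-linear in $f$ via $\dd$. Skew-symmetry is immediate. The Leibniz rule with respect to \eqref{EcPproduct} follows from the defining properties of $\D$ and the $P_0$-linearity of $[\,,\,]_1$ and $\K$; admissibility \eqref{EcGPA} is built in by the first summand. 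The Jacobi identity reduces, by $R$-multilinearity and the fact that both sides are derivations, to checking it on generators $\dd$-type and $P_1$-type arguments, which are precisely the four cases above and hence amount to \eqref{EcPT1}--\eqref{EcPT3} together with Jacobi for $[\,,\,]_1$. One then verifies that the two constructions are inverse to each other, which is a direct computation since $\D_{\dd f}\eta$, $[\,,\,]_1$ and $\K$ are recovered from $\{\,,\,\}$ by the very formulas used above.

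\medskip

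\emph{The Lie algebroid picture.} For the equivalence of \ref{Correspondence2} and \ref{Correspondence3}, I would exhibit a direct translation between a Poisson triple and a bracket on $A$: set
\begin{equation*}
    \cSch{\alpha \oplus \eta, \beta \oplus \xi} := \cSch{\alpha,\beta}_{P_0} \oplus \big( \D_\alpha \xi - \D_\beta \eta + \K(\alpha,\beta) + [\eta,\xi]_1 \big), \qquad \varrho := \varrho_{P_0} \circ \mathrm{proj}_0.
\end{equation*}
The ``Leibniz rule'' \eqref{EcLeibnizAnchor} for $(\cSch{\,,\,},\varrho)$ is equivalent to the contravariant-derivative axioms for $\D$ plus $P_0$-linearity of $[\,,\,]_1$ and $\K$; the Jacobi identity for $\cSch{\,,\,}$ is equivalent, by the same case analysis on $\Omega^1_{P_0}$-type and $P_1$-type arguments (using that $\Omega^1_{P_0}$ and $P_1$ generate $A$ over $P_0$ and that the Jacobiator is $P_0$-multilinear once \eqref{EcLeibnizAnchor} holds, because the anchor vanishes on $P_1$), to \eqref{EcPT1}--\eqref{EcPT3} and Jacobi for $[\,,\,]_1$; and condition \eqref{EcA} is automatic from the first summand while \eqref{eq:Rho0Proj0} is the definition of $\varrho$. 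Conversely any Lie algebroid structure on $A$ with anchor $\varrho_{P_0}\circ\mathrm{proj}_0$ satisfying \eqref{EcA} has its bracket forced into this shape: the $\Omega^1_{P_0}$-component is fixed by \eqref{EcA}, and writing the $P_1$-component of $\cSch{\alpha\oplus 0,\,0\oplus\xi}$, $\cSch{0\oplus\eta,\,0\oplus\xi}$ and $\cSch{\alpha\oplus 0,\,\beta\oplus 0}$ defines $\D$, $[\,,\,]_1$ and $\K$ respectively, with the $P_0$-linearity and derivation properties coming from \eqref{EcLeibnizAnchor} with vanishing anchor on $P_1$.

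\medskip

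The main obstacle, and the step deserving the most care, is the well-definedness of $\D$ and $\K$ as maps out of $\Omega^1_{P_0}$ rather than out of $P_0\times P_0$: one must show the naive formulas $\pr_1\{\iota_0 f,\iota_1\eta\}$ and $\pr_1\{\iota_0 f,\iota_0 g\}$ descend through the relations generating $\Omega^1_{P_0}$ (equivalently, are $P_0$-linear in the appropriate arguments), which is exactly where admissibility \eqref{EcGPA} combined with the Leibniz rule is indispensable. Everything else is a disciplined but routine bookkeeping of the Jacobi identity split into its four cases, which I would present compactly rather than term by term.
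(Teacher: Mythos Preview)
Your proposal is correct and follows essentially the same route as the paper, which proves the theorem via Lemmas \ref{LemaTripleToAlg}, \ref{lema:GAP-PT}, \ref{lemma:PT-LA}, and \ref{lema:TripleLA} using exactly the formulas you wrote for $\{\,,\,\}$ and $\cSch{\,,\,}$ and the same case-by-case factorization of the Jacobi identity. One small slip: your ``respectively'' matches the cases to the wrong conditions --- the Jacobi identity on $(\iota_0 f,\iota_0 g,\iota_0 h)$ yields the Bianchi-type identity \eqref{EcPT3}, on $(\iota_0 f,\iota_0 g,\iota_1\eta)$ it yields the curvature condition \eqref{EcPT2}, and on $(\iota_0 f,\iota_1\eta,\iota_1\xi)$ it yields \eqref{EcPT1}.
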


As a consequence of this theorem, the existence of a contravariant derivative on \ec{P_{1}} is a necessary condition for the trivial extension algebra $P$ to admit an admissible Poisson structure (Remark \ref{remark:admissible}).

\begin{examplex}
Let $\PP$ be the admissible Poisson algebra defined by the bracket \eqref{eq:APmatrix}. The corresponding Poisson triple of $P$ is given in \eqref{eq:PTripleMatrix}. The corresponding Lie algebroid structure \ec{(\cSch{\,,\,}, \varrho_{P_{0}} \circ \mathrm{proj}_{0})} on \ec{\Omega^{1}_{P_{0}} \oplus P_{1}} over \ec{P_{0}} is given by the Lie bracket \ec{\cSch{\dd f \oplus M, \dd g \oplus N} = \cSch{\dd f,\dd g}_{P_{0}} \oplus (D_{\dd{f}}N- D_{\dd{g}}M + [|M,N|])}, for all \ec{f,g \in P_{0}} and \ec{M,N \in P_{1}}.
\end{examplex}

\begin{remark}
Admissible Poisson algebras induce contravariant derivatives and Lie brackets satisfying conditions \eqref{eq:Dpreserves} and \eqref{eq:CurvAd}.
\end{remark}

In the remainder of this section we present a proof of Theorem \ref{teo:correspondenceGPA-PT-LA}.

\paragraph{APA's and Poisson Triples.} First, we show that there is a one--to--one correspondence between admissible Poisson algebras and Poisson triples.

\begin{lemma}\label{LemaTripleToAlg}
Each Poisson triple \ec{([\,,\,]_{1},\D,\mathscr{K})} of the commutative algebra $P$ induces an admissible Poisson structure on $P$ defined by
    \begin{equation}\label{EcBracketPT}
        \{f \oplus \eta, g \oplus \xi\} := \{f,g\}_{0} \oplus \big( \D_{\dd{f}}\xi - \D_{\dd{g}}\eta + [\eta,\xi]_{1} + \K(\dd{f},\dd{g}) \big),
    \end{equation}
for all \ec{f \oplus \eta, g \oplus \xi \in P}.
\end{lemma}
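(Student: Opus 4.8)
The plan is to verify directly that the bracket $\{\,,\,\}$ defined by \eqref{EcBracketPT} is an admissible Poisson structure, i.e.\ that it is $R$--bilinear, skew--symmetric, satisfies the Leibniz rule with respect to the product \eqref{EcPproduct}, satisfies the Jacobi identity, and satisfies the admissibility condition \eqref{EcGPA}. The first and last of these are immediate from the formula: $R$--bilinearity follows since $\D$, $[\,,\,]_1$ and $\K$ are all $R$--bilinear; skew--symmetry follows since $\{\,,\,\}_0$ is skew, $[\,,\,]_1$ is skew, $\K$ is skew--symmetric, and the two terms $\D_{\dd f}\xi - \D_{\dd g}\eta$ exchange sign under swapping $(f\oplus\eta)\leftrightarrow(g\oplus\xi)$; and admissibility \eqref{EcGPA} holds because $\pr_0$ of the right--hand side of \eqref{EcBracketPT} is exactly $\{f,g\}_0$.

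Next I would check the Leibniz rule. Using \eqref{EcPproduct}, compute $\{f\oplus\eta,(g\oplus\xi)\cdot(h\oplus\zeta)\} = \{f\oplus\eta, gh\oplus(g\zeta+h\xi)\}$ and expand via \eqref{EcBracketPT}. The $P_0$--component gives $\{f,gh\}_0 = \{f,g\}_0 h + g\{f,h\}_0$ by the Leibniz rule in $\PP_0$. For the $P_1$--component, the key inputs are: $\D_{\dd f}(g\zeta+h\xi) = g\D_{\dd f}\zeta + \varrho_{P_0}(\dd f)(g)\zeta + h\D_{\dd f}\xi + \varrho_{P_0}(\dd f)(h)\xi$ (second defining property of a contravariant derivative, together with $\varrho_{P_0}(\dd f)(g) = \{f,g\}_0$); the $P_0$--linearity of $[\,,\,]_1$, which gives $[\eta, g\zeta + h\xi]_1 = g[\eta,\zeta]_1 + h[\eta,\xi]_1$; and the $P_0$--linearity of $\D$ in the first slot and of $\K$, which handle the terms $\D_{\dd(gh)}\eta = g\D_{\dd h}\eta + h\D_{\dd g}\eta$ (using $\dd(gh) = g\,\dd h + h\,\dd g$ and $\D_{f\alpha}\eta = f\D_\alpha\eta$) and $\K(\dd f,\dd(gh)) = g\K(\dd f,\dd h) + h\K(\dd f,\dd g)$. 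Collecting terms and matching against $\{f\oplus\eta,g\oplus\xi\}\cdot(h\oplus\zeta) + (g\oplus\xi)\cdot\{f\oplus\eta,h\oplus\zeta\}$ closes this step; it is a routine but slightly lengthy bookkeeping computation.

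The main obstacle is the Jacobi identity, and this is where conditions \eqref{EcPT1}--\eqref{EcPT3} enter. I would evaluate the Jacobiator $\mathfrak{S}_{(f\oplus\eta,\,g\oplus\xi,\,h\oplus\zeta)}\{f\oplus\eta,\{g\oplus\xi,h\oplus\zeta\}\}$ by expanding twice with \eqref{EcBracketPT}. Its $P_0$--component is the cyclic sum $\mathfrak{S}\,\{f,\{g,h\}_0\}_0$, which vanishes by the Jacobi identity in $\PP_0$ (one also uses $\cSch{\dd f,\dd g}_{P_0} = \dd\{f,g\}_0$ to recognize the relevant brackets). The $P_1$--component naturally splits, according to the number of $P_1$--entries involved, into four groups of terms: (a) those trilinear in $\eta,\xi,\zeta$, which cancel by the Jacobi identity for the $P_0$--linear Lie bracket $[\,,\,]_1$; (b) those involving $\D$ twice and one of $\eta,\xi,\zeta$ once — here the cyclic sum produces precisely $\mathfrak{S}\big(\pmb{[}\D_{\dd f},\D_{\dd g}\pmb{]} - \D_{\cSch{\dd f,\dd g}_{P_0}} - \operatorname{ad}_{\K(\dd f,\dd g)}\big)\zeta$, which vanishes by \eqref{EcPT2}; (c) those bilinear in two of $\eta,\xi,\zeta$ and involving $\D$ once — these combine, via skew--symmetry of $[\,,\,]_1$, into terms of the form $\pmb{[}\D_{\dd f},\operatorname{ad}_\eta\pmb{]} - \operatorname{ad}_{\D_{\dd f}\eta}$ applied to the remaining argument, vanishing by \eqref{EcPT1}; and (d) the terms with no $\eta,\xi,\zeta$, namely $\mathfrak{S}\big(\D_{\dd f}\K(\dd g,\dd h) + \K(\dd f,\cSch{\dd g,\dd h}_{P_0})\big)$ together with a contribution $\operatorname{ad}_{\K(\dd f,\dd g)}$--type term already absorbed in (b) — the genuinely new part vanishes by \eqref{EcPT3} (equivalently $\dd_\D\K=0$, cf.\ \eqref{eq:dK0}). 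The care needed is in correctly distributing the doubly--expanded brackets among these four groups and tracking the signs in the cyclic sum; once the grouping is done, each group closes by exactly one of the three Poisson--triple axioms. This establishes that \eqref{EcBracketPT} is a Poisson structure, and it is admissible by the remark above.
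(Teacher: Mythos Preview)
Your proposal is correct and follows exactly the approach the paper indicates: the paper's proof consists only of the remark that ``the proof of this lemma is a direct verification. In particular, the Leibniz rule follows from the definition of the data $([\,,\,]_{1},\D,\mathscr{K})$. Jacobi identity is derived from the conditions \eqref{EcPT1}--\eqref{EcPT3}.'' Your plan expands precisely this outline, correctly identifying which structural property of $\D$, $[\,,\,]_1$, $\K$ is needed at each step of the Leibniz rule and grouping the $P_1$--component of the Jacobiator by degree in the $P_1$--entries so that each group is killed by exactly one of the Jacobi identity for $[\,,\,]_1$, \eqref{EcPT1}, \eqref{EcPT2}, or \eqref{EcPT3}.
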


The proof of this lemma is a direct verification. In particular, the Leibniz rule follows from the definition of the data \ec{([\,,\,]_{1},\D,\mathscr{K})}. Jacobi identity is derived from the conditions \eqref{EcPT1}--\eqref{EcPT3}.

\begin{lemma}\label{lema:GAP-PT}
Each admissible Poisson algebra $\PP$
induces a Poisson triple \ec{([\,,\,]_{1},\D^{\PP},\K^{\PP})} of $P$, where \ec{[\,,\,]_{1}} is the Lie bracket \eqref{EcLieBraP1} and \ec{\D^{\PP}} and \ec{\K^{\PP}} are defined by
    \begin{align*}
        \D^{\PP}_{\dd{f}}\eta &:= \pr_1\{\iota_{0}f, \iota_{1}\eta\}, \nonumber \\
        \K^{\PP}(\dd{f},\dd{g}) &:= \pr_1\{\iota_{0}f, \iota_{0}g\},
	\end{align*}
for all \ec{f,g \in P_{0}} and \ec{\eta \in P_{1}}. Moreover, the Poisson structure \ec{\{\,,\,\}} is expressed by this Poisson triple in the same way as \eqref{EcBracketPT}.
\end{lemma}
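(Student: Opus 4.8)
The plan is to extract the triple $([\,,\,]_{1},\D^{\PP},\K^{\PP})$ directly from the admissible Poisson bracket $\{\,,\,\}$ and then verify (a) that the three maps are well defined with the required algebraic type ($P_0$-linearity of $[\,,\,]_1$, the contravariant-derivative axioms for $\D^{\PP}$, $P_0$-bilinearity and skew-symmetry of $\K^{\PP}$), (b) that conditions \eqref{EcPT1}--\eqref{EcPT3} hold, and (c) that the bracket reconstitutes as in \eqref{EcBracketPT}. The Lie bracket $[\,,\,]_1$ and its $P_0$-linearity are already furnished by the Proposition preceding Corollary \ref{cor:P1algebroid}. For $\D^{\PP}$ I would first note that by admissibility \eqref{EcGPA}, $\pr_0\{\iota_0 f,\iota_1\eta\}=\{f,0\}_0=0$, so $\{\iota_0 f,\iota_1\eta\}\in\{0\}\oplus P_1$ and $\D^{\PP}_{\dd f}\eta$ is meaningful; similarly $\K^{\PP}(\dd f,\dd g)\in P_1$. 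One extends $\D^{\PP}$ to all of $\Omega^1_{P_0}$ and $\K^{\PP}$ to all of $\Omega^1_{P_0}\times\Omega^1_{P_0}$ by $P_0$-(bi)linearity, which is legitimate because $\Omega^1_{P_0}=\mathrm{span}_{P_0}\{\dd f\}$ once one checks the defining relations are respected.

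The heart of the routine verification is the Leibniz-type axiom for $\D^{\PP}$: using the Leibniz rule for $\{\,,\,\}$ with respect to the product \eqref{EcPproduct}, $\{\iota_0 f,\iota_1(h\eta)\}=\{\iota_0 f,(\iota_0 h)\cdot(\iota_1\eta)\}=\iota_0 h\cdot\{\iota_0 f,\iota_1\eta\}+\iota_1\eta\cdot\{\iota_0 f,\iota_0 h\}$; projecting with $\pr_1$ and using $\iota_1\eta\cdot\iota_0 k=\iota_1(k\eta)$ together with $\pr_0\{\iota_0 f,\iota_0 h\}=\{f,h\}_0$ yields $\D^{\PP}_{\dd f}(h\eta)=h\,\D^{\PP}_{\dd f}\eta+\{f,h\}_0\,\eta=h\,\D^{\PP}_{\dd f}\eta+\varrho_{P_0}(\dd f)(h)\,\eta$, which is exactly the contravariant-derivative condition; $P_0$-linearity in the $\Omega^1_{P_0}$-slot, $\D^{\PP}_{h\dd f}\eta=h\,\D^{\PP}_{\dd f}\eta$, comes out similarly once the extension is set up, using that $\dd(hf)=h\,\dd f+f\,\dd h$ at the level of $\varrho_{P_0}$. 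The decomposition $\{f\oplus\eta,g\oplus\xi\}=\{\iota_0 f,\iota_0 g\}+\{\iota_0 f,\iota_1\xi\}+\{\iota_1\eta,\iota_0 g\}+\{\iota_1\eta,\iota_1\xi\}$, projected by $\pr_0$ and $\pr_1$ and combined with skew-symmetry (so that $\pr_1\{\iota_1\eta,\iota_0 g\}=-\D^{\PP}_{\dd g}\eta$) and \eqref{EcLieBraP1}, gives formula \eqref{EcBracketPT} at once.

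For the Poisson-triple conditions \eqref{EcPT1}--\eqref{EcPT3}, I would invoke the Jacobi identity of $\{\,,\,\}$ applied to well-chosen triples of elements and project with $\pr_1$: the identity $\{\{\iota_0 f,\iota_1\eta\},\iota_1\xi\}+\mathrm{cyclic}=0$ gives \eqref{EcPT1} (the compatibility $\pmb{[}\D^{\PP}_{\dd f},\mathrm{ad}_\eta\pmb{]}=\mathrm{ad}_{\D^{\PP}_{\dd f}\eta}$), the Jacobiator on $(\iota_0 f,\iota_0 g,\iota_1\eta)$ gives \eqref{EcPT2} (so $\mathrm{Curv}^{\D^{\PP}}=\mathrm{ad}\circ\K^{\PP}$, after identifying $\cSch{\dd f,\dd g}_{P_0}=\dd\{f,g\}_0$), and the Jacobiator on $(\iota_0 f,\iota_0 g,\iota_0 h)$ gives \eqref{EcPT3}, the Bianchi identity $\dd_{\D^{\PP}}\K^{\PP}=0$. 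The main obstacle, and the only place any real care is needed, is bookkeeping: one must check that the definitions of $\D^{\PP}$ and $\K^{\PP}$ on generators $\dd f$ do extend consistently to $P_0$-(bi)linear maps on $\Omega^1_{P_0}$ — i.e. that $\sum h_i\,\dd f_i=0$ forces $\sum h_i\,\D^{\PP}_{\dd f_i}\eta=0$ and likewise for $\K^{\PP}$ — which follows because the right-hand sides are computed from the intrinsically defined bracket $\{\,,\,\}$ via the $P_0$-module structure on $P_1$ (this is where faithfulness of $P_1$ and the already-established $P_0$-linearity of the relevant expressions are used). Everything else is a direct if somewhat lengthy expansion, so I would state the lemma's proof as "a direct verification paralleling Lemma \ref{LemaTripleToAlg}", spelling out only the extension-consistency point and one representative Jacobi-identity computation.
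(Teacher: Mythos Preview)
Your proposal is correct and follows essentially the same approach as the paper: the paper's proof (given as the paragraph immediately after the lemma) simply states that the Leibniz rule for $\{\,,\,\}$ ensures $\D^{\PP}$ is a contravariant derivative and $\K^{\PP}\in\chi^{2}_{P_{0}}(\Omega^{1}_{P_{0}};P_{1})$, and that the Jacobi identity yields conditions \eqref{EcPT1}--\eqref{EcPT3}. Your plan is precisely this argument spelled out in more detail, including the useful observation that the extension of $\D^{\PP}$ and $\K^{\PP}$ from generators $\dd f$ to all of $\Omega^{1}_{P_{0}}$ requires a consistency check, a point the paper leaves implicit.
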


The Leibniz rule for \ec{\{\,,\,\}} ensures that \ec{\D^{\PP}} is a contravariant derivative on $P_{1}$ and \ec{\K^{\PP} \in \chi^{2}_{P_{0}}(\Omega^{1}_{P_{0}}; P_{1})}. The Jacobi identity implies that the triple \ec{([\,,\,]_{1},\D^{\PP},\K^{\PP})} satisfies conditions \eqref{EcPT1}--\eqref{EcPT3}. This completes the proof of the correspondence between the items \ref{Correspondence1} and \ref{Correspondence2} in Theorem \ref{teo:correspondenceGPA-PT-LA}.

\begin{corollary}\label{cor:P0KD0}
The subalgebra \ec{P_{0} \oplus \{0\}} of $P$ is a Poisson subalgebra of $\PP$ if and only if \ec{\K^{\PP} = 0}.
\end{corollary}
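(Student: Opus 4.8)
The plan is to reduce the statement to a one--line computation using the explicit form of the admissible Poisson bracket provided by Lemma \ref{lema:GAP-PT}. First I would recall that, by the remark following Definition \ref{def:GPA}, the summand \ec{P_{0} \oplus \{0\}} is always an associative and commutative subalgebra of $P$, since \ec{\pr_{0}} is an algebra morphism and \ec{(f \oplus 0)\cdot(g \oplus 0) = fg \oplus 0}. Hence \ec{P_{0} \oplus \{0\}} is a Poisson subalgebra of $\PP$ if and only if it is closed under the bracket, i.e., \ec{\{\iota_{0}f, \iota_{0}g\} \in P_{0} \oplus \{0\}} for all \ec{f,g \in P_{0}}, which is equivalent to \ec{\pr_{1}\{\iota_{0}f, \iota_{0}g\} = 0} for all \ec{f,g \in P_{0}}.

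Next I would invoke the definition of the Poisson triple \ec{([\,,\,]_{1},\D^{\PP},\K^{\PP})} induced by $\PP$ from Lemma \ref{lema:GAP-PT}: by construction \ec{\K^{\PP}(\dd{f},\dd{g}) = \pr_{1}\{\iota_{0}f, \iota_{0}g\}}. Equivalently, writing the bracket in the form \eqref{EcBracketPT} and setting \ec{\eta = \xi = 0}, one gets \ec{\{f \oplus 0, g \oplus 0\} = \{f,g\}_{0} \oplus \K^{\PP}(\dd{f},\dd{g})}, since the \ec{\D^{\PP}} and \ec{[\,,\,]_{1}} contributions vanish. Therefore the closedness condition \ec{\pr_{1}\{\iota_{0}f, \iota_{0}g\} = 0} becomes exactly \ec{\K^{\PP}(\dd{f},\dd{g}) = 0} for all \ec{f,g \in P_{0}}.

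Finally I would upgrade ``\ec{\K^{\PP}} vanishes on all pairs of exact forms'' to ``\ec{\K^{\PP} = 0}''. This uses that \ec{\K^{\PP}} is \ec{P_{0}}--bilinear and skew--symmetric, together with \ec{\Omega^{1}_{P_{0}} = \mathrm{span}_{P_{0}}\{\dd{f} \mid f \in P_{0}\}}: any \ec{\alpha,\beta \in \Omega^{1}_{P_{0}}} are finite \ec{P_{0}}--combinations of exact forms, so \ec{\K^{\PP}(\alpha,\beta)} is a \ec{P_{0}}--combination of the values \ec{\K^{\PP}(\dd{f},\dd{g})}, all of which are zero. The converse direction is immediate from the same identity \ec{\{f \oplus 0, g \oplus 0\} = \{f,g\}_{0} \oplus \K^{\PP}(\dd{f},\dd{g})}. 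There is no genuine obstacle here; the only point requiring a word of justification is this last reduction from exact forms to arbitrary elements of \ec{\Omega^{1}_{P_{0}}}, which is handled by \ec{P_{0}}--bilinearity.
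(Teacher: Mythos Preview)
Your proof is correct and follows exactly the approach the paper intends: the corollary is stated without an explicit proof in the paper, as it is immediate from the defining relation \ec{\K^{\PP}(\dd{f},\dd{g}) = \pr_{1}\{\iota_{0}f, \iota_{0}g\}} in Lemma~\ref{lema:GAP-PT} together with the fact that \ec{\Omega^{1}_{P_{0}}} is \ec{P_{0}}--spanned by exact forms. Your write--up simply makes explicit the one extension step (from exact forms to arbitrary elements via \ec{P_{0}}--bilinearity) that the paper leaves to the reader.
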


\begin{examplex}
For the admissible Poisson algebra $\PP$ defined by the bracket \eqref{eq:APmatrix}, the subalgebra \ec{P_{0} \oplus \{0\}} is a Poisson subalgebra of $\PP$ since the corresponding Poisson triple \eqref{eq:PTripleMatrix} of $P$ is such that \ec{\K^{\PP} = 0}.
\end{examplex}

As a consequence of Corollary \ref{cor:P0KD0} and condition \eqref{EcPT2}, admissible Poisson algebras for which \ec{P_{0} \oplus \{0\}} is a Poisson subalgebra induce flat contravariant derivatives on \ec{P_{1}}. Such a class of admissible Poisson algebras is given, for example, by Poisson modules (Section \ref{sec:PoissonModules}).

\paragraph{Poisson Triples and Lie Algebroids.} Now, we show that there is a one--to--one correspondence between Poisson triples and Lie algebroid structures on $A$ over \ec{P_{0}} with Lie bracket satisfying condition \eqref{EcA} and anchor map of the form \eqref{eq:Rho0Proj0}.

\begin{lemma}\label{lemma:PT-LA}
Each Poisson triple \ec{([\,,\,]_{1},\D,\mathscr{K})} of the commutative algebra $P$ induces a Lie algebroid structure on $A$ over \ec{P_{0}} with Lie bracket (satisfying condition \eqref{EcA}) given by
    \begin{equation}\label{eq:BracketLATriple}
        \cSch{\alpha \oplus \eta, \beta \oplus \xi} = \cSch{\alpha,\beta}_{P_{0}} \oplus \big( \D_{\alpha}\xi - \D_{\beta}\eta + [\eta,\xi]_{1} + \K(\alpha,\beta) \big),
    \end{equation}
for all \ec{\alpha \oplus \eta, \beta \oplus \xi \in A}, and anchor map \ec{\varrho = \varrho_0 \circ \mathrm{proj}_{0}}.
\end{lemma}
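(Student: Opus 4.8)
The plan is to verify directly that the bracket \eqref{eq:BracketLATriple} together with the anchor $\varrho = \varrho_{P_{0}} \circ \mathrm{proj}_{0}$ satisfies the three defining properties of a Lie algebroid structure on $A = \Omega^{1}_{P_{0}} \oplus P_{1}$ over $P_{0}$ (Definition \ref{def:LieAlgebroid}): $R$--bilinearity and skew--symmetry of $\cSch{\,,\,}$, the Jacobi identity, and the Leibniz rule \eqref{EcLeibnizAnchor} relating $\cSch{\,,\,}$ and $\varrho$. Skew--symmetry is immediate from the skew--symmetry of $\cSch{\,,\,}_{P_{0}}$, of $\K$, and of the antisymmetrized terms $\D_{\alpha}\xi - \D_{\beta}\eta$ and $[\eta,\xi]_{1}$; $R$--bilinearity is clear since each constituent ($\cSch{\,,\,}_{P_{0}}$, $\D$, $[\,,\,]_{1}$, $\K$) is $R$--bilinear. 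That condition \eqref{EcA} holds is visible by applying $\mathrm{proj}_{0}$ to \eqref{eq:BracketLATriple}. So the content is in the Jacobi identity and the Leibniz rule.

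First I would check the Leibniz rule \eqref{EcLeibnizAnchor}. Take $X = \alpha \oplus \eta$, $Y = \beta \oplus \xi$ and $a \in P_{0}$; then $aY = a\beta \oplus a\xi$, and I compute $\cSch{X, aY}$ using \eqref{eq:BracketLATriple}. The $\Omega^{1}_{P_{0}}$--component is $\cSch{\alpha, a\beta}_{P_{0}} = a\cSch{\alpha,\beta}_{P_{0}} + \varrho_{P_{0}}(\alpha)(a)\,\beta$, which is exactly the Lie algebroid Leibniz rule for the base Lie algebroid $(\Omega^{1}_{P_{0}}, \cSch{\,,\,}_{P_{0}}, \varrho_{P_{0}})$. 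The $P_{1}$--component is $\D_{\alpha}(a\xi) - \D_{a\beta}\eta + [\eta, a\xi]_{1} + \K(\alpha, a\beta)$. Now $\D_{\alpha}(a\xi) = a\D_{\alpha}\xi + \varrho_{P_{0}}(\alpha)(a)\,\xi$ by the defining property of a contravariant derivative, $\D_{a\beta}\eta = a\D_{\beta}\eta$ by $P_{0}$--linearity in the first slot, $[\eta, a\xi]_{1} = a[\eta,\xi]_{1}$ by $P_{0}$--linearity of $[\,,\,]_{1}$, and $\K(\alpha, a\beta) = a\K(\alpha,\beta)$ by $P_{0}$--linearity of $\K$. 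Collecting, the $P_{1}$--component equals $a\big(\D_{\alpha}\xi - \D_{\beta}\eta + [\eta,\xi]_{1} + \K(\alpha,\beta)\big) + \varrho_{P_{0}}(\alpha)(a)\,\xi$, and since $\varrho(X)(a) = \varrho_{P_{0}}(\alpha)(a)$, this is precisely $a\cSch{X,Y} + \varrho(X)(a)\,Y$ read off in the $P_{1}$--slot. Hence \eqref{EcLeibnizAnchor} holds. That $\varrho$ is a morphism of $P_{0}$--modules is clear since $\mathrm{proj}_{0}$ is and $\varrho_{P_{0}}$ is.

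The main obstacle is the Jacobi identity for $\cSch{\,,\,}$, and this is where conditions \eqref{EcPT1}--\eqref{EcPT3} enter; this step is also where the computation is genuinely a ``factorization of the Jacobi identity'' as advertised in the introduction. I would expand the Jacobiator $\mathfrak{S}_{(X,Y,Z)}\cSch{\cSch{X,Y},Z}$ for $X=\alpha\oplus\eta$, $Y=\beta\oplus\xi$, $Z=\gamma\oplus\zeta$, separating the $\Omega^{1}_{P_{0}}$--component from the $P_{1}$--component. The $\Omega^{1}_{P_{0}}$--component reduces to the Jacobiator of $\cSch{\,,\,}_{P_{0}}$, which vanishes since $(\Omega^{1}_{P_{0}}, \cSch{\,,\,}_{P_{0}}, \varrho_{P_{0}})$ is a Lie algebroid. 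The $P_{1}$--component organizes into several types of terms: (a) terms involving only $[\,,\,]_{1}$ and the $\eta,\xi,\zeta$, whose cyclic sum vanishes by the Jacobi identity for the $P_{0}$--linear Lie bracket $[\,,\,]_{1}$; (b) terms of the form $\D_{\alpha}[\xi,\zeta]_{1} - [\D_{\alpha}\xi,\zeta]_{1} - [\xi,\D_{\alpha}\zeta]_{1}$ and their cyclic images, which vanish by \eqref{EcPT1} (i.e. $\D$ derives $[\,,\,]_{1}$); (c) terms of the form $\big(\pmb{[}\D_{\alpha},\D_{\beta}\pmb{]} - \D_{\cSch{\alpha,\beta}_{P_{0}}}\big)\zeta - [\K(\alpha,\beta),\zeta]_{1}$ and cyclic images, which vanish by \eqref{EcPT2} (i.e. $\mathrm{Curv}^{\D} = \mathrm{ad}\circ\K$); and (d) the terms $\mathfrak{S}_{(\alpha,\beta,\gamma)}\big(\D_{\alpha}\K(\beta,\gamma) + \K(\alpha,\cSch{\beta,\gamma}_{P_{0}})\big)$, which vanish by \eqref{EcPT3} (the Bianchi identity, equivalently $\dd_{\D}\K=0$). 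The bookkeeping here — tracking signs and matching each group of terms to the right condition — is the delicate part; in practice it mirrors exactly the Jacobi-identity verification already invoked for Lemma \ref{LemaTripleToAlg}, since the APA bracket \eqref{EcBracketPT} is nothing but \eqref{eq:BracketLATriple} evaluated on exact one-forms $\alpha = \dd f$, $\beta = \dd g$, together with the identity $\cSch{\dd f, \dd g}_{P_{0}} = \dd\{f,g\}_{0}$. Since \eqref{eq:BracketLATriple} is $P_{0}$--linear where it needs to be and $\Omega^{1}_{P_{0}} = \mathrm{span}_{P_{0}}\{\dd f\}$, once the Jacobi identity is checked on exact generators it extends by the Leibniz rule \eqref{EcLeibnizAnchor} to all of $A$ — so in fact the Jacobi computation can be reduced to the one already done in Lemma \ref{LemaTripleToAlg}, with \eqref{EcPT1}--\eqref{EcPT3} read off directly rather than through their exact-form specializations.
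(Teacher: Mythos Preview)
Your proposal is correct and follows essentially the same approach as the paper's own proof, which merely sketches that the Leibniz rule follows from the definitions of $[\,,\,]_{1}$, $\D$, and $\K$, while the Jacobi identity follows from conditions \eqref{EcPT1}--\eqref{EcPT3}. You have simply supplied the details of that sketch, including the organization of the $P_{1}$--component of the Jacobiator into groups governed respectively by the Jacobi identity for $[\,,\,]_{1}$ and the three Poisson-triple conditions.
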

From the definition of the data \ec{([\,,\,]_{1},\D,\mathscr{K})} it follows that the bracket in \eqref{eq:BracketLATriple} is compatible with the Lie algebra morphism \ec{\varrho_0 \circ \mathrm{proj}_{0}} by means of the Leibniz rule \eqref{EcLeibnizAnchor}. Conditions \eqref{EcPT1}--\eqref{EcPT3} for Poisson triples of $P$ imply that the bracket in \eqref{eq:BracketLATriple} is a Lie bracket on $A$.

For the reciprocal, suppose that \ec{\A = (\cSch{\,,\,},\varrho_{P_{0}} \circ \mathrm{proj}_{0})} is a Lie algebroid structure on \ec{A} over \ec{P_{0}} satisfying condition \eqref{EcA}. Then, the Lie bracket \ec{\cSch{\,,\,}} of $\A$ can be represented as
    \begin{equation*}
        \cSch{A,A} = \cSch{\Omega^{1}_{P_{0}}, \Omega^{1}_{P_{0}}}_{P_{0}} \oplus \mathrm{proj}_{1}\cSch{A,A},
    \end{equation*}
where \ec{\mathrm{proj}_{1}:A \to P_{1}} is the canonical projection. Consequently, we have the following data: a \ec{P_0}--linear Lie bracket \ec{[\,,\,]_{1}^{\A}} on $P_{1}$,
    \begin{equation}\label{EcAbracket1}
        [\eta,\xi]_{1}^{\A} := \mathrm{proj}_{1}\cSch{\jmath_{1}\eta, \jmath_{1}\xi}, \quad \eta,\xi \in P_{1};
    \end{equation}
a contravariant derivative $\D^{\A}$ on $P_{1}$,
    \begin{equation}\label{EcDA}
        \hspace{1.75cm}\D^{\A}_{\alpha}\eta := \mathrm{proj}_{1}\,\cSch{\jmath_{0}\alpha, \jmath_{1}\eta}, \quad \alpha \in \Omega^{1}_{P_{0}},\, \eta \in P_{1};
    \end{equation}
and a \ec{P_{0}}--linear skew--symmetric mapping \ec{\mathscr{K}^{\A}: \Omega^{1}_{P_{0}} \times \Omega^{1}_{P_{0}} \to P_{1}},
    \begin{equation}\label{EcKA}
        \K^{\A}(\alpha,\beta) := \mathrm{proj}_1\,\cSch{\jmath_{0}\alpha, \jmath_{0}\beta}, \quad \alpha,\beta \in \Omega^{1}_{P_{0}}.
    \end{equation}
Here, \ec{\jmath_{0}: \Omega^{1}_{P_{0}} \hookrightarrow \Omega^{1}_{P_{0}} \oplus \{0\}} and \ec{\jmath_{1}: P_{1} \hookrightarrow \{0\} \oplus P_{1}} are the inclusion maps.

\begin{lemma}\label{lema:TripleLA}
The data \ec{([\,,\,]_{1}^{\A},\D^{\A},\K^{\A})} defined in \eqref{EcAbracket1}--\eqref{EcKA} is a Poisson triple of $P$.
\end{lemma}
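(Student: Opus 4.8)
The strategy is to verify directly that the data $([\,,\,]_1^{\A},\D^{\A},\K^{\A})$ extracted from $\A$ satisfies the three defining conditions \eqref{EcPT1}--\eqref{EcPT3} of a Poisson triple. All the required structural properties are already encoded in the Lie algebroid axioms for $\A$ (Jacobi identity for $\cSch{\,,\,}$, the Leibniz rule \eqref{EcLeibnizAnchor} with anchor $\varrho_{P_0}\circ\mathrm{proj}_0$) together with the compatibility condition \eqref{EcA}; the point is simply to repackage them. First I would record the elementary facts that $[\,,\,]_1^{\A}$ is $P_0$-linear (this is \eqref{EcLeibnizAnchor} restricted to $\jmath_1\eta,\jmath_1\xi$, using that $\varrho$ vanishes on $\{0\}\oplus P_1$ by \eqref{eq:Rho0Proj0} since $\mathrm{proj}_0\jmath_1=0$), that $\D^{\A}$ satisfies the two defining identities of a contravariant derivative (the $P_0$-homogeneity in $\alpha$ and the Leibniz rule in $\eta$ both come from \eqref{EcLeibnizAnchor} with anchor $\varrho_{P_0}(\alpha)$), and that $\K^{\A}$ is $P_0$-bilinear and skew-symmetric (again \eqref{EcLeibnizAnchor} plus skew-symmetry of $\cSch{\,,\,}$). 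These are the same computations that underlie Lemma \ref{lema:GAP-PT}, just transported from the Poisson algebra to the Lie algebroid side.

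The heart of the proof is the Jacobi identity for $\cSch{\,,\,}$ on $A$, which I would write out on three elements of the form $\jmath_i(\cdot)$ with the indices chosen to isolate each of \eqref{EcPT1}--\eqref{EcPT3} in turn. Applying $\mathrm{proj}_0$ to the Jacobi identity just reproduces the Jacobi identity for $\cSch{\,,\,}_{P_0}$ (via \eqref{EcA}), so all information is in the $P_1$-component $\mathrm{proj}_1$. Taking the triple $(\jmath_0\alpha,\jmath_1\eta,\jmath_1\xi)$ and projecting to $P_1$ yields exactly $\pmb{[}\D^{\A}_\alpha,\mathrm{ad}_\eta\pmb{]}=\mathrm{ad}_{\D^{\A}_\alpha\eta}$, i.e.\ \eqref{EcPT1} (here $\mathrm{ad}_\eta=[\eta,\cdot]_1^{\A}$). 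Taking $(\jmath_0\alpha,\jmath_0\beta,\jmath_1\eta)$ and using \eqref{EcA} to identify $\mathrm{proj}_0\cSch{\jmath_0\alpha,\jmath_0\beta}=\cSch{\alpha,\beta}_{P_0}$ gives $\pmb{[}\D^{\A}_\alpha,\D^{\A}_\beta\pmb{]}-\D^{\A}_{\cSch{\alpha,\beta}_{P_0}}=\mathrm{ad}_{\K^{\A}(\alpha,\beta)}$, which is \eqref{EcPT2}. Finally $(\jmath_0\alpha,\jmath_0\beta,\jmath_0\gamma)$ gives the cyclic identity \eqref{EcPT3}, again after rewriting the nested brackets with \eqref{EcA} and \eqref{EcDA}, \eqref{EcKA}.

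The main obstacle, such as it is, is bookkeeping rather than conceptual: in expanding the Jacobi identity one must carefully distribute $\mathrm{proj}_0$ and $\mathrm{proj}_1$ across the iterated bracket $\cSch{\,\cdot\,,\cSch{\,\cdot\,,\cdot\,}}$, using that $\cSch{\jmath_0\alpha,\jmath_1\eta}=\jmath_0 0\oplus \D^{\A}_\alpha\eta=\jmath_1(\D^{\A}_\alpha\eta)$ and similarly for the other pairings, so that the ``mixed'' terms land in the right summand. Once the decomposition $\cSch{A,A}=\cSch{\Omega^1_{P_0},\Omega^1_{P_0}}_{P_0}\oplus\mathrm{proj}_1\cSch{A,A}$ is invoked, each iterated bracket cleanly splits and the three identities drop out. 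I would also note in passing that these computations are the exact inverse of those in Lemma \ref{lemma:PT-LA}: substituting $\D^{\A},[\,,\,]_1^{\A},\K^{\A}$ into \eqref{eq:BracketLATriple} returns the original bracket $\cSch{\,,\,}$, which both establishes that \eqref{EcAbracket1}--\eqref{EcKA} invert the construction of Lemma \ref{lemma:PT-LA} and simultaneously confirms that no data is lost. This completes the correspondence between items \ref{Correspondence2} and \ref{Correspondence3} of Theorem \ref{teo:correspondenceGPA-PT-LA}.
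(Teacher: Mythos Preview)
Your proposal is correct and matches what the paper does---or rather, what the paper omits: the paper states Lemma~\ref{lema:TripleLA} without proof, treating it as the evident converse to Lemma~\ref{lemma:PT-LA}, and your plan of evaluating the Jacobi identity for $\cSch{\,,\,}$ on the triples $(\jmath_0\alpha,\jmath_1\eta,\jmath_1\xi)$, $(\jmath_0\alpha,\jmath_0\beta,\jmath_1\eta)$, $(\jmath_0\alpha,\jmath_0\beta,\jmath_0\gamma)$ to recover \eqref{EcPT1}--\eqref{EcPT3} is exactly the intended verification.
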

This completes the proof of the Theorem \ref{teo:correspondenceGPA-PT-LA}.

    \section{Cochain Complexes}
        \label{sec:ComplexAPA}

In this section, we show that admissible Poisson algebras have associated two natural cochain complexes and describe some of their properties. These complexes play an important role in the computation of the first Poisson cohomology (Section \ref{sec:FirstCohomology}).

For an admissible Poisson algebra $\PP$, we denote the center of the associated Lie algebra \ec{(P_{1},[\,,\,]_{1})} by
    \begin{equation*}
        Z_{\PP}(P_{1}) := \{ \xi \in P_{1} \mid [\xi, \cdot]_{1} = 0 \}.
    \end{equation*}
Note that the center is also a $P_{0}$--module.

\begin{theorem}\label{teo:CochainComplexes}
Every admissible Poisson algebra \ec{\PP=(P=P_{0} \ltimes P_{1}, \{\,,\,\})} induces two cochain complexes:
    \begin{equation}\label{eq:Complex}
        \big( \overline{\Gamma}^{\ast}_{\PP} := \oplus_{k}\ \overline{\Gamma}_{P_{0}}^{k} \equiv \overline{\Gamma}_{P_{0}}^{k}\big( \Omega_{P_{0}}^{1}; {Z}_{\PP}(P_{1}) \big),\, \overline{\partial}_{\D} \big)
    \end{equation}
and
\begin{equation}\label{eq:ComplexBar}
        \big( \Gamma^{\ast}_{\PP} := \oplus_{k}\ \Gamma_{R}^{k} \equiv \Gamma_{R}^{k}\big( P_{0}; {Z}_{\PP}(P_{1}) \big),\, \partial_{\D} \big),
    \end{equation}
where \ec{([\,,\,]_{1},\D,\K)} is the Poisson triple of $P$ corresponding to $\PP$, and the coboundary operators $\overline{\partial}_{\D}$ and $\partial_{\D}$ are defined in \eqref{eq:Partial} and \eqref{eq:PartialBar}, respectively, and associated to the contravariant derivative $\D$ and the Poisson algebra \ec{\PP_{0}=(P_{0},\cdot,\{\,,\,\}_{0})}.
\end{theorem}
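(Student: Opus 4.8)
The idea is to recognize that the statement is essentially a specialization of Proposition~\ref{prop:TwoCochainCmplx} to the data extracted from $\PP$. Concretely, by Lemma~\ref{lema:GAP-PT} (part of Theorem~\ref{teo:correspondenceGPA-PT-LA}), the admissible Poisson algebra $\PP=(P_{0}\ltimes P_{1},\{\,,\,\})$ determines a Poisson triple $([\,,\,]_{1},\D,\K)$ of $P$, i.e.\ a $P_{0}$--linear Lie bracket on $P_{1}$, a contravariant derivative $\D$ on $P_{1}$, and a $P_{0}$--linear skew--symmetric map $\K\in\chi^{2}_{P_{0}}(\Omega^{1}_{P_{0}};P_{1})$, subject to \eqref{EcPT1}--\eqref{EcPT3}. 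The first step is to observe that conditions \eqref{EcPT1} and \eqref{EcPT2} are precisely the hypotheses \eqref{eq:Dpreserves} and \eqref{eq:CurvAd} needed to form the complexes in the Preliminaries: \eqref{EcPT1} states $\pmb{[}\D_{\alpha},\mathrm{ad}_{\eta}\pmb{]}=\mathrm{ad}_{\D_{\alpha}\eta}$, which unwinds to $\D_{\alpha}[\eta,\xi]_{1}=[\D_{\alpha}\eta,\xi]_{1}+[\eta,\D_{\alpha}\xi]_{1}$, i.e.\ \eqref{eq:Dpreserves}; and \eqref{EcPT2} reads $\mathrm{Curv}^{\D}(\alpha,\beta)=\mathrm{ad}_{\K(\alpha,\beta)}$, which is \eqref{eq:CurvAd} with $\eta=\K(\alpha,\beta)$.

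The second step is to check that $\D$ preserves the center $Z_{\PP}(P_{1})=Z(P_{1})$ of $(P_{1},[\,,\,]_{1})$, so that the target module of the complexes is well defined. If $\xi\in Z_{\PP}(P_{1})$ and $\alpha\in\Omega^{1}_{P_{0}}$, then for every $\eta\in P_{1}$ the Leibniz-type identity \eqref{eq:Dpreserves} gives $[\D_{\alpha}\xi,\eta]_{1}=\D_{\alpha}[\xi,\eta]_{1}-[\xi,\D_{\alpha}\eta]_{1}=0-0=0$, hence $\D_{\alpha}\xi\in Z_{\PP}(P_{1})$. Since $Z_{\PP}(P_{1})$ is also a $P_{0}$--submodule of $P_{1}$, the contravariant differentials $\overline{\dd}_{\D}$ of \eqref{eq:dD} and $\dd_{\D}$ of \eqref{eq:dDbar} restrict to maps $\overline{\Gamma}_{P_{0}}^{k}(\Omega^{1}_{P_{0}};Z_{\PP}(P_{1}))\to\overline{\Gamma}_{P_{0}}^{k+1}(\Omega^{1}_{P_{0}};Z_{\PP}(P_{1}))$ and $\Gamma_{R}^{k}(P_{0};Z_{\PP}(P_{1}))\to\Gamma_{R}^{k+1}(P_{0};Z_{\PP}(P_{1}))$, namely the operators $\overline{\partial}_{\D}$ and $\partial_{\D}$ of \eqref{eq:Partial} and \eqref{eq:PartialBar}.

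Finally, I would invoke the square-zero property. On $Z_{\PP}(P_{1})$--valued cochains, formula \eqref{eq:dD2} gives $(\overline{\dd}_{\D}^{2}\overline{Q})(\alpha_{0},\dots,\alpha_{k+1})=\sum_{i<j}(-1)^{i+j+1}\mathrm{Curv}^{\D}(\alpha_{i},\alpha_{j})\big(\overline{Q}(\dots)\big)$, and by \eqref{EcPT2} each term equals $\mathrm{ad}_{\K(\alpha_{i},\alpha_{j})}$ applied to a value of $\overline{Q}$, which lies in the center and is therefore annihilated; hence $\overline{\partial}_{\D}^{2}=0$. The same argument with \eqref{eq:dD2bar} and $\mathrm{Curv}^{\D}(\dd f_{i},\dd f_{j})=\mathrm{ad}_{\K(\dd f_{i},\dd f_{j})}$ yields $\partial_{\D}^{2}=0$. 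Thus $(\overline{\Gamma}^{\ast}_{\PP},\overline{\partial}_{\D})$ and $(\Gamma^{\ast}_{\PP},\partial_{\D})$ are cochain complexes. Equivalently, one may simply cite Proposition~\ref{prop:TwoCochainCmplx} applied to $(\PP_{0},(P_{1},[\,,\,]_{1},\D))$ once the hypotheses \eqref{eq:Dpreserves}--\eqref{eq:CurvAd} have been verified. There is no real obstacle here; the only point requiring a little care is the bookkeeping that the curvature terms in \eqref{eq:dD2}/\eqref{eq:dD2bar} act by inner derivations and so vanish on center-valued cochains, which is exactly where \eqref{EcPT2} is used.
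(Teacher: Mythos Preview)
Your proposal is correct and follows essentially the same approach as the paper: extract the Poisson triple via Theorem~\ref{teo:correspondenceGPA-PT-LA}, note that conditions \eqref{EcPT1}--\eqref{EcPT2} are exactly \eqref{eq:Dpreserves}--\eqref{eq:CurvAd}, and then invoke Proposition~\ref{prop:TwoCochainCmplx}. The paper's proof is simply the compressed version of what you wrote, citing the correspondence and Proposition~\ref{prop:TwoCochainCmplx} directly without unpacking the center-invariance and square-zero checks that you spelled out.
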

\begin{proof}
By Theorem \ref{teo:correspondenceGPA-PT-LA}, the Poisson algebra $\PP$ is in correspondence with a Poisson triple  \ec{([\,,\,]_{1},\D,\K)} of $P$. Then, the claim follows from conditions \eqref{EcPT1}--\eqref{EcPT2} for Poisson triples of $P$ and Proposition \ref{prop:TwoCochainCmplx}.
\end{proof}

Note that we have two extreme cases: first, if \ec{Z_{\PP}(P_{1})=\{0\}}, then \ec{\overline{\Gamma}^{k}_{P_{0}} = \{0\} = \Gamma^{k}_{R}}, for all $k$. Second, if \ec{Z_{\PP}(P_{1})=P_{1}}, then we have the following:

\begin{corollary}\label{cor:Partialisd}
If the Lie algebra \ec{(P_{1},[\,,\,]_{1})} is abelian, \ec{[\,,\,]_{1}=0}, then the cochain complexes \eqref{eq:Complex} and \eqref{eq:ComplexBar} just coincides, respectively, with the cochain complexes
    \begin{equation}\label{eq:CochainAbelian}
        (\chi^{\ast} := \oplus_{k}\ \chi_{P_{0}}^{k} \equiv \chi_{P_{0}}^{k}(\Omega^{1}_{P_{0}}; P_{1}),\, \overline{\dd}_{\D}) \quad \text{and} \quad \big( \X{}^{\ast} := \oplus_{k}\ \X{R}^{k} \equiv \X{R}^{k}(P_{0}; P_{1}),\, \dd_{\D} \big),
    \end{equation}
induced by the coboundary operators \ec{\overline{\dd}_{\D}} in \eqref{eq:dD} and \ec{\dd_{\D}} in \eqref{eq:dDbar} associated to $\D$ and $\PP_{0}$. In particular, we have that $\D$ is flat and \ec{\K \in \chi^{2}_{P_{0}}} is a 2--cocycle of $\overline{\dd}_{\D}$.
\end{corollary}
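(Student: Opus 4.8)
The plan is to observe that, as a special case of Theorem \ref{teo:CochainComplexes}, passing to the abelian situation $[\,,\,]_{1}=0$ forces the center $Z_{\PP}(P_{1})$ to be the whole of $P_{1}$, so that the two complexes produced by that theorem degenerate into the full contravariant complexes attached to $\D$ and $\PP_{0}$; after that, everything is a matter of unwinding definitions.

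First I would note that $[\,,\,]_{1}=0$ gives $Z_{\PP}(P_{1})=\{\xi\in P_{1}\mid[\xi,\cdot]_{1}=0\}=P_{1}$. Comparing the defining descriptions of the coefficient modules, this means $\overline{\Gamma}_{P_{0}}^{k}\big(\Omega_{P_{0}}^{1};Z_{\PP}(P_{1})\big)=\overline{\Gamma}_{P_{0}}^{k}\big(\Omega_{P_{0}}^{1};P_{1}\big)=\chi^{k}_{P_{0}}(\Omega^{1}_{P_{0}};P_{1})$ and, likewise, $\Gamma_{R}^{k}\big(P_{0};Z_{\PP}(P_{1})\big)=\X{R}^{k}(P_{0};P_{1})$ for every $k$. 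Hence the graded modules underlying the complexes \eqref{eq:Complex} and \eqref{eq:ComplexBar} agree termwise with those in \eqref{eq:CochainAbelian}. The coboundary operators then match for free: by construction \eqref{eq:Partial} and \eqref{eq:PartialBar}, the operators $\overline{\partial}_{\D}$ and $\partial_{\D}$ are precisely the restrictions of $\overline{\dd}_{\D}$ (formula \eqref{eq:dD}) and $\dd_{\D}$ (formula \eqref{eq:dDbar}) to the cochains valued in $Z_{\PP}(P_{1})$; since here those are all cochains, one gets $\overline{\partial}_{\D}=\overline{\dd}_{\D}$ and $\partial_{\D}=\dd_{\D}$ verbatim, so the complexes coincide as claimed.

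For the ``in particular'' clause, I would argue as follows. Condition \eqref{EcPT2} for the Poisson triple of $P$ reads $\mathrm{Curv}^{\D}(\alpha,\beta)=\mathrm{ad}_{\K(\alpha,\beta)}=[\K(\alpha,\beta),\cdot]_{1}$, which vanishes identically because $[\,,\,]_{1}=0$; thus $\D$ is flat (alternatively, invoke the remark after Definition \ref{def:PoissonTrip}, since now every element of $P_{1}$ lies in the center). Flatness of $\D$, combined with \eqref{eq:dD2} and \eqref{eq:dD2bar}, gives $\overline{\dd}_{\D}^{2}=0$ and $\dd_{\D}^{2}=0$, so the pairs in \eqref{eq:CochainAbelian} really are cochain complexes; and condition \eqref{EcPT3}, i.e.\ $\overline{\dd}_{\D}\K=0$ (cf.\ \eqref{eq:dK0}), together with $\K\in\chi^{2}_{P_{0}}(\Omega^{1}_{P_{0}};P_{1})=\overline{\Gamma}^{2}_{P_{0}}$, says exactly that $\K$ is a $2$--cocycle of $\overline{\dd}_{\D}$.

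I do not expect any genuine obstacle here: the whole argument is bookkeeping on the definitions of the coefficient modules and of the restricted coboundary operators. The only point worth a second look is the purely notational identification of the Bianchi--type identity \eqref{EcPT3} with the cocycle equation $\overline{\dd}_{\D}\K=0$, but this is already recorded in \eqref{eq:dK0}, so it can simply be cited.
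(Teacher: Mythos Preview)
Your argument is correct and is exactly the unwinding of definitions the paper has in mind; the corollary is stated without proof as an immediate consequence of Theorem~\ref{teo:CochainComplexes} and the observation just before it that $Z_{\PP}(P_{1})=P_{1}$ in the abelian case. The only cosmetic point is that \eqref{eq:dK0} is written in the paper as $\dd_{\D}\K=0$ while referring to the operator \eqref{eq:dD} on $\chi^{2}_{P_{0}}$, so your reading of it as $\overline{\dd}_{\D}\K=0$ is the intended one.
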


Moreover, taking into account condition \eqref{EcPT2} and the property \eqref{eq:dK0} for $\K$, we get the following consequence of Theorem \ref{teo:CochainComplexes}.

\begin{corollary}\label{cor:Ktrivial}
Let $\PP$ be an admissible Poisson algebra and \ec{([\,,\,]_{1},\D,\K)} the corresponding Poisson triple of $P$. Then, $\K$ belongs to \ec{\overline{\Gamma}^{2}_{P_{0}}(\Omega^{1}_{P_{0}};Z_{\PP}(P_{1}))} and is a 2--cocycle of the cochain complex \eqref{eq:Complex} if and only if the contravariant derivative $\D$ is flat.
\end{corollary}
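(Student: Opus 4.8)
The plan is to trace through the three defining conditions of a Poisson triple, using them together with the structure results recalled just before the statement. First I would observe that condition \eqref{EcPT2} of the Poisson triple says precisely $\mathrm{Curv}^{\D} = \mathrm{ad}\circ\K$, i.e.\ $\mathrm{Curv}^{\D}(\alpha,\beta) = \mathrm{ad}_{\K(\alpha,\beta)} = [\K(\alpha,\beta),\cdot\,]_1$ for all $\alpha,\beta\in\Omega^1_{P_0}$. From this I extract the equivalence between ``$\D$ flat'' and ``$\K$ takes values in $Z_{\PP}(P_1)$'': if $\D$ is flat then $[\K(\alpha,\beta),\cdot\,]_1 = \mathrm{Curv}^{\D}(\alpha,\beta) = 0$ for all $\alpha,\beta$, so $\K(\alpha,\beta)\in Z_{\PP}(P_1)$ and hence $\K\in\overline{\Gamma}^2_{P_0}(\Omega^1_{P_0};Z_{\PP}(P_1))$; conversely, if $\K$ is valued in the center, then $\mathrm{Curv}^{\D}(\alpha,\beta) = \mathrm{ad}_{\K(\alpha,\beta)} = 0$, so $\D$ is flat. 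This part is essentially the remark made after Definition \ref{def:PoissonTrip}.

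Next I would handle the cocycle assertion. Recall that $\K$ always satisfies the Bianchi-type identity \eqref{eq:dK0}, i.e.\ $\dd_{\D}\K = 0$, which is just a restatement of condition \eqref{EcPT3}. Here $\dd_{\D}$ is the contravariant differential \eqref{eq:dD} on $\chi^2_{P_0}(\Omega^1_{P_0};P_1)$. When $\D$ is flat and $\K$ is $Z_{\PP}(P_1)$-valued, this identity lives in $\overline{\Gamma}^3_{P_0}(\Omega^1_{P_0};Z_{\PP}(P_1))$, and by the definition \eqref{eq:Partial} of $\overline{\partial}_{\D}$ as the restriction of $\overline{\dd}_{\D}$ (equivalently, by Corollary \ref{cor:Partialisd} applied to the values, since on $Z_{\PP}(P_1)$ the operator $\overline{\partial}_{\D}$ agrees with $\overline{\dd}_{\D}$) we get $\overline{\partial}_{\D}\K = 0$; that is, $\K$ is a $2$--cocycle of the complex \eqref{eq:Complex}. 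For the converse, if $\K$ is a $2$--cocycle of \eqref{eq:Complex}, then in particular $\K$ belongs to $\overline{\Gamma}^2_{P_0}(\Omega^1_{P_0};Z_{\PP}(P_1))$, i.e.\ it is center-valued, and then the first paragraph gives flatness of $\D$.

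The argument is short and the only genuine content is the identification in \eqref{EcPT2} of the curvature with $\mathrm{ad}\circ\K$; the remaining steps are bookkeeping about where the cochains take values. The main point to be careful about — the closest thing to an obstacle — is the compatibility of the two coboundary operators $\dd_{\D}$ (on $P_1$-valued cochains) and $\overline{\partial}_{\D}$ (on $Z_{\PP}(P_1)$-valued cochains) on the element $\K$: one must check that when $\K$ is center-valued the differential $\dd_{\D}\K$ computed by \eqref{eq:dD} coincides with $\overline{\partial}_{\D}\K$, so that the already-known identity $\dd_{\D}\K=0$ indeed yields $\overline{\partial}_{\D}\K=0$. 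This is immediate from the fact that $\overline{\partial}_{\D}$ is defined in \eqref{eq:Partial} precisely as the restriction of $\overline{\dd}_{\D}$, together with the observation (already noted in the paper, just before \eqref{eq:Partial}) that $\D$ leaves $Z_{\PP}(P_1)$ invariant.
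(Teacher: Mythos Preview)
Your proof is correct and follows essentially the same approach as the paper, which simply invokes condition \eqref{EcPT2} (giving $\mathrm{Curv}^{\D}=\mathrm{ad}\circ\K$, hence flatness of $\D$ is equivalent to $\K$ being center-valued) together with the Bianchi-type identity \eqref{eq:dK0} and the fact that $\overline{\partial}_{\D}$ is the restriction of $\overline{\dd}_{\D}$. One small remark: your parenthetical appeal to Corollary~\ref{cor:Partialisd} is not quite the right reference (that corollary concerns the abelian case $[\,,\,]_1=0$), but your primary justification via the definition \eqref{eq:Partial} of $\overline{\partial}_{\D}$ as a restriction is the correct one and suffices.
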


Now, consider the collection of $P_{0}$--linear mappings \ec{\Delta \equiv \Delta_{k}: \overline{\Gamma}^{k}_{P_{0}} \to \Gamma^{k}_{R}} defined by
    \begin{equation}\label{eq:Delta}
        (\Delta \overline{Q})(f_{1},\ldots,f_{k}) := \overline{Q}(\dd{f_{1}},\ldots,\dd{f_{k}}), \quad \text{for}\ \overline{Q} \in \overline{\Gamma}^{k}_{P_{0}}.
    \end{equation}
It is easy to see that $\Delta$ is well--defined and injective. Moreover, the following diagram is commutative:
    \begin{equation*}
        \xymatrix{
            \overline{\Gamma}^{k}_{P_{0}} \ar[r]^{\Delta} \ar[d]_{\overline{\partial}_{\D}} & \Gamma^{k}_{R} \ar[d]^{\partial_{\D}} \\
            \overline{\Gamma}^{k+1}_{P_{0}} \ar[r]^{\Delta} & \Gamma^{k+1}_{R}}
    \end{equation*}
Hence, we have a cochain complex morphism.

\begin{proposition}\label{prop:DeltaAst}
Let $\PP$ be an admissible Poisson algebra. Then, there exist a natural linear mapping in cohomology
    \begin{equation*}
        \Delta^{\ast}: \mathrm{H}^{k}(\overline{\Gamma}^{\ast}_{\PP}) \to \mathrm{H}^{k}(\Gamma^{\ast}_{\PP}),
    \end{equation*}
between the cohomology groups of the cochain complexes \eqref{eq:Complex} and \eqref{eq:ComplexBar}, defined by \ec{\Delta^{\ast}[\bar{Q}] := [\Delta \bar{Q}]}. Moreover, we have an exact sequence
    \begin{equation*}
        0 \to \mathrm{H}^{1}\big( \overline{\Gamma}^{\ast}_{\PP}) \to
              \mathrm{H}^{1}\big( \Gamma^{\ast}_{\PP}
              ).
    \end{equation*}
\end{proposition}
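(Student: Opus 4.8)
The plan is to split the statement into its two parts: the existence of $\Delta^{\ast}$, which is formal, and the exactness of the sequence in degree $1$, which amounts to injectivity of $\Delta^{\ast}$ on $\mathrm{H}^{1}$. For the first part I would simply invoke the observation made just above the proposition, namely that $\Delta=(\Delta_{k})_{k}$ is a morphism of cochain complexes (the square commutes). A cochain map carries cocycles to cocycles and coboundaries to coboundaries, hence descends to a well-defined map $\Delta^{\ast}\colon\mathrm{H}^{k}(\overline{\Gamma}^{\ast}_{\PP})\to\mathrm{H}^{k}(\Gamma^{\ast}_{\PP})$, $[\bar Q]\mapsto[\Delta\bar Q]$; $R$-linearity (indeed $P_{0}$-linearity) is inherited from that of each $\Delta_{k}$. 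This is one line.

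For the exact sequence I would first record two facts in low degrees. First, in degree $0$ both complexes coincide with the single module $Z_{\PP}(P_{1})$, and $\Delta_{0}$ is the identity of $Z_{\PP}(P_{1})$ (the defining formula \eqref{eq:Delta} has no arguments in degree $0$); moreover, from \eqref{eq:dD} and \eqref{eq:dDbar} the degree-$0$ coboundaries are $(\overline{\partial}_{\D}\phi)(\alpha)=\D_{\alpha}\phi$ and $(\partial_{\D}\phi)(f)=\D_{\dd f}\phi$, so the commuting square in the lowest degree reads $\Delta\circ\overline{\partial}_{\D}=\partial_{\D}\circ\Delta_{0}$ and is transparent. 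Second, $\Delta_{1}$ is injective — this is the content of ``$\Delta$ is well-defined and injective'' already noted: $(\Delta_{1}\bar Q)(f)=\bar Q(\dd f)$, and since $\Omega^{1}_{P_{0}}=\mathrm{span}_{P_{0}}\{\dd f\mid f\in P_{0}\}$ while $\bar Q$ is $P_{0}$-linear, $\bar Q$ is determined by its values on exact forms.

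Granting these, I would run the standard diagram chase. Let $\bar Q\in\overline{\Gamma}^{1}_{P_{0}}$ be an $\overline{\partial}_{\D}$-cocycle with $\Delta^{\ast}[\bar Q]=0$, i.e.\ $\Delta\bar Q=\partial_{\D}\phi$ for some $\phi\in\Gamma^{0}_{R}=Z_{\PP}(P_{1})=\overline{\Gamma}^{0}_{P_{0}}$. By commutativity of the square, $\Delta(\overline{\partial}_{\D}\phi)=\partial_{\D}(\Delta_{0}\phi)=\partial_{\D}\phi=\Delta\bar Q$, and injectivity of $\Delta_{1}$ forces $\bar Q=\overline{\partial}_{\D}\phi$, so $[\bar Q]=0$ in $\mathrm{H}^{1}(\overline{\Gamma}^{\ast}_{\PP})$. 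Hence $\Delta^{\ast}$ is injective on $\mathrm{H}^{1}$, which is exactly exactness of $0\to\mathrm{H}^{1}(\overline{\Gamma}^{\ast}_{\PP})\to\mathrm{H}^{1}(\Gamma^{\ast}_{\PP})$ (the second arrow being $\Delta^{\ast}$). Structurally this is the elementary homological fact that a cochain map which is bijective in degree $0$ and injective in degree $1$ induces an injection on $\mathrm{H}^{1}$.

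I do not expect a genuine obstacle here; the one point to be careful about is that injectivity of the cochain map $\Delta$ by itself does \emph{not} imply injectivity of $\Delta^{\ast}$ — the extra input is the surjectivity (here bijectivity) of $\Delta_{0}$, which is also why the conclusion is stated only for $\mathrm{H}^{1}$ and not for all $\mathrm{H}^{k}$, since $\Delta_{k}$ need not be surjective for $k\geq1$.
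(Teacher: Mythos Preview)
Your proposal is correct and follows exactly the approach the paper sets up: the paper establishes just before the proposition that $\Delta$ is injective and that the square with $\overline{\partial}_{\D}$ and $\partial_{\D}$ commutes, then states the proposition without further proof, leaving the diagram chase you carry out implicit. Your observation that bijectivity of $\Delta_{0}$ (not merely injectivity of $\Delta$) is the crucial extra input for injectivity on $\mathrm{H}^{1}$ is precisely the point, and explains why the exact sequence is asserted only in degree $1$.
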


\begin{remark}
The family of morphisms in \eqref{eq:Delta} is well known in a geometric framework for the case of multivector fields and multi--derivations on a smooth manifold \cite{Duf05,Lau12}.
\end{remark}

    \section{Gauge Transformations and Deformations}\label{sec:equivalenceGPA}

Here, we introduce some transformations that preserve the class of admissible Poisson algebras, and then we formulate some Poisson equivalence criteria and present a way to construct new one-parametric admissible Poisson algebras from a given one.

Let \ec{\phi:P \to P} be an $R$--linear mapping. Since \ec{P=P_{0} \oplus P_{1}}, we have that $\phi$ admits a unique representation
    \begin{equation}\label{EcPhi}
        \phi(f \oplus \eta) = \big( \phi_{00}f + \phi_{01}\eta \big) \oplus \big( \phi_{10}f + \phi_{11}\eta \big), \quad f \oplus \eta \in P.
    \end{equation}
Here, \ec{\phi_{00}: P_{0} \rightarrow P_{0},\ \phi_{01}: P_{1} \rightarrow P_{0},\ \phi_{10}: P_{0} \rightarrow P_{1}} and \ec{\phi_{11}: P_{1}\rightarrow P_{1}} are the $R$--linear mappings defined by \ec{\phi_{ij} := \pr_i \circ \phi \circ \iota_j}, with \ec{i,j=0,1}. We will also write
    \begin{equation}\label{EcPhiMtx}
        \phi = \left(
                 \begin{array}{cc}
                    \phi_{00} & \phi_{01} \\
                    \phi_{10} & \phi_{11} \\
                 \end{array}
               \right).
    \end{equation}

\begin{lemma}\label{lemma:Endo}
An $R$--linear mapping \ec{\phi: P \to P} is an \emph{endomorphism} of the commutative algebra $P$, that is, \ec{\phi(\pi_{1}\,\pi_{2}) = \phi(\pi_{1})\phi(\pi_{2})} for all \ec{\pi_{1},\pi_{2} \in P}, if and only if
    \begin{itemize}
    \item the linear mapping \ec{\phi_{00}} is an endomorphism of the commutative algebra \ec{P_{0}}: \ec{\phi_{00}(fg) = (\phi_{00}f)(\phi_{00}g)};
    \item the linear mapping \ec{\phi_{01}} is \ec{\phi_{00}}--semilinear and its image is zero--squared: \ec{\phi_{01}(f\eta) = (\phi_{00}f)(\phi_{01}\eta)} and \ec{(\phi_{01}\eta)(\phi_{01}\xi) = 0}, respectively;
    \item the linear mapping \ec{\phi_{10}} is a \ec{\phi_{00}}--derivation of \ec{P_0}:
                \begin{equation}\label{eq:Endo3}
                    \phi_{10}(fg) = (\phi_{00}f)(\phi_{10}g) + (\phi_{00}g)(\phi_{10}f);
                \end{equation}
    \item the linear mapping \ec{\phi_{11}} is compatible with \ec{\phi_{00}, \phi_{01}} and \ec{\phi_{10}} in the following sense: $\phi_{11}(f\eta) = (\phi_{00}f)(\phi_{11}\eta) + (\phi_{01}\eta)(\phi_{10}f)$ and \ec{(\phi_{01}\eta)(\phi_{11}\xi) + (\phi_{01}\xi)(\phi_{11}\eta) = 0};
    \end{itemize}
for all \ec{f,g\in P_0} and \ec{\eta,\xi\in P_{1}}.
\end{lemma}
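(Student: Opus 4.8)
The plan is to expand both sides of the defining identity $\phi(\pi_{1}\pi_{2}) = \phi(\pi_{1})\phi(\pi_{2})$ using the block decomposition \eqref{EcPhi} and the multiplication rule \eqref{EcPproduct}, and then compare components in $P_{0}$ and $P_{1}$ separately. Write $\pi_{1} = f \oplus \eta$ and $\pi_{2} = g \oplus \xi$. On the one hand, by \eqref{EcPproduct}, $\pi_{1}\pi_{2} = fg \oplus (f\xi + g\eta)$, so applying \eqref{EcPhi} gives
\[
\phi(\pi_{1}\pi_{2}) = \big( \phi_{00}(fg) + \phi_{01}(f\xi + g\eta) \big) \oplus \big( \phi_{10}(fg) + \phi_{11}(f\xi + g\eta) \big).
\]
On the other hand, $\phi(\pi_{1}) = (\phi_{00}f + \phi_{01}\eta) \oplus (\phi_{10}f + \phi_{11}\eta)$ and similarly for $\phi(\pi_{2})$; multiplying these two elements of $P = P_{0} \ltimes P_{1}$ via \eqref{EcPproduct} and collecting terms produces a $P_{0}$-component $(\phi_{00}f + \phi_{01}\eta)(\phi_{00}g + \phi_{01}\xi)$ and a $P_{1}$-component $(\phi_{00}f + \phi_{01}\eta)(\phi_{10}g + \phi_{11}\xi) + (\phi_{00}g + \phi_{01}\xi)(\phi_{10}f + \phi_{11}\eta)$.

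The next step is to equate the $P_{0}$-components and the $P_{1}$-components and then isolate the contributions according to how many $P_{1}$-arguments ($\eta$, $\xi$) they involve, i.e. a bidegree bookkeeping. Setting $\eta = \xi = 0$ in the $P_{0}$-component gives exactly $\phi_{00}(fg) = (\phi_{00}f)(\phi_{00}g)$; the terms linear in $\eta$ (resp. $\xi$) give the $\phi_{00}$-semilinearity of $\phi_{01}$; the terms bilinear in $\eta,\xi$ give $(\phi_{01}\eta)(\phi_{01}\xi) = 0$. Likewise, setting $\eta = \xi = 0$ in the $P_{1}$-component gives the $\phi_{00}$-derivation property \eqref{eq:Endo3} of $\phi_{10}$; the mixed terms linear in one of $\eta,\xi$ give the compatibility identity $\phi_{11}(f\eta) = (\phi_{00}f)(\phi_{11}\eta) + (\phi_{01}\eta)(\phi_{10}f)$ (using also the already-established semilinearity of $\phi_{01}$ to cancel cross terms); and the terms bilinear in $\eta,\xi$ give $(\phi_{01}\eta)(\phi_{11}\xi) + (\phi_{01}\xi)(\phi_{11}\eta) = 0$. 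For the converse, one simply substitutes the four bullet conditions back into the expansions above and checks that the two sides agree term by term. Since all arguments $f,g,\eta,\xi$ are arbitrary, matching the homogeneous pieces separately is legitimate — this is the point one should be slightly careful about, and it is the only place where a genuine (if elementary) argument is needed: because $R$ is merely a ring, one cannot invoke scalar specialization, so instead one uses that the identity holds for all arguments and that, for fixed $f,g$, the map is additive in $\eta$ and $\xi$, allowing the bidegree-$(i,j)$ parts to be separated by the standard polarization/additivity argument.

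The main (and only real) obstacle is purely organizational: keeping the bookkeeping of the mixed terms straight, in particular noticing that in the $P_{1}$-component the cross term $(\phi_{01}\eta)(\phi_{11}\xi + \phi_{10}\cdots)$-type expressions must be split correctly, and that the semilinearity of $\phi_{01}$ already derived from the $P_{0}$-component is needed to simplify the $P_{1}$-component cleanly. No deep idea is involved; the proof is a direct verification, and I would present it by displaying the two expansions, equating components, and reading off the four bullet points, then remarking that the converse follows by reversing the computation.
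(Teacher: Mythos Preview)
Your proposal is correct and matches the paper's (implicit) approach: the paper states Lemma~\ref{lemma:Endo} without proof, treating it as a direct verification, and the analogous Lemma~\ref{lema:PD} for derivations is proved exactly as you describe---by specializing $\pi_{1},\pi_{2}$ to elements of $P_{0}\oplus\{0\}$ and $\{0\}\oplus P_{1}$ to extract each condition, then remarking that the converse is a straightforward computation. Your added care about separating homogeneous pieces via additivity (rather than scalar specialization, since $R$ is only a ring) is a nice touch that the paper leaves tacit.
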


\begin{remark}
If \ec{\phi_{00}} is the identity mapping, then condition \eqref{eq:Endo3} is equivalent to \ec{\phi_{10}} belongs to \ec{\mathrm{Der}(P_{0};P_{1})} defined in \eqref{eq:DerP0P1}. If, in addition \ec{\phi_{11}} is the identity mapping, then \ec{\phi_{01}} belongs to \ec{\varepsilon(P_1; P_{0})} defined in \eqref{epsilon}.
\end{remark}

\paragraph{Gauge Equivalence.} Let \ec{\PP=(P,\{\,,\,\})} be an admissible Poisson algebra. Suppose that \ec{\PP'=(P,\{\,,\,\}')} is a Poisson algebra isomorphic to $\PP$ via a Poisson isomorphism \ec{\phi:\PP' \to \PP}, in the sense of \eqref{EcPhiAlg}. Then,
    \begin{equation}\label{EcBracketPrima}
        \big\{\pi_1,\pi_2\big\}' = \phi^{-1}\big\{\phi(\pi_1),\phi(\pi_2)\big\}, \quad \pi_1,\pi_2 \in P.
    \end{equation}
However, this relation \emph{does not} imply that $\PP'$ is an \emph{admissible} Poisson algebra, in general.

\begin{lemma}\label{lema:BracketGauge}
If \ec{\phi_{01}=0} and  \ec{\phi_{00}} is invertible, then \ec{(P,\{\,,\,\}')} is an admissible Poisson algebra.
\end{lemma}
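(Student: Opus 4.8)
The plan is to read off admissibility of $\{\,,\,\}'$ from the lower--triangular block form of $\phi$. Writing $\phi$ as in \eqref{EcPhiMtx}, the hypothesis $\phi_{01}=0$ says that $\phi=\left(\begin{smallmatrix}\phi_{00} & 0\\ \phi_{10}&\phi_{11}\end{smallmatrix}\right)$ is lower triangular. Since $\phi$ is invertible and $\phi_{00}$ is invertible, $\phi_{11}$ is invertible too, and $\phi^{-1}$ is again lower triangular with $(\phi^{-1})_{00}=\phi_{00}^{-1}$ and $(\phi^{-1})_{01}=0$ (explicitly $(\phi^{-1})_{10}=-\phi_{11}^{-1}\phi_{10}\phi_{00}^{-1}$, $(\phi^{-1})_{11}=\phi_{11}^{-1}$). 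In terms of the canonical maps this reads $\pr_{0}\circ\phi=\phi_{00}\circ\pr_{0}$ and $\pr_{0}\circ\phi^{-1}=\phi_{00}^{-1}\circ\pr_{0}$. As a side observation, the same triangularity shows that $\{0\}\oplus P_{1}=\ker\pr_{0}$ — a Poisson ideal of the admissible algebra $\PP$ — is invariant under $\phi$ and $\phi^{-1}$, hence is a Poisson ideal of $\PP'$ as well, so that $\pr_{0}\{\,,\,\}'$ already factors through $\pr_{0}$.

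Next I would compute the $P_{0}$--component of $\{\,,\,\}'$ directly. For $\pi_{1},\pi_{2}\in P$, relation \eqref{EcBracketPrima} together with $\pr_{0}\circ\phi^{-1}=\phi_{00}^{-1}\circ\pr_{0}$, the admissibility \eqref{EcGPA} of $\PP$, and $\pr_{0}\circ\phi=\phi_{00}\circ\pr_{0}$ gives
\begin{equation*}
\pr_{0}\{\pi_{1},\pi_{2}\}'
=\phi_{00}^{-1}\bigl(\pr_{0}\{\phi(\pi_{1}),\phi(\pi_{2})\}\bigr)
=\phi_{00}^{-1}\{\pr_{0}\phi(\pi_{1}),\,\pr_{0}\phi(\pi_{2})\}_{0}
=\phi_{00}^{-1}\{\phi_{00}\pr_{0}\pi_{1},\,\phi_{00}\pr_{0}\pi_{2}\}_{0}.
\end{equation*}

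The key step is then to conclude that the right--hand side equals $\{\pr_{0}\pi_{1},\pr_{0}\pi_{2}\}_{0}$, which is exactly condition \eqref{EcGPA} for $\{\,,\,\}'$; equivalently, that $\phi_{00}$ is a Poisson automorphism of $\PP_{0}$, i.e.\ $\phi_{00}^{-1}\{\phi_{00}f,\phi_{00}g\}_{0}=\{f,g\}_{0}$. This is where the hypothesis on $\phi_{00}$ enters: by Lemma \ref{lemma:Endo}, $\phi_{00}$ is an automorphism of the commutative algebra $P_{0}$, and for the transformations under consideration it intertwines $\{\,,\,\}_{0}$ with itself (in the gauge situation one has $\phi_{00}=\mathrm{id}_{P_{0}}$, for which the identity is immediate). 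Granting this, the displayed identity becomes $\pr_{0}\{f\oplus\eta,g\oplus\xi\}'=\{f,g\}_{0}$ for all $f\oplus\eta,g\oplus\xi\in P$, and since $\{\,,\,\}'$ is automatically a Poisson bracket on $P$ compatible with \eqref{EcPproduct} (being obtained from $\{\,,\,\}$ by transport of structure along the algebra automorphism $\phi$), $(P,\{\,,\,\}')$ is an admissible Poisson algebra. I expect this last point — controlling the action of $\phi_{00}$ on $\{\,,\,\}_{0}$ — to be the only genuine obstacle; the remainder is just bookkeeping with block--triangular maps and does not use the Jacobi identity at all.
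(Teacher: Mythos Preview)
Your computation matches the paper's exactly: using $\phi_{01}=0$ and the invertibility of $\phi_{00}$ (hence lower--triangularity of $\phi^{-1}$ with $(\phi^{-1})_{00}=\phi_{00}^{-1}$) you reduce the admissibility condition for $\{\,,\,\}'$ to
\[
\pr_{0}\{f\oplus\eta,\,g\oplus\xi\}' \;=\; \phi_{00}^{-1}\{\phi_{00}f,\phi_{00}g\}_{0},
\]
just as the paper does in its first sentence. The only remaining issue is why the right--hand side equals $\{f,g\}_{0}$, i.e.\ why $\phi_{00}$ is a Poisson automorphism of $\PP_{0}$.

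This is where your argument has a genuine gap relative to the paper. You invoke Lemma~\ref{lemma:Endo}, but that lemma concerns only the commutative--algebra structure of $P$ and says nothing about $\{\,,\,\}_{0}$; you then simply assert (``for the transformations under consideration\ldots'', ``granting this'') the Poisson compatibility, falling back on the special case $\phi_{00}=\mathrm{id}_{P_{0}}$. The paper instead uses the standing hypothesis, stated in the paragraph immediately preceding the lemma, that $\phi:\PP'\to\PP$ is a \emph{Poisson} isomorphism (not merely an algebra isomorphism): together with $\phi_{01}=0$, this is precisely what the paper invokes to conclude that $\phi_{00}$ is a Poisson morphism of $\PP_{0}$. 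You never bring the Poisson--morphism property of $\phi$ to bear on $\phi_{00}$, and that is the ingredient your proposal is missing.
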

\begin{proof}
From the hypotheses, it follows that \ec{\pr_{0}\{f \oplus \eta, g \oplus \xi\}' = \phi^{-1}_{00}\{\phi_{00}f, \phi_{00}g\}_{0}}, for all \ec{f \oplus \eta, g \oplus \xi \in P}. Since $\phi$ is a Poisson morphism, condition \ec{\phi_{01}=0} implies that \ec{\phi_{00}} is a Poisson morphism of the Poisson algebra \ec{\PP_{0}=(P_{0},\cdot,\{\,,\,\}_{0})}, and the lemma follows.
\end{proof}

Taking into account Lemma \ref{lemma:Endo}, this last fact leads to the following:

\begin{definition}\label{def:gauget}
By a \emph{gauge transformation} on the trivial extension algebra \ec{P=P_{0} \ltimes P_{1}} we mean an $R$--linear mapping \ec{\phi:P\to P} such that
    \begin{enumerate}[label=(\roman*)]
      \item \label{Gauge2} the linear mapping \ec{\phi_{01}} is trivial, \ec{\phi_{01}=0};
      \item \label{Gauge1} the linear mapping \ec{\phi_{00}} is a Poisson algebra isomorphism of \ec{\PP_{0}=(P_{0},\cdot,\{\,,\,\}_{0})};
      \item \label{Gauge3} the linear mapping \ec{\phi_{10}} is a \ec{\phi_{00}}--derivation of \ec{P_0}, in the sense of \eqref{eq:Endo3};
      \item \label{Gauge4} the pair \ec{(\phi_{11},\phi_{00})} is a
            $P_{0}$--module isomorphism of $P_{1}$.
    \end{enumerate}
\end{definition}

Clearly, the identity mapping on $P$ is a gauge transformation. Moreover, by Lemma \ref{lemma:Endo} and direct computations, one can show that:
    \begin{enumerate}[label=(\alph*)]
        \item \label{GaugeIso} Every gauge transformation is an $R$--algebra isomorphism.
        \item \label{GaugeInverse} The inverse of a gauge transformation is also a gauge transformation. Indeed, if \ec{\phi:P \to P} is a gauge transformation, then its inverse is given by
                \begin{equation}\label{eq:phi-1}
                    \phi^{-1} = \left(
                             \begin{array}{cc}
                                \phi_{00}^{-1} & 0 \\
                                -\phi^{-1}_{11} \circ \phi_{10} \circ \phi^{-1}_{00} & \phi_{11}^{-1} \\
                             \end{array}
                           \right).
                \end{equation}

        \item \label{GaugeClosed} Gauge transformations are closed under the composition.
    \end{enumerate}
Hence, it follows from the items \ref{GaugeIso}--\ref{GaugeClosed} that the set of all gauge transformations on $P$ is a group.

\begin{proposition}\label{prop:GaugeInduces}
If \ec{\PP'=(P,\{\,,\,\}')} is a Poisson algebra isomorphic to an admissible Poisson algebra \ec{\PP=(P,\{\,,\,\})} via a gauge transformation \ec{\phi:\PP' \to \PP}, then $\PP'$ is an admissible Poisson algebra. Moreover, the Poisson triples \ec{([\,,\,]_{1}',\D',\K')} and \ec{([\,,\,]_{1},\D,\K)} of $P$ corresponding to $\PP'$ and $\PP$, respectively, are related by the formulas
    \begin{align}
        [\eta,\xi]_{1}' &= \phi^{-1}_{11}[\phi_{11}\eta,\phi_{11}\xi]_{1}, \label{EcPT1prima} \\[0.15cm]
        \D'_{\dd{f}}\eta &= \phi^{-1}_{11}\big( \D_{\dd(\phi_{00}f)}\phi_{11}\eta + [\phi_{10}f,\phi_{11}\eta]_{1} \big), \label{EcPT2prima} \\[0.15cm]
        \K'(\dd{f},\dd{g}) &= \phi^{-1}_{11}\big( \K(\dd\phi_{00}f,\dd\phi_{00}g) + \D_{\dd(\phi_{00}f)}\phi_{10}g - \D_{\dd(\phi_{00}g)}\phi_{10}f
        + [\phi_{10}f,\phi_{10}g]_{1} - \phi_{10}\{f,g\}_0 \big), \label{EcPT3prima}
    \end{align}
for all \ec{f,g \in P_{0}} and \ec{\eta,\xi \in P_{1}}.
\end{proposition}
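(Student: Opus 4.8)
The plan is to establish the two assertions of Proposition~\ref{prop:GaugeInduces} in turn: first that $\PP'$ is admissible, and then derive the transformation formulas \eqref{EcPT1prima}--\eqref{EcPT3prima} for the associated Poisson triple. For the first part, recall that by items~\ref{Gauge2} and~\ref{Gauge1} of Definition~\ref{def:gauget} we have $\phi_{01}=0$ and $\phi_{00}$ is a Poisson algebra isomorphism of $\PP_{0}$, hence in particular invertible. Thus Lemma~\ref{lema:BracketGauge} applies directly and yields that $\PP'=(P,\{\,,\,\}')$ is an admissible Poisson algebra. (Here one also uses item~\ref{GaugeIso} to know that $\phi$ is in particular an $R$-algebra isomorphism, so that $\{\,,\,\}'$ defined via \eqref{EcBracketPrima} is genuinely a Poisson bracket on the commutative algebra $P$, i.e.\ compatible with the product \eqref{EcPproduct}.)

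For the second part, I would use the characterization of the Poisson triple of an admissible Poisson algebra given in Lemma~\ref{lema:GAP-PT}: the components are recovered as $[\eta,\xi]_{1}'=\pr_{1}\{\iota_{1}\eta,\iota_{1}\xi\}'$, $\D'_{\dd f}\eta=\pr_{1}\{\iota_{0}f,\iota_{1}\eta\}'$ and $\K'(\dd f,\dd g)=\pr_{1}\{\iota_{0}f,\iota_{0}g\}'$. The strategy is then to substitute \eqref{EcBracketPrima}, namely $\{\pi_{1},\pi_{2}\}'=\phi^{-1}\{\phi(\pi_{1}),\phi(\pi_{2})\}$, and compute. For each of the three brackets, the first step is to evaluate $\phi$ on the relevant generators using representation \eqref{EcPhi} together with $\phi_{01}=0$: explicitly $\phi(\iota_{0}f)=\phi_{00}f\oplus\phi_{10}f$ and $\phi(\iota_{1}\eta)=0\oplus\phi_{11}\eta$. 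Next, apply the known bracket formula \eqref{EcBracketPT} for $\PP$, expressed through its Poisson triple $([\,,\,]_{1},\D,\K)$, to the resulting elements. Finally, apply $\phi^{-1}$, whose explicit form is recorded in \eqref{eq:phi-1}, and project onto $P_{1}$; since $\phi^{-1}$ has lower-triangular block form with $(1,1)$-entry $\phi_{11}^{-1}$, the $\pr_{1}$ of $\phi^{-1}(0\oplus\zeta)$ is simply $\phi_{11}^{-1}\zeta$, which is exactly the prefactor appearing in \eqref{EcPT1prima}--\eqref{EcPT3prima}. Carrying this out for $\{\iota_{1}\eta,\iota_{1}\xi\}'$ gives \eqref{EcPT1prima} immediately; for $\{\iota_{0}f,\iota_{1}\eta\}'$ one picks up the $\D_{\dd(\phi_{00}f)}\phi_{11}\eta$ term from the $\D$-part of \eqref{EcBracketPT} and the $[\phi_{10}f,\phi_{11}\eta]_{1}$ term from the Lie-bracket part, yielding \eqref{EcPT2prima}; and for $\{\iota_{0}f,\iota_{0}g\}'$ all five terms of \eqref{EcPT3prima} appear --- the $\K$-term and the two $\D$-terms and the $[\,,\,]_{1}$-term directly from \eqref{EcBracketPT}, while the extra term $-\phi_{10}\{f,g\}_{0}$ arises from the lower-left block $-\phi_{11}^{-1}\circ\phi_{10}\circ\phi_{00}^{-1}$ of $\phi^{-1}$ in \eqref{eq:phi-1} acting on the $P_{0}$-component $\{\phi_{00}f,\phi_{00}g\}_{0}$ of the bracket and on using that $\phi_{00}$ is a Poisson morphism so $\phi_{00}^{-1}\{\phi_{00}f,\phi_{00}g\}_{0}=\{f,g\}_{0}$.

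The computations are routine bookkeeping once the block decomposition is set up, so the only genuine subtlety --- and the step I expect to require the most care --- is the appearance of the correction term $-\phi_{10}\{f,g\}_{0}$ in \eqref{EcPT3prima}: it is easy to overlook that $\pr_{1}\phi^{-1}$ is \emph{not} simply $\phi_{11}^{-1}\circ\pr_{1}$ but also feeds the $P_{0}$-component through the off-diagonal block. One must track both contributions of $\phi^{-1}$ to the $P_{1}$-slot. A secondary point worth stating explicitly is that the formulas are well-defined, i.e.\ that the right-hand sides of \eqref{EcPT2prima} and \eqref{EcPT3prima} indeed factor through $\dd f$ (respectively $\dd f,\dd g$) as $P_{0}$-linear expressions; this follows from item~\ref{Gauge4} (the pair $(\phi_{11},\phi_{00})$ is a $P_{0}$-module isomorphism), item~\ref{Gauge3} ($\phi_{10}$ is a $\phi_{00}$-derivation, hence $\phi_{10}$ composed with $\dd$ behaves correctly under multiplication by elements of $P_{0}$), and the defining properties of $\D$ and $[\,,\,]_{1}$. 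Once these checks are in place, comparing term by term with the definitions in Lemma~\ref{lema:GAP-PT} completes the proof.
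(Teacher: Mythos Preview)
Your proposal is correct and follows essentially the same approach as the paper's proof: admissibility via Lemma~\ref{lema:BracketGauge} (using $\phi_{01}=0$ and invertibility of $\phi_{00}$), followed by direct computation of the triple components through \eqref{EcBracketPrima}, the block forms of $\phi$ and $\phi^{-1}$ in \eqref{eq:phi-1}, and the bracket formula \eqref{EcBracketPT}. Your write-up is in fact more explicit than the paper's, which only gestures at ``long and tricky computations''; your identification of the off-diagonal block of $\phi^{-1}$ as the source of the $-\phi_{10}\{f,g\}_{0}$ term is exactly the point that needs care.
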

\begin{proof}
By definition of gauge transformation, the claims follow from the items \ref{GaugeIso}--\ref{GaugeClosed}, Lemma \ref{lema:BracketGauge} and long and tricky computations based on conditions \eqref{EcPT1}--\eqref{EcPT3} for Poisson triples of $P$.
\end{proof}

\begin{corollary}\label{cor:GaugeInduces}
Let \ec{(P, \{\,,\,\})} be an admissible Poisson algebra. Then, every gauge transformation $\phi$ on $P$ induces an admissible Poisson structure on $P$ by means of the formula \eqref{EcBracketPrima}, which is $\phi$--equivalent to the initial one \ec{\{\,,\,\}} and with corresponding Poisson triple of $P$ given by the formulas \eqref{EcPT1prima}--\eqref{EcPT3prima}.
\end{corollary}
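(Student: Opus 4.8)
The plan is to reduce Corollary \ref{cor:GaugeInduces} to facts already established. Let $\PP = (P, \{\,,\,\})$ be the given admissible Poisson algebra and let $\phi$ be a gauge transformation on $P$. First I would use the fact (item \ref{GaugeIso}) that every gauge transformation is an $R$--algebra isomorphism of $P$; this guarantees that $\phi$ is invertible with $\phi^{-1}$ again an $R$--algebra isomorphism (in fact a gauge transformation, by item \ref{GaugeInverse}). Hence the formula \eqref{EcBracketPrima}, namely $\{\pi_1,\pi_2\}' := \phi^{-1}\{\phi(\pi_1),\phi(\pi_2)\}$, defines a genuine Poisson structure on $P$: it is a Lie bracket because it is the pushforward of a Lie bracket along a bijection, and it satisfies the Leibniz rule with respect to the product of $P$ because $\phi$ is an algebra isomorphism and $\{\,,\,\}$ is a Poisson bracket. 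By construction $\phi:\PP' \to \PP$ is then a Poisson isomorphism, so $\PP'$ and $\PP$ are $\phi$--equivalent.

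Next I would invoke Proposition \ref{prop:GaugeInduces} directly. Since $\PP' = (P,\{\,,\,\}')$ is obtained from the admissible Poisson algebra $\PP$ via the gauge transformation $\phi$, that proposition asserts precisely that $\PP'$ is again admissible and that its Poisson triple $([\,,\,]_1',\D',\K')$ is given in terms of the Poisson triple $([\,,\,]_1,\D,\K)$ of $\PP$ by the formulas \eqref{EcPT1prima}--\eqref{EcPT3prima}. Thus Corollary \ref{cor:GaugeInduces} is, essentially, a restatement of Proposition \ref{prop:GaugeInduces} emphasizing the construction of $\{\,,\,\}'$ from $\phi$ and the $\phi$--equivalence, rather than a new result.

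The only genuinely new point to address, beyond quoting Proposition \ref{prop:GaugeInduces}, is to make sure that the hypotheses of that proposition are met; but this is immediate, since the premise of the corollary is literally that $\phi$ is a gauge transformation and that $\PP$ is admissible. I would therefore structure the proof as: (1) observe $\phi$ is an $R$--algebra isomorphism (items \ref{GaugeIso}--\ref{GaugeClosed}), so \eqref{EcBracketPrima} yields a Poisson structure $\{\,,\,\}'$ with $\phi:\PP' \to \PP$ a Poisson isomorphism; (2) apply Proposition \ref{prop:GaugeInduces} to conclude $\PP'$ is admissible with Poisson triple given by \eqref{EcPT1prima}--\eqref{EcPT3prima}; (3) note that $\phi$--equivalence is exactly the content of step (1). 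I do not anticipate any real obstacle here: the substantive work — the long computations verifying the transformation formulas for the triple — has already been carried out in the proof of Proposition \ref{prop:GaugeInduces}, and the corollary merely packages it together with the elementary observation that a gauge transformation pushes a Poisson bracket forward to a Poisson bracket.
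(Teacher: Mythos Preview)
Your proposal is correct and matches the paper's treatment: the corollary is presented immediately after Proposition~\ref{prop:GaugeInduces} without a separate proof, being precisely the repackaging you describe. Your explicit verification that \eqref{EcBracketPrima} defines a Poisson bracket (using that $\phi$ is an $R$--algebra isomorphism, items \ref{GaugeIso}--\ref{GaugeInverse}) and your direct appeal to Proposition~\ref{prop:GaugeInduces} are exactly what is implicit in the paper.
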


In other words, a main property of gauge transformations is that they allow us to derive equivalent admissible Poisson structures from a given one.

Now, we arrive at one of the main results of this section.

\begin{theorem}\label{teo:GaugeEquiv}
Let $\PP'$ and $\PP$ be two admissible Poisson algebras. Then, the following assertions are equivalent:
    \begin{enumerate}[label=(\roman*)]
      \item \label{Equiv1} The admissible Poisson algebras $\PP'$ and $\PP$ are isomorphic via a gauge transformation on $P$.
      \item \label{Equiv2} The Poisson triples of $P$ corresponding to $\PP'$ and $\PP$ are related by formulas \eqref{EcPT1prima}--\eqref{EcPT3prima}, for some gauge transformation on $P$.
      \item \label{Equiv3} The Lie algebroids associated to $\PP'$ and $\PP$  are isomorphic via a Lie algebroid isomorphism induced by a gauge transformation on $P$.
    \end{enumerate}
\end{theorem}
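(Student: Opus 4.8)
The plan is to prove the equivalences by establishing a cycle of implications, exploiting the three-way correspondence of Theorem~\ref{teo:correspondenceGPA-PT-LA} so that each object (Poisson structure, Poisson triple, Lie algebroid) can be transported along a gauge transformation in a compatible way. Concretely, I would show $\ref{Equiv1} \Rightarrow \ref{Equiv2} \Rightarrow \ref{Equiv3} \Rightarrow \ref{Equiv1}$.

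First, $\ref{Equiv1} \Rightarrow \ref{Equiv2}$ is essentially Proposition~\ref{prop:GaugeInduces}: if $\phi\colon\PP'\to\PP$ is a Poisson isomorphism that is simultaneously a gauge transformation on $P$, then $\PP'$ is admissible and its Poisson triple is obtained from that of $\PP$ by the explicit formulas \eqref{EcPT1prima}--\eqref{EcPT3prima}. So this implication requires no new argument beyond invoking that proposition together with Lemma~\ref{lema:GAP-PT} identifying the triple induced by an admissible Poisson algebra.

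Next, $\ref{Equiv2} \Rightarrow \ref{Equiv3}$: given a gauge transformation $\phi$ with components $(\phi_{00},0,\phi_{10},\phi_{11})$ relating the two Poisson triples, I would define the candidate Lie algebroid isomorphism $\varphi\colon A\to A$ on $A=\Omega^{1}_{P_{0}}\oplus P_{1}$ by declaring its action on exact generators, $\varphi(\dd f\oplus\eta):=\dd(\phi_{00}f)\oplus(\phi_{10}f+\phi_{11}\eta)$, and extending $P_{0}$-linearly (using that $\phi_{00}$ is a Poisson isomorphism, hence $\dd\circ\phi_{00}$ pushes $\Omega^{1}_{P_{0}}$ to itself, and that $(\phi_{11},\phi_{00})$ is a module isomorphism). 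One then checks, using Lemma~\ref{lemma:PT-LA} for the bracket \eqref{eq:BracketLATriple} in terms of the triple, that the relations \eqref{EcPT1prima}--\eqref{EcPT3prima} are exactly what is needed for $\varphi\cSch{\cdot,\cdot}'=\cSch{\varphi(\cdot),\varphi(\cdot)}$ and $\varrho'=\varrho\circ\varphi$, i.e.\ \eqref{ecMorphismAlgbd} holds; invertibility of $\varphi$ follows from that of $\phi$ (formula \eqref{eq:phi-1}), and this $\varphi$ is manifestly the one induced by $\phi$. The reverse $\ref{Equiv3} \Rightarrow \ref{Equiv1}$ runs symmetrically: a Lie algebroid isomorphism $\varphi$ over the identity preserving the anchor $\varrho_{P_{0}}\circ\mathrm{proj}_{0}$ forces $\varphi$ to have block-triangular form with a Poisson-automorphism block on $\Omega^{1}_{P_{0}}$, which by the correspondence \eqref{EcA} and the explicit reconstruction in Lemma~\ref{LemaTripleToAlg} determines a gauge transformation $\phi$ on $P$; the compatibility \eqref{ecMorphismAlgbd} translated through \eqref{eq:BracketLATriple} gives back \eqref{EcBracketPrima}, so $\phi$ is a Poisson isomorphism $\PP'\to\PP$.

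The main obstacle is the $\ref{Equiv3}\Rightarrow\ref{Equiv1}$ step, specifically verifying that a Lie algebroid isomorphism over the identity with the prescribed anchor must respect the decomposition $A=\Omega^{1}_{P_{0}}\oplus P_{1}$ in the block-triangular sense needed to read off a genuine gauge transformation (and not merely some algebra endomorphism of $P$ failing condition \ref{Gauge4} of Definition~\ref{def:gauget}). The condition $\varrho'=\varrho\circ\varphi$ with $\varrho=\varrho_{P_{0}}\circ\mathrm{proj}_{0}$ pins down $\mathrm{proj}_{0}\circ\varphi$ on $\Omega^{1}_{P_{0}}$ up to the kernel of $\varrho_{P_{0}}$, so one needs the Leibniz rule \eqref{EcLeibnizAnchor} together with faithfulness of $A$ (and $P_{1}$) to upgrade this to the full block structure; I would handle it by evaluating $\varphi$ on generators $\dd f$ and $\jmath_{1}\eta$ and using $P_{0}$-linearity plus \eqref{EcLeibnizAnchor} to nail the four blocks $(\varphi_{00},\varphi_{01},\varphi_{10},\varphi_{11})$, showing $\varphi_{01}=0$, $\varphi_{00}$ is induced by a Poisson automorphism of $P_{0}$, and $(\varphi_{11},\varphi_{00})$ is a module isomorphism. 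The remaining computations linking \eqref{EcPT1prima}--\eqref{EcPT3prima} to \eqref{ecMorphismAlgbd} are long but routine, parallel to the proof of Proposition~\ref{prop:GaugeInduces}, and I would only indicate that they amount to matching coefficients of $\D'$, $[\,,\,]'_{1}$ and $\K'$ in \eqref{eq:BracketLATriple}.
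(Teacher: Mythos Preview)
Your implications $\ref{Equiv1}\Rightarrow\ref{Equiv2}$ and $\ref{Equiv2}\Rightarrow\ref{Equiv3}$ match the paper's argument closely: the paper also invokes Proposition~\ref{prop:GaugeInduces} together with Lemmas~\ref{LemaTripleToAlg} and~\ref{lema:GAP-PT} for the first, and constructs the very same induced map $\varphi$ on $A=\Omega^{1}_{P_{0}}\oplus P_{1}$ for the second, then appeals to Lemmas~\ref{lemma:PT-LA} and~\ref{lema:TripleLA}.

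The difficulty you flag in closing the cycle is largely self-inflicted, for two related reasons. First, item~\ref{Equiv3} does \emph{not} assert that the Lie algebroids are isomorphic via an arbitrary Lie algebroid isomorphism; it says explicitly that the isomorphism is one \emph{induced by a gauge transformation} on $P$. So a gauge transformation $\phi$ is already part of the hypothesis, and there is nothing to extract: what remains is only to check that the relations \eqref{ecMorphismAlgbd} for $\varphi$, translated through \eqref{eq:BracketLATriple}, force \eqref{EcPT1prima}--\eqref{EcPT3prima} and hence (via \ref{Equiv2}$\Leftrightarrow$\ref{Equiv1}) that $\phi$ is a Poisson isomorphism. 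This is exactly why the paper organizes the proof as two equivalences $\ref{Equiv1}\Leftrightarrow\ref{Equiv2}$ and $\ref{Equiv2}\Leftrightarrow\ref{Equiv3}$ rather than a cycle: in each direction the gauge transformation is given, and the work reduces to transporting data along the bijections of Theorem~\ref{teo:correspondenceGPA-PT-LA}.

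Second, your assumption that $\varphi$ is ``over the identity'' is off. The paper defines the induced isomorphism as the pair $(\varphi,\phi_{00})$ with
\[
\varphi(g\,\dd f\oplus\eta)=(\phi_{00}g)\,\dd(\phi_{00}f)\oplus\big((\phi_{00}g)\,\phi_{10}f+\phi_{11}\eta\big),
\]
which is $\phi_{00}$-semilinear, not $P_{0}$-linear. Your phrase ``extending $P_{0}$-linearly'' is therefore incorrect in general (it only works when $\phi_{00}=\mathrm{id}$), and the argument you sketch using $\varrho'=\varrho\circ\varphi$ to pin down blocks ``up to $\ker\varrho_{P_{0}}$'' would not recover $\phi_{00}$ at all. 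None of this is needed once you read \ref{Equiv3} correctly: the gauge transformation is given, and the paper's proof simply verifies that the induced $\varphi$ intertwines the two Lie algebroid structures precisely when the Poisson triples are related by \eqref{EcPT1prima}--\eqref{EcPT3prima}.
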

\begin{proof}
Taking into account formulas \eqref{EcBracketPrima} and \eqref{eq:phi-1} and Proposition \ref{prop:GaugeInduces}, the equivalence of the items \ref{Equiv1} and \ref{Equiv2} follows from Lemmas \ref{LemaTripleToAlg} and \ref{lema:GAP-PT}. Now, we show that each gauge transformation \ec{\phi: P \to P} induces an isomorphism of Lie algebroids structures on $A$ in \eqref{A} over \ec{P_0} satisfying \eqref{EcA} and \eqref{eq:Rho0Proj0}: the (module) isomorphism is given by the tuple \ec{(\varphi,\phi_{00})}, where \ec{\varphi: A \to A} is defined by
    \begin{equation*}
        \varphi (g\dd f \oplus \eta) := (\phi_{00}g\,\dd\phi_{00}f) \oplus (\phi_{00}g \cdot \phi_{10}f + \phi_{11}\eta), \quad
        f, g \in P_0;\ \eta \in P_{1}.
    \end{equation*}
Then, the equivalence of the items \ref{Equiv2} and \ref{Equiv3} follows from Lemmas \ref{lemma:PT-LA} and \ref{lema:TripleLA}.
\end{proof}

From Proposition \ref{prop:GaugeInduces} and Lemma \ref{lema:IsoCohomo}, we deduce the following:

\begin{corollary}\label{cor:Lie1IsoGauge}
If two admissible Poisson algebras $\PP'$ and $\PP$ are isomorphic via a gauge transformation on $P$, then the corresponding Lie algebras \ec{(P_{1}, [\,,\,]_{1}')} and \ec{(P_{1}, [\,,\,]_{1})}, and first Poisson cohomologies are isomorphic.
\end{corollary}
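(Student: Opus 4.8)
The plan is to extract both claims directly from the structure already in place. Suppose $\phi\colon\PP'\to\PP$ is a gauge transformation between admissible Poisson algebras. By Proposition \ref{prop:GaugeInduces}, the induced Poisson triples are related by \eqref{EcPT1prima}--\eqref{EcPT3prima}; in particular, formula \eqref{EcPT1prima} reads $[\eta,\xi]_{1}' = \phi^{-1}_{11}[\phi_{11}\eta,\phi_{11}\xi]_{1}$ for all $\eta,\xi\in P_{1}$. This says precisely that $\phi_{11}\colon (P_{1},[\,,\,]_{1}')\to (P_{1},[\,,\,]_{1})$ intertwines the two Lie brackets; since the pair $(\phi_{11},\phi_{00})$ is a $P_{0}$--module isomorphism by Definition \ref{def:gauget}\ref{Gauge4}, $\phi_{11}$ is in particular $R$--linear and invertible, hence a Lie algebra isomorphism. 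This establishes the first assertion.

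For the second assertion, I would invoke Lemma \ref{lema:IsoCohomo}: equivalent (i.e. Poisson isomorphic) Poisson algebras have isomorphic Poisson cohomologies in every degree, and in particular isomorphic first Poisson cohomologies. Since item \ref{GaugeIso} after Definition \ref{def:gauget} guarantees that every gauge transformation is an $R$--algebra isomorphism, and by hypothesis $\phi$ is a Poisson morphism (it realizes the isomorphism $\PP'\cong\PP$), $\phi$ is a Poisson isomorphism in the sense of \eqref{EcPhiAlg}. Therefore $\PP'$ and $\PP$ are equivalent Poisson algebras, and Lemma \ref{lema:IsoCohomo} yields $\mathscr{H}^{1}(\PP')\simeq\mathscr{H}^{1}(\PP)$.

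There is essentially no obstacle here: the corollary is a bookkeeping consequence of Proposition \ref{prop:GaugeInduces} (which supplies the transformation rule \eqref{EcPT1prima} for the bracket) and Lemma \ref{lema:IsoCohomo} (which supplies the cohomology statement). The only point requiring a word of care is that one should not confuse the bracket isomorphism $\phi_{11}$ on $P_{1}$ with the ambient algebra isomorphism $\phi$ on $P$: the first follows from \eqref{EcPT1prima} alone, while the second is what feeds into Lemma \ref{lema:IsoCohomo}. Once these two ingredients are separated, the proof is immediate.
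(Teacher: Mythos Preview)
Your proof is correct and follows exactly the approach indicated in the paper, which deduces the corollary directly from Proposition~\ref{prop:GaugeInduces} (supplying the transition rule \eqref{EcPT1prima} that makes $\phi_{11}$ a Lie algebra isomorphism) and Lemma~\ref{lema:IsoCohomo} (supplying the cohomology statement). Your write-up is simply a more explicit unpacking of those two ingredients.
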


Now, note that every $P_{0}$--valued derivation \ec{\phi_{10}:P_{0} \to P_{1}} in \eqref{eq:DerP0P1} induces a gauge transformation on $P$ by the formula
    \begin{equation}\label{eq:GaugePhi10}
        \phi = \left(
                 \begin{array}{cc}
                    \mathrm{id}_{P_{0}} & 0 \\
                    \phi_{10} & \mathrm{id}_{P_{1}} \\
                 \end{array}
               \right).
    \end{equation}
In particular, we introduce the following important subclass of gauge transformations.

\begin{definition}\label{def:MuGauge}
Let \ec{\mu:\Omega^1_{P_0} \to P_1} be a \ec{P_{0}}--linear mapping. By a \emph{$\mu$--gauge transformation} on $P$ we mean an $R$--linear mapping \ec{\phi=\phi_{\mu}} in \eqref{eq:GaugePhi10} such that
    \begin{equation*}
        \phi_{10}(f) = \mu(\dd{f}), \quad f \in P_{0}.
    \end{equation*}
\end{definition}

It is clear that a $\mu$--gauge transformation satisfies conditions \ref{Gauge2}--\ref{Gauge4} of Definition \ref{def:gauget} and hence, by Corollary \ref{cor:GaugeInduces}, such transformations preserve the family of admissible Poisson structures. Moreover, the set of all $\mu$--gauge transformation is an abelian group. This claim follows from the formula for composition \ec{\phi_{\mu_{1}} \circ \phi_{\mu_{2}} = \phi_{\mu_{1} + \mu_{2}}}. We remark that this class of gauge transformations is used in the study of the Hamiltonization problem for linearized dynamics along Poisson submanifolds \cite{RuGaVo20}.

\paragraph{Equivalence Criterion Via $\mathbf{\K}$.} Taking into account Corollaries \ref{cor:Lie1IsoGauge} and \ref{cor:Ktrivial}, consider two admissible Poisson algebras $\PP$ and $\PP'$ inducing the same $P_{0}$--linear Lie bracket and the same flat contravariant derivative on $P_{1}$. That is, such that the corresponding Poisson triples of $P$ are given, respectively, by
    \begin{equation}\label{eq:LieDKflat}
        \big( [\,,\,]_{1},\D,\K \big) \quad \text{and} \quad \big( [\,,\,]'_{1} = [\,,\,]_{1}, \D' = \D, \K' \big), \quad \text{with} \quad \mathrm{Curv}^{\D} = 0.
    \end{equation}
Note that this hypothesis implies that $\PP$ and $\PP'$ induce the same cochain complexes \ec{(\overline{\Gamma}^{\ast}_{\PP}, \overline{\partial}_{\D})} in \eqref{eq:Complex} and \ec{(\Gamma^{\ast}_{\PP}, \partial_{\D})} in \eqref{eq:ComplexBar}. Hence, we have the following:

\begin{theorem}\label{teo:Kequivalence}
Two admissible Poisson algebras $\PP$ and $\PP'$ satisfying \eqref{eq:LieDKflat} are isomorphic by means of a gauge transformation of the form \eqref{eq:GaugePhi10} if and only if the $\partial_{\D}$--cohomology class of the 2--cocycle \ec{\Delta(\K - \K')} is trivial. Here, the $P_{0}$--linear mapping $\Delta$ is defined in \eqref{eq:Delta}.
\end{theorem}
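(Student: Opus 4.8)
The plan is to unwind the definitions and reduce the statement to a concrete cohomological condition. First I would recall that, by Corollary~\ref{cor:GaugeInduces} and Theorem~\ref{teo:GaugeEquiv}, a gauge transformation of the form \eqref{eq:GaugePhi10} is precisely a $\mu$--gauge transformation $\phi_\mu$ with $\mu(\dd f)=\phi_{10}(f)$, and the induced Poisson triple is obtained from \eqref{EcPT1prima}--\eqref{EcPT3prima} with $\phi_{00}=\mathrm{id}_{P_0}$ and $\phi_{11}=\mathrm{id}_{P_1}$. Under the hypotheses \eqref{eq:LieDKflat}, formulas \eqref{EcPT1prima} and \eqref{EcPT2prima} are automatically satisfied: \eqref{EcPT1prima} gives $[\,,\,]_1'=[\,,\,]_1$ trivially, while \eqref{EcPT2prima} becomes $\D'_{\dd f}\eta=\D_{\dd f}\eta+[\mu(\dd f),\eta]_1$. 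So to have $\D'=\D$ we need the extra constraint $[\mu(\dd f),\eta]_1=0$ for all $f,\eta$, i.e.\ $\mu$ must take values in the center $Z_{\PP}(P_1)$. I would single this out explicitly at the start of the proof.

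Next I would substitute $\phi_{00}=\mathrm{id}$, $\phi_{11}=\mathrm{id}$, $\phi_{10}=\mu\circ\dd$, and the centrality of $\mu$ into \eqref{EcPT3prima}. Using $\mathrm{Curv}^\D=0$ together with \eqref{eq:CurvAd}/\eqref{EcPT2} (which forces $\K$ to be $Z_{\PP}(P_1)$--valued, by Corollary~\ref{cor:Ktrivial}), and using that $[\mu(\dd f),\mu(\dd g)]_1=0$ since $\mu$ is central, the formula collapses to
\begin{equation*}
    \K'(\dd f,\dd g)=\K(\dd f,\dd g)+\D_{\dd f}\mu(\dd g)-\D_{\dd g}\mu(\dd f)-\mu(\{f,g\}_0).
\end{equation*}
Now I recognize the right-most three terms: viewing $\mu$ as an element of $\overline{\Gamma}^1_{P_0}(\Omega^1_{P_0};Z_{\PP}(P_1))$, applying $\Delta$ to pass to $\Gamma^1_R(P_0;Z_{\PP}(P_1))$, and comparing with the coboundary formula \eqref{eq:dDbar}--\eqref{eq:PartialBar}, one gets exactly $\Delta(\K')-\Delta(\K)=\partial_{\D}(\Delta\mu)$ as 2--cochains in $\Gamma^2_R(P_0;Z_{\PP}(P_1))$. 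Hence $\Delta(\K-\K')=-\partial_{\D}(\Delta\mu)$ is a $\partial_{\D}$--coboundary. For the converse, if $\Delta(\K-\K')=\partial_{\D}\nu$ for some $\nu\in\Gamma^1_R(P_0;Z_{\PP}(P_1))$, I would need to lift $\nu$ back to a $P_0$--linear map $\mu:\Omega^1_{P_0}\to Z_{\PP}(P_1)$ with $\nu=\Delta\mu$; this is possible because $\Delta$ on the relevant degree-$1$ pieces is an isomorphism when restricted appropriately (every $R$--derivation $P_0\to Z_{\PP}(P_1)$ in the image of $\partial_\D$ extends $P_0$--linearly over $\Omega^1_{P_0}=\mathrm{span}_{P_0}\{\dd f\}$, and well-definedness follows from the cocycle relation), and then $\phi_\mu$ is the required gauge transformation.

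The step I expect to be the main obstacle is the careful verification that the derived identity for $\K'$ really matches $\partial_{\D}\Delta\mu$, and, on the converse side, that an arbitrary $\partial_{\D}$--primitive $\nu\in\Gamma^1_R$ can be realized as $\Delta\mu$ for a genuinely $P_0$--linear, $Z_{\PP}(P_1)$--valued $\mu$ on $\Omega^1_{P_0}$; this well-definedness over the module of Kähler differentials is the technical heart and is where the hypothesis $\mathrm{Curv}^\D=0$ (equivalently, that $\K$ and $\K'$ are central-valued, so that $Z_{\PP}(P_1)$ is $\D$--invariant and the complex $(\Gamma^\ast_{\PP},\partial_\D)$ is actually defined) is genuinely used. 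The remaining bookkeeping — tracking signs in \eqref{EcPT3prima} and confirming that $\phi_\mu$ indeed realizes the Poisson isomorphism via \eqref{EcBracketPrima} — is routine given Proposition~\ref{prop:GaugeInduces} and Theorem~\ref{teo:GaugeEquiv}.
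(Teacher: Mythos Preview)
Your argument is correct and matches the paper's proof almost verbatim: specialize the transition rules \eqref{EcPT1prima}--\eqref{EcPT3prima} to $\phi_{00}=\phi_{11}=\mathrm{id}$, read off the centrality constraint $\phi_{10}\in\Gamma^1_R$ from \eqref{EcPT2prima}, and obtain $\Delta(\K-\K')=-\partial_\D\phi_{10}$ from \eqref{EcPT3prima}; for the converse the paper simply sets $\phi_{10}=-Q$. The only unnecessary complication is your worry about ``lifting'' $\nu\in\Gamma^1_R$ to a $P_0$-linear $\mu$ on $\Omega^1_{P_0}$: this is automatic by the universal property of K\"ahler differentials (every $R$-derivation $P_0\to M$ factors uniquely as a $P_0$-linear map through $\dd$), so no cocycle hypothesis is needed for well-definedness---the paper sidesteps this by working directly with $\phi_{10}$ rather than $\mu$.
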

\begin{proof}
By the hypothesis \eqref{eq:LieDKflat} and Corollary \ref{cor:Ktrivial}, we have that $\K$ and $\K'$ define classes in the second cohomology group of \ec{(\overline{\Gamma}^{\ast}_{\PP},\overline{\partial}_{\D})} since $\D$ is flat. So, by Proposition \ref{prop:DeltaAst}, the 2--cocycle \ec{\Delta(\K - \K')} of \ec{(\Gamma^{1}_{R}, \partial_{\D})} defines a class in the second cohomology group. Hence, the theorem follows from the definition of $\Delta$, the transition rules \eqref{EcPT1prima}--\eqref{EcPT3prima} for Poisson triples of $P$ under a gauge transformation and Theorem \ref{teo:GaugeEquiv}. Indeed, if $\PP$ and $\PP'$ are isomorphic via a gauge transformation of the form \eqref{eq:GaugePhi10}, then \ec{\phi_{10} \in \Gamma^{1}_{R}} and \ec{\Delta(\K-\K') = - \partial_{\D}\phi_{10}}. Reciprocally, if \ec{\Delta(\K-\K') = \partial_{\D}Q} for some \ec{Q \in \Gamma^{1}_{R}}, then we take \ec{\phi_{10}=-Q}.
\end{proof}

In particular, taking into account Corollary \ref{cor:P0KD0}, we deduce the following fact.

\begin{corollary}\label{cor:FlatP0Sub}
Let $\PP$ be an admissible Poisson algebra with corresponding Poisson triple \ec{([\,,\,]_{1}, \D, \K)} such that $\D$ is flat. If the $\partial_{D}$--cohomology class of the 2--cocycle \ec{\Delta\K} is trivial, then $\PP$ is isomorphic to an admissible Poisson algebra for which \ec{P_{0} \oplus \{0\}} is a Poisson subalgebra.
\end{corollary}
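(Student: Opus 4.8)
The plan is to realize $\PP$ as being gauge-equivalent to a second admissible Poisson algebra $\PP'$ whose associated $\K$ vanishes, and then to read off the conclusion from Corollary~\ref{cor:P0KD0}. First I would use Lemma~\ref{LemaTripleToAlg} to define $\PP'$ as the admissible Poisson algebra associated to the Poisson triple $([\,,\,]_{1},\D,0)$. One must check that this is indeed a Poisson triple: condition \eqref{EcPT1} is inherited verbatim from the Poisson triple of $\PP$ (it involves only $[\,,\,]_{1}$ and $\D$); condition \eqref{EcPT2} reads $\mathrm{Curv}^{\D}=\mathrm{ad}_{0}=0$, which holds precisely because $\D$ is flat; and the Bianchi-type identity \eqref{EcPT3} is trivially satisfied by the zero map. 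So $([\,,\,]_{1},\D,0)$ is a legitimate Poisson triple and $\PP'$ is well defined.

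Next I would observe that the pair $(\PP,\PP')$ satisfies the hypotheses \eqref{eq:LieDKflat} of Theorem~\ref{teo:Kequivalence}: both admissible Poisson algebras induce the same $P_{0}$--linear Lie bracket $[\,,\,]_{1}$ and the same contravariant derivative $\D$, which is flat, and for $\PP'$ the corresponding $\K'$ is zero. Consequently $\Delta(\K-\K')=\Delta\K$, and by assumption the $\partial_{\D}$--cohomology class of the $2$--cocycle $\Delta\K$ is trivial. Theorem~\ref{teo:Kequivalence} then provides a gauge transformation of the form \eqref{eq:GaugePhi10} that realizes a Poisson isomorphism between $\PP$ and $\PP'$; hence $\PP$ is isomorphic to the admissible Poisson algebra $\PP'$.

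Finally, since the Poisson triple of $\PP'$ has $\K^{\PP'}=0$, Corollary~\ref{cor:P0KD0} gives that $P_{0}\oplus\{0\}$ is a Poisson subalgebra of $\PP'$, which is exactly what was to be shown. I do not expect any genuine obstacle here: the only point requiring a moment's care is the verification that $([\,,\,]_{1},\D,0)$ is a Poisson triple, where flatness of $\D$ is exactly what makes \eqref{EcPT2} compatible with $\K=0$; the rest is a direct combination of Theorem~\ref{teo:Kequivalence} and Corollary~\ref{cor:P0KD0}.
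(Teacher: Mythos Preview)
Your argument is correct and is exactly the approach the paper takes: the corollary is stated immediately after Theorem~\ref{teo:Kequivalence} as a direct consequence of it together with Corollary~\ref{cor:P0KD0}, applied with $\K'=0$. The paper does not spell out the verification that $([\,,\,]_{1},\D,0)$ is a Poisson triple, but your check that flatness of $\D$ is precisely what makes \eqref{EcPT2} hold for $\K=0$ is the right observation.
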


Recall that for admissible Poisson algebras induced by Poisson modules it holds that \ec{P_{0} \oplus \{0\}} is a Poisson subalgebra (Section \ref{sec:PoissonModules}).

\begin{examplex}
Consider the admissible Poisson algebra \ec{\PP=(P=P_{0} \ltimes P_{1},\{\,,\,\})} defined by the bracket \eqref{eq:APmatrix}. Then, the following Poisson structures
     \begin{equation*}
        \{f\oplus M, g\oplus N\}_{X} = \{f,g\}_{0} \oplus \big( D_{\dd{f}}N- D_{\dd{g}}M + [|M,N|] + ( \{Xf,g\}_{0} + \{f,Xg\}_{0} - X\{f,g\}_{0} )\,I\,\big), \quad X \in \mathrm{Der}(P_{0}),
    \end{equation*}
define a \ec{\mathrm{Der}(P)}--parameterized family of admissible Poisson algebras \ec{\PP_{X}=(P,\{\,,\,\}_{X})} isomorphic to the initial one $\PP$. Clearly, \ec{\PP_{X}=\PP} for all \ec{X \in \mathrm{Poiss}(P_{0}, \cdot, \{\,,\,\}_{0})}. Here, $I$ is the identity matrix.
\end{examplex}

\paragraph{Deformations of APA's.} Let \ec{\PP=(P,\{\,,\,\})} be an admissible Poisson algebra, \ec{([\,,\,]_{1},\D,\K)} the corresponding Poisson triple of $P$ and \ec{(\overline{\Gamma}^{\ast}_{\PP} = \oplus_{k}\overline{\Gamma}^{k}_{P_{0}}, \overline{\partial}_{\D})} the associated cochain complex given in \eqref{eq:Complex}.

\begin{theorem}\label{teo:NewAdmissible}
Every 2--cocycle \ec{\mathscr{C} \in \overline{\Gamma}^{2}_{P_{0}}} of the cochain complex \ec{(\overline{\Gamma}^{\ast}_{\PP}, \overline{\partial}_{\D})} induces a $t$--parameterized family \ec{\PP^{t}=(P,\{\,,\,\}^{t})} of admissible Poisson algebras defined by
    \begin{equation}\label{eq:tFamily}
        \{f \oplus \eta, g \oplus \xi\}^{t} := \{f \oplus \eta, g \oplus \xi\} \,+\, 0 \oplus
        (t\mathscr{C})(\dd{f}, \dd{g}), \quad t \in R, \\
    \end{equation}
with \ec{f \oplus \eta, g \oplus \xi \in P}. In particular, we have \ec{\PP = \PP^{t=0}}. Moreover, there exists a $t$--parameterized family of $\mu$--gauge transformations inducing Poisson isomorphisms between the admissible Poisson algebras \ec{\PP^{t}} and $\PP$ if and only if the $\overline{\partial}_{\D}$--cohomology class of \ec{\mathscr{C}} is trivial.
\end{theorem}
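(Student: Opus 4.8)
The plan is to verify that \eqref{eq:tFamily} defines an admissible Poisson structure by exhibiting its Poisson triple, and then to translate the triviality of the deformation into the triviality of the cohomology class of $\mathscr{C}$ via the gauge machinery already developed. First I would observe that, since $\mathscr{C} \in \overline{\Gamma}^{2}_{P_{0}}$ takes values in $Z_{\PP}(P_{1})$ and is a $2$--cocycle of $\overline{\partial}_{\D}$, the triple $\big([\,,\,]_{1}, \D, \K + t\mathscr{C}\big)$ is again a Poisson triple of $P$: condition \eqref{EcPT1} is untouched; condition \eqref{EcPT2} holds because $t\mathscr{C}$ is central, so $\operatorname{ad}_{\K(\alpha,\beta) + t\mathscr{C}(\alpha,\beta)} = \operatorname{ad}_{\K(\alpha,\beta)}$; and condition \eqref{EcPT3}, equivalently $\dd_{\D}(\K + t\mathscr{C}) = 0$, follows from $\dd_{\D}\K = 0$ together with $\overline{\partial}_{\D}\mathscr{C} = 0$ (using that on central-valued cochains $\overline{\dd}_{\D}$ restricts to $\overline{\partial}_{\D}$, and $\Delta$ intertwines $\overline{\partial}_{\D}$ with $\partial_{\D}$). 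By Lemma \ref{LemaTripleToAlg}, this Poisson triple produces precisely the bracket \eqref{eq:tFamily}, which is therefore admissible, with $\PP = \PP^{t=0}$ being the case $t=0$.

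Next I would address the equivalence statement. For the ``if'' direction, suppose the $\overline{\partial}_{\D}$--cohomology class of $\mathscr{C}$ is trivial, say $\mathscr{C} = \overline{\partial}_{\D}\nu$ for some $\nu \in \overline{\Gamma}^{1}_{P_{0}} = \overline{\Gamma}^{1}_{P_{0}}(\Omega^{1}_{P_{0}}; Z_{\PP}(P_{1}))$, i.e.\ a $P_{0}$--linear map $\nu: \Omega^{1}_{P_{0}} \to Z_{\PP}(P_{1}) \subseteq P_{1}$. I would then take $\mu = -t\nu$ and consider the $\mu$--gauge transformation $\phi_{\mu}$ of Definition \ref{def:MuGauge}, with $\phi_{10}(f) = -t\nu(\dd f)$. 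Applying the transition formulas \eqref{EcPT1prima}--\eqref{EcPT3prima} with $\phi_{00} = \mathrm{id}$, $\phi_{11} = \mathrm{id}$ and $\phi_{10} = -t\nu$: formula \eqref{EcPT1prima} gives $[\,,\,]_{1}' = [\,,\,]_{1}$; formula \eqref{EcPT2prima} gives $\D'_{\dd f}\eta = \D_{\dd f}\eta + [-t\nu(\dd f), \eta]_{1} = \D_{\dd f}\eta$ because $\nu$ is central; and formula \eqref{EcPT3prima} gives, after the central terms $[\phi_{10}f,\phi_{10}g]_{1}$ drop out,
\begin{equation*}
\K'(\dd f, \dd g) = \K(\dd f, \dd g) - t\,\D_{\dd f}\nu(\dd g) + t\,\D_{\dd g}\nu(\dd f) + t\,\nu(\{f,g\}_{0}) = \K(\dd f, \dd g) - t\,(\overline{\partial}_{\D}\nu)(\dd f, \dd g),
\end{equation*}
where I used the explicit formula \eqref{eq:dD} for $\overline{\dd}_{\D}\nu$ evaluated on $\dd f, \dd g$ (recalling $\cSch{\dd f, \dd g}_{P_{0}} = \dd\{f,g\}_{0}$), which on the central-valued cochain $\nu$ coincides with $\overline{\partial}_{\D}\nu$. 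Hence $\K' = \K - t\,\overline{\partial}_{\D}\nu = \K - t\mathscr{C}$, which is exactly the Poisson triple of $\PP$ when compared with that of $\PP^{t}$ read off as $\big([\,,\,]_{1},\D,\K+t\mathscr{C}\big)$ with the roles swapped; by Theorem \ref{teo:GaugeEquiv}, $\phi_{-t\nu}$ is a Poisson isomorphism $\PP^{t} \to \PP$ for every $t \in R$, giving the desired $t$--parameterized family of $\mu$--gauge transformations.

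For the ``only if'' direction, suppose there is a family of $\mu$--gauge transformations, say with parameter maps $\mu_{t}: \Omega^{1}_{P_{0}} \to P_{1}$, inducing Poisson isomorphisms $\PP^{t} \to \PP$. Comparing the Poisson triples via \eqref{EcPT1prima}--\eqref{EcPT3prima} with $\phi_{00} = \mathrm{id}$, $\phi_{11} = \mathrm{id}$, $\phi_{10}(f) = \mu_{t}(\dd f)$: formula \eqref{EcPT2prima} forces $[\mu_{t}(\dd f), \eta]_{1} = 0$ for all $\eta$, so $\mu_{t}$ is central-valued, $\mu_{t} \in \overline{\Gamma}^{1}_{P_{0}}(\Omega^{1}_{P_{0}}; Z_{\PP}(P_{1}))$; then formula \eqref{EcPT3prima} collapses, as above, to $\K = (\K + t\mathscr{C}) + t\,\D_{\dd f}\mu_{t}(\dd g) - \cdots = \K + t\mathscr{C} - t\,\overline{\partial}_{\D}\mu_{t}$, whence $t\mathscr{C} = t\,\overline{\partial}_{\D}\mu_{t}$ in $R$. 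Taking $t=1$ (or using that $R$ acts and extracting the $t$--linear part, with the mild caveat about torsion in $R$ that one sidesteps by simply specializing $t$ to a unit), we get $\mathscr{C} = \overline{\partial}_{\D}\mu_{1}$, so the cohomology class of $\mathscr{C}$ is trivial. The main obstacle I anticipate is purely bookkeeping: carefully checking that the various restrictions of $\overline{\dd}_{\D}$ to the central-valued subcomplex, the identification of $(\overline{\partial}_{\D}\nu)(\dd f,\dd g)$ with the combination of $\D$-terms and the $\nu(\{f,g\}_{0})$ term appearing in \eqref{EcPT3prima}, and the centrality arguments that kill the bracket terms, all line up with the correct signs — the conceptual content is entirely carried by Theorem \ref{teo:GaugeEquiv} and the cocycle identity for $\mathscr{C}$.
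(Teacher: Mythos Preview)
Your approach is correct and essentially identical to the paper's: identify the deformed Poisson triple as $([\,,\,]_{1},\D,\K+t\mathscr{C})$, then use the transition rules \eqref{EcPT1prima}--\eqref{EcPT3prima} for $\mu$--gauge transformations and specialize at $t=1$. The only slips are in the bookkeeping you yourself flagged: with the convention of Proposition~\ref{prop:GaugeInduces} for $\phi_{\mu_t}:\PP^{t}\to\PP$ one obtains $\K+t\mathscr{C}=\K+\overline{\partial}_{\D}\mu_{t}$, so one should take $\mu_{t}=t\nu$ (not $-t\nu$), and in the converse the relation reads $t\mathscr{C}=\overline{\partial}_{\D}\mu_{t}$ without an extra factor of $t$ on the right-hand side.
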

\begin{proof}
Taking into account that the Poisson triple corresponding to \ec{\PP^{t}} is given by
    \begin{equation*}
        \big( [\,,\,]_{1}, \D, \K + t\mathscr{C} \big),
    \end{equation*}
the result follows by adapting the proof of Theorem \ref{teo:Kequivalence} to the case when $\D$ is not necessarily flat and \ec{\K'-\K \in \overline{\Gamma}^{2}_{P_{0}}}. Indeed, if for every $t$ we have that \ec{\PP^{t}} and $\PP$ are isomorphic via a $\mu_{t}$--gauge transformation \ec{\phi_{\mu_{t}}:\PP^{t} \to \PP}, then \ec{\mu_{t} \in \overline{\Gamma}^{1}_{P_{0}}} and \ec{t\mathscr{C} = \overline{\partial}_{\D}\mu_{t}}. In particular, \ec{\mathscr{C} = \overline{\partial}_{\D}\mu_{1}}. Conversely, if \ec{\mathscr{C} = \overline{\partial}_{\D}\vartheta} for some \ec{\vartheta \in \overline{\Gamma}^{1}_{P_{0}}}, then we take \ec{\mu_{t} = t\vartheta}.
\end{proof}

We can think of the family of brackets defined in \eqref{eq:tFamily} as a one-parametric deformation of the initial admissible Poisson structure \ec{\{\,,\,\}}. Hence, Theorem \ref{teo:NewAdmissible} says that this family defines an \emph{exact (infinitesimal) deformation} of \ec{\{\,,\,\}} by means of 2--cocycles of the cochain complex \ec{(\overline{\Gamma}^{\ast}_{\PP}, \overline{\partial}_{\D})} associated to $\PP$. Moreover, it gives sufficient cohomological conditions for the deformation to be \emph{trivial}.

\begin{corollary}\label{cor:Gamma1Admissible}
For every admissible Poisson algebra $\PP$ there exists a $t$--parameterized family of admissible Poisson algebras \ec{(P,\{\,,\,\}^{t})} isomorphic to $\PP$ given by
    \begin{equation*}
        \{f \oplus \eta, g \oplus \xi\}^{t}_{C} = \{f,g\}_{0} \oplus \big( \D_{\dd{f}}\big( \xi + tC(\dd{g}) \big) - \D_{\dd{g}}\big( \eta + tC(\dd{f}) \big) - tC\big( \dd\{f,g\}_{0} \big) + [\eta,\xi]_{1} + \K(\dd{f},\dd{g}) \big),
    \end{equation*}
with \ec{C \in \Gamma^{1}_{P_{0}}} arbitrary and \ec{f \oplus \eta, g \oplus \xi \in P}.
\end{corollary}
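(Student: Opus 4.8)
The plan is to recognize the displayed family as an \emph{exact} deformation of $\PP$ and to read everything off from Theorem \ref{teo:NewAdmissible}.

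First I would observe that a $P_{0}$--linear mapping $C\colon\Omega^{1}_{P_{0}}\to Z_{\PP}(P_{1})$, i.e.\ $C\in\overline{\Gamma}^{1}_{P_{0}}$, produces a canonical $2$--cocycle of the cochain complex $(\overline{\Gamma}^{\ast}_{\PP},\overline{\partial}_{\D})$ from \eqref{eq:Complex}, namely $\mathscr{C}:=\overline{\partial}_{\D}C$. Indeed, since the contravariant derivative $\D$ attached to $\PP$ satisfies \eqref{eq:Dpreserves} it leaves the center $Z_{\PP}(P_{1})$ invariant, and $\overline{\partial}_{\D}$ is a coboundary operator by \eqref{eq:Partial}; hence $\mathscr{C}\in\overline{\Gamma}^{2}_{P_{0}}$ and $\mathscr{C}$ is $\overline{\partial}_{\D}$--exact, in particular $\overline{\partial}_{\D}$--closed. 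Evaluating the degree--one part of the defining formula \eqref{eq:dD} on exact one--forms and using $\cSch{\dd f,\dd g}_{P_{0}}=\dd\{f,g\}_{0}$ gives
\begin{equation*}
\mathscr{C}(\dd f,\dd g)=\D_{\dd f}C(\dd g)-\D_{\dd g}C(\dd f)-C\big(\dd\{f,g\}_{0}\big),\qquad f,g\in P_{0}.
\end{equation*}

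Next I would feed $\mathscr{C}$ into Theorem \ref{teo:NewAdmissible}. That theorem yields a $t$--parameterized family $\PP^{t}=(P,\{\,,\,\}^{t})$ of admissible Poisson algebras with Poisson triple $([\,,\,]_{1},\D,\K+t\mathscr{C})$ and $\PP^{0}=\PP$; and since the $\overline{\partial}_{\D}$--cohomology class of $\mathscr{C}$ is trivial, the same theorem supplies a $t$--parameterized family of $\mu$--gauge transformations (with $\mu_{t}=tC$) realizing Poisson isomorphisms $\PP^{t}\simeq\PP$. It then only remains to make the bracket explicit: combining \eqref{eq:tFamily} with \eqref{EcBracketPT} and the formula for $\mathscr{C}$ above,
\begin{equation*}
\{f\oplus\eta,g\oplus\xi\}^{t}=\{f,g\}_{0}\oplus\big(\D_{\dd f}\xi-\D_{\dd g}\eta+[\eta,\xi]_{1}+\K(\dd f,\dd g)+t\,\mathscr{C}(\dd f,\dd g)\big),
\end{equation*}
and regrouping the $\D$--terms using $R$--linearity of $\D$ in its second argument, $\D_{\dd f}\xi+t\,\D_{\dd f}C(\dd g)=\D_{\dd f}(\xi+tC(\dd g))$ and similarly for the $\dd g$ term, reproduces verbatim the stated expression for $\{\,,\,\}^{t}_{C}$.

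I do not expect a genuine obstacle here: the corollary is essentially a restatement of the ``exact'' half of Theorem \ref{teo:NewAdmissible} applied to $\mathscr{C}=\overline{\partial}_{\D}C$, and the only points requiring care are that the values of $C$ must be central for $\mathscr{C}$ to live in the correct cochain complex, and the purely bookkeeping identification of the expanded bracket with the displayed closed form. An equivalent route, which I would mention as an alternative, is to argue directly with the $\mu_{t}$--gauge transformation $\phi_{\mu_{t}}$ of the form \eqref{eq:GaugePhi10} with $\phi_{10}f=tC(\dd f)$: by Corollary \ref{cor:GaugeInduces} it induces an admissible Poisson structure $\phi_{\mu_{t}}$--equivalent to $\PP$, whose Poisson triple is computed from the transition rules \eqref{EcPT1prima}--\eqref{EcPT3prima} with $\phi_{00}=\mathrm{id}_{P_{0}}$, $\phi_{11}=\mathrm{id}_{P_{1}}$, $\phi_{01}=0$; centrality of the values of $C$ kills every $[\,,\,]_{1}$--contribution there, leaving exactly $([\,,\,]_{1},\D,\K+t\,\overline{\partial}_{\D}C)$ and the same bracket.
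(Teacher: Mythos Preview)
Your proposal is correct and matches the paper's intended argument: the corollary is an immediate application of Theorem \ref{teo:NewAdmissible} with $\mathscr{C}=\overline{\partial}_{\D}C$, whose $\overline{\partial}_{\D}$--class is trivially trivial, followed by the explicit regrouping of the bracket that you carry out. The paper gives no separate proof beyond the sentence following the corollary, which paraphrases exactly your first route; your alternative via the $\mu_{t}$--gauge transformation and the transition rules \eqref{EcPT1prima}--\eqref{EcPT3prima} is likewise valid and is in fact how the converse direction of Theorem \ref{teo:NewAdmissible} is proved.
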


In other words, every $P_{0}$--linear mapping \ec{\Omega^{1}_{P_{0}} \to Z_{\PP}(P_{1})} defines \emph{exact trivial deformations} of admissible Poisson structures.

    \section{Poisson and Hamiltonian Derivations}\label{sec:hamder}

In this section, we present detailed descriptions of Poisson and Hamiltonian derivations of admissible Poisson algebras with some further applications to the computation of the first Poisson cohomology (Section \ref{sec:FirstCohomology}).

Let \ec{X:P \to P} be an $R$--linear mapping. Then, similar to \eqref{EcPhi} and \eqref{EcPhiMtx}, $X$ admits a unique representation \ec{X(f \oplus \eta) = (X_{00}f + X_{01}\eta) \oplus (X_{10}f + X_{11}\eta)}, for \ec{f \oplus \eta \in P}. Or, equivalently, one can write
    \begin{equation}\label{EcX}
        X = \left(
                 \begin{array}{cc}
                    X_{00} & X_{01} \\
                    X_{10} & X_{11} \\
                 \end{array}
               \right).
    \end{equation}

\begin{lemma}\label{lemma:Xpreserving}
An $R$--linear mapping \ec{X: P \to P} preserves
    \begin{enumerate}[label=(\alph*)]
      \item the commutative algebra \ec{P_{0}} if and only if \ec{X_{10}} vanishes:
                \begin{equation}\label{eq:X10Vanish}
                    X(P_{0} \oplus \{0\}) \subseteq P_{0} \oplus \{0\}
                        \quad \Longleftrightarrow \quad
                    X = \left(
                     \begin{array}{cc}
                        X_{00} & X_{01} \\
                        0 & X_{11} \\
                     \end{array}
                   \right);
                \end{equation}
      \item the \ec{P_{0}}--module \ec{P_{1}} if and only if \ec{X_{01}} vanishes:
                \begin{equation}\label{eq:X01Vanish}
                    X(\{0\} \oplus P_{1}) \subseteq \{0\} \oplus P_{1}
                        \quad \Longleftrightarrow \quad
                    X = \left(
                     \begin{array}{cc}
                        X_{00} & 0 \\
                        X_{10} & X_{11} \\
                     \end{array}
                   \right).
                \end{equation}
    \end{enumerate}
\end{lemma}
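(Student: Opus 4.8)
The statement consists of two claims of the same shape, so the plan is to prove (a) carefully and then note that (b) follows by exchanging the roles of the two summands. The only structure used is that $P = P_0 \oplus P_1$ as an $R$--module, with $\pr_0,\pr_1$ and $\iota_0,\iota_1$ the associated projections and inclusions, and that by definition the blocks in \eqref{EcX} are $X_{ij} = \pr_i \circ X \circ \iota_j$.

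For (a): for every $f \in P_0$ one has $X(\iota_0 f) = X(f \oplus 0) = (X_{00}f) \oplus (X_{10}f)$. The inclusion $X(P_0 \oplus \{0\}) \subseteq P_0 \oplus \{0\}$ means precisely that $\pr_1 \circ X \circ \iota_0 = 0$, i.e.\ $X_{10}f = 0$ for all $f \in P_0$; since this holds for every $f$, it upgrades to the operator identity $X_{10} = 0$. Conversely, if $X_{10} = 0$ then $X(f \oplus 0) = (X_{00}f) \oplus 0 \in P_0 \oplus \{0\}$, which gives the inclusion. The matrix equivalence displayed in \eqref{eq:X10Vanish} is just a restatement of $X_{10} = 0$. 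For (b), identically, $X(\iota_1 \eta) = X(0 \oplus \eta) = (X_{01}\eta) \oplus (X_{11}\eta)$ for every $\eta \in P_1$, so $X(\{0\} \oplus P_1) \subseteq \{0\} \oplus P_1$ is equivalent to $\pr_0 \circ X \circ \iota_1 = 0$, i.e.\ $X_{01}\eta = 0$ for all $\eta$, i.e.\ $X_{01} = 0$; the converse and the matrix form follow as before.

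There is essentially no obstacle: the argument is a direct unwinding of the block decomposition, and the only point worth spelling out is that each inclusion is asserted for \emph{all} elements of the relevant summand, so the pointwise vanishing conditions $X_{10}f = 0$ and $X_{01}\eta = 0$ genuinely give the operator identities $X_{10} = 0$ and $X_{01} = 0$.
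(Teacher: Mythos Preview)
Your proof is correct and is exactly the natural argument; the paper does not give an explicit proof of this lemma, treating it as immediate from the block decomposition \ec{X_{ij} = \pr_i \circ X \circ \iota_j}. Your careful unwinding of both directions is the intended reasoning.
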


Let us introduce the set
    \begin{equation}\label{epsilon}
        \varepsilon(P_1; P_{0}) := \big\{ T: P_{1} \to P_{0} \mid T(f\eta) = f\,T(\eta) \,\ \text{and} \,\ T(\xi)\eta + T(\eta)\xi = 0,\ \text{for all}\ f \in P_{0};\ \eta,\xi \in P_{1} \big\},
    \end{equation}
and the \ec{P_{0}}--module of $R$--linear \ec{P_{1}}--valued derivations of \ec{P_{0}}
    \begin{equation}\label{eq:DerP0P1}
        \mathrm{Der}(P_{0}; P_{1}) := \{D: P_{0} \to P_{1} \mid D(fg) = fDg + gDf,\ \text{for all}\ f,g \in P_{0}\}.
    \end{equation}
We remark that the set \ec{\varepsilon(P_1; P_{0})} is non--empty, in general \cite{Zhu19}.

\begin{lemma}\label{lema:PD}
An $R$--linear mapping \ec{X:P \to P} is a derivation of the commutative algebra $P$ iff
    \begin{enumerate}[label=(\roman*)]
        \item \label{Der1} \ec{X_{00}} is a derivation of \ec{P_{0}}, \ec{X_{00} \in \mathrm{Der}(P_{0})};
        \item \label{Der2} \ec{X_{01}\in \varepsilon(P_1;P_{0})};
        \item \label{Der3} \ec{X_{10}} is a \ec{P_{1}}--valued derivation of \ec{P_{0}}, \ec{X_{10} \in \mathrm{Der}(P_{0};P_{1})}; and
        \item \label{Der4} \ec{X_{11}} is a \emph{generalized \ec{X_{00}}--derivation} of \ec{P_{1}},
                \begin{equation*}
                    X_{11}(f\eta) = fX_{11}\eta + (X_{00}f)\eta;
                \end{equation*}
    \end{enumerate}
for all \ec{f,g\in P_{0}} and \ec{\eta\in P_{1}}.
\end{lemma}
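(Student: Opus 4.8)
The plan is to characterize when $X$ is a derivation of $P$ by expanding the Leibniz rule $X(\pi_1 \cdot \pi_2) = X(\pi_1)\cdot \pi_2 + \pi_1 \cdot X(\pi_2)$ using the block decomposition \eqref{EcX} and the product formula \eqref{EcPproduct}, then reading off the component equations. First I would take general elements $\pi_1 = f \oplus \eta$ and $\pi_2 = g \oplus \xi$ of $P$ and compute both sides: by \eqref{EcPproduct}, $\pi_1 \cdot \pi_2 = fg \oplus (f\xi + g\eta)$, so the left-hand side is
    \begin{equation*}
        X(fg \oplus (f\xi + g\eta)) = \big( X_{00}(fg) + X_{01}(f\xi + g\eta) \big) \oplus \big( X_{10}(fg) + X_{11}(f\xi + g\eta) \big),
    \end{equation*}
while the right-hand side, again using \eqref{EcPproduct}, is
    \begin{equation*}
        \big( (X_{00}f)g + f(X_{00}g) + (X_{01}\eta)g + f(X_{01}\xi) \big) \oplus \big( \cdots \big),
    \end{equation*}
with an analogous but longer expression in the $P_1$-component. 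Matching the $P_0$-components and the $P_1$-components of these two expressions gives a system of functional equations in $X_{00}, X_{01}, X_{10}, X_{11}$.

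Next I would specialize the resulting identities to isolate each block map. Setting $\eta = \xi = 0$ in the $P_0$-component equation yields $X_{00}(fg) = (X_{00}f)g + f(X_{00}g)$, i.e.\ condition \ref{Der1}. Setting $\eta = \xi = 0$ in the $P_1$-component equation yields $X_{10}(fg) = f(X_{10}g) + g(X_{10}f)$, i.e.\ condition \ref{Der3} (using commutativity of $P_0$). Then, setting $f = g = 0$ (or more carefully isolating terms linear in $\eta,\xi$): from the $P_0$-component we must get both the $\phi_{00}$-type semilinearity $X_{01}(f\xi) = f X_{01}(\xi)$ and, by symmetrizing in $\eta \leftrightarrow \xi$, the zero-squared-type relation $X_{01}(\xi)\eta + X_{01}(\eta)\xi = 0$; these two together say $X_{01} \in \varepsilon(P_1; P_0)$, condition \ref{Der2}. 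Finally the cross terms in the $P_1$-component, after using the relations already obtained, reduce to $X_{11}(f\eta) = f X_{11}(\eta) + (X_{00}f)\eta$, which is condition \ref{Der4}. For the converse, I would simply substitute the four conditions back into the expanded Leibniz identity and check term by term that both sides agree; this is routine given the specialized identities above.

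The only mildly delicate point — and the step I expect to require the most care rather than being the main obstacle — is the bookkeeping in separating the $P_0$-component equation into its two consequences \ref{Der2}. The equation $X_{01}(f\xi + g\eta) = (X_{01}\eta)g + f(X_{01}\xi)$ (the genuinely new part of the $P_0$-component, after removing the $X_{00}$ terms) must be untangled by first setting $g = 0$, $\eta$ arbitrary to get semilinearity of $X_{01}$, and then using that semilinearity back in the full equation — together with the symmetry of the left side under swapping $(f,\eta) \leftrightarrow (g,\xi)$ — to extract the vanishing relation $T(\xi)\eta + T(\eta)\xi = 0$ defining $\varepsilon(P_1;P_0)$. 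Similarly the $P_1$-component must be handled by first disposing of the pure-$P_0$ part (condition \ref{Der3}) and the $X_{01}$-contributions (which vanish in the relevant component of the product since $P_1 \cdot P_1$-type terms do not appear: elements of $P_1$ multiply trivially among themselves in $P$), leaving exactly the generalized derivation relation \ref{Der4}. No deeper structural input is needed: everything follows from \eqref{EcPproduct}, the block form \eqref{EcX}, and $R$-linearity, so the proof is a direct verification once the components are correctly separated.
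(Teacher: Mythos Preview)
Your approach is essentially the paper's: expand the Leibniz rule in block form and specialize to extract each condition. However, there is a bookkeeping slip. The relation $(X_{01}\xi)\eta + (X_{01}\eta)\xi = 0$ takes values in $P_1$, not $P_0$, so it cannot be extracted from the $P_0$-component equation as you claim. In fact, once you have semilinearity, the residual $P_0$-equation $X_{01}(f\xi + g\eta) = g\,X_{01}\eta + f\,X_{01}\xi$ becomes a tautology and yields nothing further. The zero-squared relation instead comes from the $P_1$-component: with $f = g = 0$ the $P_1$-equation reads $0 = (X_{01}\eta)\xi + (X_{01}\xi)\eta$. Correspondingly, your assertion that the $X_{01}$-contributions to the $P_1$-component ``vanish'' is wrong --- those terms are precisely what produce this half of condition \ref{Der2}.

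The paper's organization makes this cleaner: take $\pi_1 = f \oplus 0$, $\pi_2 = g \oplus 0$ to get \ref{Der1} and \ref{Der3}; take $\pi_1 = 0 \oplus \eta$, $\pi_2 = 0 \oplus \xi$ to get the second half of \ref{Der2}; take $\pi_1 = f \oplus 0$, $\pi_2 = 0 \oplus \eta$ to get \ref{Der4} and the semilinearity part of \ref{Der2}. With this correction your argument goes through and coincides with the paper's.
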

\begin{proof}
By definition, $X$ is a derivation of $P$ if and only if
    \begin{equation}\label{eq:XderProof}
        X(\pi_{1}\,\pi_{2}) = X(\pi_{1})\pi_{2} + \pi_{1}X(\pi_{2}),
    \end{equation}
for any \ec{\pi_{1},\pi_{2} \in P}. In particular, taking into account the product \eqref{EcPproduct}, we have that if \ec{\pi_{1} = f \oplus 0} and \ec{\pi_{2} = g \oplus 0}, then condition \eqref{eq:XderProof} implies the items \ref{Der1} and \ref{Der3}. If \ec{\pi_{1} = 0 \oplus \eta} and \ec{\pi_{2} = 0 \oplus \xi}, then \eqref{eq:XderProof} implies \ref{Der2}. If \ec{\pi_{1} = f \oplus 0} and \ec{\pi_{2} = 0 \oplus \eta}, then \eqref{eq:XderProof} implies \ref{Der4}. Reciprocally, by straightforward computations, one can show that conditions \ref{Der1}--\ref{Der4} for $X$ imply \eqref{eq:XderProof}.
\end{proof}

\begin{corollary}\label{cor:DerP1toP0}
Every derivation of the trivial extension algebra \ec{P_{0} \ltimes P_{1}} induces a derivation of the commutative algebra $P_{0}$.
\end{corollary}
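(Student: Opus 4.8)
The statement, Corollary~\ref{cor:DerP1toP0}, asserts that every derivation of the trivial extension algebra $P = P_{0} \ltimes P_{1}$ induces a derivation of the commutative algebra $P_{0}$. The plan is to deduce this directly from Lemma~\ref{lema:PD}, which characterizes the derivations of the commutative algebra $P$ in terms of the four block components $X_{00}, X_{01}, X_{10}, X_{11}$. Concretely, let $X \in \mathrm{Der}(P)$ be a derivation of the commutative algebra $P$, written in block form as in \eqref{EcX}. By part \ref{Der1} of Lemma~\ref{lema:PD}, the component $X_{00} = \pr_{0} \circ X \circ \iota_{0}$ is a derivation of $P_{0}$, i.e.\ $X_{00} \in \mathrm{Der}(P_{0})$. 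This is already the required conclusion once we specify the construction: the induced derivation of $P_{0}$ is precisely $X \mapsto X_{00}$.

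The only point worth spelling out is \emph{why} $X_{00}$ satisfies the Leibniz rule on $P_{0}$, which is exactly the content of the $\pi_{1} = f \oplus 0$, $\pi_{2} = g \oplus 0$ case in the proof of Lemma~\ref{lema:PD}: evaluating $X((f \oplus 0)\cdot(g \oplus 0)) = X(fg \oplus 0)$ and comparing with $X(f \oplus 0)\cdot(g \oplus 0) + (f \oplus 0)\cdot X(g \oplus 0)$ using the product \eqref{EcPproduct}, the $P_{0}$--component yields $X_{00}(fg) = (X_{00}f)g + f(X_{00}g)$. Since this argument has already been carried out in Lemma~\ref{lema:PD}, the corollary follows with essentially no further work. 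It may also be worth remarking that $X_{00}$ is the unique map making $\pr_{0} \circ X = X_{00} \circ \pr_{0}$ hold on $P_{0} \oplus \{0\}$, so the assignment $X \mapsto X_{00}$ is canonical and $R$--linear.

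There is no real obstacle here: the corollary is an immediate bookkeeping consequence of the block decomposition established in Lemma~\ref{lema:PD}. If any care is needed, it is only in phrasing the statement precisely --- namely, identifying which of the four blocks gives the induced derivation and confirming that no hypothesis beyond ``$X$ is a derivation of the commutative algebra $P$'' is used (in particular, the Poisson structure plays no role in this particular corollary, unlike in the analogous statements for Poisson derivations that will follow).
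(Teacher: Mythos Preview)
Your proposal is correct and matches the paper's approach exactly: the corollary is stated immediately after Lemma~\ref{lema:PD} with no separate proof, so it is intended as an immediate consequence of part~\ref{Der1} of that lemma, namely that the block component $X_{00}$ of any derivation $X$ of $P$ is a derivation of $P_{0}$.
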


In the remainder of this section, we assume that \ec{\PP=(P,\{\,,\,\})} is an admissible Poisson algebra and \ec{([\,,\,]_{1},\D,\K)} is the corresponding Poisson triple of $P$. Also, for each \ec{f \oplus \eta \in P}, we define the following element of \ec{\mathrm{Der}(P_{0};P_{1})} associated to $\PP$:
    \begin{equation}\label{EcTorsionDef}
    	\mathscr{T}_{f \oplus \eta} \equiv \mathscr{T}^{\PP}_{f \oplus \eta} \,:=\, \K\big(\dd{f}, \dd(\,\cdot\,)\big) - \D_{\dd (\cdot)}\eta.
    \end{equation}

\paragraph{Characterization of Casimir Elements.}
First, we denote the kernel of the $R$--linear mapping \ec{f \oplus \eta \mapsto \mathscr{T}_{f \oplus \eta}} by
    \begin{equation*}
        \ker{\mathscr{T}} = \big\{f \oplus \eta \in P \mid \mathscr{T}_{f \oplus \eta} = 0\big\}.
    \end{equation*}

\begin{proposition}\label{prop:Casim}
An element \ec{k \oplus \zeta \in P} is a Casimir element of \ec{\PP} if and only if
    \begin{equation}\label{eq:CasimDescrib}
        k \oplus \zeta \in \big( \mathrm{Casim}(\PP_{0}) \oplus P_{1} \big) \cap \ker{\mathscr{T}}
        \quad \text{and} \quad \D_{\dd{k}} + [\zeta,\cdot]_{1} = 0.
    \end{equation}
\end{proposition}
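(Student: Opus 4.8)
The plan is to unravel the Casimir condition $\{k\oplus\zeta,\,f\oplus\eta\}=0$ for all $f\oplus\eta\in P$ using the explicit formula \eqref{EcBracketPT} for the admissible bracket in terms of the Poisson triple $([\,,\,]_{1},\D,\K)$. Since
    \begin{equation*}
        \{k\oplus\zeta,\,f\oplus\eta\}=\{k,f\}_{0}\oplus\big(\D_{\dd k}\eta-\D_{\dd f}\zeta+[\zeta,\eta]_{1}+\K(\dd k,\dd f)\big),
    \end{equation*}
this vanishes for all $f\oplus\eta$ if and only if two independent families of equations hold: $\{k,f\}_{0}=0$ for all $f\in P_{0}$ (the $P_{0}$--component), and $\D_{\dd k}\eta-\D_{\dd f}\zeta+[\zeta,\eta]_{1}+\K(\dd k,\dd f)=0$ for all $f\in P_{0}$, $\eta\in P_{1}$ (the $P_{1}$--component). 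The first family says precisely $k\in\mathrm{Casim}(\PP_{0})$.

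First I would exploit the $R$--linearity of the $P_{1}$--component equation in the pair $(f,\eta)$ to split it. Setting $\eta=0$ gives $-\D_{\dd f}\zeta+\K(\dd k,\dd f)=0$ for all $f\in P_{0}$; recognizing $\mathscr{T}_{k\oplus\zeta}(\cdot)=\K(\dd k,\dd(\cdot))-\D_{\dd(\cdot)}\zeta$ from \eqref{EcTorsionDef}, this is exactly $\mathscr{T}_{k\oplus\zeta}=0$, i.e. $k\oplus\zeta\in\ker\mathscr{T}$. Subtracting this off, the remaining content of the $P_{1}$--component equation is $\D_{\dd k}\eta+[\zeta,\eta]_{1}=0$ for all $\eta\in P_{1}$, which is precisely the operator identity $\D_{\dd k}+[\zeta,\cdot]_{1}=0$. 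Conversely, if $k\in\mathrm{Casim}(\PP_{0})$, $k\oplus\zeta\in\ker\mathscr{T}$, and $\D_{\dd k}+[\zeta,\cdot]_{1}=0$, then reassembling these three facts shows every bracket $\{k\oplus\zeta,f\oplus\eta\}$ vanishes, since an arbitrary element of $P$ is $f\oplus\eta$ and the bracket is $R$--bilinear.

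The only subtlety worth care is the reduction from ``the bracket with every $f\oplus\eta$ vanishes'' to ``the bracket with every $f\oplus 0$ and every $0\oplus\eta$ vanishes.'' This is immediate from $R$--bilinearity: $f\oplus\eta=(f\oplus 0)+(0\oplus\eta)$, so $\{k\oplus\zeta,f\oplus\eta\}=\{k\oplus\zeta,f\oplus 0\}+\{k\oplus\zeta,0\oplus\eta\}$, and since the two summands lie in the (independent) images governed respectively by the $\D_{\dd f}\zeta,\K(\dd k,\dd f)$ terms and the $\D_{\dd k}\eta,[\zeta,\eta]_{1}$ terms, each must vanish separately when expanded against arbitrary $f$ and arbitrary $\eta$. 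I do not expect a genuine obstacle here; the proof is a direct bookkeeping argument once formula \eqref{EcBracketPT} and the definition \eqref{EcTorsionDef} of $\mathscr{T}$ are in hand. One should simply be careful to phrase the final ``if and only if'' so that the constraint $k\oplus\zeta\in(\mathrm{Casim}(\PP_{0})\oplus P_{1})\cap\ker\mathscr{T}$ captures the two linear conditions and $\D_{\dd k}+[\zeta,\cdot]_{1}=0$ captures the third, matching \eqref{eq:CasimDescrib} exactly.
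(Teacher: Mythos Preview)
Your proof is correct and is the natural direct argument: expand $\{k\oplus\zeta,f\oplus\eta\}$ via \eqref{EcBracketPT}, read off $k\in\mathrm{Casim}(\PP_{0})$ from the $P_{0}$--component, and split the $P_{1}$--component by specializing to $\eta=0$ and $f=0$ to obtain $\mathscr{T}_{k\oplus\zeta}=0$ and $\D_{\dd k}+[\zeta,\cdot]_{1}=0$ respectively.

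The paper's discussion frames the result slightly differently: rather than stopping at the bare computation, it records the additional structural observations that $\ker\mathscr{T}$ is a Lie subalgebra of $\PP$ and that $(\mathrm{Casim}(\PP_{0})\oplus P_{1})\cap\ker\mathscr{T}$ is a Poisson subalgebra, both deduced from the identity \eqref{eq:TorBracket} for $\mathscr{T}_{\{f\oplus\eta,\,g\oplus\xi\}}$. These facts are not strictly needed to establish the characterization \eqref{eq:CasimDescrib} itself---your argument already does that---but they explain why the conditions in \eqref{eq:CasimDescrib} are compatible with the algebraic structure and set up the later description \eqref{eq:TeoHamCasim} and the subalgebra $\mathrm{Ham}_{0}(\PP)$. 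So your route is the more economical one for the proposition as stated; the paper's route packages in extra information used downstream.
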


Note that the second condition in \eqref{eq:CasimDescrib} is well--defined since \ec{\D_{\dd{k}}:P_{1} \to P_{1}} is a \ec{P_{0}}--linear mapping for every \ec{k \in \mathrm{Casim}(\PP_{0})}.

\begin{corollary}
Every Casimir element of \ec{\PP=(P_{0} \ltimes P_{1}, \{\,,\,\})} induces a Casimir element of the Poisson algebra \ec{\PP_{0}=(P_{0},\{\,,\,\}_{0})}.
\end{corollary}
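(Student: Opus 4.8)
The plan is to obtain the statement as an immediate consequence of the characterization of Casimir elements in Proposition \ref{prop:Casim}. Given a Casimir element $k \oplus \zeta \in P$ of $\PP$, the first of the two conditions in \eqref{eq:CasimDescrib} states that $k \oplus \zeta \in \big(\mathrm{Casim}(\PP_{0}) \oplus P_{1}\big) \cap \ker{\mathscr{T}}$, which in particular forces $k = \pr_{0}(k \oplus \zeta) \in \mathrm{Casim}(\PP_{0})$. So the claim is just the observation that the natural projection $\pr_{0}$ maps $\mathrm{Casim}(\PP)$ into $\mathrm{Casim}(\PP_{0})$, the ``induced'' Casimir being $\pr_{0}(k \oplus \zeta) = k$.

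I would also record a self-contained argument that does not invoke Proposition \ref{prop:Casim} and uses only admissibility. By Definition \ref{def:GPA} the projection $\pr_{0}: P \to P_{0}$ is a Poisson morphism, so $\pr_{0}\{k \oplus \zeta, \iota_{0}f\} = \{k, f\}_{0}$ for every $f \in P_{0}$, by \eqref{EcGPA}. If $k \oplus \zeta$ is a Casimir of $\PP$, then $\{k \oplus \zeta, \iota_{0}f\} = 0$, and applying $\pr_{0}$ yields $\{k, f\}_{0} = 0$; since $f \in P_{0}$ is arbitrary, $k \in \mathrm{Casim}(\PP_{0})$. This is really the general fact that a \emph{surjective} Poisson morphism sends Casimir elements to Casimir elements, applied to the surjection $\pr_{0}$.

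There is no genuine obstacle here: the statement is a one-line corollary either of Proposition \ref{prop:Casim} or of the admissibility condition \eqref{EcGPA}. The only point deserving a word of care is the meaning of ``induces'': it must be read as ``induces via the natural projection $\pr_{0}$'', since the second component $\zeta$ of a Casimir of $\PP$ is constrained by the extra condition $\D_{\dd{k}} + [\zeta,\cdot]_{1} = 0$ in \eqref{eq:CasimDescrib} and need not vanish in general.
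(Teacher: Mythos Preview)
Your proposal is correct and your primary argument---reading off $k \in \mathrm{Casim}(\PP_{0})$ from the characterization in Proposition \ref{prop:Casim}---is exactly how the paper obtains the corollary (no separate proof is given there; it is immediate from \eqref{eq:CasimDescrib}). Your second, self-contained argument via the surjective Poisson morphism $\pr_{0}$ is a clean alternative that the paper does not spell out, and your remark on the meaning of ``induces'' is apt.
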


Proposition \ref{prop:Casim} follows from the definition of Casimir element and the following facts: the set \ec{\ker{\mathscr{T}}} is a Lie subalgebra of $\PP$, \ec{\{\ker{\mathscr{T}},\ker{\mathscr{T}}\} \subseteq \ker{\mathscr{T}}}. Moreover, the intersection \ec{(\mathrm{Casim}(\PP_{0}) \oplus P_{1}) \cap \ker{\mathscr{T}}} is a Poisson subalgebra of $\PP$. These claims are deduced from the formula
    \begin{equation}\label{eq:TorBracket}
    	\mathscr{T}_{\{f \oplus \eta, g \oplus \xi\}} =
    	\mathscr{T}_{f \oplus \eta} \circ \mathrm{ad}^{0}_{g}
    	- \mathscr{T}_{g \oplus \xi} \circ \mathrm{ad}^{0}_{f} \\
    	+ \big(\D_{\dd{f}} + \mathrm{ad}_{\eta}\big) \circ \mathscr{T}_{g \oplus \xi}
    	- \big(\D_{\dd{g}} + \mathrm{ad}_{\xi}\big) \circ \mathscr{T}_{f \oplus \eta},
    \end{equation}
for all \ec{f \oplus \eta, g \oplus \xi \in P}. Here, \ec{\mathrm{ad}^{0}_{f}:=\{f,\cdot\}_{0}} and \ec{\mathrm{ad}_{\eta}:=[\eta,\cdot]_{1}}.

\begin{examplex}\label{exm:SympType}
Let $\PP$ be an admissible Poisson algebra. If \ec{\PP_{0}} is of \emph{symplectic type}, \ec{\mathrm{Casim}(\PP_{0})=R}, then
    \begin{equation*}
        \mathrm{Casim}(\PP) = R \oplus \mathrm{H}^{0}_{\overline{\partial}_{\D}}(\overline{\Gamma}^{\ast}_{\PP} ),
    \end{equation*}
where \ec{\mathrm{H}_{\overline{\partial}_{\D}}^{0}(\overline{\Gamma}^{\ast}_{\PP})} is the zero cohomology group of the cochain complex \ec{(\overline{\Gamma}^{\ast}_{\PP}, \overline{\partial}_{\D})} in \eqref{eq:Complex}. In particular, \ec{\mathrm{Casim}(\PP) \simeq R} if and only if \ec{\mathrm{H}^{0}_{\overline{\partial}_{\D}}(\overline{\Gamma}^{\ast}_{\PP}) = \{0\}}. This is the case, for example, if the Lie algebra \ec{(P_{1},[\,,\,]_{1})} is centerless.
\end{examplex}

\paragraph{Characterization of Poisson Derivations.} We derive the following description of Poisson derivations of $\PP$ in terms of the corresponding Poisson triple.

\begin{proposition}\label{prop:PoissDer}
An $R$--linear mapping \ec{X:P \to P}, satisfying conditions \ref{Der1}--\ref{Der4} of Lemma \ref{lema:PD}, is a derivation of $\PP$ if and only if
    \begin{align}
        X_{00}\{f,g\}_{0} &= \{X_{00}f,g\}_{0} + \{f,X_{00}g\}_{0} - \big( X_{01}\circ \K \big)(\dd f, \dd g), \label{eq:PoissDerAPA1} \\
		X_{11}[\eta,\xi]_{1} &= [X_{11}\eta,\xi]_{1}+[\eta,X_{11}\xi]_{1}+\D_{\dd X_{01}\eta}\xi-\D_{\dd X_{01}\xi}\eta, \label{eq:PoissDerAPA2} \\
        X_{10}\{f,g\}_0 &= \mathscr{T}_{X_{00}f\oplus X_{10}f}(g)-\mathscr{T}_{X_{00}g\oplus X_{10}g}(f) - \big( X_{11} \circ \K \big)(\dd f, \dd g), \label{eq:PoissDerAPA5} \\
        \big( X_{11} \circ \D_{\dd f} \big)(\eta) &= \big( \D_{\dd X_{00}f} + \mathrm{ad}_{X_{10}f} \big)(\eta) - \mathscr{T}_{X_{01} \eta \oplus X_{11}\eta}(f), \label{eq:PoissDerAPA6} \\
		X_{01} \circ \mathrm{ad}_{\eta} &= 0, \label{eq:PoissDerAPA3} \\
        X_{01}\circ \D_{\dd f} &= \mathrm{ad}_{f}^{0}\circ X_{01}, \label{eq:PoissDerAPA4}
    \end{align}
for all \ec{f \oplus \eta, g \oplus \xi \in P}. Here, \ec{\mathrm{ad}^{0}_{f}:=\{f,\cdot\}_{0}} and \ec{\mathrm{ad}_{\eta}:=[\eta,\cdot]_{1}}.
\end{proposition}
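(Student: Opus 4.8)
The plan is to proceed by direct computation, expanding the Poisson-derivation condition $X\{\pi_1,\pi_2\} = \{X\pi_1,\pi_2\} + \{\pi_1,X\pi_2\}$ componentwise using the block decomposition \eqref{EcX} of $X$ and the explicit formula \eqref{EcBracketPT} for the admissible bracket in terms of the Poisson triple $([\,,\,]_1,\D,\K)$. Since $X$ is already assumed to satisfy conditions \ref{Der1}--\ref{Der4} of Lemma \ref{lema:PD} (i.e. it is a derivation of the commutative algebra $P$), the only content to extract is compatibility with the bracket. By $R$-bilinearity it suffices to test the derivation identity on the three types of pairs $(f\oplus 0, g\oplus 0)$, $(f\oplus 0, 0\oplus\eta)$ and $(0\oplus\eta, 0\oplus\xi)$; each such test produces an identity in $P_0\oplus P_1$, whose two components give a pair of scalar equations. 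The claim is that the resulting six equations are exactly \eqref{eq:PoissDerAPA1}--\eqref{eq:PoissDerAPA4}.

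Concretely, I would organize the calculation as follows. First take $\pi_1=\iota_0 f$, $\pi_2=\iota_0 g$: then $\{\pi_1,\pi_2\} = \{f,g\}_0 \oplus \K(\dd f,\dd g)$, and applying $X$ and expanding $X$ on both sides, the $P_0$-component yields \eqref{eq:PoissDerAPA1} (the term $(X_{01}\circ\K)(\dd f,\dd g)$ arising from $X_{01}$ acting on $\K(\dd f,\dd g)$), while the $P_1$-component yields \eqref{eq:PoissDerAPA5}, after recognizing the combination $\mathscr{T}_{X_{00}f\oplus X_{10}f}(g) - \mathscr{T}_{X_{00}g\oplus X_{10}g}(f)$ from \eqref{EcTorsionDef}. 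Next take $\pi_1=\iota_0 f$, $\pi_2=\iota_1\eta$: here $\{\iota_0 f,\iota_1\eta\} = 0 \oplus \D_{\dd f}\eta$ (the $\D^{\PP}$ defined in Lemma \ref{lema:GAP-PT}), so the $P_0$-component gives \eqref{eq:PoissDerAPA4} (involving $X_{01}\circ\D_{\dd f}$ and $\mathrm{ad}^0_f\circ X_{01}$) and the $P_1$-component gives \eqref{eq:PoissDerAPA6}, again after collecting a $\mathscr{T}$-term. Finally take $\pi_1=\iota_1\eta$, $\pi_2=\iota_1\xi$: then $\{\iota_1\eta,\iota_1\xi\} = 0\oplus[\eta,\xi]_1$, and the $P_0$-component collapses to \eqref{eq:PoissDerAPA3} (forced because $X_{01}$ applied to $[\eta,\xi]_1$ must vanish against the zero $P_0$-component), while the $P_1$-component, after using that $X_{01}\in\varepsilon(P_1;P_0)$ so that $\D_{\dd X_{01}\eta}$ makes sense, gives \eqref{eq:PoissDerAPA2}. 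For the converse direction one checks that assuming all six identities, the derivation condition holds on each of the three generating pair-types, hence on all of $P$ by bilinearity and the Leibniz rule for $X$ (which is Lemma \ref{lema:PD}).

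The routine but bookkeeping-heavy step is making sure every term lands in the claimed slot: in particular that $X_{01}$ acting on the $P_1$-component of a bracket feeds correctly into \eqref{eq:PoissDerAPA1} and \eqref{eq:PoissDerAPA4}, and that the four $\mathscr{T}$-terms appearing in \eqref{eq:PoissDerAPA5} and \eqref{eq:PoissDerAPA6} genuinely reassemble from the raw expansion in terms of $\D$ and $\K$ via \eqref{EcTorsionDef}. The main obstacle is not conceptual but organizational: one must carefully distinguish the $\phi_{00}$-twisted Leibniz behaviour of $X_{11}$ (condition \ref{Der4}) when commuting $X_{11}$ past multiplication by elements of $P_0$ inside $\D_{\dd f}\eta = \D_{\dd f}\eta$, so that the extra $(X_{00}f)$-type terms cancel or combine as needed; a sign or a misplaced twist here is the most likely source of error. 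I would double-check consistency by verifying that in the abelian case $[\,,\,]_1=0$, $\K\in Z$, equations \eqref{eq:PoissDerAPA2}--\eqref{eq:PoissDerAPA3} become vacuous and the system reduces to the known conditions for Poisson derivations of the trivial extension associated to a Poisson module, matching \cite{Zhu19}.
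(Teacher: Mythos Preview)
Your proposal is correct and follows essentially the same approach as the paper's own proof: the paper also tests the Poisson-derivation identity \eqref{eq:XpoissderProof} on the three generating pairs $(f\oplus 0,\,g\oplus 0)$, $(f\oplus 0,\,0\oplus\eta)$, and $(0\oplus\eta,\,0\oplus\xi)$, extracting from each the $P_{0}$- and $P_{1}$-components to obtain exactly the six equations \eqref{eq:PoissDerAPA1}--\eqref{eq:PoissDerAPA4}, and then appeals to bilinearity for the converse. Your identification of which pair and which component produces which equation matches the paper precisely.
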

\begin{proof}
By definition, $X$ is a derivation of $\PP$ if and only if it is a derivation of $P$ and satisfies
    \begin{equation}\label{eq:XpoissderProof}
        X\{\pi_{1},\pi_{2}\} = \{X(\pi_{1}),\pi_{2}\} + \{\pi_{1},X(\pi_{2})\},
    \end{equation}
for any \ec{\pi_{1},\pi_{2} \in P}. In particular, by using formula \eqref{EcBracketPT}, if \ec{\pi_{1} = f \oplus 0} and \ec{\pi_{2} = g \oplus 0}, then condition \eqref{eq:XpoissderProof} implies \eqref{eq:PoissDerAPA1} and \eqref{eq:PoissDerAPA5}. If \ec{\pi_{1} = 0 \oplus \eta} and \ec{\pi_{2} = 0 \oplus \xi}, then \eqref{eq:XpoissderProof} implies \eqref{eq:PoissDerAPA2} and \eqref{eq:PoissDerAPA3}. If \ec{\pi_{1} = f \oplus 0} and \ec{\pi_{2} = 0 \oplus \eta}, then \eqref{eq:XpoissderProof} implies \eqref{eq:PoissDerAPA4} and \eqref{eq:PoissDerAPA6}. Reciprocally, by direct computations, one can show that conditions \eqref{eq:PoissDerAPA1}--\eqref{eq:PoissDerAPA4} for $X$ imply \eqref{eq:XpoissderProof}.
\end{proof}

\begin{corollary}[The $P_{1}$--Preserving Case]\label{cor:PoissP1}
A derivation \ec{X:P \to P} which preserves the $P_{0}$--module $P_{1}$, in the sense of \eqref{eq:X01Vanish}, is a derivation of $\PP$ if and only if
    \begin{align}
        &\delta_{\PP_{0}}X_{00} = 0, \label{eq:X01Poiss1} \\
        &X_{11}[\eta,\xi]_{1} = [X_{11}\eta,\xi]_{1}+[\eta,X_{11}\xi]_{1}, \label{eq:X01Poiss2} \\
        &(\dd_{\D}X_{10})(f,g) = (X_{11} \circ \K)(\dd{f},\dd{g}) -\K(\dd{X_{00}f}, \dd{g}) - \K(\dd{f}, \dd{X_{00}g}), \label{eq:X01Poiss3} \\
        &\pmb{[}X_{11}, \D_{\dd{f}}\pmb{]} = \D_{\dd X_{00}f} + [X_{10}f,\cdot]_{1}, \label{eq:X01Poiss4}
    \end{align}
for all \ec{f,g \in P_{0}}. Here, $\delta_{\PP_{0}}$ is the coboundary operator induced by Poisson algebra $\PP_{0}$ and $\dd_{\D}$ is the contravariant differential \eqref{eq:dDbar} on \ec{\X{R}^{1}(P_{0};P_{1})} associated to the contravariant derivative $\D$ and $\PP_{0}$.
\end{corollary}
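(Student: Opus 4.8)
The plan is to deduce the corollary directly from Proposition \ref{prop:PoissDer} by specializing to the case $X_{01}=0$. By Lemma \ref{lemma:Xpreserving}, a derivation $X$ of the commutative algebra $P$ preserves the $P_{0}$-module $P_{1}$ in the sense of \eqref{eq:X01Vanish} precisely when $X_{01}=0$; since $X$ is assumed to be such a derivation, Lemma \ref{lema:PD} applies, so that item \ref{Der2} holds trivially while items \ref{Der1}, \ref{Der3} and \ref{Der4} give $X_{00}\in\mathrm{Der}(P_{0})$, $X_{10}\in\mathrm{Der}(P_{0};P_{1})$ and the generalized $X_{00}$-derivation property of $X_{11}$. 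In particular, all of $\delta_{\PP_{0}}X_{00}$, $\dd_{\D}X_{10}$ and $\pmb{[}X_{11},\D_{\dd f}\pmb{]}$ appearing in \eqref{eq:X01Poiss1}--\eqref{eq:X01Poiss4} are well defined.

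First I would substitute $X_{01}=0$ into the six identities of Proposition \ref{prop:PoissDer}. Then \eqref{eq:PoissDerAPA3} and \eqref{eq:PoissDerAPA4} become vacuous; in \eqref{eq:PoissDerAPA1} the term $X_{01}\circ\K$ drops out, leaving $X_{00}\{f,g\}_{0}=\{X_{00}f,g\}_{0}+\{f,X_{00}g\}_{0}$, which by \eqref{eq:DeltaP} with $k=1$ is exactly $\delta_{\PP_{0}}X_{00}=0$, i.e.\ \eqref{eq:X01Poiss1}; and in \eqref{eq:PoissDerAPA2} the terms $\D_{\dd X_{01}\eta}\xi$ and $\D_{\dd X_{01}\xi}\eta$ vanish since $\dd 0=0$, giving \eqref{eq:X01Poiss2}. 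For the remaining two identities I would unwind the definition \eqref{EcTorsionDef} of $\mathscr{T}$: using $\dd 0=0$, hence $\K(\dd 0,\cdot)=0$ and $\D_{\dd 0}=0$, the $X_{01}$-dependent summands of \eqref{eq:PoissDerAPA5} and \eqref{eq:PoissDerAPA6} disappear, so \eqref{eq:PoissDerAPA5} becomes
\[
X_{10}\{f,g\}_{0}=\K(\dd X_{00}f,\dd g)-\D_{\dd g}X_{10}f-\K(\dd X_{00}g,\dd f)+\D_{\dd f}X_{10}g-(X_{11}\circ\K)(\dd f,\dd g),
\]
while \eqref{eq:PoissDerAPA6} becomes $X_{11}\D_{\dd f}\eta=\D_{\dd X_{00}f}\eta+[X_{10}f,\eta]_{1}+\D_{\dd f}X_{11}\eta$.

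It remains to identify these with \eqref{eq:X01Poiss3} and \eqref{eq:X01Poiss4}. In the displayed equation, the combination $\D_{\dd f}X_{10}g-\D_{\dd g}X_{10}f-X_{10}\{f,g\}_{0}$ equals $(\dd_{\D}X_{10})(f,g)$ by \eqref{eq:dDbar} with $k=1$, and rewriting $-\K(\dd X_{00}g,\dd f)=\K(\dd f,\dd X_{00}g)$ by skew-symmetry of $\K$ turns it into \eqref{eq:X01Poiss3}. Reading the second identity as an equality of $P_{0}$-linear endomorphisms of $P_{1}$ and transposing $\D_{\dd f}X_{11}\eta$ gives $\pmb{[}X_{11},\D_{\dd f}\pmb{]}=\D_{\dd X_{00}f}+[X_{10}f,\cdot]_{1}$, which is \eqref{eq:X01Poiss4}. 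The converse is the same chain of substitutions read backwards, together with Lemma \ref{lema:PD} and \eqref{eq:X01Vanish} to ensure that an $X$ with $X_{01}=0$ satisfying items \ref{Der1}, \ref{Der3}, \ref{Der4} is a derivation of the commutative algebra $P$. The only step requiring care is this sign bookkeeping---tracking the skew-symmetry of $\K$ and matching the $\mathscr{T}$-combinations to $\dd_{\D}$ and to the commutator $\pmb{[}X_{11},\D_{\dd f}\pmb{]}$---but there is no conceptual obstacle, the whole argument being a substitution into Proposition \ref{prop:PoissDer}.
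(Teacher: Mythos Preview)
Your proof is correct and follows exactly the approach the paper intends: the corollary is a direct specialization of Proposition~\ref{prop:PoissDer} to the case $X_{01}=0$, and you carry out that specialization carefully. One small wording slip: in your last paragraph you describe \eqref{eq:X01Poiss4} as an equality of ``$P_{0}$-linear endomorphisms of $P_{1}$'', but neither side is $P_{0}$-linear---both $\pmb{[}X_{11},\D_{\dd f}\pmb{]}$ and $\D_{\dd X_{00}f}$ are generalized $\mathrm{ad}^{0}_{X_{00}f}$-derivations (using \eqref{eq:X01Poiss1}); the identity should simply be read as an equality of $R$-linear operators on $P_{1}$. This does not affect the argument.
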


Note that condition \eqref{eq:X01Poiss1} says that \ec{X_{00}} is a 1--cocycle of the complex of the Poisson algebra \ec{\PP_{0}}. Taking into account condition \eqref{EcPT2} and formula \eqref{eq:dD2bar}, if in addition \ec{\K=0}, then \eqref{eq:X01Poiss3} implies that \ec{X_{10}} is a 1--cocycle (\ec{\dd_{\D}X_{10}=0}) of the cochain complex \ec{(\mathrm{Der}(P_{0};P_{1}), \dd_{\D})} since in this case $\D$ is flat.

\begin{corollary}[The case \ec{\K=0}]\label{cor:CaseK0}
Let \ec{\PP=(P_{0} \ltimes P_{1},\{\,,\,\})} be an admissible Poisson algebra such that \ec{P_{0} \oplus \{0\}} is a Poisson subalgebra. Then, every Poisson derivation of $\PP$ induces a derivation of the Poisson algebra \ec{\PP_{0}=(P_{0},\{\,,\,\}_{0})}.
\end{corollary}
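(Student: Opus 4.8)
The plan is to follow the block decomposition of a Poisson derivation and exploit the vanishing of $\K$. First I would record that, by Corollary \ref{cor:P0KD0}, the hypothesis that $P_{0}\oplus\{0\}$ is a Poisson subalgebra of $\PP$ is equivalent to $\K^{\PP}=0$; so in the Poisson triple $([\,,\,]_{1},\D,\K)$ attached to $\PP$ we have $\K=0$.

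Now let $X\colon P\to P$ be a Poisson derivation of $\PP$. In particular $X$ is a derivation of the commutative algebra $P$, so by item \ref{Der1} of Lemma \ref{lema:PD} (equivalently, by Corollary \ref{cor:DerP1toP0}) its $(0,0)$--component $X_{00}$ lies in $\mathrm{Der}(P_{0})$. It therefore only remains to verify that $X_{00}$ is compatible with the bracket $\{\,,\,\}_{0}$, that is, that $X_{00}\in\mathrm{Poiss}(\PP_{0})$.

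For this I would invoke Proposition \ref{prop:PoissDer}: since $X$ is a Poisson derivation of $\PP$, its components satisfy \eqref{eq:PoissDerAPA1}, namely
\[
X_{00}\{f,g\}_{0}=\{X_{00}f,g\}_{0}+\{f,X_{00}g\}_{0}-\big(X_{01}\circ\K\big)(\dd f,\dd g)
\]
for all $f,g\in P_{0}$. Substituting $\K=0$ from the first step annihilates the last term and yields exactly the Poisson--Leibniz identity for $X_{00}$ on $\PP_{0}$. Combined with $X_{00}\in\mathrm{Der}(P_{0})$, this shows $X_{00}\in\mathrm{Poiss}(\PP_{0})$, which is the asserted induced derivation.

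There is no serious obstacle here: the only genuine content is the translation of the hypothesis ``$P_{0}\oplus\{0\}$ is a Poisson subalgebra'' into the algebraic condition $\K=0$ via Corollary \ref{cor:P0KD0}, after which the single identity \eqref{eq:PoissDerAPA1} closes the argument. One can also observe that, setting $\K=0$, the remaining equations \eqref{eq:PoissDerAPA2}--\eqref{eq:PoissDerAPA4} reduce to the description of Poisson derivations given in Corollary \ref{cor:PoissP1} in the $P_{1}$--preserving case, but this refinement is not needed for the statement at hand.
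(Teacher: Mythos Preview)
Your proof is correct and follows exactly the paper's own route: the paper simply states that the corollary ``follows from Corollary \ref{cor:P0KD0} and condition \eqref{eq:PoissDerAPA1}'', which is precisely your two-step argument (translate the hypothesis into $\K=0$, then read off the Poisson identity for $X_{00}$ from \eqref{eq:PoissDerAPA1}). One small caveat: your closing remark that with $\K=0$ the remaining equations reduce to those of Corollary \ref{cor:PoissP1} is not quite accurate, since that corollary additionally assumes $X_{01}=0$ and $X_{01}$ still appears in \eqref{eq:PoissDerAPA2}, \eqref{eq:PoissDerAPA6}, \eqref{eq:PoissDerAPA3}, \eqref{eq:PoissDerAPA4}; but as you note, this observation is inessential to the proof.
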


This last corollary follows from Corollary \ref{cor:P0KD0} and condition \eqref{eq:PoissDerAPA1}.

\paragraph{Characterization of Hamiltonian Derivations.} Here we give a description of all Hamiltonian derivations of an admissible Poisson algebra in terms of the corresponding Poisson triple.

\begin{proposition}\label{prop:hamalgebra}
The Lie subalgebra of all Hamiltonian derivations of $\PP$ is given by
    \begin{equation}\label{EcHamDescription}
       \mathrm{Ham}(\PP)= \left\{
            X_{h \oplus \eta} =
            \left.
            \left(
              \begin{array}{cc}
                \mathrm{ad}^{0}_{h} & 0 \\
                \mathscr{T}_{h \oplus \eta} & \D_{\dd{h}} + \mathrm{ad}_{\eta} \\
              \end{array}
            \right)
                \,\right|\,
            h \oplus \eta \in P
            \right\}.
    \end{equation}
Here, \ec{\mathrm{ad}^{0}_{h} = \{h,\cdot\}_0}, \ec{\mathrm{ad}_{\eta} = [\eta,\cdot]_1} and the $R$--linear morphism \ec{h \oplus \eta \to \mathscr{T}_{h \oplus \eta}} is defined by \eqref{EcTorsionDef}. Moreover,
\begin{itemize}
  \item the mapping \ec{h \oplus \eta \mapsto X_{h \oplus \eta}} is a Lie algebra homomorphism, whose kernel consists of all Casimir elements of $\PP$,
    \begin{equation}\label{eq:TeoHamCasim}
        \mathrm{Casim}(\PP) = \big\{ k \oplus \zeta \in P \mid k \in \mathrm{Casim}(\PP_{0}), \ k \oplus \zeta \in \ker{\mathscr{T}} \,\ \text{and} \,\ \D_{\dd{k}} + \mathrm{ad}_{\zeta} = 0 \big\};
    \end{equation}

  \item a Hamiltonian derivation \ec{X_{h \oplus \eta}} of $\PP$ preserves the commutative algebra \ec{P_{0}}, in the sense of \eqref{eq:X10Vanish}, if and only if
        \begin{equation*}
            h \oplus \eta \in \ker{\mathscr{T}};
        \end{equation*}

  \item the set of all $P_{0}$--preserving Hamiltonian derivations of $\PP$,
    \begin{equation}\label{EcHamZero}
        \mathrm{Ham}_{0}(\PP) :=
         \left\{
            \left.
            \left(
              \begin{array}{cc}
                \mathrm{ad}^{0}_{h} & 0 \\
                0 & \D_{\dd{h}} + \mathrm{ad}_{\eta} \\
              \end{array}
            \right)
                \,\right|\,
            h \oplus \eta \in \ker{\mathscr{T}}
            \right\},
    \end{equation}
is a Lie subalgebra of \ec{\mathrm{Ham}(\PP)}.
\end{itemize}
\end{proposition}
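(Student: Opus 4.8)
The plan is to compute the matrix representation \eqref{EcX} of the Hamiltonian derivation $X_{h\oplus\eta}=\{h\oplus\eta,\cdot\}$ directly from the bracket formula \eqref{EcBracketPT}, and then read off each of the asserted properties. First I would apply \eqref{EcBracketPT} to a general element $f\oplus\xi\in P$:
\begin{equation*}
  \{h\oplus\eta,\, f\oplus\xi\} = \{h,f\}_0 \,\oplus\, \big(\D_{\dd h}\xi - \D_{\dd f}\eta + [\eta,\xi]_1 + \K(\dd h,\dd f)\big).
\end{equation*}
Collecting the terms that act on $f$ alone versus on $\xi$ alone, the upper-left block is $f\mapsto\{h,f\}_0=\mathrm{ad}^0_h$, the upper-right block is $\xi\mapsto\pr_0\{h\oplus\eta,\iota_1\xi\}=0$ (immediate from admissibility \eqref{EcGPA}, or from the formula since there is no $P_0$-component on the right produced by $\xi$), the lower-left block is $f\mapsto\K(\dd h,\dd f)-\D_{\dd f}\eta=\mathscr{T}_{h\oplus\eta}(f)$ by the very definition \eqref{EcTorsionDef}, and the lower-right block is $\xi\mapsto\D_{\dd h}\xi+[\eta,\xi]_1=\D_{\dd h}+\mathrm{ad}_\eta$. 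This establishes \eqref{EcHamDescription}; one should also note in passing that $\mathscr{T}_{h\oplus\eta}\in\mathrm{Der}(P_0;P_1)$ and $\D_{\dd h}+\mathrm{ad}_\eta$ is a generalized $\mathrm{ad}^0_h$-derivation of $P_1$, so that $X_{h\oplus\eta}$ indeed lies in the class described by Lemma \ref{lema:PD}, consistently with it being a derivation of $P$.

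Next I would verify that $h\oplus\eta\mapsto X_{h\oplus\eta}$ is a Lie algebra homomorphism. This is the general fact that $\{\{h_1\oplus\eta_1,\cdot\},\{h_2\oplus\eta_2,\cdot\}\}$-commutator equals $\{\{h_1\oplus\eta_1,h_2\oplus\eta_2\},\cdot\}$, which is nothing but the Jacobi identity for the admissible Poisson structure on $P$; since $P$ is a Poisson algebra (Lemma \ref{LemaTripleToAlg}) this holds automatically, and it also yields the identification of the kernel: $X_{h\oplus\eta}=0$ iff $h\oplus\eta$ is a Casimir element. Feeding the block description above into the vanishing condition then reproduces \eqref{eq:TeoHamCasim}: $\mathrm{ad}^0_h=0$ gives $h\in\mathrm{Casim}(\PP_0)$, $\mathscr{T}_{h\oplus\eta}=0$ gives $h\oplus\eta\in\ker\mathscr{T}$, and the lower-right block gives $\D_{\dd h}+\mathrm{ad}_\eta=0$ --- exactly the characterization already obtained in Proposition \ref{prop:Casim}, so this part can simply cite that result. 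The statement that $X_{h\oplus\eta}$ preserves $P_0$ (in the sense $X_{10}=0$, i.e. \eqref{eq:X10Vanish}) iff $h\oplus\eta\in\ker\mathscr{T}$ is immediate from the block description, since $X_{10}=\mathscr{T}_{h\oplus\eta}$.

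Finally, for the last bullet I would show that $\mathrm{Ham}_0(\PP)$ is a Lie subalgebra of $\mathrm{Ham}(\PP)$. Since $\mathrm{Ham}_0(\PP)$ is by definition the image under $h\oplus\eta\mapsto X_{h\oplus\eta}$ of the subset $\ker\mathscr{T}\subseteq P$, and this map is a Lie algebra homomorphism, it suffices to show $\ker\mathscr{T}$ is a Lie subalgebra of $\PP$, i.e. $\{\ker\mathscr{T},\ker\mathscr{T}\}\subseteq\ker\mathscr{T}$; but this is precisely the identity \eqref{eq:TorBracket} established in the discussion following Proposition \ref{prop:Casim}, which expresses $\mathscr{T}_{\{f\oplus\eta,g\oplus\xi\}}$ as a combination of compositions of $\mathscr{T}_{f\oplus\eta}$ and $\mathscr{T}_{g\oplus\xi}$ with other operators, hence vanishing when both torsion terms vanish. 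The only mild subtlety --- and the step I would be most careful about --- is checking that the closed-form expression for the bracket of two elements of $\mathrm{Ham}_0(\PP)$ really has vanishing lower-left block; this follows either from \eqref{eq:TorBracket} directly or, more cheaply, from the homomorphism property combined with the $P_0$-preservation criterion just proved, so no genuine obstacle arises.
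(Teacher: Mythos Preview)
Your proposal is correct and follows essentially the same approach as the paper: the paper isolates the block computation into two short lemmas (first showing $X_{01}=0$ for any Hamiltonian derivation, then computing $X_{00},X_{10},X_{11}$ from \eqref{EcBracketPT}), and then deduces the remaining bullets from the homomorphism formula $\pmb{[}X_{h\oplus\eta},X_{h'\oplus\eta'}\pmb{]}=X_{\{h\oplus\eta,h'\oplus\eta'\}}$ together with Lemma \ref{lemma:Xpreserving} and the identity \eqref{eq:TorBracket}. Your write-up merges the two lemmas into a single direct calculation but is otherwise identical in content and structure.
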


\begin{corollary}\label{cor:HamToHamP0}
Every Hamiltonian derivation of \ec{\PP=(P_{0} \ltimes P_{1}, \{\,,\,\})} induces a Hamiltonian derivation of the Poisson algebra \ec{\PP_{0}=(P_{0},\{\,,\}_{0})}.
\end{corollary}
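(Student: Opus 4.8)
The plan is to read the statement off directly from the explicit block description of Hamiltonian derivations provided by Proposition \ref{prop:hamalgebra}. By that proposition, every Hamiltonian derivation of $\PP$ is of the form
\begin{equation*}
    X_{h \oplus \eta} = \left(
        \begin{array}{cc}
            \mathrm{ad}^{0}_{h} & 0 \\
            \mathscr{T}_{h \oplus \eta} & \D_{\dd{h}} + \mathrm{ad}_{\eta}
        \end{array}
    \right)
\end{equation*}
for some $h \oplus \eta \in P$, where $\mathrm{ad}^{0}_{h} = \{h,\cdot\}_{0}$ and $\mathscr{T}_{h \oplus \eta}$ is as in \eqref{EcTorsionDef}. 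Thus the whole point is to inspect the $(0,0)$--block of this matrix.

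First I would recall that, by Corollary \ref{cor:DerP1toP0} (equivalently, by item \ref{Der1} of Lemma \ref{lema:PD}), a derivation $X$ of the trivial extension algebra $P = P_{0}\ltimes P_{1}$ induces the derivation $X_{00} = \pr_{0}\circ X\circ \iota_{0}$ of the commutative algebra $P_{0}$; this is the canonical map on derivations compatible with the algebra morphism $\pr_{0}\colon P\to P_{0}$. Applying this to $X = X_{h \oplus \eta}$ and using the block form above, the induced derivation of $P_{0}$ is exactly $X_{00} = \mathrm{ad}^{0}_{h} = \{h,\cdot\}_{0}$, i.e.\ the Hamiltonian derivation of $\PP_{0}$ with Hamiltonian element $h = \pr_{0}(h\oplus\eta)$. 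Hence it belongs to $\mathrm{Ham}(\PP_{0})$, which is the claim.

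There is essentially no obstacle here: the only substantive ingredient is the description of $\mathrm{Ham}(\PP)$ in Proposition \ref{prop:hamalgebra}, and once its block form is invoked the corollary is immediate. If one wishes to record a little more, one can note that the assignment $X_{h\oplus\eta}\mapsto \{h,\cdot\}_{0}$ is compatible with the Lie algebra homomorphism $h\oplus\eta\mapsto X_{h\oplus\eta}$ of Proposition \ref{prop:hamalgebra} and with $\pr_{0}$, so it yields a (well--defined, since Casimir elements of $\PP$ project to Casimir elements of $\PP_{0}$) Lie algebra morphism $\mathrm{Ham}(\PP)\to\mathrm{Ham}(\PP_{0})$ covering $\pr_{0}$.
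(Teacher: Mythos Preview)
Your proof is correct and matches the paper's intended argument: the corollary is stated immediately after Proposition~\ref{prop:hamalgebra} with no separate proof, so the point is precisely to read off the $(0,0)$--block $\mathrm{ad}^{0}_{h}$ from the description \eqref{EcHamDescription}, exactly as you do.
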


We divide the proof of Proposition \ref{prop:hamalgebra} into the following lemmas.

\begin{lemma}\label{lemma:HamP1preserv}
Every Hamiltonian derivation of $\PP$ preserves the \ec{P_{0}}--module \ec{P_{1}}.
\end{lemma}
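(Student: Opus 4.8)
The plan is to reduce the statement to a one-line computation via the matrix formalism of the previous section. By definition a Hamiltonian derivation of $\PP$ has the form $X=\{h,\cdot\}$ for some $h=h_{0}\oplus h_{1}\in P$ with $h_{0}\in P_{0}$ and $h_{1}\in P_{1}$; being in particular a derivation of the commutative algebra $P$, it admits a matrix representation as in \eqref{EcX}. By Lemma~\ref{lemma:Xpreserving}\,(b), the claim that $X$ preserves the $P_{0}$--module $P_{1}$ is equivalent to $X_{01}=0$, i.e.\ to $\pr_{0}\{h,\iota_{1}\xi\}=0$ for every $\xi\in P_{1}$. So everything comes down to identifying the $P_{0}$--component of $\{h,\iota_{1}\xi\}$.

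First I would note that this is immediate from the fact, recorded in Section~\ref{sec:GPA}, that $\{0\}\oplus P_{1}$ is a Poisson ideal of $\PP$: then $\{h,\iota_{1}\xi\}\in\{P,\{0\}\oplus P_{1}\}\subseteq\{0\}\oplus P_{1}$, and applying $\pr_{0}$ gives $X_{01}=0$. However, the version I would actually write down is the direct evaluation using the bracket formula \eqref{EcBracketPT} attached to the Poisson triple $([\,,\,]_{1},\D,\K)$ of $\PP$, because it yields at the same time the remaining entries of $X$ needed right afterwards in the proof of Proposition~\ref{prop:hamalgebra}. Substituting $f\oplus\eta=h_{0}\oplus h_{1}$, $g\oplus\xi=0\oplus\xi$ into \eqref{EcBracketPT}, the $P_{0}$--part is $\{h_{0},0\}_{0}=0$ and, using $\dd 0=0$ and the $R$--bilinearity of $\D$ and $\K$, the $P_{1}$--part collapses to $\D_{\dd h_{0}}\xi+[h_{1},\xi]_{1}$; similarly $\pr_{1}\{h,\iota_{0}g\}=\K(\dd h_{0},\dd g)-\D_{\dd g}h_{1}=\mathscr{T}_{h_{0}\oplus h_{1}}(g)$ with $\mathscr{T}$ as in \eqref{EcTorsionDef}. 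In particular $X(\iota_{1}\xi)\in\{0\}\oplus P_{1}$ for all $\xi$, so $X_{01}=0$, and by Lemma~\ref{lemma:Xpreserving}\,(b) the derivation $X$ preserves $P_{1}$.

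I do not expect a genuine obstacle; the only thing to watch is the logical order, since Lemma~\ref{lemma:HamP1preserv} is the first step toward Proposition~\ref{prop:hamalgebra} and therefore cannot use the matrix description \eqref{EcHamDescription} of $\mathrm{Ham}(\PP)$ — that description is exactly what this chain of lemmas is designed to prove. What it may use — Lemma~\ref{lemma:Xpreserving}, the Poisson-ideal property of $\{0\}\oplus P_{1}$, and the explicit bracket \eqref{EcBracketPT} — is more than enough, and in fact either of the first two already closes the argument on its own.
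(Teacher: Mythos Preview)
Your proof is correct and matches the paper's own argument almost verbatim: both compute $X_{01}(\xi)=\pr_{0}\{h\oplus\eta,0\oplus\xi\}=\{h,0\}_{0}=0$ and then invoke Lemma~\ref{lemma:Xpreserving}. The only cosmetic difference is that the paper reads off the $P_{0}$--component directly from the admissibility condition~\eqref{EcGPA}, whereas you route through the explicit bracket formula~\eqref{EcBracketPT}; your additional remarks (the Poisson-ideal shortcut and the computation of the remaining matrix entries) are correct and indeed anticipate the next lemma, but they are extras rather than a different strategy.
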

\begin{proof}
Let \ec{X} be a Hamiltonian derivation of \ec{\PP} with Hamiltonian element \ec{h\oplus \eta \in P}. Then, by definition and representation \eqref{EcX}, we have \ec{X_{01}(\xi) = (\pr_{0} \circ X \circ \iota_{1}) (\xi) = \pr_{0}\{h\oplus\eta, 0\oplus \xi\} = \{h,0\}_{0}=0}, for all \ec{\xi \in P_{1}}. Hence, \ec{X_{01}=0} and the claim follows from Lemma \ref{lemma:Xpreserving}.
\end{proof}

\begin{lemma}\label{lemma:ham}
A $P_{1}$--preserving $R$--linear mapping \ec{X:P \to P} given in \eqref{eq:X01Vanish} is a Hamiltonian derivation of \ec{\PP=(P,\{\,,\,\})} if and only if there exists \ec{h \oplus \eta \in P} such that
	\begin{equation}\label{EcHamDescrip}
		X_{00} = \mathrm{ad}^{0}_h, \quad X_{10} = \mathscr{T}_{h \oplus \eta} \quad \text{and} \quad X_{11} = \D_{\dd{h}} + \mathrm{ad}_{\eta}.
	\end{equation}
Moreover, a Hamiltonian element for $X$ is just \ec{h \oplus \eta}.
\end{lemma}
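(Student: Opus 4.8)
The plan is to compute the block decomposition \eqref{EcX} of the operator $\{h \oplus \eta, \cdot\}$ directly from the explicit formula \eqref{EcBracketPT} for the admissible bracket in terms of the corresponding Poisson triple, and to match it against the asserted form.

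First I would recall that, by definition, $X$ is a Hamiltonian derivation of $\PP$ exactly when $X = \{h \oplus \eta, \cdot\}$ for some $h \oplus \eta \in P$, and that by Lemma~\ref{lemma:HamP1preserv} every such $X$ automatically preserves the $P_{0}$--module $P_{1}$, so the standing hypothesis \eqref{eq:X01Vanish} imposes no real restriction. Since $P = P_{0} \oplus P_{1}$ as $R$--modules and the bracket is $R$--bilinear, it suffices to evaluate $\{h \oplus \eta, \cdot\}$ on arguments $g \oplus 0$ and $0 \oplus \xi$ and then extend by linearity. Using \eqref{EcBracketPT} (together with $\dd 0 = 0$ and $\{h,0\}_{0}=0$) one gets
\[
\{h \oplus \eta,\, g \oplus 0\} = \{h,g\}_{0} \oplus \big( \K(\dd{h},\dd{g}) - \D_{\dd{g}}\eta \big), \qquad
\{h \oplus \eta,\, 0 \oplus \xi\} = 0 \oplus \big( \D_{\dd{h}}\xi + [\eta,\xi]_{1} \big).
\]
Comparing with \eqref{EcX} and using the definition \eqref{EcTorsionDef} of $\mathscr{T}_{h \oplus \eta}$, the first identity yields $X_{00} = \mathrm{ad}^{0}_{h}$ and $X_{10} = \mathscr{T}_{h \oplus \eta}$, while the second yields $X_{01} = 0$ and $X_{11} = \D_{\dd{h}} + \mathrm{ad}_{\eta}$. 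This settles the ``only if'' direction and simultaneously identifies $h \oplus \eta$ as a Hamiltonian element of $X$.

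For the converse, I would run the same computation backwards: if $X$ is the $P_{1}$--preserving map whose blocks are $X_{00} = \mathrm{ad}^{0}_{h}$, $X_{10} = \mathscr{T}_{h \oplus \eta}$ and $X_{11} = \D_{\dd{h}} + \mathrm{ad}_{\eta}$ for some $h \oplus \eta \in P$, then the displayed formulas show $X(g \oplus \xi) = \{h \oplus \eta, g \oplus \xi\}$ for all $g \oplus \xi \in P$, whence $X = \{h \oplus \eta, \cdot\}$ is Hamiltonian with Hamiltonian element $h \oplus \eta$.

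No step here is a genuine obstacle; the argument is just an unwinding of \eqref{EcBracketPT} and \eqref{EcTorsionDef}. The one thing worth emphasizing is that one never has to verify by hand the derivation conditions of Lemma~\ref{lema:PD} or the Poisson--derivation conditions of Proposition~\ref{prop:PoissDer}: a map of the form $\{h \oplus \eta, \cdot\}$ is a derivation of $\PP$ by construction, so those conditions are automatically consistent with the blocks having the stated form.
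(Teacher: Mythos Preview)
Your proof is correct and follows essentially the same approach as the paper: both compute the blocks of $\{h\oplus\eta,\cdot\}$ by evaluating \eqref{EcBracketPT} on $g\oplus 0$ and $0\oplus\xi$, read off \eqref{EcHamDescrip} via \eqref{EcTorsionDef}, and then observe that the same computation run in reverse gives the converse. Your remark that Lemma~\ref{lemma:HamP1preserv} makes the $P_{1}$--preserving hypothesis automatic is a helpful clarification the paper leaves implicit.
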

\begin{proof}
Suppose that there exists \ec{h\oplus\eta \in P} such that \ec{X=\{h\oplus\eta, \cdot \}}. Then, by \eqref{EcBracketPT} and \eqref{EcX}, we have
    \begin{align*}
        X_{00}(f) &= (\pr_{0} \circ X \circ \iota_{0})(f) = \pr_{0}\{h \oplus \eta, f \oplus 0\} = \{h,f\}_{0}, \\
        X_{10}(f) &= (\pr_{1} \circ X \circ \iota_{0})(f) = \pr_{1}\{h \oplus \eta, f \oplus 0\} = \K(\dd{h},\dd{f}) - \D_{\dd{f}}\eta, \\
        X_{11}(\xi) &= (\pr_{1} \circ X \circ \iota_{1})(\xi) = \pr_{1}\{h \oplus \eta, 0 \oplus \xi\} = \D_{\dd{h}}\xi + [\eta,\xi]_{1},
    \end{align*}
for all \ec{f \in P_{0}} and \ec{\xi \in P_{1}}. Hence, conditions \eqref{EcHamDescrip} are necessary for $X$ to be a Hamiltonian derivation of $\PP$. By direct computations, one can show that they are also sufficient.
\end{proof}

\begin{proof}[Proof of Proposition \ref{prop:hamalgebra}]
From Lemma \ref{lemma:ham} we have the characterization \eqref{EcHamDescription} of all Hamiltonian derivations of \ec{\PP}. The claim that the mapping \ec{h \oplus \eta \mapsto X_{h \oplus \eta}} is a Lie algebra homomorphism follows from the formula \ec{\pmb{[}X_{h \oplus \eta},X_{h' \oplus \eta'}\pmb{]} = X_{\{h \oplus \eta, h' \oplus \eta'\}}}. So, it is clear that the kernel of this morphism consists of all Casimir elements in \eqref{eq:TeoHamCasim}. As a consequence of Lemma \ref{lemma:Xpreserving} and formula \eqref{eq:TorBracket}, the set \ec{\mathrm{Ham}_{0}(\PP)} in \eqref{EcHamZero} is the Lie subalgebra of \ec{\mathrm{Ham}(\PP)} consisting of all $P_{0}$--preserving Hamiltonian derivations of $\PP$.
\end{proof}

    \section{The First Poisson Cohomology}\label{sec:FirstCohomology}

Here, we apply results of the previous sections to give a partial description of the first Poisson cohomology of admissible Poisson algebras. In particular, we show that an obstruction to the triviality of these Poisson cohomologies can be formulated in terms of the first cohomology group of the cochain complex \eqref{eq:ComplexBar}.

Let \ec{\PP=(P_{0} \ltimes P_{1},\{\,,\,\})} be an admissible Poisson algebra. Consider the first Poisson cohomologies \ec{\mathscr{H}^{1}(\PP)} of $\PP$ and \ec{\mathscr{H}^{1}(\PP_{0})} of the given Poisson algebra $\PP_{0}$. First, we give sufficient conditions to have a morphism between these Poisson cohomologies.

\begin{proposition}\label{prop:MorphismH1}
There exists an $R$--linear morphism \ec{\mathscr{H}^{1}(\PP) \to \mathscr{H}^{1}(\PP_{0})} in the following cases:
    \begin{enumerate}[label=(\alph*)]
      \item \label{morphismK0}The commutative subalgebra \ec{P_{0} \oplus \{0\}} is a Poisson subalgebra of $\PP$.
      \item Every Poisson derivation of $\PP$ preserves the $P_{0}$--module $P_{1}$.
    \end{enumerate}
\end{proposition}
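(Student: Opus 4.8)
The plan is to exhibit, in each case, a well-defined $R$-linear map that sends a Poisson derivation $X$ of $\PP$ to a Poisson derivation of $\PP_{0}$ and then check it descends to cohomology. By Corollary~\ref{cor:DerP1toP0}, any derivation $X$ of the commutative algebra $P$ already induces the derivation $X_{00}$ of $P_{0}$, so the natural candidate is the assignment $X \mapsto X_{00}$ in both cases. The two things to verify are: (1) if $X$ is a \emph{Poisson} derivation of $\PP$, then $X_{00}$ is a Poisson derivation of $\PP_{0}$; and (2) if $X$ is Hamiltonian (i.e.\ $X = X_{h\oplus\eta}$ as in \eqref{EcHamDescription}), then $X_{00}$ is Hamiltonian for $\PP_{0}$. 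Granting both, the map $[X]\mapsto[X_{00}]$ is the desired morphism $\mathscr{H}^{1}(\PP)\to\mathscr{H}^{1}(\PP_{0})$, since $\mathscr{H}^{1}=\mathrm{Poiss}/\mathrm{Ham}$.

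For point (1) in case \ref{morphismK0}: by Corollary~\ref{cor:P0KD0} the hypothesis that $P_{0}\oplus\{0\}$ is a Poisson subalgebra is equivalent to $\K=\K^{\PP}=0$. Feeding $\K=0$ into \eqref{eq:PoissDerAPA1} of Proposition~\ref{prop:PoissDer} gives exactly $X_{00}\{f,g\}_{0}=\{X_{00}f,g\}_{0}+\{f,X_{00}g\}_{0}$, i.e.\ $\delta_{\PP_{0}}X_{00}=0$. For point (1) in case (b): if every Poisson derivation preserves $P_{1}$, then in particular our $X$ satisfies $X_{01}=0$ by Lemma~\ref{lemma:Xpreserving}, and \eqref{eq:PoissDerAPA1} again reduces to $X_{00}\{f,g\}_{0}=\{X_{00}f,g\}_{0}+\{f,X_{00}g\}_{0}$ (the obstruction term $(X_{01}\circ\K)(\dd f,\dd g)$ vanishes). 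So in both cases $X_{00}\in\mathrm{Poiss}(\PP_{0})$ — this is precisely condition \eqref{eq:X01Poiss1} of Corollary~\ref{cor:PoissP1}. For point (2), Proposition~\ref{prop:hamalgebra} (or Corollary~\ref{cor:HamToHamP0}) already records that a Hamiltonian derivation $X_{h\oplus\eta}$ of $\PP$ has $X_{00}=\mathrm{ad}^{0}_{h}=\{h,\cdot\}_{0}\in\mathrm{Ham}(\PP_{0})$; nothing more is needed here, and this step is independent of the case distinction.

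It remains to assemble these into a cohomology morphism: the map $\mathrm{Poiss}(\PP)\to\mathrm{Poiss}(\PP_{0})$, $X\mapsto X_{00}$, is $R$-linear (clear from $X\mapsto\pr_{0}\circ X\circ\iota_{0}$), it lands in $\mathrm{Poiss}(\PP_{0})$ by point (1), and it carries $\mathrm{Ham}(\PP)$ into $\mathrm{Ham}(\PP_{0})$ by point (2); hence it factors through the quotients to give $\mathscr{H}^{1}(\PP)\to\mathscr{H}^{1}(\PP_{0})$. The only subtlety — and the reason a hypothesis is needed at all — is exactly the failure of $X\mapsto X_{00}$ to preserve the Poisson-derivation property in general: without $\K=0$ or $X_{01}=0$, the term $(X_{01}\circ\K)(\dd f,\dd g)$ in \eqref{eq:PoissDerAPA1} is a genuine obstruction, so the map $X\mapsto X_{00}$ need not even land in $\mathrm{Poiss}(\PP_{0})$. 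Thus the main (and essentially only) point is to observe that either hypothesis kills that term; the rest is a routine check of linearity and compatibility with the Hamiltonian ideals, which I would not spell out in detail.
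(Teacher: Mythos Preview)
Your proposal is correct and follows essentially the same approach as the paper: the morphism is $[X]\mapsto[X_{00}]$, with the case hypotheses used (via \eqref{eq:PoissDerAPA1}) to ensure $X_{00}\in\mathrm{Poiss}(\PP_{0})$ and Proposition~\ref{prop:hamalgebra}/Corollary~\ref{cor:HamToHamP0} to handle the Hamiltonian part. You have simply unpacked in detail what the paper records tersely by citing Corollaries~\ref{cor:DerP1toP0}, \ref{cor:CaseK0}, \ref{cor:PoissP1} and~\ref{cor:HamToHamP0}.
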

\begin{proof}
Taking into account Corollaries \ref{cor:DerP1toP0}, \ref{cor:CaseK0}, \ref{cor:PoissP1} and \ref{cor:HamToHamP0}, the $R$--linear morphism is given by
    \begin{equation}\label{eq:MorphismH1H10}
        \mathscr{H}^{1}(\PP) \ni
        \left[ X =
            \left(
              \begin{array}{cc}
                X_{00} & X_{01} \\
                X_{10} & X_{11}
              \end{array}
            \right)
        \right] \longmapsto
        \big[ X_{00} \big] \in \mathscr{H}^{1}(\PP_{0}),
    \end{equation}
with \ec{X \in \mathrm{Poiss}(\PP)}.
\end{proof}

We note that the morphism \eqref{eq:MorphismH1H10} is not surjective, in general, since Lemma \ref{lema:PD} and equations \eqref{eq:X01Poiss1}--\eqref{eq:X01Poiss4} imply that not every Poisson derivation of $\PP_{0}$ can be extended to a Poisson derivation of $\PP$.

\begin{remark}\label{rmrk:P1Deriv}
By condition \eqref{eq:PoissDerAPA3}, if the Lie algebra \ec{(P_{1},[\,,\,]_{1})} associated to $\PP$ is perfect, \ec{P_{1} = [P_{1},P_{1}]_{1}}, then every Poisson derivation of $\PP$ preserves the $P_{0}$--module $P_{1}$.
\end{remark}

Now, consider the first cohomology group \ec{\mathrm{H}_{\partial_{\D}}^{1}(\Gamma^{\ast}_{\PP})} of the cochain complex \ec{(\Gamma^{\ast}_{\PP}, \partial_{\D})} in \eqref{eq:ComplexBar}. Then, we have defined a natural $R$--linear mapping:
    \begin{equation}\label{eq:Jmorphis}
        \mathrm{J}:\mathrm{H}_{\partial_{\D}}^{1}( \Gamma^{\ast}_{\PP} ) \longrightarrow \mathscr{H}^{1}(\PP),
            \qquad
        \mathrm{J}[c] :=
        \left[
            \begin{array}{cc}
             0 & 0 \\
             c & 0
           \end{array}
        \right].
    \end{equation}
By Corollary \ref{cor:PoissP1}, Proposition \ref{prop:hamalgebra} and Theorem \ref{teo:CochainComplexes}, the morphism $\mathrm{J}$ is well--defined and its kernel is given by
    \begin{equation*}
        \ker{\mathrm{J}} = \left\{\, [\mathscr{T}_{k \oplus \eta}] \mid k \in \mathrm{Casim}(\PP_{0}),\, \D_{\dd{k}} + [\eta, \cdot]_{1} = 0 \right\},
    \end{equation*}
where the derivation \ec{\mathscr{T}_{k \oplus \eta}} is defined in \eqref{EcTorsionDef}. Consequently, we have the following:

\begin{proposition}\label{prop:H1KerJ}
The non--triviality of the quotient
    \begin{equation}\label{eq:H1KerT}
        {\mathrm{H}_{\partial_{\D}}^{1}(\Gamma^{\ast}_{\PP}
        )} \,/\, {\ker{\mathrm{J}}}
    \end{equation}
is an obstruction to the triviality of \ec{\mathscr{H}^{1}(\PP)}.
\end{proposition}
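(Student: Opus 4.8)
The plan is to observe that Proposition \ref{prop:H1KerJ} is essentially an immediate consequence of the well-definedness of the morphism $\mathrm{J}$ in \eqref{eq:Jmorphis} together with the computation of its kernel, both of which have already been established just above the statement. First I would recall that $\mathrm{J}\colon \mathrm{H}_{\partial_{\D}}^{1}(\Gamma^{\ast}_{\PP}) \to \mathscr{H}^{1}(\PP)$ is the $R$--linear map sending $[c]$ to the class of the Poisson derivation with block form $\left(\begin{smallmatrix} 0 & 0 \\ c & 0 \end{smallmatrix}\right)$; by Corollary \ref{cor:PoissP1} (applied with $X_{00}=0$, $X_{11}=0$, $X_{10}=c$, so that \eqref{eq:X01Poiss1}, \eqref{eq:X01Poiss2} and \eqref{eq:X01Poiss4} hold trivially and \eqref{eq:X01Poiss3} reduces to $\dd_{\D}c = 0$, i.e. $\partial_{\D}c = 0$) such an $X$ is indeed a Poisson derivation, and by Proposition \ref{prop:hamalgebra} the Hamiltonian ones among them are exactly those of the form $\mathscr{T}_{k \oplus \eta}$ with $k \in \mathrm{Casim}(\PP_{0})$ and $\D_{\dd k} + [\eta,\cdot]_{1} = 0$ (since $X_{00} = \mathrm{ad}^{0}_{h} = 0$ forces $h = k$ Casimir, and $X_{11} = \D_{\dd h} + \mathrm{ad}_{\eta} = 0$). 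This is precisely the description of $\ker{\mathrm{J}}$ recorded before the statement.

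The key step is then purely formal: for any $R$--linear map $\mathrm{J}\colon V \to W$ of $R$--modules, injectivity of the induced map $\overline{\mathrm{J}}\colon V/\ker{\mathrm{J}} \to W$ is automatic, so $V/\ker{\mathrm{J}}$ embeds into $W$. Applying this with $V = \mathrm{H}_{\partial_{\D}}^{1}(\Gamma^{\ast}_{\PP})$ and $W = \mathscr{H}^{1}(\PP)$ gives an injection of \eqref{eq:H1KerT} into $\mathscr{H}^{1}(\PP)$. Hence, if $\mathscr{H}^{1}(\PP) = 0$, then necessarily $\mathrm{H}_{\partial_{\D}}^{1}(\Gamma^{\ast}_{\PP})/\ker{\mathrm{J}} = 0$; contrapositively, non--triviality of \eqref{eq:H1KerT} forces $\mathscr{H}^{1}(\PP) \ne 0$, which is exactly the assertion that \eqref{eq:H1KerT} is an obstruction to the triviality of $\mathscr{H}^{1}(\PP)$.

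There is no real obstacle here, since all the analytic content — that the prescribed block matrices are Poisson derivations, that their classes in $\mathscr{H}^{1}(\PP)$ depend only on the $\partial_{\D}$--cohomology class of $c$, and the identification of which of them are Hamiltonian — has been dispatched in Corollary \ref{cor:PoissP1} and Proposition \ref{prop:hamalgebra}. The only point requiring a line of care is checking that $\mathrm{J}$ is well--defined on cohomology, i.e. that if $c = \partial_{\D}Q$ for some $Q \in \Gamma^{0}_{R} = Z_{\PP}(P_{1})$, then $\left(\begin{smallmatrix} 0 & 0 \\ c & 0 \end{smallmatrix}\right)$ is Hamiltonian; but with $k = 0$ and $\eta = -Q \in Z_{\PP}(P_{1})$ one has $\mathscr{T}_{0 \oplus (-Q)} = \D_{\dd(\cdot)}Q = \partial_{\D}Q = c$ and $\D_{\dd 0} + \mathrm{ad}_{-Q} = 0$, so it lies in $\mathrm{Ham}(\PP)$. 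Thus the proof consists in assembling these already--proved facts and invoking the elementary fact that a linear map factors as a quotient followed by an injection.
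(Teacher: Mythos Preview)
Your proposal is correct and follows essentially the same approach as the paper: the proposition is recorded there as an immediate consequence of the well--definedness of $\mathrm{J}$ and the description of $\ker{\mathrm{J}}$ established in the preceding paragraph (via Corollary~\ref{cor:PoissP1}, Proposition~\ref{prop:hamalgebra}, and Theorem~\ref{teo:CochainComplexes}), together with the elementary observation that $V/\ker{\mathrm{J}}$ injects into $W$. Your extra line verifying that $\mathrm{J}$ descends to cohomology (taking $k=0$, $\eta=-Q$) makes explicit a point the paper leaves implicit.
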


We note that \ec{\ker{\mathrm{J}}=\{0\}} in the following cases:
    \begin{enumerate}[label=(\alph*)]
      \item\label{KerJ0Symp} The Poisson algebra $\PP_{0}$ is of symplectic type, \ec{\mathrm{Casim}(\PP_{0}) = R}.
      \item The condition \ec{\mathrm{Casim}(\PP_{0}) = \{f \in P_{0} \mid \D_{\dd{f}} = 0\} \cap \{f \in P_{0} \mid \K(\dd{f}, \cdot) = 0\}} holds.
      \item The Lie algebra \ec{(P_{1},[\,,\,]_{1})} is centerless.
    \end{enumerate}
Indeed, in this last case we have \ec{\mathrm{H}_{\partial_{\D}}^{1}( \Gamma^{\ast}_{\PP}) = 0} by the definition of the cochain complex \eqref{eq:ComplexBar}.

\begin{lemma}\label{lema:KerJ0Abelian}
We have that \ec{\ker{\mathrm{J}}=\{0\}} if the Poisson triple of $P$ corresponding to $\PP$ is of the form \ec{([\,,\,]_{1}=0, \D, \K)} and the $\partial_{\D}$--cohomology class of the 2--cocycle \ec{\Delta\K}  is trivial. Here, the $R$--linear mapping $\Delta$ is defined in \eqref{eq:Delta}.
\end{lemma}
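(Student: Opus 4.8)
The plan is to show directly that every generator $[\mathscr{T}_{k\oplus\eta}]$ of $\ker{\mathrm{J}}$ is a $\partial_{\D}$-coboundary, which forces $\ker{\mathrm{J}}=\{0\}$. First I would record what the hypotheses supply. Since $[\,,\,]_{1}=0$, the center $Z_{\PP}(P_{1})$ is all of $P_{1}$, so $\Gamma^{0}_{\PP}=P_{1}$; and by Corollary~\ref{cor:Partialisd} the contravariant derivative $\D$ is flat and the complex $(\Gamma^{\ast}_{\PP},\partial_{\D})$ coincides with the complex $(\X{R}^{\ast}(P_{0};P_{1}),\dd_{\D})$ in \eqref{eq:CochainAbelian}; in particular $\partial_{\D}\zeta=\bigl(g\mapsto\D_{\dd g}\zeta\bigr)$ for $\zeta\in\Gamma^{0}_{\PP}$. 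By Corollary~\ref{cor:Ktrivial} and Proposition~\ref{prop:DeltaAst}, $\Delta\K$ is a $\partial_{\D}$-cocycle in $\Gamma^{2}_{\PP}$, so the triviality hypothesis means $\Delta\K=\partial_{\D}Q$ for some $R$-linear derivation $Q\colon P_{0}\to P_{1}$. Finally, as recalled above, an element of $\ker{\mathrm{J}}$ arises from $k\oplus\eta\in P$ with $k\in\mathrm{Casim}(\PP_{0})$ and $\D_{\dd k}+[\eta,\cdot]_{1}=0$; since $[\,,\,]_{1}=0$ this reads simply $\D_{\dd k}=0$.

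The core is then a one-line cocycle manipulation. By the definition \eqref{EcTorsionDef} of $\mathscr{T}$ and the definition \eqref{eq:Delta} of $\Delta$, for every $g\in P_{0}$ one has $\mathscr{T}_{k\oplus\eta}(g)=\K(\dd k,\dd g)-\D_{\dd g}\eta=(\Delta\K)(k,g)-\D_{\dd g}\eta$. Expanding $(\Delta\K)(k,g)=(\partial_{\D}Q)(k,g)=\D_{\dd k}Q(g)-\D_{\dd g}Q(k)-Q(\{k,g\}_{0})$ and using $\D_{\dd k}=0$ together with $\{k,g\}_{0}=0$ (the Casimir property of $k$), this collapses to $(\Delta\K)(k,g)=-\D_{\dd g}Q(k)$. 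Hence $\mathscr{T}_{k\oplus\eta}(g)=-\D_{\dd g}\bigl(Q(k)+\eta\bigr)$ for all $g\in P_{0}$, that is, $\mathscr{T}_{k\oplus\eta}=-\partial_{\D}\bigl(Q(k)+\eta\bigr)$ with $Q(k)+\eta\in P_{1}=\Gamma^{0}_{\PP}$. Therefore $[\mathscr{T}_{k\oplus\eta}]=0$ in $\mathrm{H}^{1}_{\partial_{\D}}(\Gamma^{\ast}_{\PP})$, and since $\ker{\mathrm{J}}$ is generated by classes of this form, $\ker{\mathrm{J}}=\{0\}$.

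I do not anticipate a real obstacle here; the computation is short. The only points requiring care are the bookkeeping of the abelian-case identifications ($Z_{\PP}(P_{1})=P_{1}$, $\Gamma^{\ast}_{\PP}\cong(\X{R}^{\ast}(P_{0};P_{1}),\dd_{\D})$, and the description of $\partial_{\D}$ on $\Gamma^{0}_{\PP}$), and making sure that both distinguished properties of $k$ — namely $k\in\mathrm{Casim}(\PP_{0})$ and $\D_{\dd k}=0$ — are invoked at precisely the right place in the expansion of $(\partial_{\D}Q)(k,g)$.
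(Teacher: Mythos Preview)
Your proof is correct and complete. The paper states this lemma without proof, so there is no argument to compare against; your direct computation showing $\mathscr{T}_{k\oplus\eta}=-\partial_{\D}(Q(k)+\eta)$ via the expansion of $(\partial_{\D}Q)(k,g)$ and the two conditions $\D_{\dd k}=0$, $\{k,g\}_{0}=0$ is exactly the natural verification the authors presumably had in mind.
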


In particular, as we show below, for admissible Poisson algebras induced by Poisson modules (Section \ref{sec:PoissonModules}) and associated to a symplectic leaf of a Poisson manifold (Section \ref{sec:PoissonSub}) the kernel of $\mathrm{J}$ is trivial.

\paragraph{Restricted First Poisson Cohomology.}
Taking into account Lemma \ref{lemma:HamP1preserv}, we restrict the study of the first Poisson cohomology to Poisson derivations that preserve the $P_{0}$--module $P_{1}$. Based on Lemmas \ref{lemma:Xpreserving} and \ref{lema:PD}, the Lie algebra of all these derivations of $P$ is given by
    \begin{equation*}
    	\mathrm{Der}_{1}(P) := \left\{
            \left.
            \Big( \begin{smallmatrix}
                    X_{00} & 0 \\
                    X_{10} & X_{11} \\
                 \end{smallmatrix} \Big)
                \,\right|\,
            X_{00} \in \mathrm{Der}(P_{0}), X_{10} \in \mathrm{Der}(P_{0};P_{1}), X_{11}\ \text{is a generalized $X_{00}$--derivation of $P_{1}$} \right\}.
    \end{equation*}

We denote the Lie subalgebra of all Poisson derivations of \ec{\PP} in \ec{\mathrm{Der}_{1}(P)} by
    \begin{equation*}
        \mathrm{Poiss}_{1}(\PP) := \mathrm{Der}_{1}(P) \cap \mathrm{Poiss}(\PP).
    \end{equation*}
By Proposition \ref{prop:hamalgebra}, we have
\ec{\mathrm{Ham}(\PP) \subseteq \mathrm{Der}_{1}(P)}. Then, we define the \emph{restricted first Poisson cohomology} of $\PP$ as the quotient
    \begin{equation}\label{eq:H1Rest}
        \mathscr{H}^{1}_{\mathrm{rest}}(\PP) := {\mathrm{Poiss}_{1}(\PP)} \,/\, {\mathrm{Ham}(\PP)}.
    \end{equation}
Clearly, it holds that
    \begin{equation}\label{eq:RestrictedFirstCoho}
        \mathscr{H}^{1}_{\mathrm{rest}}(\PP) \subseteq \mathscr{H}^{1}(\PP).
    \end{equation}
Moreover, taking into account Remark \ref{rmrk:P1Deriv}, we have the following criterion.

\begin{lemma}\label{lema:H1Hrest}
If the Lie algebra \ec{(P_{1},[\,,\,]_{1})} associated to $\PP$ is perfect, then we have
    \begin{equation*}
        \mathscr{H}^{1}(\PP)=\mathscr{H}^{1}_{\mathrm{rest}}(\PP).
    \end{equation*}
\end{lemma}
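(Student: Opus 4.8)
The plan is to show that the hypothesis forces $\mathrm{Poiss}(\PP)=\mathrm{Poiss}_{1}(\PP)$, after which the two cohomology quotients coincide tautologically because they share the same subspace of coboundaries. Since by \eqref{eq:RestrictedFirstCoho} we already have the inclusion $\mathscr{H}^{1}_{\mathrm{rest}}(\PP)\subseteq\mathscr{H}^{1}(\PP)$, only the reverse inclusion needs work, and this amounts to proving that every Poisson derivation of $\PP$ automatically preserves the $P_{0}$--module $P_{1}$, i.e. lies in $\mathrm{Der}_{1}(P)$ in the sense of \eqref{eq:X01Vanish}.

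First I would take an arbitrary $X\in\mathrm{Poiss}(\PP)$ and write it in the block form \eqref{EcX}. By Lemma \ref{lema:PD} it is in particular a derivation of the commutative algebra $P$, so the blocks satisfy \ref{Der1}--\ref{Der4}; and by Proposition \ref{prop:PoissDer} the compatibility with the Poisson bracket gives, among the identities \eqref{eq:PoissDerAPA1}--\eqref{eq:PoissDerAPA4}, the relation \eqref{eq:PoissDerAPA3}, namely $X_{01}\circ\mathrm{ad}_{\eta}=0$ for every $\eta\in P_{1}$. Equivalently, $X_{01}\big([\eta,\xi]_{1}\big)=0$ for all $\eta,\xi\in P_{1}$. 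Now invoke the hypothesis that $(P_{1},[\,,\,]_{1})$ is perfect, $P_{1}=[P_{1},P_{1}]_{1}$: since $P_{1}$ is $R$--spanned (indeed generated as an $R$--module) by brackets $[\eta,\xi]_{1}$, the $R$--linear map $X_{01}\colon P_{1}\to P_{0}$ vanishes identically, $X_{01}=0$. By Lemma \ref{lemma:Xpreserving}\,\ref{Der2}--type reasoning (part (b), the characterization \eqref{eq:X01Vanish}), $X$ then preserves $P_{1}$, so $X\in\mathrm{Der}_{1}(P)$ and hence $X\in\mathrm{Der}_{1}(P)\cap\mathrm{Poiss}(\PP)=\mathrm{Poiss}_{1}(\PP)$. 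This is precisely the content of Remark \ref{rmrk:P1Deriv}, which could simply be cited here. Consequently $\mathrm{Poiss}(\PP)=\mathrm{Poiss}_{1}(\PP)$.

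To conclude, I would note that $\mathrm{Ham}(\PP)\subseteq\mathrm{Poiss}_{1}(\PP)$: indeed Hamiltonian derivations are Poisson derivations and, by Lemma \ref{lemma:HamP1preserv} (equivalently Proposition \ref{prop:hamalgebra}), they preserve $P_{1}$, so $\mathrm{Ham}(\PP)\subseteq\mathrm{Der}_{1}(P)\cap\mathrm{Poiss}(\PP)=\mathrm{Poiss}_{1}(\PP)$. Therefore
\[
    \mathscr{H}^{1}(\PP)=\frac{\mathrm{Poiss}(\PP)}{\mathrm{Ham}(\PP)}
    =\frac{\mathrm{Poiss}_{1}(\PP)}{\mathrm{Ham}(\PP)}=\mathscr{H}^{1}_{\mathrm{rest}}(\PP),
\]
as claimed. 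I do not expect a genuine obstacle here: the only point requiring a little care is the purely algebraic step that perfectness lets one cancel $\mathrm{ad}_{\eta}$ from $X_{01}\circ\mathrm{ad}_{\eta}=0$ to get $X_{01}=0$, which works because $P_{1}$ is generated over $R$ by brackets; everything else is a direct combination of Remark \ref{rmrk:P1Deriv}, Lemma \ref{lemma:HamP1preserv}, and the definitions \eqref{eq:H1Rest}, \eqref{eq:RestrictedFirstCoho}.
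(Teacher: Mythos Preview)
Your proposal is correct and follows essentially the same route as the paper: the paper simply cites Remark \ref{rmrk:P1Deriv} (which uses condition \eqref{eq:PoissDerAPA3} together with perfectness to force $X_{01}=0$) to conclude $\mathrm{Poiss}(\PP)=\mathrm{Poiss}_{1}(\PP)$, and the equality of the two quotients by $\mathrm{Ham}(\PP)$ is then immediate. Your write-up just makes this argument explicit.
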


Recall that an $R$--linear mapping \ec{\mathscr{L}:P_{1} \to P_{1}} is called a \emph{generalized derivation} if there exists a derivation \ec{\ell \in \mathrm{Der}(P_{0})} such that $\mathscr{L}(f\eta) = f \mathscr{L}\eta + \ell(f)\eta$, for all \ec{f \in P_{0}} and \ec{\eta \in P_{1}}. We call such a derivation $\mathscr{L}$ a generalized $\ell$--derivation. Note that a generalized $0$--derivation is just a $P_{0}$--linear morphism of $P_{1}$.

Let \ec{([\,,\,]_{1},\D,\K)} be the Poisson triple of $P$ corresponding to $\PP$. We introduce a set \ec{\mathfrak{M}(\PP)} consisting of all generalized $\ell$--derivations \ec{\mathscr{L}} of $P_{1}$ satisfying the following conditions:
    \begin{itemize}
      \item \ec{\ell \in \mathrm{Poiss}(\PP_{0})},
      \item $\mathscr{L}$ derives the $P_{0}$--linear Lie bracket \ec{[\,,\,]_{1}} in the sense of \eqref{eq:X01Poiss2},
    \end{itemize}
and there exists \ec{\theta \in \mathrm{Der}(P_{0};P_{1})} such that
    \begin{align}
        \pmb{[}\D_{\dd f},\mathscr{L}\pmb{]} + \D_{\dd\ell(f)} &= \mathrm{ad}_{\theta(f)}, \label{eq:ThetaDf}\\
        (\mathscr{L} \circ \K)(\dd{f},\dd{g}) - \K \big( \dd \ell(f),\dd{g} \big) - \K \big( \dd{f},\dd \ell(g) \big) &= - \big( \dd_{\D}\theta \big)(f,g), \label{eq:ThetaK}
    \end{align}
for all \ec{f,g \in P_{0}}. Here, \ec{\mathrm{ad}_{\eta}=[\eta,\cdot]_{1}}, for \ec{\eta \in P_{1}}, and \ec{\dd_{\D}} is the contravariant differential \eqref{eq:dDbar} on \ec{\mathrm{Der}(P_{0};P_{1})} associated to $\D$ and $\PP_{0}$. It is easy to see that \ec{\mathfrak{M}(\PP)} is an $R$--module.

\begin{lemma}\label{lemma:MP}
We have the following properties:
    \begin{itemize}
      \item \ec{\mathfrak{M}(\PP)} is a Lie algebra.
      \item \ec{\mathfrak{M}(\PP)} contains the $R$--submodule of generalized derivations of $P_{1}$ induced by $\D$,
            \begin{equation*}
                \mathscr{C}(\PP):= \{\D_{\dd{f}} \mid f \in P_{0}\} \subseteq \mathfrak{M}(\PP).
            \end{equation*}
      \item \ec{\mathfrak{M}(\PP)} contains the ideal \ec{\mathrm{Inn}(P_{1},[\,,\,]_{1})} of inner derivations of \ec{(P_{1},[\,,\,]_{1})},
            \begin{equation*}
                \mathrm{Inn}(P_{1},[\,,\,]_{1}) \subseteq \mathfrak{M}(\PP).
            \end{equation*}
      \item The intersection of these two last $R$--modules is given by
            \begin{equation*}
                \mathscr{C}(\PP) \cap \mathrm{Inn}(P_{1},[\,,\,]_{1}) = \{\D_{\dd{k}} = \mathrm{ad}_{\zeta} \mid k \in \mathrm{Casim}(\PP_{0}), \zeta \in P_{1}\}.
            \end{equation*}
    \end{itemize}
\end{lemma}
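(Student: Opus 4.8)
The plan is to identify $\mathfrak{M}(\PP)$ with the image of the ``lower--right corner'' map on the $P_{1}$--preserving Poisson derivations of $\PP$, and then to read off the four assertions. Write $\Phi\colon\mathrm{Der}_{1}(P)\to\mathrm{End}_{R}(P_{1})$, $X\mapsto X_{11}$. Every $X\in\mathrm{Der}_{1}(P)$ is lower block--triangular (Lemmas \ref{lemma:Xpreserving} and \ref{lema:PD}) and its corner $X_{11}$ is a generalized $X_{00}$--derivation of $P_{1}$; block multiplication then gives $(X\circ Y)_{11}=X_{11}\circ Y_{11}$, hence $\pmb{[}X,Y\pmb{]}_{11}=\pmb{[}X_{11},Y_{11}\pmb{]}$, for $X,Y\in\mathrm{Der}_{1}(P)$. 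Since the generalized derivations of $P_{1}$ form a Lie algebra under the commutator (the bracket of a generalized $\ell_{1}$-- and a generalized $\ell_{2}$--derivation is a generalized $\pmb{[}\ell_{1},\ell_{2}\pmb{]}$--derivation), $\Phi$ restricts to a Lie algebra homomorphism on $\mathrm{Poiss}_{1}(\PP)=\mathrm{Der}_{1}(P)\cap\mathrm{Poiss}(\PP)$. I claim its image is exactly $\mathfrak{M}(\PP)$: this is a direct translation, via the dictionary $X_{00}\leftrightarrow\ell$, $X_{11}\leftrightarrow\mathscr{L}$, $X_{10}\leftrightarrow-\theta$, between the description of $\mathrm{Poiss}_{1}(\PP)$ given by Corollary \ref{cor:PoissP1} and the definition of $\mathfrak{M}(\PP)$ --- condition \eqref{eq:X01Poiss1} says $\ell\in\mathrm{Poiss}(\PP_{0})$, \eqref{eq:X01Poiss2} says $\mathscr{L}$ derives $[\,,\,]_{1}$, \eqref{eq:X01Poiss4} rewrites (using $\pmb{[}X_{11},\D_{\dd f}\pmb{]}=-\pmb{[}\D_{\dd f},\mathscr{L}\pmb{]}$) as \eqref{eq:ThetaDf}, and \eqref{eq:X01Poiss3} rewrites as \eqref{eq:ThetaK}. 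Conversely, for $\mathscr{L}\in\mathfrak{M}(\PP)$ with data $(\ell,\theta)$, the block matrix $\Big(\begin{smallmatrix}\ell&0\\-\theta&\mathscr{L}\end{smallmatrix}\Big)$ lies in $\mathrm{Der}_{1}(P)$ by Lemma \ref{lema:PD} and in $\mathrm{Poiss}(\PP)$ by Corollary \ref{cor:PoissP1}, hence in $\mathrm{Poiss}_{1}(\PP)$, and $\Phi$ sends it to $\mathscr{L}$. Being the image of a Lie algebra homomorphism, $\mathfrak{M}(\PP)$ is a Lie algebra, which is the first bullet.

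For the inclusions $\mathscr{C}(\PP)\subseteq\mathfrak{M}(\PP)$ and $\mathrm{Inn}(P_{1},[\,,\,]_{1})\subseteq\mathfrak{M}(\PP)$ I would invoke Hamiltonian derivations: by Lemma \ref{lemma:HamP1preserv} we have $\mathrm{Ham}(\PP)\subseteq\mathrm{Poiss}_{1}(\PP)$, and by Proposition \ref{prop:hamalgebra} one has $(X_{h\oplus\eta})_{11}=\D_{\dd h}+\mathrm{ad}_{\eta}$, so $\Phi(\mathrm{Ham}(\PP))=\{\D_{\dd h}+\mathrm{ad}_{\eta}\mid h\oplus\eta\in P\}=\mathscr{C}(\PP)+\mathrm{Inn}(P_{1},[\,,\,]_{1})\subseteq\mathfrak{M}(\PP)$. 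Specializing $\eta=0$ gives $\mathscr{C}(\PP)\subseteq\mathfrak{M}(\PP)$ (and $\mathscr{C}(\PP)$ is an $R$--submodule because $\D$ is $R$--bilinear), and specializing $h=0$ gives $\mathrm{Inn}(P_{1},[\,,\,]_{1})\subseteq\mathfrak{M}(\PP)$; the latter is even an ideal of $\mathfrak{M}(\PP)$ since $\pmb{[}\mathscr{L},\mathrm{ad}_{\zeta}\pmb{]}=\mathrm{ad}_{\mathscr{L}\zeta}$ whenever $\mathscr{L}$ derives $[\,,\,]_{1}$. (Alternatively one checks these two inclusions directly: $\D_{\dd f}$ carries data $\ell=\{f,\cdot\}_{0}$ and $\theta=\K(\dd(\cdot),\dd f)$, with \eqref{eq:ThetaDf} coming from \eqref{EcPT2} and \eqref{eq:ThetaK} from the Bianchi identity \eqref{EcPT3}; while $\mathrm{ad}_{\zeta}$ carries $\ell=0$ and $\theta=\D_{\dd(\cdot)}\zeta$, with \eqref{eq:ThetaDf} from \eqref{EcPT1} and \eqref{eq:ThetaK} from \eqref{EcPT2}.)

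For the last bullet, the inclusion $\supseteq$ is immediate from the definitions. For $\subseteq$, let $\mathscr{L}=\D_{\dd f}=\mathrm{ad}_{\zeta}$ with $f\in P_{0}$ and $\zeta\in P_{1}$. Because $P_{1}$ is a faithful $P_{0}$--module, the symbol $\ell$ of a generalized $\ell$--derivation of $P_{1}$ is uniquely determined by the derivation; the symbol of $\D_{\dd f}$ is $\varrho_{P_{0}}(\dd f)=\{f,\cdot\}_{0}$, while the symbol of the $P_{0}$--linear map $\mathrm{ad}_{\zeta}$ is $0$, so $\{f,\cdot\}_{0}=0$, that is $f\in\mathrm{Casim}(\PP_{0})$; put $k:=f$. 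This yields the stated equality. The main obstacle in the whole argument is the bookkeeping in the first paragraph: one must be careful with the block--matrix products and with the sign in $X_{10}=-\theta$ so that \eqref{eq:X01Poiss3}--\eqref{eq:X01Poiss4} line up exactly with \eqref{eq:ThetaDf}--\eqref{eq:ThetaK}. Once the surjection $\mathrm{Poiss}_{1}(\PP)\to\mathfrak{M}(\PP)$ is in place, the remaining assertions follow quickly from Proposition \ref{prop:hamalgebra} and the faithfulness of $P_{1}$.
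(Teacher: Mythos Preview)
Your argument is correct and takes a genuinely different, more conceptual route than the paper. The paper works directly with the triples $(\mathscr{L},\ell,\theta)$: it writes down the explicit formula
\[
\big(\pmb{[}\mathscr{L},\mathscr{L}'\pmb{]},\ \pmb{[}\ell,\ell'\pmb{]},\ \mathscr{L}\circ\theta'-\mathscr{L}'\circ\theta+\theta\circ\ell'-\theta'\circ\ell\big)
\]
for the commutator and checks by hand, using \eqref{EcPT1}--\eqref{EcPT3}, that this triple again satisfies \eqref{eq:ThetaDf}--\eqref{eq:ThetaK}; the inclusions of $\mathscr{C}(\PP)$ and $\mathrm{Inn}(P_{1},[\,,\,]_{1})$ are likewise verified by exhibiting the triples $(\D_{\dd h},\{h,\cdot\}_{0},-\K(\dd h,\dd(\cdot)))$ and $(\mathrm{ad}_{\eta},0,\D_{\dd(\cdot)}\eta)$ directly. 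You instead recognize that the assignment $(\mathscr{L},\ell,\theta)\leftrightarrow\big(\begin{smallmatrix}\ell&0\\-\theta&\mathscr{L}\end{smallmatrix}\big)$ identifies these triples with $\mathrm{Poiss}_{1}(\PP)$ (this is exactly the content of Corollary \ref{cor:PoissP1}), so that $\mathfrak{M}(\PP)$ becomes the image of the corner map $X\mapsto X_{11}$, which is a Lie algebra homomorphism because of the block--triangular shape. This buys you the Lie algebra property for free, and lets you obtain $\mathscr{C}(\PP)+\mathrm{Inn}(P_{1},[\,,\,]_{1})\subseteq\mathfrak{M}(\PP)$ in one stroke as $\Phi(\mathrm{Ham}(\PP))$ via Proposition \ref{prop:hamalgebra}. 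The paper's approach, in exchange, produces the explicit $\theta$ for a commutator, which can be useful in later computations. Your treatment of the intersection via faithfulness of $P_{1}$ (uniqueness of the symbol of a generalized derivation) is more detailed than the paper's ``direct'' and is correct.
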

\begin{proof}
For simplicity, we identify a generalized $\ell$--derivation $\mathscr{L}$ in $\mathfrak{M}(\PP)$ with the triple \ec{(\mathscr{L}, \ell, \theta)}, for some \emph{fixed} $\theta$ in \eqref{eq:ThetaDf} and \eqref{eq:ThetaK}. Then, taking into account the conditions \eqref{EcPT1}--\eqref{EcPT3} for Poisson triples, the commutator of two given \ec{(\mathscr{L}, \ell, \theta)} and \ec{(\mathscr{L}', \ell', \theta')} of $\mathfrak{M}(\PP)$ also belongs to $\mathfrak{M}(\PP)$ and corresponds to the triple \ec{(\pmb{[}\mathscr{L},\mathscr{L}'\pmb{]}, \pmb{[}\ell,\ell'\pmb{]}, \mathscr{L} \circ \theta' - \mathscr{L}' \circ \theta + \theta \circ \ell' - \theta' \circ \ell)}. Hence, $\mathfrak{M}(\PP)$ is Lie algebra over $R$. Moreover, a generalized derivation \ec{\D_{\dd{h}} \in \mathscr{C}(\PP)}, with \ec{h \in P_{0}}, belongs to $\mathfrak{M}(\PP)$ since we can assign it the triple \ec{(\mathscr{L}=\D_{\dd{h}},\ell=\{h,\cdot\}_{0},\theta=-\K(\dd{h},\dd(\cdot)))}. Similarly, an inner derivation \ec{\mathrm{ad}_{\eta} \in \mathrm{Inn}(P_{1},[\,,\,]_{1})}, with \ec{\eta \in P_{1}}, belongs to $\mathfrak{M}(\PP)$ since we can assign it the triple $(\mathscr{L}=\mathrm{ad}_{\eta},\ell=0,\theta=\D_{\dd(\cdot)}\eta)$. The proof of the claim for the intersection of \ec{\mathscr{C}(\PP)} and \ec{\mathrm{Inn}(P_{1},[\,,\,]_{1})} is direct.
\end{proof}

\begin{theorem}\label{teo:RestrictedCoho}
Let $\PP$ be an admissible Poisson algebra. Then, there exists a short exact sequence
    \begin{equation}\label{eq:exactsqc}
        0 \,\longrightarrow\, \frac{\mathrm{H}_{\partial_{\D}}^{1}\big( \Gamma^{\ast}_{\PP})}{\ker{\mathrm{J}}}
        \,\longrightarrow\, \mathscr{H}^{1}_{\mathrm{rest}}(\PP)
        \,\longrightarrow\, \frac{\mathfrak{M}(\PP)}{\mathscr{C}(\PP) + \mathrm{Inn}(P_{1},[\,,\,]_{1})}
        \,\longrightarrow\, 0.
    \end{equation}
Here, \ec{\mathrm{H}_{\partial_{\D}}^{1}(\Gamma^{\ast}_{\PP})} is the first cohomology group of the cochain complex \ec{(\Gamma^{\ast}_{\PP}, \partial_{\D})} in \eqref{eq:ComplexBar} and $\mathrm{J}$ is the $R$--linear mapping defined in \eqref{eq:Jmorphis}.
\end{theorem}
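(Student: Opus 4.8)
The plan is to build \eqref{eq:exactsqc} from a single surjective $R$--linear map $\Phi\colon\mathrm{Poiss}_{1}(\PP)\to\mathfrak{M}(\PP)$, $\Phi(X):=X_{11}$, where $X=\left(\begin{smallmatrix}X_{00}&0\\X_{10}&X_{11}\end{smallmatrix}\right)\in\mathrm{Der}_{1}(P)$ in the notation of this section. First I would check that $\Phi$ takes values in $\mathfrak{M}(\PP)$: by Corollary~\ref{cor:PoissP1}, $X\in\mathrm{Der}_{1}(P)$ is a Poisson derivation of $\PP$ exactly when \eqref{eq:X01Poiss1}--\eqref{eq:X01Poiss4} hold, and these conditions say precisely that $\ell:=X_{00}\in\mathrm{Poiss}(\PP_{0})$ (by \eqref{eq:X01Poiss1}), that $X_{11}$ is a generalized $\ell$--derivation of $P_{1}$ deriving $[\,,\,]_{1}$ (by the definition of $\mathrm{Der}_{1}(P)$ and \eqref{eq:X01Poiss2}), and that $\theta:=-X_{10}\in\mathrm{Der}(P_{0};P_{1})$ satisfies \eqref{eq:ThetaDf} and \eqref{eq:ThetaK} (these being \eqref{eq:X01Poiss4} and \eqref{eq:X01Poiss3}, rewritten). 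Hence $X_{11}\in\mathfrak{M}(\PP)$. Conversely, given $\mathscr{L}\in\mathfrak{M}(\PP)$ with witnesses $\ell\in\mathrm{Poiss}(\PP_{0})$ and $\theta$, the matrix $\left(\begin{smallmatrix}\ell&0\\-\theta&\mathscr{L}\end{smallmatrix}\right)$ lies in $\mathrm{Poiss}_{1}(\PP)$ by the same equivalences and is sent by $\Phi$ to $\mathscr{L}$; so $\Phi$ is surjective.

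Next I would compute $\ker\Phi$. If $X_{11}=0$, then being a generalized $X_{00}$--derivation together with faithfulness of $P_{1}$ forces $X_{00}=0$, after which \eqref{eq:X01Poiss4} gives $X_{10}(f)\in Z_{\PP}(P_{1})$ for all $f$, i.e.\ $X_{10}\in\Gamma^{1}_{R}$, and \eqref{eq:X01Poiss3} then reduces to $\partial_{\D}X_{10}=0$. Thus $X\mapsto X_{10}$ identifies $\ker\Phi$ with the module $Z^{1}_{\partial_{\D}}(\Gamma^{\ast}_{\PP})$ of $1$--cocycles of $(\Gamma^{\ast}_{\PP},\partial_{\D})$; note conversely that every such cocycle $c$ gives $\left(\begin{smallmatrix}0&0\\c&0\end{smallmatrix}\right)\in\mathrm{Poiss}_{1}(\PP)$ by Corollary~\ref{cor:PoissP1}. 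On the Hamiltonian side, Proposition~\ref{prop:hamalgebra} gives $\Phi(X_{h\oplus\eta})=\D_{\dd h}+\mathrm{ad}_{\eta}$, so $\Phi(\mathrm{Ham}(\PP))=\mathscr{C}(\PP)+\mathrm{Inn}(P_{1},[\,,\,]_{1})$. Since $\mathrm{Ham}(\PP)\subseteq\mathrm{Der}_{1}(P)$, the map $\Phi$ descends to a surjection
\[
    \overline{\Phi}\colon\ \mathscr{H}^{1}_{\mathrm{rest}}(\PP)=\mathrm{Poiss}_{1}(\PP)/\mathrm{Ham}(\PP)\ \longrightarrow\ \mathfrak{M}(\PP)\big/\big(\mathscr{C}(\PP)+\mathrm{Inn}(P_{1},[\,,\,]_{1})\big),
\]
which is the surjection at the right of \eqref{eq:exactsqc}.

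It remains to identify $\ker\overline{\Phi}$. Because $\Phi(X)\in\mathscr{C}(\PP)+\mathrm{Inn}(P_{1},[\,,\,]_{1})=\Phi(\mathrm{Ham}(\PP))$ iff $X\in\ker\Phi+\mathrm{Ham}(\PP)$, the second isomorphism theorem gives $\ker\overline{\Phi}\cong\ker\Phi/(\ker\Phi\cap\mathrm{Ham}(\PP))$. Under the identification $\ker\Phi\cong Z^{1}_{\partial_{\D}}(\Gamma^{\ast}_{\PP})$ above, $\ker\Phi\cap\mathrm{Ham}(\PP)$ corresponds to the set $N$ of cocycles $\mathscr{T}_{k\oplus\eta}$ with $k\in\mathrm{Casim}(\PP_{0})$ and $\D_{\dd k}+\mathrm{ad}_{\eta}=0$ (these $X_{h\oplus\eta}$ being the Hamiltonian derivations with $\D_{\dd h}+\mathrm{ad}_{\eta}=0$, which forces $h\in\mathrm{Casim}(\PP_{0})$ since $\D_{\dd h}+\mathrm{ad}_{\eta}$ is a generalized $\mathrm{ad}^{0}_{h}$--derivation). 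Taking $k=0$ and $\eta=-\zeta$ with $\zeta\in Z_{\PP}(P_{1})$ shows $\mathscr{T}_{0\oplus(-\zeta)}=\D_{\dd(\,\cdot\,)}\zeta=\partial_{\D}\zeta$, so $N$ already contains all coboundaries $B^{1}_{\partial_{\D}}(\Gamma^{\ast}_{\PP})$; comparing with the description of $\ker\mathrm{J}$ recalled before Proposition~\ref{prop:H1KerJ}, the image of $N$ in $\mathrm{H}^{1}_{\partial_{\D}}(\Gamma^{\ast}_{\PP})=Z^{1}_{\partial_{\D}}/B^{1}_{\partial_{\D}}$ is exactly $\ker\mathrm{J}$. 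Hence, by the third isomorphism theorem, $\ker\overline{\Phi}\cong Z^{1}_{\partial_{\D}}/N\cong\mathrm{H}^{1}_{\partial_{\D}}(\Gamma^{\ast}_{\PP})/\ker\mathrm{J}$, and this isomorphism is realized by the map $\mathrm{J}$ of \eqref{eq:Jmorphis} (whose image lands in $\mathscr{H}^{1}_{\mathrm{rest}}(\PP)$, since $\left(\begin{smallmatrix}0&0\\c&0\end{smallmatrix}\right)\in\mathrm{Der}_{1}(P)$). Splicing $\mathrm{J}$ with $\overline{\Phi}$ yields \eqref{eq:exactsqc}.

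I expect the main obstacle to be the bookkeeping in this last step: tracking which $\partial_{\D}$--cocycles and coboundaries correspond to which Poisson and Hamiltonian derivations, and in particular verifying that the a priori distinct quotients $Z^{1}_{\partial_{\D}}/N$ and $Z^{1}_{\partial_{\D}}/(B^{1}_{\partial_{\D}}+N)$ coincide (i.e.\ that $B^{1}_{\partial_{\D}}\subseteq N$). By contrast, the verification that $\Phi$ sets up a bijective correspondence between $\mathrm{Poiss}_{1}(\PP)$ and $\mathfrak{M}(\PP)$ up to the cocycle ambiguity in $\theta$ is a direct translation of Corollary~\ref{cor:PoissP1} and of the defining conditions \eqref{eq:ThetaDf}--\eqref{eq:ThetaK} of $\mathfrak{M}(\PP)$, and should be routine.
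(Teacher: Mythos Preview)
Your proposal is correct and follows essentially the same approach as the paper: the surjection $\overline{\Phi}$ you build is exactly the map $[X]\mapsto[X_{11}]$ that the paper writes down, and your identification of $\ker\overline{\Phi}$ with $\mathrm{H}^{1}_{\partial_{\D}}(\Gamma^{\ast}_{\PP})/\ker\mathrm{J}$ via $\mathrm{J}$ is precisely the injective map the paper states. The paper's proof is very terse, merely asserting that these two maps are respectively injective and surjective and leaving exactness in the middle to the reader; your argument supplies all of the details the paper omits (well-definedness of $\Phi$ via Corollary~\ref{cor:PoissP1}, the identification $\ker\Phi\cong Z^{1}_{\partial_{\D}}$ using faithfulness of $P_{1}$, the equality $\Phi(\mathrm{Ham}(\PP))=\mathscr{C}(\PP)+\mathrm{Inn}(P_{1},[\,,\,]_{1})$, and the inclusion $B^{1}_{\partial_{\D}}\subseteq N$), so in content your proof is a strict expansion of the paper's.
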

\begin{proof}
By definition of $\mathrm{J}$, the natural induced $R$--linear mapping given by
    \begin{equation*}
        {\mathrm{H}_{\partial_{\D}}^{1}\big( \Gamma^{\ast}_{\PP} )} \,/\, {\ker{\mathrm{J}}}
        \,\ni \big\{ [c] + \ker{\mathrm{J}} \big\}
            \,\longmapsto\,
        \Big[ \begin{smallmatrix}
             0 & 0 \\
             c & 0
           \end{smallmatrix} \Big]
        \in \mathscr{H}^{1}_{\mathrm{rest}}(\PP),
    \end{equation*}
is injective. Moreover, by definition of \ec{\mathscr{H}^{1}_{\mathrm{rest}}(\PP)} and \ec{\mathfrak{M}(\PP)}, one can show that the $R$--linear mapping
    \begin{equation*}
        \mathscr{H}^{1}_{\mathrm{rest}}(\PP) \ni
            \Big[ \begin{smallmatrix}
                    X_{00} & 0 \\
                    X_{10} & X_{11}
                 \end{smallmatrix} \Big]
            \longmapsto
            [X_{11}] \in \frac{\mathfrak{M}(\PP)}{\mathscr{C}(\PP) + \mathrm{Inn}(P_{1},[\,,\,]_{1})}
    \end{equation*}
is well--defined and surjective.
\end{proof}

Clearly, if the short exact sequence \eqref{eq:exactsqc} splits, then
    \begin{equation*}
        \mathscr{H}^{1}_{\mathrm{rest}}(\PP) \simeq
        \frac{\mathrm{H}_{\partial_{\D}}^{1}\big( \Gamma^{\ast}_{\PP} \big)}{\ker{\mathrm{J}}} \oplus \frac{\mathfrak{M}(\PP)}{\mathscr{C}(\PP) + \mathrm{Inn}(P_{1},[\,,\,]_{1})}.
    \end{equation*}
In particular, if \ec{\mathrm{H}_{\partial_{\D}}^{1}(\Gamma^{\ast}_{\PP} ) = \ker{\mathrm{J}}}, then
    \begin{equation}\label{eq:Rest1Iso}
        \mathscr{H}^{1}_{\mathrm{rest}}(\PP) \simeq
        \frac{\mathfrak{M}(\PP)}{\mathscr{C}(\PP) + \mathrm{Inn}(P_{1},[\,,\,]_{1})}.
    \end{equation}
\begin{corollary}\label{cor:trivialcenter}

If the Lie algebra \ec{(P_{1},[\,,\,]_{1})} associated to $\PP$ is centerless, then \eqref{eq:Rest1Iso} holds.
\end{corollary}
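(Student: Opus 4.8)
The plan is to read off the isomorphism \eqref{eq:Rest1Iso} directly from the short exact sequence \eqref{eq:exactsqc} of Theorem \ref{teo:RestrictedCoho}, after showing that its leftmost term collapses. First I would note that the hypothesis ``$(P_{1},[\,,\,]_{1})$ centerless'' means exactly $Z_{\PP}(P_{1})=\{0\}$. Since the cochain groups of the complex \eqref{eq:ComplexBar} are $\Gamma^{k}_{R}\equiv\Gamma^{k}_{R}(P_{0};Z_{\PP}(P_{1}))$, i.e.\ they take values in the center, they all vanish in this case; hence $(\Gamma^{\ast}_{\PP},\partial_{\D})$ is the zero complex and in particular $\mathrm{H}^{1}_{\partial_{\D}}(\Gamma^{\ast}_{\PP})=0$, as already remarked in the text preceding Lemma \ref{lema:KerJ0Abelian}.

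Next, since $\ker{\mathrm{J}}$ is an $R$--submodule of $\mathrm{H}^{1}_{\partial_{\D}}(\Gamma^{\ast}_{\PP})$, it too is zero, so the quotient $\mathrm{H}^{1}_{\partial_{\D}}(\Gamma^{\ast}_{\PP})/\ker{\mathrm{J}}$ appearing on the left of \eqref{eq:exactsqc} vanishes. Substituting this into the short exact sequence forces the third arrow
\[
    \mathscr{H}^{1}_{\mathrm{rest}}(\PP) \longrightarrow \frac{\mathfrak{M}(\PP)}{\mathscr{C}(\PP)+\mathrm{Inn}(P_{1},[\,,\,]_{1})}
\]
to be injective as well as surjective, hence an isomorphism, which is precisely \eqref{eq:Rest1Iso}. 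Equivalently, one can argue directly with the maps constructed in the proof of Theorem \ref{teo:RestrictedCoho}: the assignment $[X]\mapsto[X_{11}]$ is surjective, and its kernel consists of classes of derivations of the form $\big(\begin{smallmatrix}0&0\\ c&0\end{smallmatrix}\big)$ with $c\in\Gamma^{1}_{R}$, which is trivial once $\Gamma^{1}_{R}=\{0\}$; this gives injectivity without invoking exactness.

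The argument is formal and I do not expect any genuine obstacle: the only point requiring a little care is the clean invocation of the definition of the complex \eqref{eq:ComplexBar} to conclude $\mathrm{H}^{1}_{\partial_{\D}}(\Gamma^{\ast}_{\PP})=0$, and then the observation that $\ker{\mathrm{J}}$, being contained in a trivial group, is itself trivial, so that the hypothesis $\mathrm{H}^{1}_{\partial_{\D}}(\Gamma^{\ast}_{\PP})=\ker{\mathrm{J}}$ underlying \eqref{eq:Rest1Iso} is automatically met.
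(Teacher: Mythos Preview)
Your argument is correct and matches the paper's own (implicit) reasoning: the corollary is stated immediately after the observation that \eqref{eq:Rest1Iso} follows whenever $\mathrm{H}_{\partial_{\D}}^{1}(\Gamma^{\ast}_{\PP})=\ker{\mathrm{J}}$, and the paper already records in item (c) after Proposition~\ref{prop:H1KerJ} that a trivial center forces $\mathrm{H}_{\partial_{\D}}^{1}(\Gamma^{\ast}_{\PP})=0$. Your additional direct description of the kernel via Theorem~\ref{teo:RestrictedCoho} is also fine and just rephrases the exactness statement.
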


Taking into account this corollary and Lemma \ref{lema:H1Hrest}, we derive the following description of the first Poisson cohomology of $\PP$.

\begin{theorem}\label{teo:H1MCI}
Let $\PP$ be an admissible Poisson algebra and \ec{(P_{1},[\,,\,]_{1})} the associated Lie algebra. If \ec{(P_{1},[\,,\,]_{1})} is perfect and centerless, then
    \begin{equation*}
        \mathscr{H}^{1}(\PP) \simeq \frac{\mathfrak{M}(\PP)}{\mathscr{C}(\PP) + \mathrm{Inn}(P_{1},[\,,\,]_{1})}.
    \end{equation*}
\end{theorem}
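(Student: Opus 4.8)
The plan is to obtain the statement as a direct corollary of Theorem \ref{teo:RestrictedCoho}, Corollary \ref{cor:trivialcenter} and Lemma \ref{lema:H1Hrest}, by feeding the two hypotheses into these results.

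First I would use the hypothesis that $(P_{1},[\,,\,]_{1})$ is \emph{centerless}. Then $Z_{\PP}(P_{1})=\{0\}$, so by the observation following Theorem \ref{teo:CochainComplexes} the cochain groups $\Gamma^{k}_{R}=\Gamma^{k}_{R}(P_{0};Z_{\PP}(P_{1}))$ all vanish; in particular $\mathrm{H}_{\partial_{\D}}^{1}(\Gamma^{\ast}_{\PP})=0$, hence trivially $\mathrm{H}_{\partial_{\D}}^{1}(\Gamma^{\ast}_{\PP})=\ker{\mathrm{J}}$. Therefore the left-hand term of the short exact sequence \eqref{eq:exactsqc} of Theorem \ref{teo:RestrictedCoho} is zero, and exactness gives the isomorphism
\begin{equation*}
    \mathscr{H}^{1}_{\mathrm{rest}}(\PP)\;\simeq\;\frac{\mathfrak{M}(\PP)}{\mathscr{C}(\PP)+\mathrm{Inn}(P_{1},[\,,\,]_{1})},
\end{equation*}
which is exactly the content of Corollary \ref{cor:trivialcenter}.

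Next I would use the hypothesis that $(P_{1},[\,,\,]_{1})$ is \emph{perfect}, i.e.\ $P_{1}=[P_{1},P_{1}]_{1}$. By Remark \ref{rmrk:P1Deriv}, which rests on condition \eqref{eq:PoissDerAPA3}, every Poisson derivation of $\PP$ then preserves the $P_{0}$--module $P_{1}$, so $\mathrm{Poiss}_{1}(\PP)=\mathrm{Poiss}(\PP)$. Since $\mathrm{Ham}(\PP)\subseteq\mathrm{Der}_{1}(P)$ by Proposition \ref{prop:hamalgebra}, the quotient defining $\mathscr{H}^{1}_{\mathrm{rest}}(\PP)$ in \eqref{eq:H1Rest} coincides with $\mathrm{Poiss}(\PP)/\mathrm{Ham}(\PP)=\mathscr{H}^{1}(\PP)$; this is precisely Lemma \ref{lema:H1Hrest}. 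Chaining the two identifications yields $\mathscr{H}^{1}(\PP)=\mathscr{H}^{1}_{\mathrm{rest}}(\PP)\simeq\mathfrak{M}(\PP)/(\mathscr{C}(\PP)+\mathrm{Inn}(P_{1},[\,,\,]_{1}))$.

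There is no substantial obstacle here: the theorem is a formal consequence of the already established exact sequence together with the two vanishing phenomena forced by the centerless and perfect assumptions. The only point deserving a line of care is that, once the subobject coming from $\mathrm{H}_{\partial_{\D}}^{1}(\Gamma^{\ast}_{\PP})/\ker{\mathrm{J}}$ collapses, the surjection $[X]\mapsto[X_{11}]$ of Theorem \ref{teo:RestrictedCoho} becomes injective as well — but this is immediate from exactness of \eqref{eq:exactsqc}, so nothing further needs to be verified.
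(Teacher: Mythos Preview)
Your proof is correct and follows essentially the same route as the paper: the authors state the theorem as an immediate consequence of Corollary \ref{cor:trivialcenter} (which uses the centerless hypothesis via Theorem \ref{teo:RestrictedCoho}) and Lemma \ref{lema:H1Hrest} (which uses the perfect hypothesis). Your write-up is in fact more explicit than the paper's, which simply says the result is derived from these two facts.
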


\begin{remark}\label{rmrk:Semisimple}
If \ec{\mathfrak{g}=(P_{1},[\,,\,]_{1})} is a finite--dimensional (real) semisimple Lie algebra, then $\mathfrak{g}$ is perfect and centerless.
\end{remark}

Now, we proceed to the computation of the restricted first Poisson cohomology of $\PP$ for two particular classes of $P_0$--preserving derivations of $P$. In particular, we derive another interpretation of the quotient \eqref{eq:H1KerT}.

\paragraph{Particular Cases.} First, we note the following simple but important fact.

\begin{lemma}\label{lemma:AdX11}
A morphism \ec{X_{11}:P_{1} \to P_{1}} is a generalized \ec{\mathrm{ad}_{h}^{0}}--derivation of $P_{1}$, with \ec{h \in P_{0}}, if and only if \ec{X_{11} = \D_{\dd{h}} + X_{11}'} for some $P_{0}$--linear morphism $X_{11}'$ of $P_{1}$. Here, \ec{\mathrm{ad}_{h}^{0}=\{h,\cdot\}_{0}}.
\end{lemma}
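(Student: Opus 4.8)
The plan is to verify both implications directly from the defining Leibniz-type identities, using as the key observation that the contravariant derivative $\D_{\dd h}$ is itself a generalized $\mathrm{ad}^{0}_{h}$-derivation of $P_{1}$. Indeed, by the defining property of a contravariant derivative and the definition of the anchor map $\varrho_{P_{0}}$ of the Lie algebroid of $\PP_{0}$, one has $\D_{\dd h}(f\eta) = f\,\D_{\dd h}\eta + \varrho_{P_{0}}(\dd h)(f)\,\eta = f\,\D_{\dd h}\eta + \{h,f\}_{0}\,\eta$, for all $f \in P_{0}$ and $\eta \in P_{1}$; that is, $\D_{\dd h}$ satisfies the identity of item \ref{Der4} in Lemma \ref{lema:PD} with $X_{00} = \mathrm{ad}^{0}_{h} = \{h,\cdot\}_{0}$.

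For the ``if'' direction I would assume $X_{11} = \D_{\dd h} + X_{11}'$ with $X_{11}'$ a $P_{0}$-linear endomorphism of $P_{1}$ (equivalently, a generalized $0$-derivation) and simply add the two Leibniz identities: $X_{11}(f\eta) = \D_{\dd h}(f\eta) + X_{11}'(f\eta) = f\,\D_{\dd h}\eta + \{h,f\}_{0}\,\eta + f\,X_{11}'\eta = f\,X_{11}\eta + \mathrm{ad}^{0}_{h}(f)\,\eta$, so that $X_{11}$ is a generalized $\mathrm{ad}^{0}_{h}$-derivation.

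For the ``only if'' direction I would set $X_{11}' := X_{11} - \D_{\dd h}$ and check $P_{0}$-linearity: subtracting the Leibniz identity for $\D_{\dd h}$ from that for $X_{11}$ (a generalized $\mathrm{ad}^{0}_{h}$-derivation), the terms $\{h,f\}_{0}\,\eta$ cancel and one is left with $X_{11}'(f\eta) = f\,X_{11}'\eta$, as required; hence $X_{11} = \D_{\dd h} + X_{11}'$ with $X_{11}'$ a $P_{0}$-linear morphism of $P_{1}$.

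The argument presents no genuine obstacle --- it is a two-line computation --- and the only point deserving care is the explicit identification of $\varrho_{P_{0}}(\dd h)$ with the Hamiltonian derivation $\mathrm{ad}^{0}_{h} = \{h,\cdot\}_{0}$ of $\PP_{0}$, which is precisely the definition of the anchor map of the Lie algebroid of $\PP_{0}$ recalled in Definition \ref{def:algebroidpoisson}.
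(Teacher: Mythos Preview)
Your proof is correct and is exactly the direct verification the paper intends; the paper states this lemma as a ``simple but important fact'' without writing out a proof, and your argument supplies the expected two-line computation based on the observation that $\D_{\dd h}$ is itself a generalized $\mathrm{ad}^{0}_{h}$-derivation.
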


Now, consider the following Lie subalgebras of \ec{\mathrm{Der}_{1}(P)} associated to the Poisson algebras $\PP$ and $\PP_{0}$:
    \begin{align*}
    	& \mathrm{Der}_{1}'(P) := \left\{
            \left.
            \Big( \begin{smallmatrix}
                    \mathrm{ad}_{h}^{0} & 0 \\
                    X_{10} & \D_{\dd{h}} + X_{11}' \\
                 \end{smallmatrix} \Big)
                \,\right|\,
            h \in P_{0},\, X_{10} \in \mathrm{Der}(P_{0};P_{1}),\, X_{11}'\ \text{is a $P_{0}$--linear morphism of $P_{1}$}
            \right\}, \\
        & \mathrm{Der}_{1}''(P) := \left\{
            \left.
            \Big( \begin{smallmatrix}
                    \mathrm{ad}_{h}^{0} & 0 \\
                    X_{10} & \D_{\dd{h}} + \mathrm{ad}_{\eta} \\
                 \end{smallmatrix} \Big)
                \,\right|\,
            h \in P_{0},\, \eta \in P_{1},\, X_{10} \in \mathrm{Der}(P_{0};P_{1})
            \right\}.
    \end{align*}
Note that by definition, and Proposition \ref{prop:hamalgebra}, we have
    \begin{equation*}
        \mathrm{Ham}(\PP) \subseteq \mathrm{Der}_{1}''(P) \subseteq \mathrm{Der}_{1}'(P) \subseteq \mathrm{Der}_{1}(P).
    \end{equation*}
So, we denote the Lie subalgebra of all Poisson derivations of $\PP$ in \ec{\mathrm{Der}_{1}'(P)} and \ec{\mathrm{Der}_{1}''(P)} by
    \begin{equation*}
        \mathrm{Poiss}_{1}'(\PP) := \mathrm{Der}_{1}'(P) \cap \mathrm{Poiss}(\PP) \quad \text{and} \quad
        \mathrm{Poiss}_{1}''(\PP) := \mathrm{Der}_{1}''(P) \cap \mathrm{Poiss}(\PP).
    \end{equation*}
Then, we define the following quotients:
    \begin{equation}\label{eq:HrestPrime}
        (\mathscr{H}_{\mathrm{rest}}^{1})'(\PP) := {\mathrm{Poiss}_{1}'(\PP)} \,/\, {\mathrm{Ham}(\PP)} \quad \text{and} \quad
        (\mathscr{H}_{\mathrm{rest}}^{1})''(\PP) := {\mathrm{Poiss}_{1}''(\PP)} \,/\, {\mathrm{Ham}(\PP)}.
    \end{equation}
Clearly, it holds that
    \begin{equation*}
        (\mathscr{H}_{\mathrm{rest}}^{1})''(\PP) \subseteq
        (\mathscr{H}_{\mathrm{rest}}^{1})'(\PP) \subseteq
        \mathscr{H}^{1}_{\mathrm{rest}}(\PP).
    \end{equation*}

Let \ec{\mathfrak{M}_{0}(\PP)} be the Lie ideal of \ec{\mathfrak{M}(\PP)}, \ec{\pmb{[}\mathfrak{M}(\PP),\mathfrak{M}_{0}(\PP)\pmb{]} \subseteq \mathfrak{M}_{0}(\PP)}, consisting of all $P_{0}$--linear morphisms \ec{\mathscr{L}':P_{1} \to P_{1}} satisfying the following conditions:
    \begin{itemize}
      \item $\mathscr{L}'$ derives the $P_{0}$--linear Lie bracket \ec{[\,,\,]_{1}} in the sense of \eqref{eq:X01Poiss2},
      \item there exists \ec{\theta' \in \mathrm{Der}(P_{0};P_{1})} such that
                \begin{equation*}
                    \pmb{[}\D_{\dd f},\mathscr{L}'\pmb{]} = \mathrm{ad}_{\theta'(f)} \quad \text{and} \quad
                    (\mathscr{L}' \circ \K)(\dd{f},\dd{g}) = - \big( \dd_{\D}\theta' \big)(f,g),
                \end{equation*}
            for all \ec{f,g \in P_{0}}.
    \end{itemize}

\begin{proposition}
Let $\PP$ be an admissible Poisson algebra.
Then,
    \begin{equation*}
        (\mathscr{H}_{\mathrm{rest}}^{1})''(\PP) \simeq
        {\mathrm{H}_{\partial_{\D}}^{1}\big( \Gamma^{\ast}_{\PP} \big)} \big/ \,{\ker{\mathrm{J}}}
    \end{equation*}
and there exists a short exact sequence
    \begin{equation}\label{eq:Sqnc2}
        0 \,\longrightarrow\, (\mathscr{H}_{\mathrm{rest}}^{1})''(\PP)
        \,\longrightarrow\, (\mathscr{H}_{\mathrm{rest}}^{1})'(\PP)
        \,\longrightarrow\, \frac{\mathfrak{M}_{0}(\PP)}{\mathscr{C}_{0}(\PP) + \mathrm{Inn}(P_{1},[\,,\,]_{1})}
        \,\longrightarrow\, 0.
    \end{equation}
Here, \ec{\mathrm{H}_{\partial_{\D}}^{1}(\Gamma^{\ast}_{\PP})} is the first cohomology group of the cochain complex \ec{(\Gamma^{\ast}_{\PP}, \partial_{\D})} in \eqref{eq:ComplexBar}, $\mathrm{J}$ is the $R$--linear mapping defined in \eqref{eq:Jmorphis} and \ec{\mathscr{C}_{0}(\PP):= \{\D_{\dd{k}} \mid k\in \mathrm{Casim}(\PP_{0})\}} is a submodule of \ec{\mathfrak{M}_{0}}.
\end{proposition}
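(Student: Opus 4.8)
The plan is to realise both the stated isomorphism and the short exact sequence by explicit maps defined already at the level of Poisson derivations, using the block decomposition \eqref{EcX} together with the characterisations in Corollary \ref{cor:PoissP1} and Proposition \ref{prop:hamalgebra}, and only then to pass to the cohomology quotients \eqref{eq:HrestPrime}. Every derivation occurring below preserves $P_{1}$, so throughout one works with blocks $\bigl(\begin{smallmatrix}X_{00}&0\\X_{10}&X_{11}\end{smallmatrix}\bigr)$ subject to conditions \eqref{eq:X01Poiss1}--\eqref{eq:X01Poiss4}, and one repeatedly invokes the chain of inclusions $\mathrm{Ham}(\PP)\subseteq\mathrm{Poiss}_{1}''(\PP)\subseteq\mathrm{Poiss}_{1}'(\PP)\subseteq\mathrm{Poiss}(\PP)$.

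For the isomorphism, I would define $\Phi\colon\mathrm{H}_{\partial_{\D}}^{1}(\Gamma^{\ast}_{\PP})\to(\mathscr{H}_{\mathrm{rest}}^{1})''(\PP)$ by $\Phi[c]:=\bigl[\,\bigl(\begin{smallmatrix}0&0\\c&0\end{smallmatrix}\bigr)\,\bigr]$. First I would check that for a $\partial_{\D}$--cocycle $c\in\Gamma_{R}^{1}(P_{0};Z_{\PP}(P_{1}))$ the block $\bigl(\begin{smallmatrix}0&0\\c&0\end{smallmatrix}\bigr)$ is a Poisson derivation lying in $\mathrm{Der}_{1}''(P)$: in Corollary \ref{cor:PoissP1} the conditions \eqref{eq:X01Poiss1}--\eqref{eq:X01Poiss2} are vacuous, \eqref{eq:X01Poiss3} becomes exactly $\partial_{\D}c=0$, and \eqref{eq:X01Poiss4} collapses to $\mathrm{ad}_{c(f)}=0$, which holds because $c$ is $Z_{\PP}(P_{1})$--valued. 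A $\partial_{\D}$--coboundary $c=\partial_{\D}\psi$ gives $\bigl(\begin{smallmatrix}0&0\\c&0\end{smallmatrix}\bigr)=X_{0\oplus(-\psi)}\in\mathrm{Ham}(\PP)$ via Proposition \ref{prop:hamalgebra} (here $\mathscr{T}_{0\oplus(-\psi)}=\D_{\dd(\cdot)}\psi$ and $\mathrm{ad}_{-\psi}=0$), so $\Phi$ is well defined; composing $\Phi$ with the inclusion $(\mathscr{H}_{\mathrm{rest}}^{1})''(\PP)\hookrightarrow\mathscr{H}^{1}(\PP)$ recovers the morphism $\mathrm{J}$ of \eqref{eq:Jmorphis}, whence $\ker\Phi=\ker\mathrm{J}$. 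For surjectivity I would take $X=\bigl(\begin{smallmatrix}\mathrm{ad}^{0}_{h}&0\\X_{10}&\D_{\dd h}+\mathrm{ad}_{\eta}\end{smallmatrix}\bigr)\in\mathrm{Poiss}_{1}''(\PP)$, subtract the Hamiltonian derivation $X_{h\oplus\eta}$, and observe that the remainder is $\bigl(\begin{smallmatrix}0&0\\c&0\end{smallmatrix}\bigr)$ with $c=X_{10}-\mathscr{T}_{h\oplus\eta}$; being again a Poisson derivation, $c$ is forced by \eqref{eq:X01Poiss3}--\eqref{eq:X01Poiss4} to be a $Z_{\PP}(P_{1})$--valued $\partial_{\D}$--cocycle, so $\Phi[c]=[X]$. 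This yields $(\mathscr{H}_{\mathrm{rest}}^{1})''(\PP)\simeq\mathrm{H}_{\partial_{\D}}^{1}(\Gamma^{\ast}_{\PP})/\ker\mathrm{J}$.

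For the short exact sequence, the left arrow is the natural map induced by $\mathrm{Poiss}_{1}''(\PP)\subseteq\mathrm{Poiss}_{1}'(\PP)$, injective since $\mathrm{Ham}(\PP)\subseteq\mathrm{Poiss}_{1}''(\PP)$. The right arrow sends $[X]$, for $X=\bigl(\begin{smallmatrix}\mathrm{ad}^{0}_{h}&0\\X_{10}&\D_{\dd h}+X_{11}'\end{smallmatrix}\bigr)\in\mathrm{Poiss}_{1}'(\PP)$, to the class of $X_{11}'$ modulo $\mathscr{C}_{0}(\PP)+\mathrm{Inn}(P_{1},[\,,\,]_{1})$. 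The key point is that $X_{11}'\in\mathfrak{M}_{0}(\PP)$: it derives $[\,,\,]_{1}$ by \eqref{eq:X01Poiss2} and \eqref{EcPT1}, and with the witness $\theta':=\mathscr{T}_{h\oplus0}-X_{10}\in\mathrm{Der}(P_{0};P_{1})$ the identity $\pmb{[}\D_{\dd f},X_{11}'\pmb{]}=\mathrm{ad}_{\theta'(f)}$ follows from \eqref{eq:X01Poiss4} and \eqref{EcPT2}, while $(X_{11}'\circ\K)(\dd f,\dd g)=-(\dd_{\D}\theta')(f,g)$ follows by combining \eqref{eq:X01Poiss3} with the Bianchi identity \eqref{eq:dK0} evaluated on $(\dd h,\dd f,\dd g)$. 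Hamiltonian derivations have $X_{11}'=\mathrm{ad}_{\eta}\in\mathrm{Inn}(P_{1},[\,,\,]_{1})$, so the arrow descends to $(\mathscr{H}_{\mathrm{rest}}^{1})'(\PP)$. It is surjective because, given $\mathscr{L}'\in\mathfrak{M}_{0}(\PP)$ with witness $\theta'$, the block $\bigl(\begin{smallmatrix}0&0\\-\theta'&\mathscr{L}'\end{smallmatrix}\bigr)$ lies in $\mathrm{Poiss}_{1}'(\PP)$ (again \eqref{eq:X01Poiss1}--\eqref{eq:X01Poiss4} reduce exactly to the defining relations of $\mathfrak{M}_{0}(\PP)$) and maps to $[\mathscr{L}']$. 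For exactness in the middle: if $[X]\mapsto0$ then $X_{11}'=\D_{\dd k}+\mathrm{ad}_{\zeta}$ with $k\in\mathrm{Casim}(\PP_{0})$, and since $\mathrm{ad}^{0}_{k}=0$ and $\D_{\dd(h+k)}=\D_{\dd h}+\D_{\dd k}$ one may replace $h$ by $h+k$ to bring $X$ into the form defining $\mathrm{Der}_{1}''(P)$, so $X\in\mathrm{Poiss}_{1}''(\PP)$; conversely every element of $\mathrm{Poiss}_{1}''(\PP)$ has $X_{11}'\in\mathrm{Inn}(P_{1},[\,,\,]_{1})$. (That $\mathscr{C}_{0}(\PP)\subseteq\mathfrak{M}_{0}(\PP)$ is obtained with witness $\theta'=-\K(\dd k,\dd(\cdot))$, again using $k\in\mathrm{Casim}(\PP_{0})$ and \eqref{eq:dK0}.)

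The step I expect to be the main obstacle is the verification that $X_{11}'\in\mathfrak{M}_{0}(\PP)$ — equivalently, that after the substitution $\theta'=\mathscr{T}_{h\oplus0}-X_{10}$ the Poisson--derivation relations for the block form of $\mathrm{Der}_{1}'(P)$ become precisely the relations defining $\mathfrak{M}_{0}(\PP)$; matching $(X_{11}'\circ\K)$ with $-\dd_{\D}\theta'$ requires carefully organising \eqref{eq:X01Poiss3}, \eqref{EcPT2} and the Bianchi identity \eqref{eq:dK0}. All the remaining arguments are bookkeeping with the block formulas of Lemma \ref{lema:PD}, Corollary \ref{cor:PoissP1} and Proposition \ref{prop:hamalgebra} and with the inclusions $\mathrm{Ham}(\PP)\subseteq\mathrm{Poiss}_{1}''(\PP)\subseteq\mathrm{Poiss}_{1}'(\PP)$.
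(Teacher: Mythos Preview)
Your proposal is correct and is essentially the explicit unfolding of the paper's one--line proof, which simply states ``It follows from Theorem~\ref{teo:RestrictedCoho} and Lemma~\ref{lemma:AdX11}''. In other words, you carry out directly for the subalgebras $\mathrm{Der}_{1}'(P)$ and $\mathrm{Der}_{1}''(P)$ the same block--by--block construction that the paper performed for $\mathrm{Der}_{1}(P)$ in Theorem~\ref{teo:RestrictedCoho}, with Lemma~\ref{lemma:AdX11} providing the decomposition $X_{11}=\D_{\dd h}+X_{11}'$; your detailed verifications (in particular the computation showing $(X_{11}'\circ\K)=-\dd_{\D}\theta'$ via the Bianchi identity~\eqref{eq:dK0}) are exactly the bookkeeping the paper leaves implicit.
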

\begin{proof}
It follows from Theorem \ref{teo:RestrictedCoho} and Lemma \ref{lemma:AdX11}.
\end{proof}

In particular, if the short exact sequence \eqref{eq:Sqnc2} splits, then
    \begin{equation*}
        (\mathscr{H}_{\mathrm{rest}}^{1})'(\PP) \simeq
        (\mathscr{H}_{\mathrm{rest}}^{1})''(\PP) \oplus \frac{\mathfrak{M}_{0}(\PP)}{\mathscr{C}_{0}(\PP) +\mathrm{Inn}(P_{1},[\,,\,]_{1})}.
    \end{equation*}
Taking into account Theorem \ref{teo:H1MCI} and condition \eqref{eq:X01Poiss1}, we derive the following description of \ec{\mathscr{H}^{1}(\PP)}.

\begin{theorem}\label{teo:H1M0}
Let $\PP$ be an admissible Poisson algebra. If the Lie algebra \ec{(P_{1},[\,,\,]_{1})} associated to $\PP$ is perfect and centerless, and the first Poisson cohomology of $\PP_{0}$ is trivial, then
    \begin{equation*}
        \mathscr{H}^{1}(\PP) \simeq \frac{\mathfrak{M}_{0}(\PP)}{\mathscr{C}_{0}(\PP) +\mathrm{Inn}(P_{1},[\,,\,]_{1})}.
    \end{equation*}
\end{theorem}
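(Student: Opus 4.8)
The plan is to derive the statement from Theorem \ref{teo:H1MCI}: since $(P_{1},[\,,\,]_{1})$ is perfect and centerless, that theorem already gives $\mathscr{H}^{1}(\PP)\simeq\mathfrak{M}(\PP)/(\mathscr{C}(\PP)+\mathrm{Inn}(P_{1},[\,,\,]_{1}))$, so it only remains to use the extra hypothesis $\mathscr{H}^{1}(\PP_{0})=0$ to rewrite this quotient. Concretely, I would prove the two $R$--module identities $\mathfrak{M}(\PP)=\mathfrak{M}_{0}(\PP)+\mathscr{C}(\PP)$ and $\mathfrak{M}_{0}(\PP)\cap(\mathscr{C}(\PP)+\mathrm{Inn}(P_{1},[\,,\,]_{1}))=\mathscr{C}_{0}(\PP)+\mathrm{Inn}(P_{1},[\,,\,]_{1})$. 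Since $\mathrm{Inn}(P_{1},[\,,\,]_{1})\subseteq\mathfrak{M}_{0}(\PP)$ (which is what makes the target quotient meaningful, and follows from \eqref{EcPT1}--\eqref{EcPT2}) and $\mathscr{C}_{0}(\PP)\subseteq\mathfrak{M}_{0}(\PP)$ (stated in the Proposition above), the second isomorphism theorem then yields $\mathfrak{M}(\PP)/(\mathscr{C}(\PP)+\mathrm{Inn})\simeq\mathfrak{M}_{0}(\PP)/(\mathfrak{M}_{0}(\PP)\cap(\mathscr{C}(\PP)+\mathrm{Inn}))=\mathfrak{M}_{0}(\PP)/(\mathscr{C}_{0}(\PP)+\mathrm{Inn})$, which is the desired isomorphism.

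For the first identity, let $\mathscr{L}\in\mathfrak{M}(\PP)$ be a generalized $\ell$--derivation of $P_{1}$; by definition $\ell\in\mathrm{Poiss}(\PP_{0})$, and $\mathscr{H}^{1}(\PP_{0})=\mathrm{Poiss}(\PP_{0})/\mathrm{Ham}(\PP_{0})=0$ produces $h\in P_{0}$ with $\ell=\{h,\cdot\}_{0}$. Because $\D_{\dd h}$ is a generalized $\{h,\cdot\}_{0}$--derivation of $P_{1}$ (directly from the defining identities of a contravariant derivative) and lies in $\mathscr{C}(\PP)\subseteq\mathfrak{M}(\PP)$, the difference $\mathscr{L}':=\mathscr{L}-\D_{\dd h}$ is $P_{0}$--linear, and the point is to verify $\mathscr{L}'\in\mathfrak{M}_{0}(\PP)$ with associated cochain $\theta'(f):=\theta(f)+\K(\dd h,\dd f)$; then $\mathscr{L}=\mathscr{L}'+\D_{\dd h}\in\mathfrak{M}_{0}(\PP)+\mathscr{C}(\PP)$, the reverse inclusion being immediate from Lemma \ref{lemma:MP}. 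Here \eqref{eq:ThetaDf} for $\mathscr{L}'$ follows by combining \eqref{eq:ThetaDf} for $\mathscr{L}$ with the curvature identity \eqref{EcPT2} applied to $\dd h,\dd f$, and \eqref{eq:ThetaK} for $\mathscr{L}'$ follows from \eqref{eq:ThetaK} for $\mathscr{L}$, the Bianchi identity \eqref{EcPT3} (equivalently \eqref{eq:dK0}) applied to $\dd h,\dd f,\dd g$, and the skew--symmetry of $\K$. For the second identity, the non--trivial inclusion is $\subseteq$: if $\mathscr{L}'\in\mathfrak{M}_{0}(\PP)$ satisfies $\mathscr{L}'=\D_{\dd f}+\mathrm{ad}_{\eta}$ for some $f\in P_{0}$ and $\eta\in P_{1}$, then $\D_{\dd f}=\mathscr{L}'-\mathrm{ad}_{\eta}$ is $P_{0}$--linear, hence $\{f,g\}_{0}\,\zeta=0$ for all $g\in P_{0}$ and $\zeta\in P_{1}$; faithfulness of $P_{1}$ forces $f\in\mathrm{Casim}(\PP_{0})$, so $\D_{\dd f}\in\mathscr{C}_{0}(\PP)$ and $\mathscr{L}'\in\mathscr{C}_{0}(\PP)+\mathrm{Inn}$; the inclusion $\supseteq$ is clear.

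The step I expect to be the main obstacle is the verification that $\mathscr{L}-\D_{\dd h}$ lies in $\mathfrak{M}_{0}(\PP)$, and in particular the identity \eqref{eq:ThetaK} for the shifted cochain $\theta'$: this is precisely where the Bianchi identity \eqref{EcPT3} for the Poisson triple is consumed, and a little care with signs and the skew--symmetry of $\K$ is needed. As an alternative route bypassing the module bookkeeping, one may argue at the level of derivations: perfectness gives $\mathscr{H}^{1}(\PP)=\mathscr{H}^{1}_{\mathrm{rest}}(\PP)$ by Lemma \ref{lema:H1Hrest}; centerlessness makes $(\Gamma^{\ast}_{\PP},\partial_{\D})$ a trivial complex, so $(\mathscr{H}^{1}_{\mathrm{rest}})''(\PP)\simeq\mathrm{H}_{\partial_{\D}}^{1}(\Gamma^{\ast}_{\PP})/\ker{\mathrm{J}}=0$, and the short exact sequence \eqref{eq:Sqnc2} of the Proposition above collapses to an isomorphism $(\mathscr{H}^{1}_{\mathrm{rest}})'(\PP)\simeq\mathfrak{M}_{0}(\PP)/(\mathscr{C}_{0}(\PP)+\mathrm{Inn}(P_{1},[\,,\,]_{1}))$; finally, by condition \eqref{eq:X01Poiss1} the component $X_{00}$ of any $X\in\mathrm{Poiss}_{1}(\PP)$ is a Poisson derivation of $\PP_{0}$, hence Hamiltonian since $\mathscr{H}^{1}(\PP_{0})=0$, say $X_{00}=\{h,\cdot\}_{0}$, whereupon Lemma \ref{lemma:AdX11} forces $X_{11}=\D_{\dd h}+X_{11}'$ with $X_{11}'$ a $P_{0}$--linear morphism, so $X\in\mathrm{Der}_{1}'(P)$; thus $\mathrm{Poiss}_{1}(\PP)=\mathrm{Poiss}_{1}'(\PP)$ and $\mathscr{H}^{1}_{\mathrm{rest}}(\PP)=(\mathscr{H}^{1}_{\mathrm{rest}})'(\PP)$, and combining with the previous isomorphism completes the proof.
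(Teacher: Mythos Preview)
Your proposal is correct. The paper does not give a detailed proof of Theorem~\ref{teo:H1M0}; it only records the one-line hint ``Taking into account Theorem~\ref{teo:H1MCI} and condition~\eqref{eq:X01Poiss1}'' before stating the result. Both of your routes are faithful to that hint: the first unpacks Theorem~\ref{teo:H1MCI} at the level of $\mathfrak{M}(\PP)$ and uses $\mathscr{H}^{1}(\PP_{0})=0$ to write each $\ell\in\mathrm{Poiss}(\PP_{0})$ as $\{h,\cdot\}_{0}$ (this is precisely the role of~\eqref{eq:X01Poiss1}), while your alternative route goes through the short exact sequence~\eqref{eq:Sqnc2} of the preceding Proposition together with Lemma~\ref{lemma:AdX11}. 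The verification that $\mathscr{L}-\D_{\dd h}\in\mathfrak{M}_{0}(\PP)$ with $\theta'(f)=\theta(f)+\K(\dd h,\dd f)$ is exactly right, and your use of the Bianchi identity~\eqref{EcPT3} for the $\K$-condition is the correct ingredient; the intersection argument via faithfulness of $P_{1}$ is also sound.
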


The following example illustrates a case where a Poisson derivation of $\PP_0$ can be extended to a Poisson derivation of $\PP$. In particular, it shows that \ec{(\mathscr{H}^{1}_{\mathrm{rest}})''(\PP)} is non--trivial, in general.

\begin{examplex}\label{exm:PoissNoHam}
Let \ec{\PP_{0}=(P_{0}, \cdot, \{\,,\,\}_{0})} be a Poisson algebra admitting a Poisson derivation $W$ which is not Hamiltonian. For the \ec{P_{0}}--module \ec{P_{1}} of (\ec{k \times k})--matrices with entries in \ec{P_{0}}, consider the admissible Poisson algebra \ec{\PP = (P, \{\,,\,\})} defined by the bracket \eqref{eq:APmatrix}. In this case, the center of the Lie algebra \ec{(P_{1}, [|\,,\,|])} is non--trivial. Moreover, for every \ec{h \oplus N\in P}, the derivation \ec{X \in \mathrm{Poiss}_{1}''(\PP)} given by
    \begin{equation*}
        X_{01} = 0, \quad X_{00} = \{h,\cdot\}_{0}, \quad X_{10} = W (\cdot)\,I - D_{\dd{(\cdot)}}N, \quad X_{11} = D_{\dd{h}} + [| N, \cdot\,|],
    \end{equation*}
is not Hamiltonian, with respect to $\PP$, since \ec{\mathscr{T}_{h\oplus N} = - D_{\dd{(\cdot)}}N} and hence cannot be \ec{X_{10} = \mathscr{T}_{h\oplus N}}. Here,  $I$ is the identity matrix.
\end{examplex}

\paragraph{The Abelian Case, \ec{\mathbf{[\,,\,]_{1}=0}}.} Recall that a short exact sequence does not necessarily splits, in general. In a geometric framework, for the so--called infinitesimal Poisson algebra of a Poisson submanifold, the corresponding short exact sequence \eqref{eq:exactsqc} split. In the general algebraic setting, one can give sufficient conditions to realize this property.

Let $\PP$ be an admissible Poisson algebra such that the induced Lie bracket \eqref{EcLieBraP1} is abelian, that is, the corresponding Poisson triple of $P$ is of the form \ec{([\,,\,]_{1}=0,\D,\K)}. First, we define the following $R$--submodule of \ec{\varepsilon(P_{1};P_{0})} in \eqref{epsilon}:
    \begin{equation*}
        \mathscr{E}(\PP) := \left\{ T \in  \varepsilon(P_{1};P_{0}) \mid T \circ \D_{\dd{f}} = \mathrm{ad}^{0}_{f} \circ T,\ \D_{\dd{(T\eta)}}\xi = \D_{\dd{(T\xi)}}\eta;\ f \in P_{0}, \eta,\xi \in P_{1} \right\},
    \end{equation*}
with \ec{\mathrm{ad}^{0}_{f} := \{f,\cdot\}_{0}}.

\begin{theorem}\label{teo:secAbelian}
There exists an exact sequence
    \begin{equation}\label{ExctSqncAbelian}
        0 \,\longrightarrow\, \mathscr{H}^{1}_{\mathrm{rest}}(\PP)
        \,\longrightarrow\, \mathscr{H}^{1}(\PP)
        \,\longrightarrow\, \mathscr{E}(\PP)
    \end{equation}
\end{theorem}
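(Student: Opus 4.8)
The plan is to construct the maps in \eqref{ExctSqncAbelian} explicitly and verify exactness at each node. Since $[\,,\,]_{1}=0$, we have $Z_{\PP}(P_{1})=P_{1}$, so by Corollary \ref{cor:Partialisd} the complex $(\Gamma^{\ast}_{\PP},\partial_{\D})$ coincides with $(\X{R}^{\ast}(P_{0};P_{1}),\dd_{\D})$ and $\D$ is flat. The first arrow is the inclusion \eqref{eq:RestrictedFirstCoho}, $\mathscr{H}^{1}_{\mathrm{rest}}(\PP)\hookrightarrow\mathscr{H}^{1}(\PP)$, which is well defined because by Lemma \ref{lemma:HamP1preserv} we have $\mathrm{Ham}(\PP)\subseteq\mathrm{Der}_{1}(P)$; injectivity is immediate. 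For the second arrow, I would send a Poisson derivation $X=\big(\begin{smallmatrix} X_{00} & X_{01}\\ X_{10} & X_{11}\end{smallmatrix}\big)\in\mathrm{Poiss}(\PP)$ to $X_{01}$. By Lemma \ref{lema:PD}, $X_{01}\in\varepsilon(P_{1};P_{0})$; conditions \eqref{eq:PoissDerAPA3} and \eqref{eq:PoissDerAPA4} (the latter reading $X_{01}\circ\D_{\dd f}=\mathrm{ad}^{0}_{f}\circ X_{01}$, with \eqref{eq:PoissDerAPA3} automatic since $\mathrm{ad}_{\eta}=0$) place $X_{01}$ in $\mathscr{E}(\PP)$, once one also checks the symmetry $\D_{\dd(T\eta)}\xi=\D_{\dd(T\xi)}\eta$; this last identity should come from applying $X$ to $\{\iota_{1}\eta,\iota_{1}\xi\}=0$ and reading the $P_{1}$--component, or from a Jacobi-type consequence of the remaining equations in Proposition \ref{prop:PoissDer}. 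One then checks that a Hamiltonian derivation has $X_{01}=0$ (Lemma \ref{lemma:HamP1preserv}), so the assignment $[X]\mapsto X_{01}$ descends to a well-defined $R$--linear map $\mathscr{H}^{1}(\PP)\to\mathscr{E}(\PP)$.

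Exactness at $\mathscr{H}^{1}(\PP)$ is the heart of the argument. The kernel of $[X]\mapsto X_{01}$ consists of classes represented by Poisson derivations with $X_{01}=0$, i.e. exactly the $P_{1}$--preserving ones in $\mathrm{Der}_{1}(P)$; by definition $\mathrm{Poiss}_{1}(\PP)=\mathrm{Der}_{1}(P)\cap\mathrm{Poiss}(\PP)$, so modulo $\mathrm{Ham}(\PP)$ this kernel is precisely $\mathscr{H}^{1}_{\mathrm{rest}}(\PP)$ as in \eqref{eq:H1Rest}. The subtle point here is that if $[X]$ maps to $0$ in $\mathscr{E}(\PP)$ we only know $X_{01}=0$ up to a Hamiltonian correction; but Hamiltonian derivations already have $X_{01}=0$, so no correction is needed and one may take the representative $X$ itself, which then lies in $\mathrm{Poiss}_{1}(\PP)$. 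This shows the image of the first arrow equals the kernel of the second, completing exactness.

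I expect the main obstacle to be verifying that the target of the second map is genuinely $\mathscr{E}(\PP)$ and not something larger — that is, proving the two defining relations of $\mathscr{E}(\PP)$ hold for $X_{01}$ whenever $X\in\mathrm{Poiss}(\PP)$. The relation $X_{01}\circ\D_{\dd f}=\mathrm{ad}^{0}_{f}\circ X_{01}$ is exactly \eqref{eq:PoissDerAPA4}, so that one is free. The symmetry condition $\D_{\dd(X_{01}\eta)}\xi=\D_{\dd(X_{01}\xi)}\eta$ is the one requiring work: I would obtain it by evaluating \eqref{eq:XpoissderProof} on $\pi_{1}=0\oplus\eta$, $\pi_{2}=0\oplus\xi$ and extracting information beyond \eqref{eq:PoissDerAPA2}–\eqref{eq:PoissDerAPA3}, using that $[\,,\,]_{1}=0$ forces the bracket \eqref{EcBracketPT} to reduce to $\{0\oplus\eta,0\oplus\xi\}=0$, so that $X$ applied to it must vanish and hence $\D_{\dd X_{01}\eta}\xi-\D_{\dd X_{01}\xi}\eta=0$ directly from \eqref{eq:PoissDerAPA2} with the left-hand side equal to $X_{11}[\eta,\xi]_{1}=0$. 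Finally, I would remark (as the paper does) that this sequence need not extend to a short exact one on the right, and that surjectivity onto $\mathscr{E}(\PP)$ — equivalently, the splitting — is precisely what Theorem \ref{teo:SplitDeltaK} addresses under the extra hypothesis that the class of $\K$ is trivial.
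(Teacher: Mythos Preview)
Your proposal is correct and follows essentially the same approach as the paper: the paper also defines the second map by $X\mapsto X_{01}$ and cites precisely conditions \eqref{eq:PoissDerAPA2} and \eqref{eq:PoissDerAPA4} (with $[\,,\,]_{1}=0$) to show it lands in $\mathscr{E}(\PP)$, then passes the resulting exact sequence $0\to\mathrm{Poiss}_{1}(\PP)\hookrightarrow\mathrm{Poiss}(\PP)\to\mathscr{E}(\PP)$ to the quotient by $\mathrm{Ham}(\PP)$. Your derivation of the symmetry condition from \eqref{eq:PoissDerAPA2} is exactly what the paper's terse citation of that equation is encoding.
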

\begin{proof}
The theorem follows from the fact that we have the following exact sequence
    \begin{eqnarray*}
        0 \,\longrightarrow\, \mathrm{Poiss}_{1}(\PP)
        \,\longhookrightarrow\, \mathrm{Poiss}(\PP)
        \,\longrightarrow\, \mathscr{E}(\PP),
    \end{eqnarray*}
where the $R$--linear mapping \ec{\mathrm{Poiss}(\PP) \ni \Big(
                \begin{smallmatrix}
                  X_{00} & X_{01} \\
                X_{10} & X_{11} \\
                \end{smallmatrix}
            \Big) \mapsto X_{01} \in \mathscr{E}(\PP)}
is well--defined by conditions \eqref{eq:PoissDerAPA2} and \eqref{eq:PoissDerAPA4} since \ec{[\,,\,]_{1}=0}.
\end{proof}

Now, let \ec{\widetilde{\mathfrak{M}}(\PP)} be the Lie algebra of all generalized $\ell$--derivations $\mathscr{L}$ of $P_{1}$ such that
    \begin{equation*}
        \ell \in \mathrm{Poiss}(\PP_{0}) \quad \text{and} \quad \pmb{[}\D_{\dd f},\mathscr{L}\pmb{]} = - \D_{\dd\ell(f)}.
    \end{equation*}

\begin{theorem}\label{teo:SplitDeltaK}
If the $\dd_{\D}$--cohomology class of the 2--cocycle $\Delta\K$ is trivial, then
    \begin{equation*}
        \mathscr{H}^{1}(\PP) \simeq
        \mathrm{H}_{\dd_{\D}}^{1}\big( \X{}^{\ast} \big) \oplus \frac{\widetilde{\mathfrak{M}}(\PP)}{\mathscr{C}(\PP)} \oplus
        \mathscr{E}(\PP)
    \end{equation*}
Here, \ec{\mathrm{H}_{\dd_{\D}}^{1}(\X{}^{\ast})} is the first cohomology group of the cochain complex \ec{(\X{}^{\ast}, \dd_{\D})} induced by the coboundary operator \ec{\dd_{\D}} in \eqref{eq:dDbar} associated to $\D$ and the Poisson algebra $\PP_{0}$.
\end{theorem}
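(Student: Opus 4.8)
The plan is to read off the asserted splitting from the two exact sequences already established, Theorem~\ref{teo:secAbelian} and Theorem~\ref{teo:RestrictedCoho}, after first reducing to the case $\K=0$, where every verification collapses. Since $[\,,\,]_{1}=0$, Corollary~\ref{cor:Partialisd} tells us that $\D$ is flat, that $(\Gamma^{\ast}_{\PP},\partial_{\D})$ is the complex $(\X{}^{\ast},\dd_{\D})$, and that $\mathrm{Inn}(P_{1},[\,,\,]_{1})=\{0\}$. As the $\dd_{\D}$--cohomology class of $\Delta\K$ is trivial by hypothesis, Corollary~\ref{cor:FlatP0Sub} supplies a gauge transformation of the form \eqref{eq:GaugePhi10} carrying $\PP$ to an admissible Poisson algebra $\PP'$ for which $P_{0}\oplus\{0\}$ is a Poisson subalgebra, hence with $\K^{\PP'}=0$ by Corollary~\ref{cor:P0KD0}; by the transition rules \eqref{EcPT1prima}--\eqref{EcPT2prima} such a transformation leaves $[\,,\,]_{1}$ and $\D$ unchanged. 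By Lemma~\ref{lema:IsoCohomo}, $\mathscr{H}^{1}(\PP)\simeq\mathscr{H}^{1}(\PP')$, and since $\widetilde{\mathfrak{M}}(\cdot)$, $\mathscr{C}(\cdot)$, $\mathscr{E}(\cdot)$ and $\mathrm{H}_{\dd_{\D}}^{1}(\X{}^{\ast})$ depend only on $\PP_{0}$ and on the pair $([\,,\,]_{1}=0,\D)$, it suffices to prove the isomorphism when $\K=0$.

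So assume $\K=0$. Then the defining conditions \eqref{eq:ThetaDf}--\eqref{eq:ThetaK} of $\mathfrak{M}(\PP)$ reduce to $\pmb{[}\D_{\dd{f}},\mathscr{L}\pmb{]}+\D_{\dd\ell(f)}=0$ (take $\theta=0$), so $\mathfrak{M}(\PP)=\widetilde{\mathfrak{M}}(\PP)$; and $\ker{\mathrm{J}}=\{0\}$ by Lemma~\ref{lema:KerJ0Abelian} (equivalently, each generator $\mathscr{T}_{k\oplus\eta}=-\D_{\dd(\cdot)}\eta$ of $\ker{\mathrm{J}}$ is the $\dd_{\D}$--coboundary of $\eta\in P_{1}=Z_{\PP}(P_{1})$). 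Feeding this, together with $\mathrm{Inn}(P_{1},[\,,\,]_{1})=\{0\}$, into Theorem~\ref{teo:RestrictedCoho} yields a short exact sequence $0\to\mathrm{H}_{\dd_{\D}}^{1}(\X{}^{\ast})\xrightarrow{\ \mathrm{J}\ }\mathscr{H}^{1}_{\mathrm{rest}}(\PP)\to\widetilde{\mathfrak{M}}(\PP)/\mathscr{C}(\PP)\to0$. I would split it by the $R$--linear assignment sending a generalized $\ell$--derivation $\mathscr{L}\in\widetilde{\mathfrak{M}}(\PP)$ to the class of $X_{\mathscr{L}}=\big(\begin{smallmatrix}\ell&0\\0&\mathscr{L}\end{smallmatrix}\big)$: for $\K=0$, Corollary~\ref{cor:PoissP1} gives $X_{\mathscr{L}}\in\mathrm{Poiss}_{1}(\PP)$, while for $\mathscr{L}=\D_{\dd{h}}\in\mathscr{C}(\PP)$ (so $\ell=\{h,\cdot\}_{0}$) one has $X_{\mathscr{L}}=X_{h\oplus0}\in\mathrm{Ham}(\PP)$ by Proposition~\ref{prop:hamalgebra}; hence the assignment descends to $\widetilde{\mathfrak{M}}(\PP)/\mathscr{C}(\PP)$ and is a right inverse of $[X]\mapsto[X_{11}]$. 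This gives $\mathscr{H}^{1}_{\mathrm{rest}}(\PP)\simeq\mathrm{H}_{\dd_{\D}}^{1}(\X{}^{\ast})\oplus\widetilde{\mathfrak{M}}(\PP)/\mathscr{C}(\PP)$.

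Next I would promote the sequence of Theorem~\ref{teo:secAbelian} to a split short exact sequence. Given $T\in\mathscr{E}(\PP)\subseteq\varepsilon(P_{1};P_{0})$, set $X_{T}=\big(\begin{smallmatrix}0&T\\0&0\end{smallmatrix}\big)$; by Lemma~\ref{lema:PD} it is a derivation of $P$, and with $\K=0$ the Poisson--derivation conditions \eqref{eq:PoissDerAPA1}--\eqref{eq:PoissDerAPA4} of Proposition~\ref{prop:PoissDer} reduce precisely to the two defining relations $T\circ\D_{\dd{f}}=\mathrm{ad}^{0}_{f}\circ T$ and $\D_{\dd(T\eta)}\xi=\D_{\dd(T\xi)}\eta$ of $\mathscr{E}(\PP)$; thus $X_{T}\in\mathrm{Poiss}(\PP)$ with $(X_{T})_{01}=T$, so $T\mapsto[X_{T}]$ is an $R$--linear right inverse of $[X]\mapsto[X_{01}]$. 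Consequently the sequence $0\to\mathscr{H}^{1}_{\mathrm{rest}}(\PP)\to\mathscr{H}^{1}(\PP)\to\mathscr{E}(\PP)\to0$ is exact and splits, giving $\mathscr{H}^{1}(\PP)\simeq\mathscr{H}^{1}_{\mathrm{rest}}(\PP)\oplus\mathscr{E}(\PP)$. Combining this with the previous paragraph and with the reduction to $\K=0$ yields the stated isomorphism.

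The part that needs care is purely computational: one must check that the two explicit lifts $X_{\mathscr{L}}$ and $X_{T}$ satisfy every condition in Corollary~\ref{cor:PoissP1} and Proposition~\ref{prop:PoissDer}, and verify that the three summands on the right are preserved by the gauge transformation used in the reduction. If one prefers to argue directly for $\K\neq0$, the harder work is to establish $\mathfrak{M}(\PP)=\widetilde{\mathfrak{M}}(\PP)$ through a Cartan-type identity on the flat complex $(\X{}^{\ast},\dd_{\D})$ and to produce a primitive of $\K$ in order to lift a general $T\in\mathscr{E}(\PP)$ to a Poisson derivation; reducing to $\K=0$ first turns each of these checks into a one-line verification, which is why I would organize the proof around that reduction.
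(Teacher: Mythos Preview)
Your proof is correct, and your organizational choice---reducing to $\K=0$ via the gauge transformation supplied by Corollary~\ref{cor:FlatP0Sub} before constructing the splittings---is a genuinely different route from the paper's argument. The paper instead fixes a primitive $c$ of $\Delta\K$ and works directly with the original $\PP$: it shows $\mathfrak{M}(\PP)=\widetilde{\mathfrak{M}}(\PP)$ by exhibiting $\theta^{\mathscr{L}}_{c}:=\mathscr{L}\circ c - c\circ\ell$ as a witness for \eqref{eq:ThetaK}, and then writes down $c$--dependent sections
\[
[\mathscr{L}]\longmapsto\Big[\begin{smallmatrix}\ell&0\\ \theta^{\mathscr{L}}_{c}&\mathscr{L}\end{smallmatrix}\Big],
\qquad
T\longmapsto\Big[\begin{smallmatrix}T\circ c&T\\ -c\circ T\circ c&-c\circ T\end{smallmatrix}\Big]
\]
to split \eqref{eq:exactsqc} and \eqref{ExctSqncAbelian} respectively. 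Your simple lifts $X_{\mathscr{L}}=\big(\begin{smallmatrix}\ell&0\\0&\mathscr{L}\end{smallmatrix}\big)$ and $X_{T}=\big(\begin{smallmatrix}0&T\\0&0\end{smallmatrix}\big)$ are exactly the $c=0$ specializations of these; in effect, the gauge transformation you invoke absorbs the primitive $c$, so the two proofs are secretly the same computation viewed from opposite ends. Your version buys cleaner bookkeeping (every verification against Proposition~\ref{prop:PoissDer} becomes trivial once $\K=0$) at the cost of the preliminary observation that the three summands depend only on $(\PP_{0},\D)$ and hence are gauge--stable; the paper's version avoids that invariance check but requires carrying the primitive $c$ through each Poisson--derivation identity.
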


This theorem is deduced from the following two lemmas.

\begin{lemma}
If the $\dd_{\D}$--cohomology class of the 2--cocycle $\Delta\K$ is trivial, then
    \begin{equation*}
        \mathscr{H}^{1}_{\mathrm{rest}}(\PP) \simeq
        \mathrm{H}_{\dd_{\D}}^{1}\big( \X{}^{\ast} \big) \oplus \frac{\widetilde{\mathfrak{M}}(\PP)}{\mathscr{C}(\PP)}.
    \end{equation*}
\end{lemma}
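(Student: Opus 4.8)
The plan is to reduce to the case $\K=0$ by a gauge transformation, and then to realize $\mathrm{Poiss}_{1}(\PP)$ and $\mathrm{Ham}(\PP)$ as direct sums of submodules which split off the two summands on the right-hand side. Throughout, recall that in the abelian case $[\,,\,]_{1}=0$ Corollary \ref{cor:Partialisd} guarantees that $\D$ is flat, that $\K\in\chi^{2}_{P_{0}}$ is a $\overline{\dd}_{\D}$-cocycle, and that the complex \eqref{eq:ComplexBar} coincides with $(\X{}^{\ast},\dd_{\D})$; write $Z^{1}\subseteq\X{R}^{1}$ and $B^{1}\subseteq\X{R}^{1}$ for the $1$-cocycles and $1$-coboundaries of this complex, so that $\mathrm{H}^{1}_{\dd_{\D}}(\X{}^{\ast})=Z^{1}/B^{1}$, and note that $\X{R}^{0}(P_{0};P_{1})=P_{1}$ with $(\dd_{\D}\eta)(f)=\D_{\dd f}\eta$.

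\textbf{Reduction to $\K=0$.} Let $\PP'$ be the admissible Poisson algebra whose Poisson triple is $(0,\D,0)$; it exists by Lemma \ref{LemaTripleToAlg}, since flatness of $\D$ is exactly condition \eqref{EcPT2} for this triple. By hypothesis $\Delta\K=\dd_{\D}Q$ for some $Q\in\X{R}^{1}$, so $\PP$ and $\PP'$ satisfy the assumptions \eqref{eq:LieDKflat} of Theorem \ref{teo:Kequivalence} (with $\K'=0$) and $\Delta(\K-\K')=\Delta\K$ is $\dd_{\D}$-exact; hence there is a gauge transformation $\phi$ of the form \eqref{eq:GaugePhi10} with $\phi\colon\PP'\to\PP$ a Poisson isomorphism. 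Since $\phi$ and $\phi^{-1}$ are lower triangular with identities on the diagonal, the conjugation $X\mapsto\phi^{-1}\circ X\circ\phi$ leaves the $(0,1)$-block of $X$ unchanged, hence maps $\mathrm{Der}_{1}(P)$ onto itself and therefore carries $\mathrm{Poiss}_{1}(\PP)$ onto $\mathrm{Poiss}_{1}(\PP')$ and $\mathrm{Ham}(\PP)$ onto $\mathrm{Ham}(\PP')$; thus $\mathscr{H}^{1}_{\mathrm{rest}}(\PP)\simeq\mathscr{H}^{1}_{\mathrm{rest}}(\PP')$. As $\widetilde{\mathfrak{M}}$, $\mathscr{C}$ and $(\X{}^{\ast},\dd_{\D})$ depend only on $\D$ and $\PP_{0}$, the right-hand side of the asserted isomorphism is the same for $\PP$ and $\PP'$. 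It therefore suffices to prove the lemma when $\K=0$.

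\textbf{The case $\K=0$.} By Lemmas \ref{lemma:Xpreserving}, \ref{lema:PD} and Corollary \ref{cor:PoissP1} (with $[\,,\,]_{1}=0$ and $\K=0$), an element of $\mathrm{Poiss}_{1}(\PP)$ is a matrix $\big(\begin{smallmatrix}X_{00}&0\\X_{10}&X_{11}\end{smallmatrix}\big)$ with $X_{00}\in\mathrm{Poiss}(\PP_{0})$, with $X_{11}$ a generalized $X_{00}$-derivation of $P_{1}$ satisfying $\pmb{[}\D_{\dd f},X_{11}\pmb{]}=-\D_{\dd X_{00}f}$, and with $X_{10}\in Z^{1}$. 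Since $P_{1}$ is faithful, the derivation of $P_{0}$ attached to the generalized derivation $X_{11}$ is uniquely determined and coincides with $X_{00}$; hence $X_{11}\in\widetilde{\mathfrak{M}}(\PP)$ and $X_{00}$ carries no independent information. It follows that $X\mapsto(X_{11},X_{10})$ is an $R$-module isomorphism $\mathrm{Poiss}_{1}(\PP)\simeq\widetilde{\mathfrak{M}}(\PP)\oplus Z^{1}$; surjectivity holds because, for any $\mathscr{L}\in\widetilde{\mathfrak{M}}(\PP)$ with associated derivation $\ell$ and any $c\in Z^{1}$, the matrix $\big(\begin{smallmatrix}\ell&0\\c&\mathscr{L}\end{smallmatrix}\big)$ satisfies conditions \ref{Der1}--\ref{Der4} of Lemma \ref{lema:PD} together with \eqref{eq:X01Poiss1}--\eqref{eq:X01Poiss4}.

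\textbf{Hamiltonian part and conclusion.} By Proposition \ref{prop:hamalgebra} (with $[\,,\,]_{1}=0$, $\K=0$) every Hamiltonian derivation is $X_{h\oplus\eta}=\big(\begin{smallmatrix}\{h,\cdot\}_{0}&0\\-\D_{\dd(\cdot)}\eta&\D_{\dd h}\end{smallmatrix}\big)$. Using flatness, $\pmb{[}\D_{\dd f},\D_{\dd h}\pmb{]}=\D_{\dd\{f,h\}_{0}}=-\D_{\dd\ell(f)}$ with $\ell=\{h,\cdot\}_{0}$, so $\D_{\dd h}\in\widetilde{\mathfrak{M}}(\PP)$ and $\mathscr{C}(\PP)\subseteq\widetilde{\mathfrak{M}}(\PP)$; moreover $f\mapsto\D_{\dd f}\eta$ equals the coboundary $\dd_{\D}\eta\in B^{1}$. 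Under the isomorphism of the previous step, $X_{h\oplus\eta}\mapsto(\D_{\dd h},-\dd_{\D}\eta)$, and since $h\in P_{0}$ and $\eta\in P_{1}$ vary independently the image is exactly the submodule $\mathscr{C}(\PP)\oplus B^{1}$. Quotienting the direct sum $\widetilde{\mathfrak{M}}(\PP)\oplus Z^{1}$ by this direct sum of submodules gives, since $\mathscr{H}^{1}_{\mathrm{rest}}(\PP)=\mathrm{Poiss}_{1}(\PP)/\mathrm{Ham}(\PP)$,
\[
    \mathscr{H}^{1}_{\mathrm{rest}}(\PP)\simeq\frac{\widetilde{\mathfrak{M}}(\PP)}{\mathscr{C}(\PP)}\oplus\frac{Z^{1}}{B^{1}}=\frac{\widetilde{\mathfrak{M}}(\PP)}{\mathscr{C}(\PP)}\oplus\mathrm{H}^{1}_{\dd_{\D}}\big(\X{}^{\ast}\big),
\]
and transporting this back along the gauge isomorphism of the reduction step finishes the argument. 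The main obstacle is the reduction step: one has to verify that gauge transformations of the special form \eqref{eq:GaugePhi10} conjugate $\mathrm{Poiss}_{1}$ and $\mathrm{Ham}$ in a matching way, so that the \emph{restricted} cohomology (and not merely $\mathscr{H}^{1}$) is preserved, and that the target is insensitive to replacing $\K$ by a cohomologous cocycle; the rest is routine bookkeeping with the explicit descriptions from Corollary \ref{cor:PoissP1} and Proposition \ref{prop:hamalgebra}.
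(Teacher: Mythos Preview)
Your argument is correct, but it follows a genuinely different route from the paper's. The paper does \emph{not} reduce to $\K=0$ by a gauge transformation. Instead it works directly with the given $\PP$: from $[\,,\,]_{1}=0$ and the hypothesis $[\Delta\K]=0$ it deduces $\ker\mathrm{J}=\{0\}$ (via Lemma~\ref{lema:KerJ0Abelian}) and $\mathrm{Inn}(P_{1},[\,,\,]_{1})=0$, so the short exact sequence of Theorem~\ref{teo:RestrictedCoho} becomes
\[
0\longrightarrow \mathrm{H}^{1}_{\dd_{\D}}(\X{}^{\ast})\longrightarrow \mathscr{H}^{1}_{\mathrm{rest}}(\PP)\longrightarrow \mathfrak{M}(\PP)/\mathscr{C}(\PP)\longrightarrow 0.
\]
A primitive $c$ with $\Delta\K=\dd_{\D}c$ is then used twice: first to show $\mathfrak{M}(\PP)=\widetilde{\mathfrak{M}}(\PP)$ (for each $\mathscr{L}\in\widetilde{\mathfrak{M}}(\PP)$ the choice $\theta^{\mathscr{L}}_{c}=\mathscr{L}\circ c-c\circ\ell$ satisfies \eqref{eq:ThetaK}), and second to build an explicit section $[\mathscr{L}]\mapsto\bigl[\begin{smallmatrix}\ell&0\\ \theta^{\mathscr{L}}_{c}&\mathscr{L}\end{smallmatrix}\bigr]$ splitting the sequence.

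Your approach trades this section construction for the gauge reduction and a bare-hands identification $\mathrm{Poiss}_{1}\simeq\widetilde{\mathfrak{M}}\oplus Z^{1}$, $\mathrm{Ham}\simeq\mathscr{C}\oplus B^{1}$ in the $\K=0$ case. The advantage of your route is that it is entirely elementary once the reduction is justified (and your justification that conjugation by a lower-triangular $\phi$ with identity diagonal preserves the $(0,1)$-block, hence $\mathrm{Der}_{1}(P)$, is clean). The advantage of the paper's route is that it produces an explicit splitting for the original $\PP$ without passing through an auxiliary Poisson structure, and it makes transparent the equality $\mathfrak{M}(\PP)=\widetilde{\mathfrak{M}}(\PP)$, which your argument obtains only implicitly via the gauge transport.
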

\begin{proof}
Since \ec{[\,,\,]_{1}=0} the following facts hold: by definition, we have \ec{\mathrm{Inn}(P_{1},[\,,\,]_{1})=0}. By Corollary \ref{cor:Partialisd}, the cochain complex \ec{(\Gamma_{\PP},\partial_{\D})} associated to $\PP$ coincides with \ec{(\X{}^{\ast}, \dd_{\D})}, and $\K$ is a 2--cocycle. So, by Proposition \ref{prop:DeltaAst}, we have \ec{\dd_{\D}\Delta\K = 0}. From this, by Lemma \ref{lema:KerJ0Abelian}, we have \ec{\ker{\mathrm{J}}=\{0\}} since \ec{[\Delta\K]=0} by hypothesis. Then, by Theorem \ref{teo:RestrictedCoho}, the short exact sequence \eqref{eq:exactsqc} associated to $\PP$ is given by
    \begin{equation}\label{eq:ExactSeqTilde}
        0 \,\longrightarrow\, \mathrm{H}_{\dd_{\D}}^{1}\big( \X{}^{\ast} \big)
        \,\longrightarrow\, \mathscr{H}^{1}_{\mathrm{rest}}(\PP)
        \,\longrightarrow\, {\mathfrak{M}(\PP)} \,/\, {\mathscr{C}(\PP)}
        \,\longrightarrow\, 0.
    \end{equation}
Now, let us fix a primitive \ec{c \in \X{R}^{1}} of \ec{\Delta\K}, that is, \ec{\Delta{\K} = \dd_{\D}c}. First, we note that for every generalized $\ell$--derivation \ec{\mathscr{L} \in \widetilde{\mathfrak{M}}(\PP)} there exists \ec{\theta^{\mathscr{L}}_{c} := \mathscr{L} \circ c - c \circ \ell} such that \eqref{eq:ThetaK} holds. So, by definition, we have \ec{\mathfrak{M}(\PP)=\widetilde{\mathfrak{M}}(\PP)} since \ec{[\,,\,]_{1}=0}. Moreover, taking into account that the assignment \ec{(\mathscr{L},c) \to \theta^{\mathscr{L}}_{c}} is $R$--bilinear, the following ($c$--depending) $R$--linear mapping
    \begin{equation*}
        {\widetilde{\mathfrak{M}}(\PP)} \,/\, {\mathscr{C}(\PP)} \ni [\mathscr{L}]
            \,\longmapsto\,
            \left[
              \begin{array}{cc}
                \ell & 0 \\
                \theta^{\mathscr{L}}_{c} & \mathscr{L}
              \end{array}
            \right]
        \in \mathscr{H}^{1}_{\mathrm{rest}}(\PP).
    \end{equation*}
is a (well--defined) \emph{section} of the exact sequence \eqref{eq:ExactSeqTilde}. Hence, the lemma follows.
\end{proof}

\begin{lemma}
If the $\dd_{\D}$--cohomology class of the 2--cocycle $\Delta\K$ is trivial, then the exact sequence \eqref{ExctSqncAbelian} is short and splits.
\end{lemma}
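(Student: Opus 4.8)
The plan is to derive the statement from Theorem~\ref{teo:secAbelian} by exhibiting an $R$--linear \emph{section} of the last arrow of \eqref{ExctSqncAbelian}, that is, of the map $\mathscr{H}^{1}(\PP)\to\mathscr{E}(\PP)$, $[X]\mapsto X_{01}$. Once such a section exists this map is surjective, so the three--term sequence \eqref{ExctSqncAbelian} is in fact short exact, $0\to\mathscr{H}^{1}_{\mathrm{rest}}(\PP)\to\mathscr{H}^{1}(\PP)\to\mathscr{E}(\PP)\to 0$, and it splits; combined with the description of $\mathscr{H}^{1}_{\mathrm{rest}}(\PP)$ from the preceding lemma this yields Theorem~\ref{teo:SplitDeltaK}. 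Throughout I use that $[\,,\,]_{1}=0$, so by Corollary~\ref{cor:Partialisd} the contravariant derivative $\D$ is flat and $(\Gamma^{\ast}_{\PP},\partial_{\D})$ coincides with $(\X{}^{\ast},\dd_{\D})$, and I fix a primitive $c\in\X{R}^{1}(P_{0};P_{1})=\mathrm{Der}(P_{0};P_{1})$ of the $2$--cocycle $\Delta\K$ as in that lemma, so that
    \begin{equation*}
        c(\{f,g\}_{0}) = \D_{\dd{f}}c(g) - \D_{\dd{g}}c(f) - \K(\dd{f},\dd{g}), \qquad f,g\in P_{0}.
    \end{equation*}

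Given $T\in\mathscr{E}(\PP)$, I would set
    \begin{equation*}
        X^{T} := \left(\begin{array}{cc} T\circ c & T \\ -\,c\circ T\circ c & -\,c\circ T \end{array}\right),
    \end{equation*}
and claim that $X^{T}\in\mathrm{Poiss}(\PP)$ and that $T\mapsto[X^{T}]$ is the required section. That $X^{T}$ is a derivation of the commutative algebra $P$ follows from Lemma~\ref{lema:PD}: $T\circ c\in\mathrm{Der}(P_{0})$ since $T$ is $P_{0}$--linear and $c$ is a derivation; $T\in\varepsilon(P_{1};P_{0})$ by definition of $\mathscr{E}(\PP)\subseteq\varepsilon(P_{1};P_{0})$ in \eqref{epsilon}; $c\circ T\circ c\in\mathrm{Der}(P_{0};P_{1})$ by a short computation again using the skew identity $T(\eta)\xi+T(\xi)\eta=0$; and $-\,c\circ T$ is a generalized $(T\circ c)$--derivation of $P_{1}$, because that same skew identity turns $T(\eta)c(f)$ into $-(T\circ c)(f)\,\eta$. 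Since every entry of $X^{T}$ is $R$--linear in $T$ and $(X^{T})_{01}=T$, the assignment $T\mapsto[X^{T}]$ is then an $R$--linear one--sided inverse of $[X]\mapsto X_{01}$, provided every $X^{T}$ is a Poisson derivation.

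The remaining task is to verify conditions \eqref{eq:PoissDerAPA1}--\eqref{eq:PoissDerAPA4} of Proposition~\ref{prop:PoissDer} for $X^{T}$, and this is where the work lies. With $[\,,\,]_{1}=0$, condition \eqref{eq:PoissDerAPA3} is vacuous, \eqref{eq:PoissDerAPA4} is exactly the relation $T\circ\D_{\dd{f}}=\mathrm{ad}^{0}_{f}\circ T$ from the definition of $\mathscr{E}(\PP)$, and \eqref{eq:PoissDerAPA2} reduces to $\D_{\dd(T\eta)}\xi=\D_{\dd(T\xi)}\eta$, again part of that definition. Conditions \eqref{eq:PoissDerAPA1} and \eqref{eq:PoissDerAPA6} follow by applying $T$, respectively $c$ and the commutator with $\D_{\dd{f}}$, to the primitive relation above and invoking the two $\mathscr{E}$--identities. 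The one genuinely laborious point is \eqref{eq:PoissDerAPA5}: rewriting it as $\dd_{\D}(X^{T}_{10})=\Psi$ with $\Psi(f,g):=-\K(\dd(X^{T}_{00}f),\dd{g})-\K(\dd{f},\dd(X^{T}_{00}g))+X^{T}_{11}\big(\K(\dd{f},\dd{g})\big)$, one computes $\dd_{\D}(c\circ X^{T}_{00})$ from the primitive relation and checks that it equals $-\Psi$ \emph{exactly}, the decisive cancellation being $\D_{\dd(X^{T}_{00}f)}c(g)=\D_{\dd(X^{T}_{00}g)}c(f)$, which is the second $\mathscr{E}$--identity applied with $\eta=c(g)$, $\xi=c(f)$; hence $X^{T}_{10}=-\,c\circ X^{T}_{00}=-\,c\circ T\circ c$ works. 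I expect that cancellation, together with the sign bookkeeping in \eqref{eq:PoissDerAPA5}, to be the main obstacle, since the precise form of $X^{T}_{10}$ is pinned down only once it is spotted. With \eqref{eq:PoissDerAPA1}--\eqref{eq:PoissDerAPA4} verified, $X^{T}\in\mathrm{Poiss}(\PP)$, the section is in hand, and \eqref{ExctSqncAbelian} is short exact and splits.
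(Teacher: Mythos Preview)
Your proposal is correct and follows precisely the paper's own approach: the paper's proof likewise fixes a primitive $c$ with $\Delta\K=\dd_{\D}c$ and declares that the assignment $T\mapsto\Big[\begin{smallmatrix} T\circ c & T \\ -c\circ T\circ c & -c\circ T \end{smallmatrix}\Big]$ is a well--defined section of $\mathscr{H}^{1}(\PP)\to\mathscr{E}(\PP)$, without spelling out the verifications. Your write--up supplies exactly those verifications (Lemma~\ref{lema:PD} and Proposition~\ref{prop:PoissDer}) in detail, and the key cancellation you single out for \eqref{eq:PoissDerAPA5}, namely $\D_{\dd(Tc(f))}c(g)=\D_{\dd(Tc(g))}c(f)$ from the second $\mathscr{E}$--identity, is indeed the crux.
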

\begin{proof}
Let us fix a primitive \ec{c \in \X{R}^{1}} of \ec{\Delta\K}, that is, \ec{\Delta{\K} = \dd_{\D}c}. Then, the following ($c$--depending) $R$--linear mapping
    \begin{equation*}
        \mathscr{E}(\PP) \ni T \,\longmapsto\,
        \left[
          \begin{array}{cc}
            \phantom{-}T \circ c & \phantom{-}T \\
            -c \circ T \circ c & -c \circ T
          \end{array}
        \right]
        \in \mathscr{H}^{1}(\PP)
    \end{equation*}
is a (well--defined) \emph{section} of the morphism \ec{\mathscr{H}^{1}(\PP) \rightarrow \mathscr{E}(\PP)} in \eqref{ExctSqncAbelian}.
\end{proof}

In particular, for admissible Poisson algebras induced by Poisson modules, the corresponding exact sequence \eqref{ExctSqncAbelian} is short and splits (Corollary \ref{cor:H1Lambda}) since in this case we have \ec{\K=0} (Corollary \ref{cor:APAbyPM}).

    \section{The Poisson Module Case} \label{sec:PoissonModules}

Here, we apply the results of the previous sections to the special class of admissible Poisson algebras associated to Poisson modules.

Recall that a \emph{Poisson module} over a Poisson algebra \ec{\PP_{0}=(P_0, \cdot, \{\,,\,\}_0)} is a
tuple \ec{(P_{1},\lambda)} consisting of a \ec{P_0}--module \ec{P_1} and an $R$--linear mapping \ec{\lambda:P_0 \times P_1\to P_1} satisfying the following conditions \cite{ReVoWe96, Bur01, Car03}:
    \begin{align}
        \lambda(f,g\eta) &= g \lambda(f,\eta) + \{f,g\}_0\,\eta, \label{EcLambdaD} \\
        \lambda(fg,\eta) &= f \lambda(g,\eta) + g\lambda(f,\eta), \label{EcLambdaMultip} \\
        \lambda\big(\{f,g\}_0, \eta\big) &= \lambda\big(f,\lambda(g,\eta)\big) - \lambda\big(g,\lambda(f,\eta)\big), \label{EcLambdaCurv}
    \end{align}
for all \ec{f,g \in P_0} and \ec{\eta \in P_1}.

\begin{examplex}
Every Poisson algebra $\PP_{0}$ is a Poisson module over itself, where \ec{P_{1}=P_{0}} and $\lambda:P_{0}\times P_{0}\to P_{0}$ is just defined by \ec{\lambda(f,g):=\{f,g\}_{0}}, for all \ec{f,g\in P_{0}}.
\end{examplex}

Some alternative versions of the following result can be found in \cite{ReVoWe96, Bur01, Car03}.

\begin{proposition}\label{prop:AdmissiblePoissonModule}
Every Poisson module \ec{(P_{1}, \lambda)} over a Poisson algebra $\PP_{0}$ induces an admissible Poisson algebra \ec{\PP_{\lambda} = (P_0 \ltimes P_1, \{\,,\,\}_{\lambda})} such that
    \begin{enumerate}[label=(\roman*)]
      \item  the subalgebra \ec{P_{0} \oplus \{0\}} of the commutative algebra \ec{P_0 \ltimes P_1} is a Poisson subalgebra of $\PP_{\lambda}$; \label{PM1}
      \item  the \ec{P_{0}}--module \ec{P_{1}} is endowed with the trivial Lie algebroid structure over \ec{P_{0}}. \label{PM2}
    \end{enumerate}
Moreover, the admissible Poisson structure is defined by
    \begin{equation}\label{EcPMGPABracket}
        \{f \oplus \eta, g \oplus \xi \}_{\lambda} := \{f,g\}_0 \oplus \big(\lambda(f, \xi) - \lambda(g, \eta) \big),
	\end{equation}
for all \ec{f,g \in P_{0}} and \ec{\eta,\xi \in P_{1}}. The converse is also true.
\end{proposition}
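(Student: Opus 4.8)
The plan is to route the argument through the correspondence between admissible Poisson algebras and Poisson triples (Lemmas \ref{LemaTripleToAlg} and \ref{lema:GAP-PT}), identifying the data carried by a Poisson module with a Poisson triple of the special form $(0,\D,0)$ with $\D$ flat. So the whole statement, including the converse, reduces to matching up the three defining conditions \eqref{EcLambdaD}--\eqref{EcLambdaCurv} of a Poisson module with the three conditions \eqref{EcPT1}--\eqref{EcPT3} for a Poisson triple in the degenerate case $[\,,\,]_1=0$, $\K=0$.

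First I would produce from $(P_1,\lambda)$ a contravariant derivative $\D$ on $P_1$ with $\D_{\dd f}\eta=\lambda(f,\eta)$. For a fixed $\eta\in P_1$, condition \eqref{EcLambdaMultip} together with $R$-linearity says that $f\mapsto\lambda(f,\eta)$ is an $R$-derivation $P_0\to P_1$; by the universal property of $(\Omega^1_{P_0},\dd)$ it factors uniquely through a $P_0$-linear map $\Omega^1_{P_0}\to P_1$, $\alpha\mapsto\D_\alpha\eta$. That $\D$ is $R$-bilinear and satisfies the second Leibniz identity $\D_\alpha(f\eta)=f\D_\alpha\eta+\varrho_{P_0}(\alpha)(f)\,\eta$ is checked on generators $\alpha=\dd g$ using \eqref{EcLambdaD} and the relation $\varrho_{P_0}(\dd g)=\{g,\cdot\}_0$, and then extended by $P_0$-linearity in the first slot. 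Finally \eqref{EcLambdaCurv} says exactly that $\mathrm{Curv}^{\D}(\dd f,\dd g)=0$, and since $\mathrm{Curv}^{\D}$ is $P_0$-bilinear and $\Omega^1_{P_0}=\mathrm{span}_{P_0}\{\dd f\}$, it follows that $\D$ is flat.

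Next I would observe that $(0,\D,0)$ is a Poisson triple of $P$: condition \eqref{EcPT1} is vacuous since $[\,,\,]_1=0$; condition \eqref{EcPT2} reads $\mathrm{Curv}^{\D}=\mathrm{ad}_{0}=0$, which is the flatness just established; and \eqref{EcPT3} is trivial because $\K=0$. By Lemma \ref{LemaTripleToAlg} this triple defines an admissible Poisson structure, and substituting into \eqref{EcBracketPT} gives precisely $\{f\oplus\eta,g\oplus\xi\}_\lambda=\{f,g\}_0\oplus(\lambda(f,\xi)-\lambda(g,\eta))$, i.e.\ formula \eqref{EcPMGPABracket}. Property \ref{PM1} follows from Corollary \ref{cor:P0KD0} since $\K^{\PP_\lambda}=0$, and property \ref{PM2} follows from Corollary \ref{cor:P1algebroid} together with Remark \ref{rmrk:AnclaCero}, the induced Lie algebroid on $P_1$ being $([\,,\,]_1=0,\varrho=0)$.

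For the converse, let $\PP=(P_0\ltimes P_1,\{\,,\,\})$ be admissible and satisfy \ref{PM1}--\ref{PM2}. By \ref{PM2} and Corollary \ref{cor:P1algebroid} the induced bracket $[\,,\,]_1$ vanishes; by \ref{PM1} and Corollary \ref{cor:P0KD0} we get $\K^{\PP}=0$; and then, by the consequence of Corollary \ref{cor:P0KD0} combined with \eqref{EcPT2}, the induced contravariant derivative $\D^{\PP}$ is flat. Setting $\lambda(f,\eta):=\D^{\PP}_{\dd f}\eta$, the contravariant-derivative axioms yield \eqref{EcLambdaD} and \eqref{EcLambdaMultip} (the latter using $\dd(fg)=f\dd g+g\dd f$ and $P_0$-linearity in the first argument), flatness of $\D^{\PP}$ gives \eqref{EcLambdaCurv}, and by Lemma \ref{lema:GAP-PT} the bracket of $\PP$ is recovered from the triple $(0,\D^{\PP},0)$, hence coincides with \eqref{EcPMGPABracket} for this $\lambda$. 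The only genuinely delicate point in the whole argument is the well-definedness of $\D$ from $\lambda$ in the forward direction --- extending $\lambda$, a priori given only on arguments of the form $\dd f$, consistently to all of $\Omega^1_{P_0}$ --- which is handled cleanly by the universal property of the module of Kähler differentials rather than by a direct relations check; everything else is bookkeeping on \eqref{EcLambdaD}--\eqref{EcLambdaCurv} versus \eqref{EcPT1}--\eqref{EcPT3}.
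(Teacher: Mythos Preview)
Your proof is correct and takes essentially the same route as the paper: both identify the Poisson module with the degenerate Poisson triple $(0,\D^{\lambda},0)$ and appeal to Corollaries \ref{cor:P0KD0} and \ref{cor:P1algebroid} for properties \ref{PM1} and \ref{PM2}. The only cosmetic difference is the direction of travel: the paper first verifies directly that \eqref{EcLambdaD}--\eqref{EcLambdaCurv} are equivalent to the Jacobi identity for \eqref{EcPMGPABracket} and then reads off the triple via Lemma \ref{lema:GAP-PT}, whereas you build the triple first and invoke Lemma \ref{LemaTripleToAlg}; your explicit use of the universal property of $\Omega^{1}_{P_0}$ to extend $\D$ off exact forms is a detail the paper leaves implicit.
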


By direct computations, one can show that conditions \eqref{EcLambdaD}--\eqref{EcLambdaCurv} are equivalent to the Jacobi identity for the bracket in \eqref{EcPMGPABracket}. Items \ref{PM1} and \ref{PM2} of Proposition \ref{prop:AdmissiblePoissonModule} are consequences of the following observation: by Lemma \ref{lema:GAP-PT}, the Poisson triple of $P$ corresponding to $\PP_{\lambda}$ is given by
    \begin{equation}\label{EcModuleTriple}
        \big( [\,,\,]^{\lambda}_1 = 0,\, \D^{\lambda},\, \K^{\lambda} = 0 \big),
    \end{equation}
where the contravariant derivative \ec{\D^{\lambda}} is defined by
    \begin{equation}\label{eq:DLambda}
        \D^{\lambda}_{\dd f}\eta := \lambda(f, \eta), \quad f \in P_{0},\, \eta \in P_{1}.
    \end{equation}
So, it is clear that conditions \eqref{EcPT1} and \eqref{EcPT3} are satisfied and \eqref{EcPT2} is equivalent to \eqref{EcLambdaCurv},  which implies that \ec{\D^{\lambda}} is flat. Taking into account formula \eqref{EcModuleTriple}, from Corollaries \ref{cor:P0KD0} and \ref{cor:P1algebroid} we deduce that \ec{P_{0} \oplus \{0\}} is a Poisson subalgebra of \ec{\PP_{\lambda}} and \ec{P_{1}} has the trivial Lie algebroid structure over \ec{P_{0}}.

\begin{corollary}\label{cor:APAbyPM}
An admissible Poisson algebra $\PP$ is induced by a Poisson module if and only if the corresponding Poisson triple of $P$ is of the form \eqref{EcModuleTriple}. Moreover, the Poisson module structure over \ec{\PP_{0}} is defined by the formula \eqref{eq:DLambda}.
\end{corollary}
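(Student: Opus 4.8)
The plan is to deduce the statement from the one--to--one correspondence of Theorem~\ref{teo:correspondenceGPA-PT-LA}, Proposition~\ref{prop:AdmissiblePoissonModule}, and the identification~\eqref{EcModuleTriple} of the Poisson triple attached to $\PP_\lambda$. In the forward direction, if $\PP$ is induced by a Poisson module $(P_1,\lambda)$, that is $\PP=\PP_\lambda$, then the discussion following Proposition~\ref{prop:AdmissiblePoissonModule} (which applies Lemma~\ref{lema:GAP-PT}) already gives that the corresponding Poisson triple of $P$ is $\big([\,,\,]_1=0,\,\D^\lambda,\,\K=0\big)$ with $\D^\lambda_{\dd f}\eta=\lambda(f,\eta)$; this is precisely the form~\eqref{EcModuleTriple}.

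For the converse, suppose the Poisson triple $([\,,\,]_1,\D,\K)$ corresponding to $\PP$ has $[\,,\,]_1=0$ and $\K=0$. I would set $\lambda(f,\eta):=\D_{\dd f}\eta$ for $f\in P_0$, $\eta\in P_1$, and verify the three Poisson module axioms. Axiom~\eqref{EcLambdaD} is the Leibniz--type property of a contravariant derivative evaluated at $\alpha=\dd f$, using $\varrho_{P_0}(\dd f)(g)=\{f,g\}_0$. Axiom~\eqref{EcLambdaMultip} follows from $\dd(fg)=f\,\dd g+g\,\dd f$ in $\Omega^1_{P_0}$ together with the $P_0$--linearity $\D_{h\alpha}=h\,\D_\alpha$. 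Axiom~\eqref{EcLambdaCurv}, rewritten with $\dd\{f,g\}_0=\cSch{\dd f,\dd g}_{P_0}$, is equivalent to $\mathrm{Curv}^{\D}(\dd f,\dd g)=0$, and this holds because condition~\eqref{EcPT2} forces $\mathrm{Curv}^{\D}=\operatorname{ad}\circ\K=0$ once $\K=0$ (indeed $[\,,\,]_1=0$ already makes every $\operatorname{ad}$ vanish); flatness on arbitrary one--forms then follows from the $P_0$--bilinearity of $\mathrm{Curv}^{\D}$.

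Having produced a Poisson module $(P_1,\lambda)$ over $\PP_0$, I would invoke~\eqref{EcModuleTriple} again: the admissible Poisson algebra $\PP_\lambda$ it induces has Poisson triple $\big(0,\D^\lambda=\D,0\big)$, which coincides with the triple of $\PP$. By the one--to--one correspondence between admissible Poisson structures and Poisson triples (Theorem~\ref{teo:correspondenceGPA-PT-LA}), this yields $\PP=\PP_\lambda$, so $\PP$ is induced by a Poisson module; alternatively one compares the brackets directly through formula~\eqref{EcBracketPT} with the triple $(0,\D,0)$, recovering~\eqref{EcPMGPABracket}. The ``moreover'' assertion is just the defining relation $\lambda(f,\eta)=\D_{\dd f}\eta$, that is~\eqref{eq:DLambda}. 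Since every ingredient is already in place there is no genuine obstacle; the only point that needs care is recognising that axiom~\eqref{EcLambdaCurv} is exactly the flatness of $\D$, that flatness is the content of~\eqref{EcPT2} when $\K=0$, and that one may pass from exact one--forms $\dd f$ to all of $\Omega^1_{P_0}$ by $P_0$--bilinearity.
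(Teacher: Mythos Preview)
Your proposal is correct and follows essentially the same approach as the paper: the forward direction is read off from the discussion after Proposition~\ref{prop:AdmissiblePoissonModule}, and the converse amounts to checking that a triple of the form $(0,\D,0)$ yields a Poisson module via $\lambda(f,\eta):=\D_{\dd f}\eta$, then invoking the bijection of Theorem~\ref{teo:correspondenceGPA-PT-LA}. The paper leaves the corollary without explicit proof, summarizing it as the one--to--one correspondence between Poisson modules and flat contravariant derivatives given by~\eqref{eq:DLambda}; your argument simply spells out those details.
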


In other words, there exists a one--to--one correspondence between Poisson modules and flat contravariant derivatives which is defined by the formula \eqref{eq:DLambda} \cite{ReVoWe96, Bur01, Car03}.

\begin{examplex}\label{Ex:PM}
Let \ec{(P_0,\cdot,\{\,,\,\}_{0})} be a Poisson algebra and consider the \ec{P_{0}}--module \ec{P_1=\mathrm{End}_{R}(P_{0})}. Then, \ec{P_{1}} is a Poisson module over \ec{P_{0}} with \ec{\lambda:P_{0} \times P_{1}\to P_{1}} defined  by
    \begin{equation}\label{exm:LambdaEnd}
        \lambda(f,T) := \mathrm{ad}_{f}^{0}\circ T.
    \end{equation}
Here, \ec{\mathrm{ad}_{f}^{0} := \{f,\cdot\}_{0}}. The induced admissible Poisson structure on \ec{P_{0} \ltimes P_1} is given by
    \begin{equation}\label{eq:APSLambda}
        \{f\oplus T, g\oplus S\}_{\lambda} = \{f,g\}_{0}\oplus \big( \mathrm{ad}_{f}^{0}\circ S-\mathrm{ad}_{g}^{0}\circ T \big),
    \end{equation}
for all \ec{f\oplus T,\, g\oplus S \in P_{0}\ltimes P_1}.
\end{examplex}

\begin{remark}
By omitting condition \eqref{EcLambdaMultip}, we get a more general definition of a Poisson module which can be found in \cite{Car03}. But in this case, the bracket in \eqref{EcPMGPABracket} does not satisfy the Jacobi identity.
\end{remark}

By Theorem \ref{teo:correspondenceGPA-PT-LA}, Poisson modules also induce Lie algebroids satisfying condition \eqref{EcA}.

\begin{proposition}
Every Poisson module \ec{(P_{1},\lambda)} over a Poisson algebra \ec{(P_{0},\cdot,\{\,,\,\}_{0})} induces a Lie algebroid \ec{(\Omega^{1}_{P_{0}} \oplus P_{1}, \cSch{\,,\,}_{\lambda}, \varrho_{\lambda})} over \ec{P_{0}}, where the Lie bracket is defined by
    \begin{equation*}
        \cSch{\dd f \oplus \eta,\dd g \oplus \xi}_{\lambda} := \dd \{f,g\}_0\oplus \big( \lambda(f,\xi)-\lambda(g,\eta) \big),
    \end{equation*}
and the anchor map is given by
    \begin{equation*}
         \varrho_{\lambda}(\dd f \oplus \eta) := \{f, \cdot \}_{0},
    \end{equation*}
for all \ec{f,g \in P_0} and \ec{\eta, \xi \in P_1}.

\end{proposition}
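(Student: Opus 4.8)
The plan is to obtain this proposition as an immediate specialization of the correspondence established in Theorem~\ref{teo:correspondenceGPA-PT-LA}. By Proposition~\ref{prop:AdmissiblePoissonModule} and Corollary~\ref{cor:APAbyPM}, a Poisson module $(P_{1},\lambda)$ over $\PP_{0}$ is the same data as the admissible Poisson algebra $\PP_{\lambda}$ whose associated Poisson triple of $P = P_{0}\ltimes P_{1}$ is $([\,,\,]_{1}^{\lambda}=0,\,\D^{\lambda},\,\K^{\lambda}=0)$, with $\D^{\lambda}_{\dd f}\eta=\lambda(f,\eta)$ as in \eqref{eq:DLambda}. Applying the passage from Poisson triples to Lie algebroids --- that is, Lemma~\ref{lemma:PT-LA} --- to this particular triple produces a Lie algebroid structure $(\cSch{\,,\,},\varrho_{P_{0}}\circ\mathrm{proj}_{0})$ on $A=\Omega^{1}_{P_{0}}\oplus P_{1}$ over $P_{0}$ satisfying condition \eqref{EcA}, with bracket given by formula \eqref{eq:BracketLATriple}.

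It then remains only to substitute the data $([\,,\,]_{1}=0,\,\K=0,\,\D=\D^{\lambda})$ into \eqref{eq:BracketLATriple} and into the anchor. Evaluating the bracket on exact generators $\alpha=\dd f$, $\beta=\dd g$ gives $\cSch{\dd f\oplus\eta,\dd g\oplus\xi}=\cSch{\dd f,\dd g}_{P_{0}}\oplus\big(\D^{\lambda}_{\dd f}\xi-\D^{\lambda}_{\dd g}\eta\big)$; using the identity $\cSch{\dd f,\dd g}_{P_{0}}=\dd\{f,g\}_{0}$ noted right after Definition~\ref{def:algebroidpoisson}, together with \eqref{eq:DLambda}, this is exactly $\dd\{f,g\}_{0}\oplus(\lambda(f,\xi)-\lambda(g,\eta))$, i.e.\ the claimed $\cSch{\,,\,}_{\lambda}$. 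For the anchor, $\varrho_{\lambda}(\dd f\oplus\eta)=(\varrho_{P_{0}}\circ\mathrm{proj}_{0})(\dd f\oplus\eta)=\varrho_{P_{0}}(\dd f)=\{f,\cdot\}_{0}$ by the defining property of $\varrho_{P_{0}}$.

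I do not expect any genuine obstacle here: the proposition records the Lie algebroid structure only on elements of the form $\dd f\oplus\eta$, which generate $A$ over $P_{0}$, so the Leibniz rule \eqref{EcLeibnizAnchor} dictates the extension to all of $A$ and nothing further needs to be checked beyond what Lemma~\ref{lemma:PT-LA} already provides. If one prefers a self-contained argument, an alternative route is a direct verification that the Poisson module axioms \eqref{EcLambdaD}--\eqref{EcLambdaCurv} imply the Jacobi identity and the Leibniz condition \eqref{EcLeibnizAnchor} for $(\cSch{\,,\,}_{\lambda},\varrho_{\lambda})$; this is the computation already indicated after Proposition~\ref{prop:AdmissiblePoissonModule} for the bracket \eqref{EcPMGPABracket}, and since $\varrho_{\lambda}$ factors through $\mathrm{proj}_{0}$ exactly as in Lemma~\ref{lemma:PT-LA}, no new identities are required. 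One could equally phrase the result as the special case $[\,,\,]_{1}=0$, $\K=0$ of the Lie algebroid displayed in the example following Theorem~\ref{teo:correspondenceGPA-PT-LA}.
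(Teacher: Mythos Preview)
Your proposal is correct and follows exactly the route the paper indicates: the sentence immediately preceding the proposition, ``By Theorem \ref{teo:correspondenceGPA-PT-LA}, Poisson modules also induce Lie algebroids satisfying condition \eqref{EcA},'' is the entire argument, and you have simply unpacked it by invoking the Poisson triple \eqref{EcModuleTriple} and specializing formula \eqref{eq:BracketLATriple} from Lemma~\ref{lemma:PT-LA}. There is nothing to add.
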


\begin{examplex}
The Lie algebroid \ec{(\Omega^{1}_{P_{0}} \oplus P_{1}, \cSch{\,,\,}_{\lambda}, \varrho_{\lambda})} induced by the Poisson module of Example \ref{Ex:PM} is given by \ec{\cSch{\dd f \oplus T,\dd g \oplus S}_{\lambda} = \dd \{f,g\}_0 \oplus (\mathrm{ad}_{f}^{0} \circ S-\mathrm{ad}_{g}^{0}\circ T)} and \ec{\varrho_{\lambda}(\dd{f}\oplus T) = \mathrm{ad}_{f}^{0}}, for all \ec{f,g \in P_{0}} and  \ec{S,T\in \mathrm{End}_{R}(P_{0})}.
\end{examplex}

\paragraph{Gauge Equivalence.} Let \ec{\PP=(P,\{\,,\,\})} be an (arbitrary) admissible Poisson algebra and \ec{\PP_{\lambda}=(P,\{\,,\,\}_{\lambda})} an admissible Poisson algebra induced by a Poisson module \ec{(P_{1},\lambda)} over $\PP_{0}$.
Here we apply the results of Section \ref{sec:equivalenceGPA} to derive some equivalence criteria.

First, if $\PP$ is isomorphic to $\PP_{\lambda}$ by means of a gauge transformation \ec{\phi: \PP \to \PP_{\lambda}}, then the Poisson triples of $P$ corresponding to $\PP$ and $\PP_{\lambda}$ are related by
    \begin{align}
        [\,,\,]_{1} &= 0, \label{eq:TripMod1} \\[0.15cm]
        \D_{\dd{f}} &= \phi^{-1}_{11} \circ \D^{\lambda}_{\dd(\phi_{00}f)} \circ \phi_{11}, \label{eq:TripMod2} \\[0.15cm]
        \K(\dd{f},\dd{g}) &= \phi^{-1}_{11}\big( \D^{\lambda}_{\dd(\phi_{00}f)}\phi_{10}g - \D^{\lambda}_{\dd(\phi_{00}g)}\phi_{10}f - \phi_{10}\{f,g\}_0 \big), \label{eq:TripMod3}
    \end{align}
due to Theorem \ref{teo:GaugeEquiv} and transition rules \eqref{EcPT1prima}--\eqref{EcPT3prima}, for all \ec{f,g \in P_{0}}. From here, we deduce that
    \begin{enumerate}[label=(\alph*)]
      \item the admissible Poisson algebra $\PP$ is not necessarily associated to a Poisson module, by Corollary \ref{cor:APAbyPM} and formula \eqref{eq:TripMod3};

      \item \label{item:PoissModAbelian} a necessary condition for $\PP$ to be isomorphic to $\PP_{\lambda}$ by means of a gauge transformation is that the Lie algebra \ec{(P_{1},[\,,\,]_{1})} is abelian, by \eqref{eq:TripMod1}. In particular, by \eqref{EcPT2}, the contravariant derivative $\D$ must be flat.
    \end{enumerate}

Note that the transition rules \eqref{eq:TripMod1}--\eqref{eq:TripMod3}, by Corollaries \ref{cor:GaugeInduces} and \ref{cor:APAbyPM} and the formula \eqref{eq:DLambda}, provides a way to derive Poisson modules from a given one.

\begin{proposition}
Let \ec{(P_{1},\lambda)} be a Poisson module over a Poisson algebra \ec{\PP_{0} = (P_{0}, \cdot, \{,\}_{0})}. Then, every $P_{0}$--module isomorphism \ec{(\phi_{11}:P_{1} \to P_{1},\phi_{00}:P_{0} \to P_{0})} of $P_{1}$ such that $\phi_{00}$ is a Poisson algebra isomorphism of $\PP_{0}$ induces a Poisson module \ec{(P_{1},\lambda')} over \ec{\PP_{0}}, where \ec{\lambda'} is given by
    \begin{equation*}
        \lambda'(f,\eta) = \phi_{11}^{-1}\big( \lambda(\phi_{00}f, \phi_{11}\eta) \big),
    \end{equation*}
for all \ec{f \in P_{0}} and \ec{\eta \in P_{1}}.
\end{proposition}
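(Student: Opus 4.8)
The plan is to deduce the statement from the gauge--transformation formalism of Section \ref{sec:equivalenceGPA} together with the characterization of Poisson--module--induced admissible Poisson algebras in Corollary \ref{cor:APAbyPM}, rather than verifying the module axioms \eqref{EcLambdaD}--\eqref{EcLambdaCurv} by hand. First, by Proposition \ref{prop:AdmissiblePoissonModule} the Poisson module \ec{(P_{1},\lambda)} gives the admissible Poisson algebra \ec{\PP_{\lambda}} whose corresponding Poisson triple of \ec{P=P_{0}\ltimes P_{1}} is \ec{(0,\D^{\lambda},0)} with \ec{\D^{\lambda}_{\dd f}\eta=\lambda(f,\eta)}, see \eqref{EcModuleTriple}--\eqref{eq:DLambda}. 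Next I would observe that the hypotheses on the pair \ec{(\phi_{11},\phi_{00})} are exactly what is needed to make the \ec{R}--linear map \ec{\phi=\big(\begin{smallmatrix}\phi_{00}&0\\0&\phi_{11}\end{smallmatrix}\big)} a gauge transformation on \ec{P} in the sense of Definition \ref{def:gauget}: condition \ref{Gauge2} holds since \ec{\phi_{01}=0}; condition \ref{Gauge3} holds trivially since \ec{\phi_{10}=0} is a \ec{\phi_{00}}--derivation; condition \ref{Gauge1} is the assumption that \ec{\phi_{00}} is a Poisson isomorphism of \ec{\PP_{0}}; and condition \ref{Gauge4} is the assumption that \ec{(\phi_{11},\phi_{00})} is a \ec{P_{0}}--module isomorphism of \ec{P_{1}}.

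Then, by Corollary \ref{cor:GaugeInduces}, \ec{\phi} transports \ec{\{\,,\,\}_{\lambda}} to an admissible Poisson structure \ec{\{\,,\,\}'} on \ec{P} whose Poisson triple is obtained from \ec{(0,\D^{\lambda},0)} via the transition rules \eqref{EcPT1prima}--\eqref{EcPT3prima} specialized to \ec{\phi_{10}=0}. These immediately give \ec{[\,,\,]_{1}'=\phi_{11}^{-1}[\phi_{11}\cdot,\phi_{11}\cdot]_{1}=0}, \ec{\D'_{\dd f}\eta=\phi_{11}^{-1}\big(\D^{\lambda}_{\dd(\phi_{00}f)}\phi_{11}\eta\big)=\phi_{11}^{-1}\big(\lambda(\phi_{00}f,\phi_{11}\eta)\big)}, and \ec{\K'=\phi_{11}^{-1}\,\K^{\lambda}(\dd\phi_{00}\cdot,\dd\phi_{00}\cdot)=0} (all the remaining terms in \eqref{EcPT3prima} vanish because \ec{\phi_{10}=0} and \ec{\phi_{00}} preserves \ec{\{\,,\,\}_{0}}). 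Hence the Poisson triple of the new admissible Poisson algebra \ec{\PP'} is again of the form \ec{(0,\D',0)}; in particular \ec{\D'} is flat by \eqref{EcPT2}. Applying Corollary \ref{cor:APAbyPM}, \ec{\PP'} is induced by a Poisson module \ec{(P_{1},\lambda')} over \ec{\PP_{0}} with \ec{\lambda'(f,\eta)=\D'_{\dd f}\eta=\phi_{11}^{-1}\big(\lambda(\phi_{00}f,\phi_{11}\eta)\big)}, which is precisely the asserted formula.

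The proof is therefore essentially bookkeeping: no new identity has to be checked beyond confirming that \ec{\phi} satisfies conditions \ref{Gauge2}--\ref{Gauge4}, and the key structural facts (flatness of \ec{\D'} and vanishing of \ec{\K'}) are already packaged in \eqref{eq:TripMod1}--\eqref{eq:TripMod3} and \eqref{EcPT1prima}--\eqref{EcPT3prima}. I do not expect a genuine obstacle. For completeness one could also argue directly, checking \eqref{EcLambdaD} and \eqref{EcLambdaMultip} for \ec{\lambda'} from the \ec{\phi_{00}}--semilinearity of \ec{\phi_{11}} and \eqref{EcLambdaCurv} from the fact that \ec{\phi_{00}} is a Poisson morphism; in that route the curvature identity \eqref{EcLambdaCurv} is the only nontrivial (though still routine) computation and is where the Poisson--isomorphism hypothesis on \ec{\phi_{00}} is indispensable.
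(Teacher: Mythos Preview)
Your proof is correct and follows essentially the same route as the paper: the proposition is stated immediately after the observation that the transition rules \eqref{eq:TripMod1}--\eqref{eq:TripMod3}, together with Corollaries \ref{cor:GaugeInduces} and \ref{cor:APAbyPM} and formula \eqref{eq:DLambda}, provide a way to derive new Poisson modules from a given one, and your argument is precisely the explicit unfolding of that observation in the special case \ec{\phi_{10}=0}.
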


Now, taking into account the item \ref{item:PoissModAbelian}, consider an admissible Poisson algebra $\PP$ such that the corresponding Poisson triple of $P$ is of the form
    \begin{equation}\label{eq:TripleAbelian}
        \big( [\,,\,]_{1}=0,\D,\K \big).
    \end{equation}
Then, by using Corollary \ref{cor:Partialisd}, we deduce the following:

\begin{theorem}\label{teo:PisoPlambda}
An admissible Poisson algebra satisfying \eqref{eq:TripleAbelian} is isomorphic to an admissible Poisson algebra associated to a Poisson module by means of a gauge transformation of the form \eqref{eq:GaugePhi10} if and only if the $\dd_{\D}$--cohomology class of the 2--cocycle \ec{\Delta \K \in \X{R}^{2}} is trivial. Here, the $R$--linear mapping $\Delta$ is defined in \eqref{eq:Delta}.
\end{theorem}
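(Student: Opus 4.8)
The plan is to deduce the statement as a special case of Theorem~\ref{teo:Kequivalence}, taking the reference Poisson module to be the one furnished by the contravariant derivative $\D$ itself. Since $\PP$ satisfies \eqref{eq:TripleAbelian}, its Poisson triple has the abelian form $([\,,\,]_{1}=0,\D,\K)$, so Corollary~\ref{cor:Partialisd} applies: the center $Z_{\PP}(P_{1})$ equals $P_{1}$, the contravariant derivative $\D$ is flat, the cochain complex $(\Gamma^{\ast}_{\PP},\partial_{\D})$ of \eqref{eq:ComplexBar} coincides with $(\X{}^{\ast},\dd_{\D})$, and $\Delta\K\in\X{R}^{2}$ is a genuine $2$--cocycle of $\dd_{\D}$, so its cohomology class is well defined. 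Because $\D$ is flat, the correspondence of Corollary~\ref{cor:APAbyPM} produces a Poisson module $(P_{1},\lambda)$ over $\PP_{0}$ with $\lambda(f,\eta):=\D_{\dd f}\eta$, whose associated admissible Poisson algebra $\PP_{\lambda}$ has Poisson triple $(0,\D,0)$. Thus $\PP$ and $\PP_{\lambda}$ satisfy the hypothesis \eqref{eq:LieDKflat} of Theorem~\ref{teo:Kequivalence}, with the same abelian Lie bracket, the same flat $\D$, and $\K'=0$.

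For the ``if'' direction I would assume the $\dd_{\D}$--class of $\Delta\K$ is trivial. Under the identification of $(\Gamma^{\ast}_{\PP},\partial_{\D})$ with $(\X{}^{\ast},\dd_{\D})$ coming from Corollary~\ref{cor:Partialisd}, this is exactly the triviality of the $\partial_{\D}$--class of $\Delta(\K-\K')=\Delta\K$. Hence Theorem~\ref{teo:Kequivalence} yields a gauge transformation of the form \eqref{eq:GaugePhi10} identifying $\PP$ with $\PP_{\lambda}$, and since $\PP_{\lambda}$ is associated to the Poisson module $(P_{1},\lambda)$ by construction, this proves one implication.

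For the ``only if'' direction I would suppose $\phi:\PP\to\PP_{\lambda}$ is a gauge transformation of the form \eqref{eq:GaugePhi10} onto an admissible Poisson algebra $\PP_{\lambda}$ coming from some Poisson module $(P_{1},\lambda)$; thus $\phi_{00}=\mathrm{id}_{P_{0}}$, $\phi_{11}=\mathrm{id}_{P_{1}}$, $\phi_{01}=0$, and $\phi_{10}\in\mathrm{Der}(P_{0};P_{1})=\X{R}^{1}(P_{0};P_{1})$. The Poisson triple of $\PP_{\lambda}$ is $(0,\D^{\lambda},0)$ with $\D^{\lambda}$ flat by Corollary~\ref{cor:APAbyPM}, so the transition rules \eqref{eq:TripMod1}--\eqref{eq:TripMod3} specialize, upon setting $\phi_{00}=\phi_{11}=\mathrm{id}$, to $\D=\D^{\lambda}$ together with $\K(\dd f,\dd g)=\D_{\dd f}\phi_{10}(g)-\D_{\dd g}\phi_{10}(f)-\phi_{10}(\{f,g\}_{0})$. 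Comparing with formula \eqref{eq:dDbar} for $k=1$ and with the definition \eqref{eq:Delta} of $\Delta$, the right--hand side is precisely $(\dd_{\D}\phi_{10})(f,g)$, so $\Delta\K=\dd_{\D}\phi_{10}$ is a $\dd_{\D}$--coboundary and its cohomology class vanishes, as required.

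The computations are short and I do not expect a genuine obstacle; the only points demanding care are the bookkeeping identification of $(\Gamma^{\ast}_{\PP},\partial_{\D})$ with $(\X{}^{\ast},\dd_{\D})$ in the abelian case (via Corollary~\ref{cor:Partialisd}) and the observation that a gauge transformation of the restricted form \eqref{eq:GaugePhi10} automatically fixes $[\,,\,]_{1}$ and $\D$, so that it acts only on $\K$ --- which is exactly what makes Theorem~\ref{teo:Kequivalence} directly applicable and reduces the whole equivalence to the triviality of the cohomology class of $\Delta\K$.
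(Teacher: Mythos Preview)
Your proposal is correct and follows essentially the same route as the paper's proof: the paper cites Corollaries~\ref{cor:FlatP0Sub} and~\ref{cor:APAbyPM} together with the transition rules \eqref{EcPT1prima}--\eqref{EcPT3prima} and \eqref{eq:TripMod1}--\eqref{eq:TripMod3}, and your argument simply unpacks this by invoking Theorem~\ref{teo:Kequivalence} directly (of which Corollary~\ref{cor:FlatP0Sub} is the $\K'=0$ special case) and making explicit that a gauge transformation of the form \eqref{eq:GaugePhi10} fixes $[\,,\,]_{1}$ and $\D$ while acting only on $\K$.
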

The proof follows from Corollaries \ref{cor:FlatP0Sub} and \ref{cor:APAbyPM} and transitions rules \eqref{EcPT1prima}--\eqref{EcPT3prima} and \eqref{eq:TripMod1}--\eqref{eq:TripMod3}.

\begin{examplex}
Let \ec{\PP_{0} = (P_0,\cdot,\{\,,\,\}_{0})} be a Poisson $R$--algebra. Consider the admissible Poisson algebra \ec{\PP=(P=P_{0} \ltimes P_{1}, \{\,,\,\}_{k_{0}})}, where \ec{P_1=\mathrm{End}_{R}(P_{0})} is the \ec{P_{0}}--module of $R$--endomorphisms of \ec{P_{0}} and, for a fixed \ec{k_{0} \notin \mathrm{Casim}(\PP_{0})}, the admissible Poisson structure is given by
    \begin{multline*}
        \{f\oplus T,g\oplus S\}_{k_{0}} = \{f,g\}_{0} \oplus \left( \mathrm{ad}^{0}_f \circ S - \mathrm{ad}^{0}_g \circ T - \left( f\,\mathrm{ad}^{0}_{g} - g\,\mathrm{ad}^{0}_{f}\right)\circ \mathrm{ad}^{0}_{k_{0}} \right.\\
         \left. + \{f,g\}_{0} \cdot \mathrm{ad}^{0}_{k_{0}} + \{f,k_{0}\}_{0} \cdot \mathrm{ad}^{0}_{g} - \{g,k_{0}\}_{0} \cdot \mathrm{ad}^{0}_{f} \right), \quad f \oplus T, g \oplus S \in P,
    \end{multline*}
with \ec{\mathrm{ad}^{0}_{h} = \{h,\cdot\}_0}. In this case, the \ec{P_{0}}--linear Lie bracket \eqref{EcLieBraP1} on \ec{P_{1}} induced by $\PP$ is abelian. But, by Corollary \ref{cor:APAbyPM}, the Poisson algebra $\PP$ is not associated to a Poisson module over $\PP_{0}$ since
    \begin{equation*}
        \K^{\PP}(\dd{f},\dd{g}) = \{f,g\}_{0}\,\mathrm{ad}^{0}_{k_{0}} + \{f,k_{0}\}_{0}\,\mathrm{ad}^{0}_{g} - \{g,k_{0}\}_{0}\,\mathrm{ad}^{0}_{f} - \left( f\,\mathrm{ad}^{0}_{g} -  g\,\mathrm{ad}^{0}_{f}\right)\circ \mathrm{ad}^{0}_{k_{0}}, \quad f,g \in P_{0}.
    \end{equation*}
However, $\PP$ is gauge--isomorphic to the admissible Poisson algebra \ec{(P,\{\,,\,\}_{\lambda})} defined by \eqref{eq:APSLambda}, which is induced by the Poisson module \ec{(P_{1},\lambda)} over \ec{P_{0}} defined by \eqref{exm:LambdaEnd}, by means of the $\mu$--gauge transformation \ec{\phi_{\mu}:P \to P} given by \ec{\phi_{\mu}(f \oplus T) = f \oplus ( T - \mathrm{ad}^{0}_{k_{0}f})}, with \ec{\mu(\dd{f}) = -\mathrm{ad}^{0}_{k_{0}f}}, for all \ec{f \oplus T, g \oplus S \in P}.
\end{examplex}

\paragraph{Cochain Complex.} Let \ec{(P_{1},\lambda)} be a Poisson module  over the given Poisson algebra $\PP_{0}=(P_{0},\cdot,\{\,,\,\}_{0})$. Consider the admissible Poisson algebra \ec{\PP_{\lambda}=(P,\{\,,\,\}_{\lambda})} associated to \ec{(P_{1},\lambda)}.

Since the Poisson triple of $P$ corresponding  to $\PP_{\lambda}$ is of the form \eqref{EcModuleTriple}, by Corollary \ref{cor:Partialisd}, the cochain complex \eqref{eq:ComplexBar} induced by $\PP_{\lambda}$ just coincide with the cochain complex
    \begin{equation}\label{eq:XbarCochain}
        \big( \X{}^{\ast} := \oplus_{k}\ \X{R}^{k} \equiv \X{R}^{k}(P_{0}; P_{1}),\, \dd_{\lambda} \big)
    \end{equation}
associated to the Poisson module \ec{(P_{1},\lambda)} with coboundary operator given by \cite{Car03,Zhu19}
    \begin{equation*}
        \fiteq{(\dd_{\lambda}Q)(f_{0},\ldots,f_{k}) := \sum_{i=0}^{k}(-1)^{i}\,\lambda\big( f_{i}, Q(f_{0}, \ldots, \widehat{f}_{i},\ldots,f_{k}) \big) + \sum_{0 \leq i < j \leq k} (-1)^{i+j}\, Q\big( \{f_{i},f_{j}\}_{0},f_{0} \ldots, \widehat{f}_{i},\ldots,\widehat{f}_{j}, \ldots f_{k} \big)},
    \end{equation*}
for all \ec{Q \in \X{R}^{k}} and \ec{f_{0},\ldots,f_{k} \in P_{0}}.

Denote by \ec{\mathrm{H}_{\lambda}^{1}} the first cohomology group of the canonical cochain complex \eqref{eq:XbarCochain}. By Theorem \ref{teo:SplitDeltaK} and Corollary \ref{cor:APAbyPM}, we deduce a description of the first Poisson cohomology of admissible Poisson algebras induced by Poisson modules.

\begin{corollary}\label{cor:H1Lambda}
For the admissible Poisson algebra $\PP_{\lambda}$, we have
    \begin{equation*}
        \mathscr{H}^{1}\big( \PP_{\lambda} \big) \,\simeq\,
        \mathrm{H}_{\lambda}^{1} \oplus
        \frac{\mathfrak{M}_{\lambda}}{\mathscr{C}_{\lambda}} \oplus
        \mathscr{E}_{\lambda}.
    \end{equation*}
Here, $\mathfrak{M}_{\lambda}$ is the Lie algebra of all generalized $\ell$--derivations $\mathscr{L}$ of $P_{1}$ satisfying
    \begin{equation*}
        \ell \in \mathrm{Poiss}(\PP_{0}) \quad \text{and} \quad \pmb{[}\lambda(f, \cdot), \mathscr{L}\pmb{]} = -\lambda\big( \ell(f),\cdot \big), \quad f \in P_{0};
    \end{equation*}
$\mathscr{C}_{\lambda}$ is the $R$--submodule of generalized derivations of $P_{1}$ induced by $\lambda$,
        \begin{equation*}
            \mathscr{C}_{\lambda} := \{\lambda(f,\cdot) \mid f \in P_{0}\} \subseteq \mathfrak{M}_{\lambda};
        \end{equation*}
and $\mathscr{E}_{\lambda}$ is the $R$--module of all $R$--linear mapping \ec{T \in \varepsilon(P_{1};P_{0})}, defined in \eqref{epsilon}, such that
    \begin{equation*}
        T\big( \lambda( f, \eta) \big) = \{f, T\eta\}_{0} \quad \text{and} \quad \lambda( T\eta, \xi) = \lambda( T\xi,\eta), \quad f \in P_{0}, \eta,\xi \in P_{1}.
    \end{equation*}
\end{corollary}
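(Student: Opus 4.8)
The plan is to obtain Corollary~\ref{cor:H1Lambda} as a direct specialization of Theorem~\ref{teo:SplitDeltaK}. First I would invoke Corollary~\ref{cor:APAbyPM}: the Poisson triple of $P$ corresponding to $\PP_{\lambda}$ is $\big([\,,\,]^{\lambda}_{1}=0,\,\D^{\lambda},\,\K^{\lambda}=0\big)$ with $\D^{\lambda}_{\dd f}\eta=\lambda(f,\eta)$. In particular $\PP_{\lambda}$ falls under the abelian case treated in Section~\ref{sec:FirstCohomology}, and since $\K^{\lambda}=0$ we have $\Delta\K^{\lambda}=0$, so its $\dd_{\D^{\lambda}}$-cohomology class is (trivially) zero and the hypothesis of Theorem~\ref{teo:SplitDeltaK} is automatically satisfied. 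Applying that theorem gives
\begin{equation*}
    \mathscr{H}^{1}\big(\PP_{\lambda}\big)\;\simeq\;
    \mathrm{H}_{\dd_{\D^{\lambda}}}^{1}\big(\X{}^{\ast}\big)\oplus
    \frac{\widetilde{\mathfrak{M}}(\PP_{\lambda})}{\mathscr{C}(\PP_{\lambda})}\oplus
    \mathscr{E}(\PP_{\lambda}).
\end{equation*}

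It then remains to translate the three summands into the language of $\lambda$. Since $\D^{\lambda}_{\dd f}\eta=\lambda(f,\eta)$, the coboundary operator $\dd_{\D^{\lambda}}$ of \eqref{eq:dDbar} is exactly $\dd_{\lambda}$, so the complex $(\X{}^{\ast},\dd_{\D^{\lambda}})$ coincides with \eqref{eq:XbarCochain} and $\mathrm{H}_{\dd_{\D^{\lambda}}}^{1}(\X{}^{\ast})=\mathrm{H}_{\lambda}^{1}$; this is Corollary~\ref{cor:Partialisd} read for the triple \eqref{EcModuleTriple}. Next, substituting $\D^{\lambda}_{\dd f}=\lambda(f,\cdot)$ and $\D^{\lambda}_{\dd\ell(f)}=\lambda(\ell(f),\cdot)$ into the definitions of $\widetilde{\mathfrak{M}}(\PP_{\lambda})$, $\mathscr{C}(\PP_{\lambda})$ and $\mathscr{E}(\PP_{\lambda})$ turns their defining conditions into, respectively: $\ell\in\mathrm{Poiss}(\PP_{0})$ together with $\pmb{[}\lambda(f,\cdot),\mathscr{L}\pmb{]}=-\lambda(\ell(f),\cdot)$; the set $\{\lambda(f,\cdot)\mid f\in P_{0}\}$; and $T\in\varepsilon(P_{1};P_{0})$ with $T(\lambda(f,\eta))=\{f,T\eta\}_{0}$ and $\lambda(T\eta,\xi)=\lambda(T\xi,\eta)$. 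These are precisely $\mathfrak{M}_{\lambda}$, $\mathscr{C}_{\lambda}$ and $\mathscr{E}_{\lambda}$, whence the asserted isomorphism.

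I expect no genuine obstacle here: the content is entirely the verification that, under the dictionary $\D^{\lambda}_{\dd f}=\lambda(f,\cdot)$, the objects $\widetilde{\mathfrak{M}}$, $\mathscr{C}$, $\mathscr{E}$ and the complex $(\X{}^{\ast},\dd_{\D})$ attached to a general abelian-type admissible Poisson algebra specialize verbatim to $\mathfrak{M}_{\lambda}$, $\mathscr{C}_{\lambda}$, $\mathscr{E}_{\lambda}$ and $(\X{}^{\ast},\dd_{\lambda})$. The one mildly delicate point is that $\dd_{\lambda}$ must be a genuine coboundary operator for the final complex to make sense, and this rests on the curvature identity \eqref{EcLambdaCurv} for the Poisson module (equivalently, on the flatness of $\D^{\lambda}$); everything else is a routine rewriting of Theorem~\ref{teo:SplitDeltaK}.
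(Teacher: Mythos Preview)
Your proposal is correct and follows essentially the same route as the paper: the corollary is stated there as an immediate consequence of Theorem~\ref{teo:SplitDeltaK} and Corollary~\ref{cor:APAbyPM}, with the identification of the cochain complex via Corollary~\ref{cor:Partialisd} made explicit in the paragraph preceding the statement. Your added remark that the well-definedness of $\dd_{\lambda}$ rests on the flatness of $\D^{\lambda}$ (i.e.\ on \eqref{EcLambdaCurv}) is a helpful clarification but not something the paper spells out at this point.
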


\begin{remark}
This corollary recovers the result of \cite[Theorem 4.1]{Zhu19}, in which the quotient \ec{{\mathfrak{M}_{\lambda}}/{\mathscr{C}_{\lambda}}} is denoted by \ec{\mathrm{hp}^{1}(P)} and is called the restricted first Poisson cohomology group of the trivial extension $P$.
\end{remark}

\paragraph{Deformations.} Now, starting from a Poisson module, we construct admissible Poisson algebras by using a special class of deformations.

From Theorems \ref{teo:NewAdmissible} and \ref{teo:PisoPlambda} we deduce that every 2--cocycle of the cochain complex \eqref{eq:XbarCochain} induces an \emph{exact (infinitesimal) deformation} of the admissible Poisson algebra $\PP_{\lambda}$. Moreover, the deformation is trivial if the class of the corresponding 2--cocycle is zero.

\begin{theorem}
Every 2--cocycle \ec{\mathscr{C} \in \X{R}^{2}} of the canonical cochain complex of \ec{(P_{1},\lambda)} induces a $t$--parameterized family \ec{\PP^{t}_{\lambda}=(P,\{\,,\,\}^{t}_{\lambda})} of admissible Poisson algebras defined by
    \begin{equation*}
        \{f \oplus \eta, g \oplus \xi\}^{t} := \{f,g\}_0 \oplus \big(\lambda(f, \xi) - \lambda(g, \eta) + (t\mathscr{C})(f,g) \big), \quad t \in R, \\
    \end{equation*}
with \ec{f \oplus \eta, g \oplus \xi \in P}. In particular, we have \ec{\PP_{\lambda} = \PP^{t=0}_{\lambda}}. Moreover, if the cohomology class of \ec{\mathscr{C}} is trivial, then $\PP^{t}_{\lambda}$ and $\PP_{\lambda}$ are Poisson isomorphic for all $t$.
\end{theorem}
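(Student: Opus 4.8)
The statement is essentially a direct application of Theorem~\ref{teo:NewAdmissible} (applied to the admissible Poisson algebra $\PP_{\lambda}$) combined with the identification of the cochain complex of $\PP_{\lambda}$ afforded by Corollary~\ref{cor:APAbyPM} and Corollary~\ref{cor:Partialisd}. First I would recall that, by Corollary~\ref{cor:APAbyPM}, the Poisson triple of $P$ corresponding to $\PP_{\lambda}$ is $([\,,\,]^{\lambda}_{1}=0,\D^{\lambda},\K^{\lambda}=0)$, with $\D^{\lambda}$ flat and given by $\D^{\lambda}_{\dd f}\eta=\lambda(f,\eta)$. Since $[\,,\,]^{\lambda}_{1}=0$, Corollary~\ref{cor:Partialisd} gives that the cochain complex $(\Gamma^{\ast}_{\PP_{\lambda}},\partial_{\D})$ in \eqref{eq:ComplexBar} coincides with $(\X{}^{\ast},\dd_{\D^{\lambda}})$, which is exactly the canonical cochain complex \eqref{eq:XbarCochain} of the Poisson module, i.e.\ $\dd_{\D^{\lambda}}=\dd_{\lambda}$. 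In particular, a $2$--cocycle $\mathscr{C}\in\X{R}^{2}$ of \eqref{eq:XbarCochain} lies in $\overline{\Gamma}^{2}_{P_{0}}$ up to the injection $\Delta$ of \eqref{eq:Delta}; more directly, since $Z_{\PP_{\lambda}}(P_{1})=P_{1}$, we may regard $\mathscr{C}$ itself (as a skew-symmetric $R$--multilinear derivation $P_{0}\times P_{0}\to P_{1}$) as a $2$--cocycle of $(\Gamma^{\ast}_{\PP_{\lambda}},\partial_{\D})$.

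**Main steps.** With this identification in hand, the bracket in the statement is obtained from \eqref{eq:tFamily} of Theorem~\ref{teo:NewAdmissible}: writing out $\{f\oplus\eta,g\oplus\xi\}_{\lambda}+0\oplus(t\mathscr{C})(\dd f,\dd g)$ using \eqref{EcPMGPABracket} and the fact that $\mathscr{C}(\dd f,\dd g)$ is just $\mathscr{C}(f,g)$ under $\Delta$, one gets precisely $\{f,g\}_{0}\oplus(\lambda(f,\xi)-\lambda(g,\eta)+(t\mathscr{C})(f,g))$. Theorem~\ref{teo:NewAdmissible} then immediately yields that each $\{\,,\,\}^{t}$ is an admissible Poisson structure on $P$, that $\PP_{\lambda}=\PP^{t=0}_{\lambda}$, and the corresponding Poisson triple of $P$ is $(0,\D^{\lambda},\K^{\lambda}+t\mathscr{C})=(0,\D^{\lambda},t\mathscr{C})$. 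For the last assertion, Theorem~\ref{teo:NewAdmissible} states that there is a $t$--parameterized family of $\mu$--gauge transformations giving Poisson isomorphisms $\PP^{t}_{\lambda}\cong\PP_{\lambda}$ if and only if the $\overline{\partial}_{\D}$--cohomology class of $\mathscr{C}$ is trivial; since here the complex $(\overline{\Gamma}^{\ast}_{\PP_{\lambda}},\overline{\partial}_{\D})$ agrees with $(\X{}^{\ast},\dd_{\lambda})$ by Corollary~\ref{cor:Partialisd}, triviality of the cohomology class of $\mathscr{C}$ in $\mathrm{H}^{2}_{\dd_{\lambda}}$ is exactly the hypothesis, and the isomorphisms follow. (One could alternatively invoke Theorem~\ref{teo:PisoPlambda} to see directly that $\PP^{t}_{\lambda}$ is gauge-isomorphic to a Poisson-module APA, since its triple has abelian bracket and flat $\D$, but the cleaner route is the direct appeal to Theorem~\ref{teo:NewAdmissible}.)

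**Expected obstacle.** There is essentially no obstacle: the only point requiring a little care is the bookkeeping identifying the three a priori different cochain complexes—$(\X{}^{\ast},\dd_{\lambda})$ from the Poisson-module literature, $(\Gamma^{\ast}_{\PP_{\lambda}},\partial_{\D})$ from \eqref{eq:ComplexBar}, and $(\overline{\Gamma}^{\ast}_{\PP_{\lambda}},\overline{\partial}_{\D})$ from \eqref{eq:Complex}—and checking that under $\Delta$ a $2$--cocycle of one corresponds to a $2$--cocycle of the others. This is precisely the content of Corollary~\ref{cor:Partialisd} together with Proposition~\ref{prop:DeltaAst} and the flatness of $\D^{\lambda}$, so the verification is routine. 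Thus the proof is a two-line reduction: apply Theorem~\ref{teo:NewAdmissible} to $\PP_{\lambda}$ and translate its statement through Corollaries~\ref{cor:APAbyPM} and~\ref{cor:Partialisd}.
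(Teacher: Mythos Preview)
Your proposal is correct and follows essentially the same approach as the paper: the paper's proof is a one-sentence appeal to Theorems~\ref{teo:NewAdmissible} and~\ref{teo:PisoPlambda}, and you correctly identify that the reduction goes through Corollaries~\ref{cor:APAbyPM} and~\ref{cor:Partialisd} to match the cochain complex of $\PP_{\lambda}$ with the canonical Poisson-module complex \eqref{eq:XbarCochain}. Your remark that Theorem~\ref{teo:PisoPlambda} provides an alternative route to the isomorphism claim is exactly the second ingredient the paper invokes.
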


In particular, we have that every $R$--linear mapping \ec{c: P_{0} \to P_{1}} defines an exact trivial deformation of $\PP_{\lambda}$ given by
    \begin{equation*}
        \{f \oplus \eta, g \oplus \xi\}^{t}_{\lambda,c} = \{f,g\}_{0} \oplus \big(\, \lambda\big( f, \xi + t c(g) \big) - \lambda\big(g, \eta + t c(f)\big) - t c(\{f,g\}_{0}) \big).
    \end{equation*}

    \section{Poisson Algebras of Poisson Submanifolds} \label{sec:PoissonSub}

In this section, following the results of \cite{RuGaVo20}, we show that the commutative algebra of fiberwise affine functions on the normal bundle of an embedded Poisson submanifold carries an admissible Poisson structure determining the so--called \emph{infinitesimal Poisson algebra} of the submanifold. As a consequence of the general results of Section \ref{sec:FirstCohomology}, we derive some splitting properties of the first cohomology of these Poisson algebras.

To formulate a geometric version of our algebraic approach, we start with an arbitrary vector bundle.

\paragraph{The Trivial Extension Algebra \ec{\boldsymbol{\Cinf{\mathrm{aff}}(E)}}.} Let \ec{E \overset{\pi}{\rightarrow} S} be a vector bundle over a smooth manifold $S$. Consider the $\Cinf{S}$--module of \emph{fiberwise affine} $\Cinf{}$--\emph{functions} on $E$,
    \begin{equation*}
        \Cinf{\mathrm{aff}}(E) := \pi^{\ast}\Cinf{S} \oplus \Cinf{\mathrm{lin}}(E),
    \end{equation*}
where \ec{\Cinf{\mathrm{lin}}(E)} is the $\Cinf{S}$--module of fiberwise linear functions on $E$.

It is clear that the module \ec{\Cinf{\mathrm{lin}}(E)} is isomorphic to the $\Cinf{S}$--module \ec{\Gamma E^{\ast}} of smooth sections of the dual bundle $E^{\ast}$ over $S$. So, we have
    \begin{equation*}
        \Cinf{\mathrm{aff}}(E) \simeq \Cinf{S} \oplus \Gamma{E^{\ast}}.
    \end{equation*}
Consequently, \ec{\Cinf{\mathrm{aff}}(E)} is a commutative algebra isomorphic to the \emph{trivial extension algebra} of the commutative algebra \ec{P_{0}=\Cinf{S}} by the $P_{0}$--module \ec{P_{1} = \Gamma{E^{\ast}}} with multiplication given by
    \begin{equation}\label{eq:TEAaff}
        (f \oplus \eta) \cdot (g \oplus \xi) = fg \oplus (f\xi + g\eta),
    \end{equation}
for all \ec{f,g \in \Cinf{S}} and \ec{\eta,\xi \in \Gamma{E^{\ast}}}.

Observe that the multiplication \eqref{eq:TEAaff} can be also introduced in terms of the linearization procedure on the total space $E$ at the zero section \ec{S \hookrightarrow E} as follows: let
    \begin{equation}\label{eq:Aff}
        \mathrm{Aff}: \Cinf{E} \to \Cinf{\mathrm{aff}}(E)
    \end{equation}
be the \emph{linearization mapping} \cite{RuGaVo20} which assigns to each smooth function $F$ on $E$ its affine part \ec{\mathrm{Aff}(F)} in the Taylor expansion of $F$ at the points of $S$. Then, for any \ec{\varphi_{1},\varphi_{2} \in \Cinf{\mathrm{aff}}(E)} the formula \eqref{eq:TEAaff} reads \ec{\varphi_{1} \cdot \varphi_{2} = \mathrm{Aff}(\varphi_{1} \varphi_{2})}.

\paragraph{Derivations of the Trivial Extension Algebra \ec{\boldsymbol{\Cinf{\mathrm{aff}}(E)}}.} Let \ec{Z \in \Gamma\,\T{E}} be a vector field on $E$ which is tangent to the zero section \ec{S \hookrightarrow E}. Then, one can associate to $Z$ a natural derivation \ec{Z^{(2)} \in \mathrm{Der}_{1}(\Cinf{\mathrm{Aff}})} of the trivial extension algebra \ec{\Cinf{\mathrm{aff}}(E)} given by
    \begin{equation}\label{Aff2}
        Z^{(2)}\varphi := \mathrm{Aff}\big( \mathrm{L}_{Z}\varphi \big), \quad \varphi \in \Cinf{\mathrm{aff}}(E).
    \end{equation}
It is clear that \ec{Z_{01}^{(2)}=0} and hence, by Lemma \ref{lemma:Xpreserving}, the derivation $Z^{(2)}$ preserves the $\Cinf{S}$--module $\Cinf{\mathrm{lin}}(E)$. Moreover, the components \ec{Z_{00}^{(2)}} and \ec{Z_{11}^{(2)}}
are uniquely determined by the Lie derivative \ec{\mathrm{L}_{\mathrm{var}_{S}Z}} along the fiberwise linear vector field \ec{\mathrm{var}_{S}Z}, which is called the \emph{first variation of $Z$ at $S$} \cite{RuGaVo20}. In general, the component \ec{Z_{10}^{(2)}: \Cinf{S} \to \Cinf{\mathrm{lin}}(E)} is non--trivial.

Taking into account Lemma \ref{lemma:Xpreserving}, we have the following simple but important criteria \cite{RuGaVo20}.

\begin{lemma}\label{lem:Z2}
For the derivation \ec{Z^{(2)}} in \eqref{Aff2}, the following assertions are equivalent:
    \begin{enumerate}[label=(\alph*)]
        \item The component $Z_{10}^{(2)}$ vanishes, \ec{Z_{10}^{(2)} = 0}.
        \item The derivation $Z^{(2)}$ is the restriction of the Lie derivative along a linear vector field on $E$.
        \item The canonical splitting \ec{\T_{S}E = \T{S} \oplus E} is invariant with respect to the differential of the flow of $Z$.
    \end{enumerate}
\end{lemma}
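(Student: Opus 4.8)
The plan is to study the one-parameter group $\Phi_{t}:=\mathrm{Aff}\circ\phi_{t}^{\ast}|_{\Cinf{\mathrm{aff}}(E)}$ induced on the trivial extension algebra by the (local) flow $\phi_{t}$ of $Z$, whose infinitesimal generator is exactly $Z^{(2)}$, and to read off each of (a)--(c) from the block form of $\Phi_{t}$ relative to the decomposition $\Cinf{\mathrm{aff}}(E)\simeq\pi^{\ast}\Cinf{S}\oplus\Cinf{\mathrm{lin}}(E)$. First I would check that $\mathrm{Aff}\circ\phi_{t}^{\ast}\circ\mathrm{Aff}=\mathrm{Aff}\circ\phi_{t}^{\ast}$ on $\Cinf{\mathrm{aff}}(E)$ --- which holds because $\phi_{t}$ fixes $S$ and its zero section, so the $1$-jet of $\phi_{t}^{\ast}\varphi$ along $S$ depends only on the $1$-jet of $\varphi$ there --- so that $\Phi_{t}$ is well defined and $\Phi_{t+s}=\Phi_{t}\circ\Phi_{s}$. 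Since $\phi_{t}$ preserves $S$ and the zero section, $Z_{01}^{(2)}=0$ and, by Lemma \ref{lemma:Xpreserving}, $\Phi_{t}$ is lower triangular, $\Phi_{t}=\left(\begin{smallmatrix}(\phi_{t}|_{S})^{\ast}&0\\ b_{t}&c_{t}\end{smallmatrix}\right)$ with $b_{t}\colon\Cinf{S}\to\Cinf{\mathrm{lin}}(E)$; a short computation identifies $b_{t}$ with the ``shear'' block of $\mathrm{d}\phi_{t}$ acting on $\T_{S}E=\T S\oplus E$ (the component $E\to\T S$ in the canonical splitting), so that $b_{t}=0$ precisely when $\mathrm{d}\phi_{t}$ preserves this splitting, while $\tfrac{\mathrm{d}}{\mathrm{d}t}\big|_{0}b_{t}=Z_{10}^{(2)}$.

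With this dictionary, (a)$\Leftrightarrow$(c) follows from the cocycle identity: comparing block forms in $\Phi_{t+s}=\Phi_{t}\circ\Phi_{s}$ gives $b_{t+s}=b_{t}\circ(\phi_{s}|_{S})^{\ast}+c_{t}\circ b_{s}$, and differentiating at $s=0$ yields the linear ordinary differential equation $\dot b_{t}=b_{t}\circ\dlie{Z|_{S}}+c_{t}\circ Z_{10}^{(2)}$ with $b_{0}=0$. Hence $Z_{10}^{(2)}=0$ forces $b_{t}\equiv 0$, i.e.\ $\mathrm{d}\phi_{t}$ preserves the canonical splitting for all $t$; conversely $b_{t}\equiv 0$ gives $\dot b_{0}=Z_{10}^{(2)}=0$.

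For (a)$\Leftrightarrow$(b), recall that a vector field $W$ on $E$ is linear iff its flow consists of vector bundle automorphisms, iff $\dlie{W}$ preserves both $\pi^{\ast}\Cinf{S}$ and $\Cinf{\mathrm{lin}}(E)$; consequently $\dlie{W}|_{\Cinf{\mathrm{aff}}(E)}$ is block diagonal, which gives (b)$\Rightarrow$(a). For the converse I would invoke the properties of the first variation from \cite{RuGaVo20}: $\mathrm{var}_{S}Z$ is a linear vector field having the same $1$-jet along $S$ as $Z$, and $\dlie{\mathrm{var}_{S}Z}|_{\Cinf{\mathrm{aff}}(E)}$ is precisely the diagonal derivation with entries $Z_{00}^{(2)}$ and $Z_{11}^{(2)}$; thus if $Z_{10}^{(2)}=0$ then $Z^{(2)}=\dlie{\mathrm{var}_{S}Z}|_{\Cinf{\mathrm{aff}}(E)}$, which is (b) with $W=\mathrm{var}_{S}Z$. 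Alternatively, (c)$\Rightarrow$(b) directly, since preservation of the splitting makes $\mathrm{d}\phi_{t}|_{\T_{S}E}$ coincide with the differential of a flow by bundle automorphisms, namely that of $\mathrm{var}_{S}Z$.

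The main obstacle is the bookkeeping in the first step: giving a clean, coordinate-free proof of the dictionary relating the algebraic component $Z_{10}^{(2)}$, the off-diagonal block $b_{t}$ of $\Phi_{t}$, and the shear of $\mathrm{d}\phi_{t}$ on $\T_{S}E$ --- and, with it, upgrading the infinitesimal condition $Z_{10}^{(2)}=0$ to the finite invariance of the canonical splitting for every $t$; this is exactly where the one-parameter group structure (the cocycle identity and the resulting linear ODE in $t$) is indispensable. Everything else is routine, given Lemma \ref{lemma:Xpreserving} and the characterizations of linear vector fields and of $\mathrm{var}_{S}Z$ recalled in \cite{RuGaVo20}.
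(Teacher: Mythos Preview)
The paper does not prove this lemma; it is stated with a citation to \cite{RuGaVo20} as a known ``simple but important'' criterion, so there is no in-paper proof to compare against. Your argument is correct and supplies exactly the missing details.

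Two small points of polish. First, the identity $\mathrm{Aff}\circ\phi_t^\ast\circ\mathrm{Aff}=\mathrm{Aff}\circ\phi_t^\ast$ must hold on all of $\Cinf{E}$ (or at least on the image of $\phi_s^\ast|_{\Cinf{\mathrm{aff}}(E)}$), not merely on $\Cinf{\mathrm{aff}}(E)$, where it is vacuous since $\mathrm{Aff}$ is the identity there; your justification via $1$-jets is the right one for this stronger statement and is what actually yields $\Phi_{t+s}=\Phi_t\circ\Phi_s$. Second, when you appeal to uniqueness for the linear ODE $\dot b_t=b_t\circ\dlie{Z|_S}$ with $b_0=0$, it is worth noting that the dictionary you set up makes this finite-dimensional at each point: $b_t$ is determined by the bundle map $B_t\colon E_p\to\T_{\phi_t(p)}S$, which satisfies the variational equation $\dot B_t=\alpha_{\phi_t(p)}\,B_t+\beta_{\phi_t(p)}\,C_t$ (with $\alpha,\beta$ the $\T S\!\to\!\T S$ and $E\!\to\!\T S$ blocks of $DZ|_S$, and $\beta=0$ exactly when $Z_{10}^{(2)}=0$). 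This makes the uniqueness step routine and completes the passage from the infinitesimal condition to the finite invariance in (c). Your handling of (a)$\Leftrightarrow$(b) via $\mathrm{var}_SZ$ is clean and matches the description the paper gives just before the lemma.
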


So, the component \ec{Z_{10}^{(2)}} measures the deviation of \ec{Z^{(2)}} from the property to be the restriction to \ec{\Cinf{\mathrm{aff}}(E)} of a derivation of the commutative algebra \ec{\Cinf{E}}.

\paragraph{Admissible Poisson Structures on \ec{\boldsymbol{\Cinf{\mathrm{aff}}(E)}}.} Now, suppose that the base $S$ is equipped with a Poisson bivector field $\psi$. Recall that a contravariant connection \cite{Vais91} on \ec{E^{\ast}} consists of $\R{}$--linear operators \ec{\D_{\alpha}:\Gamma{E^{\ast}} \to \Gamma{E^{\ast}}} which are \ec{\Cinf{S}}--linear in \ec{\alpha \in \Gamma\,\T^{\ast}S} and satisfy the Leibniz type rule
    \begin{equation*}
        \D_{\alpha}(f\eta) = f\D_{\alpha}\eta + \big(\dlie{\psi^{\natural}\alpha}f\big)\eta,
    \end{equation*}
for all \ec{f \in \Cinf{S}} and \ec{\eta \in \Gamma{E^{\ast}}}. The curvature \ec{\mathrm{Curv}^{\D}} of $\D$ is defined as
    \begin{equation*}
        \mathrm{Curv}^{\D}(\alpha,\beta) := \D_{\alpha}\D_{\beta} - \D_{\beta}\D_{\alpha} - \D_{[\alpha,\beta]_{\T^{\ast}S}},
    \end{equation*}
for all \ec{\alpha, \beta \in \Gamma\,\T^{\ast}S}. Here, \ec{[\,,\,]_{\T^{\ast}S}} denotes the Lie bracket for 1--forms on the Poisson manifold \ec{(S,\psi)} \cite{Duf05}.

Let \ec{\PP_{0} = (\Cinf{S}, \{\,,\,\}_{\psi})} be the Poisson algebra of \ec{(S,\psi)}. Suppose we are given a Poisson triple \ec{([\,,\,]_{\mathrm{fib}},\D,\K)} of the trivial extension algebra \ec{\Cinf{\mathrm{aff}}(E)}, that is,
    \begin{itemize}
        \item a fiberwise Lie algebra structure \ec{[\,,\,]_{\mathrm{fib}}} on \ec{E^{\ast}};
        \item a contravariant connection $\D$ on \ec{E^{\ast}};
        \item a \ec{E^{\ast}}--valuated bivector field \ec{\K: \Gamma\,\T^{\ast}S \times \Gamma\,\T^{\ast}S \to \Gamma{E^{\ast}}} on $S$;
    \end{itemize}
satisfying the following conditions \cite{RuGaVo20}
    \begin{equation*}
        \pmb{[}\D_{\alpha},\mathrm{ad}_{\eta}\pmb{]} = \mathrm{ad}_{\D_{\alpha}\eta},
            \qquad
        \mathrm{Curv}^{\D}(\alpha,\beta) = \mathrm{ad}_{\K(\alpha,\beta)},
            \qquad
        \underset{(\alpha,\beta,\delta)}{\mathfrak{S}} \D_{\alpha}\,\K\big(\beta,\delta\big) + \K\big(\alpha,[\beta,\delta]_{\T^{\ast}S}\big) = 0,
    \end{equation*}
for all \ec{\alpha,\beta,\delta \in \Gamma\,\T^{\ast}S} and \ec{\eta \in \Gamma{E^{\ast}}}. Here, \ec{\mathrm{ad}_{\eta} := [\eta,\,]_{\mathrm{fib}}}.

Applying Theorem \ref{teo:correspondenceGPA-PT-LA} to the commutative algebra \ec{P = \Cinf{\mathrm{aff}}(E)}, we get an admissible Poisson structure on \ec{\Cinf{\mathrm{aff}}(E)} defined by (Lemma \ref{LemaTripleToAlg})
    \begin{equation}\label{EcBracketAff}
        \{f \oplus \eta, g \oplus \xi\}^{\mathrm{aff}} := \psi(\dd{f},\dd{g}) \oplus \big( \D_{\dd{f}}\xi - \D_{\dd{g}}\eta + [\eta,\xi]_
        {\mathrm{fib}}+ \K(\dd{f},\dd{g}) \big),
    \end{equation}
for all \ec{f,g \in \Cinf{S}} and \ec{\eta,\xi \in \Gamma{E^{\ast}}}. This yields the following:

\begin{proposition}\label{prop:APAGeom}
Suppose that there exists a Poisson triple associated to \ec{E \overset{\pi}{\rightarrow} (S,\psi)}. Then, the commutative algebra of fiberwise affine functions on $E$ admits an admissible Poisson structure defined by the formula \eqref{EcBracketAff}.
\end{proposition}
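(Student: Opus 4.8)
The plan is to deduce Proposition~\ref{prop:APAGeom} directly from Theorem~\ref{teo:correspondenceGPA-PT-LA}, and more precisely from Lemma~\ref{LemaTripleToAlg}, applied to the commutative algebra $P = \Cinf{\mathrm{aff}}(E)$. First I would fix the algebraic data: take $\PP_{0} = (\Cinf{S}, \{\,,\,\}_{\psi})$, the Poisson algebra of $(S,\psi)$ with $\{f,g\}_{\psi} = \psi(\dd{f},\dd{g})$, whose associated Lie algebroid (Definition~\ref{def:algebroidpoisson}) is the cotangent Lie algebroid $(\Omega^{1}_{\Cinf{S}} = \Gamma\,\T^{\ast}S,\, \cSch{\,,\,}_{\PP_{0}} = [\,,\,]_{\T^{\ast}S},\, \varrho_{\PP_{0}} = \psi^{\natural})$, with $\Omega^{1}_{\Cinf{S}}$ spanned over $\Cinf{S}$ by the exact one-forms $\dd{f}$ (this being the identification already used in the cotangent Lie algebroid example). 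Take $P_{1} = \Gamma{E^{\ast}}$; assuming $\mathrm{rank}\,E \geq 1$ this is a faithful $\Cinf{S}$-module, and the isomorphism $\Cinf{\mathrm{aff}}(E) \simeq \Cinf{S} \oplus \Gamma{E^{\ast}}$ together with \eqref{eq:TEAaff} identifies $\Cinf{\mathrm{aff}}(E)$ with the trivial extension algebra $P_{0} \ltimes P_{1}$ in the sense of \eqref{EcPproduct}.

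Next I would record the dictionary between the geometric and the algebraic notions of a Poisson triple. Under the above identifications, a fiberwise Lie algebra structure $[\,,\,]_{\mathrm{fib}}$ on $E^{\ast}$ is exactly a $\Cinf{S}$-linear Lie bracket on $P_{1}$; a contravariant connection $\D$ on $E^{\ast}$ over $(S,\psi)$ is exactly a contravariant derivative $\D\colon \Omega^{1}_{\Cinf{S}} \times P_{1} \to P_{1}$ in the algebraic sense, and its curvature coincides with the algebraic curvature \eqref{EcCurvD} on exact one-forms, hence on all of $\Omega^{1}_{\Cinf{S}}$ by $\Cinf{S}$-bilinearity; and an $E^{\ast}$-valued bivector field $\K$ is an element of $\chi^{2}_{\Cinf{S}}(\Omega^{1}_{\Cinf{S}}; P_{1})$. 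Consequently, the three compatibility conditions imposed on the geometric triple $([\,,\,]_{\mathrm{fib}},\D,\K)$ are term by term the conditions \eqref{EcPT1}, \eqref{EcPT2} and \eqref{EcPT3} of Definition~\ref{def:PoissonTrip} (the third being the Bianchi-type identity $\dd_{\D}\K = 0$), so that a geometric Poisson triple associated to $E \to (S,\psi)$ is the same thing as an algebraic Poisson triple of $P = \Cinf{\mathrm{aff}}(E)$.

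Finally I would apply Lemma~\ref{LemaTripleToAlg} (the implication \ref{Correspondence2}$\Rightarrow$\ref{Correspondence1} in Theorem~\ref{teo:correspondenceGPA-PT-LA}): the Poisson triple induces an admissible Poisson structure on $P = \Cinf{\mathrm{aff}}(E)$ given by formula \eqref{EcBracketPT}. Substituting $\{f,g\}_{0} = \psi(\dd{f},\dd{g})$ and $[\,,\,]_{1} = [\,,\,]_{\mathrm{fib}}$ into \eqref{EcBracketPT} yields precisely \eqref{EcBracketAff}, and the admissibility of this structure is exactly the assertion that the bundle projection $\pr_{0}\colon \Cinf{\mathrm{aff}}(E) \to \Cinf{S}$ is a Poisson morphism.

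I do not expect a genuine obstacle here: the statement is the geometric restatement of Lemma~\ref{LemaTripleToAlg}, and the Jacobi and Leibniz identities for \eqref{EcBracketAff} have already been absorbed, respectively, into conditions \eqref{EcPT1}--\eqref{EcPT3} and into the module structure of the data. The one point requiring a moment's care is the soft analytic identification $\Omega^{1}_{\Cinf{S}} = \Gamma\,\T^{\ast}S$ --- that every smooth one-form on $S$ is a $\Cinf{S}$-combination of exact forms, and dually that a contravariant connection defined on all one-forms and a skew map $\K$ defined on all pairs of one-forms transfer compatibly between the geometric and algebraic pictures --- but this is the standard identification underlying the cotangent Lie algebroid and needs no new argument.
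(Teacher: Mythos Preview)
Your proposal is correct and follows exactly the same approach as the paper: the proposition is stated immediately after a sentence that reads ``Applying Theorem~\ref{teo:correspondenceGPA-PT-LA} to the commutative algebra $P = \Cinf{\mathrm{aff}}(E)$, we get an admissible Poisson structure on $\Cinf{\mathrm{aff}}(E)$ defined by (Lemma~\ref{LemaTripleToAlg}) \ldots'', which is precisely your argument. Your write-up is more detailed---spelling out the dictionary between the geometric and algebraic data and flagging the identification $\Omega^{1}_{\Cinf{S}} \simeq \Gamma\,\T^{\ast}S$---but the logical content is identical.
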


\paragraph{Derivations of the APA \ec{\boldsymbol{\Cinf{\mathrm{aff}}(E)}}.} Suppose we start again with a vector bundle \ec{E \overset{\pi}{\rightarrow} S} over $S$ equipped with a Poisson bivector field \ec{\Pi \in \Gamma\wedge^{2}\T E}, and the corresponding Poisson bracket $\{f,g\}_{E}=\Pi(\dd f, \dd g)$. Now, assume that the zero section $S\hookrightarrow E$ is a Poisson submanifold of
$(E,\Pi)$, that is,  $\Pi$ is tangent to $S$ and restricted to a Poisson tensor $\psi$ on $S$.

Consider the Lie algebra \ec{\mathrm{Poiss}_{S}(E,\Pi)} of all Poisson vector fields $Z$ on $E$ which are tangent to $S$, \ec{\dlie{Z}\Pi=0} and \ec{Z_{p} \in \T_{p}S}, for every \ec{p \in S}. If we suppose that there exists a Poisson triple associated to \ec{E \overset{\pi}{\rightarrow} (S,\psi)}, then we have the following:

\begin{lemma}\label{lema:PoissZ2}
For every \ec{Z \in \mathrm{Poiss}_{S}(E,\Pi)}, its linearization \ec{Z^{(2)}} in \eqref{Aff2} is a Poisson derivation of the admissible Poisson algebra \ec{\PP_{\mathrm{aff}}=(\Cinf{\mathrm{aff}}(E), \{\,,\,\}^{\mathrm{aff}})} with \ec{Z_{01}^{(2)}=0},
    \begin{equation*}
        Z^{(2)}\{\phi_{1},\phi_{2}\}^{\mathrm{aff}} = \big\{ Z^{(2)}\phi_{1},\phi_{2} \big\}^{\mathrm{aff}} + \big\{ \phi_{1},Z^{(2)}\phi_{2} \big\}^{\mathrm{aff}}.
    \end{equation*}
Moreover, the linearization mapping $Z\rightarrow Z^{(2)}$ is a Poisson algebra homomorphism.
\end{lemma}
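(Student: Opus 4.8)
The plan is to realize $\PP_{\mathrm{aff}}$ as a Poisson quotient of $(\Cinf{E},\{\,,\,\}_{E})$ and to recognize $Z^{(2)}$ as the derivation induced on that quotient by the Lie derivative $\mathrm{L}_{Z}$. Let $\mathcal{I}_{S}\subset\Cinf{E}$ denote the ideal of functions vanishing along the zero section $S\hookrightarrow E$, so that $\mathcal{I}_{S}^{2}=\ker\mathrm{Aff}$ is the ideal of functions vanishing to second order at $S$ and $\mathrm{Aff}$ induces an isomorphism of commutative algebras $\Cinf{E}/\mathcal{I}_{S}^{2}\simeq\Cinf{\mathrm{aff}}(E)$ which identifies $\Cinf{\mathrm{lin}}(E)$ with $\mathcal{I}_{S}/\mathcal{I}_{S}^{2}$. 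Since $S$ is a Poisson submanifold, $\mathcal{I}_{S}$ is a Poisson ideal of $(\Cinf{E},\{\,,\,\}_{E})$, whence by the Leibniz rule $\{\mathcal{I}_{S}^{2},\Cinf{E}\}_{E}\subseteq\mathcal{I}_{S}\cdot\mathcal{I}_{S}=\mathcal{I}_{S}^{2}$, so $\mathcal{I}_{S}^{2}$ is a Poisson ideal too; thus $\{\,,\,\}_{E}$ descends to the quotient, and by \cite{RuGaVo20} the descended bracket is exactly the admissible bracket $\{\,,\,\}^{\mathrm{aff}}$ of \eqref{EcBracketAff}.

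With this in place I would argue as follows. Because $Z$ is tangent to $S$, $\mathrm{L}_{Z}$ preserves $\mathcal{I}_{S}$ and, being a derivation, preserves $\mathcal{I}_{S}^{2}$ as well; hence $\mathrm{L}_{Z}$ induces an $\R{}$-linear derivation of $\Cinf{E}/\mathcal{I}_{S}^{2}\simeq\Cinf{\mathrm{aff}}(E)$, and unwinding \eqref{Aff2} this induced map is precisely $Z^{(2)}$ (the class of $\varphi$ is sent to the class of $\mathrm{L}_{Z}\varphi$, whose affine representative is $\mathrm{Aff}(\mathrm{L}_{Z}\varphi)$). In particular $Z^{(2)}$ is a derivation of the commutative algebra $\Cinf{\mathrm{aff}}(E)$, and it preserves $P_{1}=\Gamma E^{\ast}$ since $\mathrm{L}_{Z}$ preserves $\mathcal{I}_{S}$; by Lemma \ref{lemma:Xpreserving} this is the assertion $Z^{(2)}_{01}=0$. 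Next, the Poisson condition $\mathrm{L}_{Z}\Pi=0$ is equivalent to $\mathrm{L}_{Z}\{F,G\}_{E}=\{\mathrm{L}_{Z}F,G\}_{E}+\{F,\mathrm{L}_{Z}G\}_{E}$ for all $F,G\in\Cinf{E}$; pushing this identity to the Poisson quotient, with its descended bracket $\{\,,\,\}^{\mathrm{aff}}$ and descended derivation $Z^{(2)}$, gives $Z^{(2)}\{\phi_{1},\phi_{2}\}^{\mathrm{aff}}=\{Z^{(2)}\phi_{1},\phi_{2}\}^{\mathrm{aff}}+\{\phi_{1},Z^{(2)}\phi_{2}\}^{\mathrm{aff}}$, i.e.\ $Z^{(2)}\in\mathrm{Poiss}(\PP_{\mathrm{aff}})$.

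Finally, $\mathrm{Poiss}_{S}(E,\Pi)$ is a Lie algebra under the bracket of vector fields (the bracket of Poisson vector fields is again Poisson, and the bracket of vector fields tangent to $S$ is again tangent to $S$), the assignment $Z\mapsto\mathrm{L}_{Z}$ is a Lie algebra homomorphism $\Gamma\,\T E\to\mathrm{Der}(\Cinf{E})$, and passing to the quotient is a homomorphism on the subalgebra of derivations preserving $\mathcal{I}_{S}^{2}$; composing these, $Z\mapsto Z^{(2)}$ is a homomorphism of Lie algebras from $\mathrm{Poiss}_{S}(E,\Pi)$ into $\mathrm{Poiss}(\PP_{\mathrm{aff}})$, which is the asserted property. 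The one substantive point in this scheme is the identification of the algebraically defined bracket \eqref{EcBracketAff} with the bracket induced on $\Cinf{E}/\mathcal{I}_{S}^{2}$ by $\{\,,\,\}_{E}$; this rests on the fact, established in \cite{RuGaVo20}, that the Poisson triple attached to the Poisson submanifold $S\hookrightarrow(E,\Pi)$ has as components exactly the first variation data of $\Pi$ at $S$. Granting that, every remaining step is a routine transport of a derivation identity across a Poisson ideal quotient.
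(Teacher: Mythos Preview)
Your proof is correct. The paper does not give an explicit proof of this lemma (it is stated without proof, relying implicitly on \cite{RuGaVo20}), but the route you take---realizing $\PP_{\mathrm{aff}}$ as the Poisson quotient $\Cinf{E}/\mathcal{I}_{S}^{2}$ and transporting the derivation identity $\mathrm{L}_{Z}\{\,\cdot\,,\,\cdot\,\}_{E}=\{\mathrm{L}_{Z}\,\cdot\,,\,\cdot\,\}_{E}+\{\,\cdot\,,\mathrm{L}_{Z}\,\cdot\,\}_{E}$ across that quotient---is exactly the mechanism the paper itself invokes elsewhere: right after Theorem~\ref{teo:PoissonSubm} the authors state that ``the infinitesimal Poisson algebra $\PP^{\Sigma}$ is naturally identified with the quotient Poisson algebra $\Cinf{M}/I^{2}(S)$.'' You are right to flag the one nontrivial input, namely that the abstract bracket \eqref{EcBracketAff} built from the Poisson triple agrees with the bracket descended from $\{\,,\,\}_{E}$; this is the content of \cite{RuGaVo20} (and of Theorem~\ref{teo:PoissonSubm} in the normal-bundle case, where the splitting $\T_{S}E=\T S\oplus E$ plays the role of $\Sigma$). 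With that granted, your argument for $Z^{(2)}_{01}=0$ (preservation of $\mathcal{I}_{S}$), for the Poisson-derivation property (descent of the Leibniz identity), and for the Lie-algebra homomorphism property (functoriality of the quotient) is complete and matches the paper's intended reasoning.
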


Now, consider the Lie algebra \ec{\mathrm{Ham}(E,\Pi)} of Hamiltonian vector fields \ec{X_{H}=\mathbf{i}_{dH}\Pi}, with \ec{H \in \Cinf{E}}.

\begin{lemma}\label{lema:HamZ2}
For every Hamiltonian vector field  $X_{H}$, its linearization \ec{X_{H}^{(2)}} in \eqref{Aff2} is a Hamiltonian derivation of the admissible Poisson algebra \ec{\PP_{\mathrm{aff}}=(\Cinf{\mathrm{aff}}(E), \{\,,\,\}^{\mathrm{aff}})} of the form \eqref{EcHamDescription}, associated to an element \ec{h \oplus \eta \in \Cinf{S} \oplus \Gamma E^{\ast}}, where
    \begin{equation*}
        h = H|_{S}, \quad \text{and} \quad \eta=(\dd_{S}H)|_{E}
    \end{equation*}
is the second term in the decomposition of \ec{\dd_{S}H} relative to the canonical splitting \ec{\T_{S}^{\ast}{E}=E^{\circ}\oplus (\T{S})^{\circ}}.
\end{lemma}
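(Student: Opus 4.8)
\emph{Plan.} The plan is to identify the linearized Hamiltonian vector field $X_H^{(2)}$ with the Hamiltonian derivation of $\PP_{\mathrm{aff}}$ whose Hamiltonian element is the affine part $\mathrm{Aff}(H)$ of $H$ along $S$, and then to read off the announced decomposition of that element from Proposition \ref{prop:hamalgebra}.

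First I would record that $X_H$ lies in $\mathrm{Poiss}_S(E,\Pi)$. Indeed $X_H=\ii_{\dd H}\Pi$ is a Poisson vector field, $\dlie{X_H}\Pi=0$, as a consequence of $[\Pi,\Pi]=0$ and the graded Jacobi identity; and it is tangent to the zero section, because $S$ is a Poisson submanifold, so $\Pi$ is tangent to $S$ and hence $X_H(p)=\Pi_p^{}(\dd H(p),\cdot)\in\T_pS$ for every $p\in S$, the value of $\Pi_p^{\natural}$ on any covector already lying in $\T_pS$. Therefore Lemma \ref{lema:PoissZ2} applies: $X_H^{(2)}$ is a Poisson derivation of $\PP_{\mathrm{aff}}$ with $\big(X_H^{(2)}\big)_{01}=0$, i.e.\ it preserves the $\Cinf{S}$-module $\Cinf{\mathrm{lin}}(E)\simeq\Gamma E^\ast$.

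Second I would reduce the Hamiltonian property to a single identity of operators on $\Cinf{\mathrm{aff}}(E)$. By \eqref{Aff2} we have $X_H^{(2)}\varphi=\mathrm{Aff}(\dlie{X_H}\varphi)=\mathrm{Aff}(\{H,\varphi\}_E)$ for every $\varphi\in\Cinf{\mathrm{aff}}(E)$; on the other hand, the admissible bracket \eqref{EcBracketAff} induced by the Poisson triple canonically associated with $\Pi$ is precisely the linearization of $\{\,,\,\}_E$ on affine functions, $\{\varphi_1,\varphi_2\}^{\mathrm{aff}}=\mathrm{Aff}(\{\varphi_1,\varphi_2\}_E)$ (this is the setting of \cite{RuGaVo20}, already implicit in Lemma \ref{lema:PoissZ2}). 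Writing $h\oplus\eta:=\mathrm{Aff}(H)$, so that $h=H|_S$ and $\eta$ is the fiberwise-linear part of $\dd_SH$, namely its $(\T S)^\circ$-component in $\T_S^\ast E=E^\circ\oplus(\T S)^\circ$, it then suffices to prove that
\[
    \mathrm{Aff}\big(\{H,\varphi\}_E\big)=\mathrm{Aff}\big(\{\mathrm{Aff}(H),\varphi\}_E\big),\qquad \varphi\in\Cinf{\mathrm{aff}}(E),
\]
since this says exactly that $X_H^{(2)}=\{h\oplus\eta,\cdot\}^{\mathrm{aff}}$, hence $X_H^{(2)}\in\mathrm{Ham}(\PP_{\mathrm{aff}})$ with Hamiltonian element $h\oplus\eta$; the matrix form \eqref{EcHamDescription} and the asserted values of $h$ and $\eta$ are then forced by Proposition \ref{prop:hamalgebra} and Lemma \ref{lemma:ham}.

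Third I would prove the displayed identity by an ideal-theoretic argument. Let $\mathscr{I}_S\subset\Cinf{E}$ be the vanishing ideal of the zero section and put $R:=H-\mathrm{Aff}(H)$, so $R\in\mathscr{I}_S^{\,2}$. Locally $R=\sum_j u_jv_j$ with $u_j,v_j\in\mathscr{I}_S$, whence $\{R,\varphi\}_E=\sum_j\big(u_j\{v_j,\varphi\}_E+v_j\{u_j,\varphi\}_E\big)$. Because $S$ is a Poisson submanifold, $\mathscr{I}_S$ is a Poisson ideal, so $\{u_j,\varphi\}_E,\{v_j,\varphi\}_E\in\mathscr{I}_S$ and therefore $\{R,\varphi\}_E\in\mathscr{I}_S^{\,2}$; since $\mathrm{Aff}$ annihilates functions vanishing to second order along $S$, we get $\mathrm{Aff}(\{R,\varphi\}_E)=0$, which is the required identity. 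The genuinely delicate point — the main obstacle — is the compatibility used in the reduction step: that the admissible bracket \eqref{EcBracketAff} coming from the canonically induced Poisson triple agrees on affine functions with $\mathrm{Aff}\circ\{\,,\,\}_E$, together with a careful check of the sign conventions in $\dlie{X_H}\varphi=\{H,\varphi\}_E$ versus $\{h\oplus\eta,\cdot\}^{\mathrm{aff}}$ so that the Hamiltonian element is $\mathrm{Aff}(H)$ with no extraneous sign. Alternatively one may bypass this dictionary by computing $\big(X_H^{(2)}\big)_{00}=\mathrm{ad}^0_{H|_S}$, $\big(X_H^{(2)}\big)_{10}$ and $\big(X_H^{(2)}\big)_{11}$ directly from \eqref{Aff2}, the canonical splitting $\T_SE=\T S\oplus E$ and Lemma \ref{lem:Z2}, and matching them against $\mathscr{T}_{h\oplus\eta}$ and $\D_{\dd h}+\mathrm{ad}_\eta$ in \eqref{EcHamDescription}; this route is more computational but relies only on results already established.
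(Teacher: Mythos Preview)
The paper states this lemma without proof, so there is no argument to compare against directly. Your approach is sound and constitutes a valid proof: the reduction to the identity $\mathrm{Aff}(\{H,\varphi\}_E)=\mathrm{Aff}(\{\mathrm{Aff}(H),\varphi\}_E)$ for affine $\varphi$, followed by the ideal-theoretic argument using $H-\mathrm{Aff}(H)\in\mathscr{I}_S^{\,2}$ and the Poisson-ideal property of $\mathscr{I}_S$, is clean and correct. The identification $\mathrm{Aff}(H)=h\oplus\eta$ with $h=H|_S$ and $\eta$ the $(\T S)^\circ$-component of $\dd_S H$ is exactly what the lemma asserts.

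The one point you rightly flag as delicate---that $\{\varphi_1,\varphi_2\}^{\mathrm{aff}}=\mathrm{Aff}(\{\varphi_1,\varphi_2\}_E)$ for affine $\varphi_1,\varphi_2$---is precisely the construction of the infinitesimal Poisson algebra in \cite{RuGaVo20}, and is also implicit in the paper's identification of $\PP^\Sigma$ with the quotient $\Cinf{M}/I^2(S)$ stated just after Theorem~\ref{teo:PoissonSubm}. Once that is granted, your argument goes through without obstruction; the sign worry is harmless since $\dlie{X_H}\varphi=\{H,\varphi\}_E$ matches the paper's convention for Hamiltonian derivations. Your alternative route, computing the matrix components $\big(X_H^{(2)}\big)_{00}$, $\big(X_H^{(2)}\big)_{10}$, $\big(X_H^{(2)}\big)_{11}$ directly and matching them against \eqref{EcHamDescription}, would also work and is closer in spirit to how the paper handles the purely algebraic analogue in Lemma~\ref{lemma:ham}.
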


In particular, we have
    \begin{equation*}
        \big( X_{H}^{(2)} \big)_{00} = \dlie{\mathbf{i}_{\dd h}\psi}
    \end{equation*}
and the \emph{first variation vector field} \ec{\mathrm{var}_{S}X_{H}} induces a Hamiltonian derivation of $\PP$ if and only if
    \begin{equation*}
        \big( X_{H}^{(2)} \big)_{10}(f\oplus0) = \mathrm{Aff}\big( \dlie{X_{H}}(\pi^{\ast}f) \big) = 0, \quad \text{for all} \quad f \in \Cinf{S}.
    \end{equation*}
Here, \ec{\mathrm{Aff}} is the linearization mapping \eqref{eq:Aff}.

\paragraph{Tangent First Poisson Cohomology.} Since clearly \ec{\mathrm{Ham}(E,\Pi) \subseteq \mathrm{Poiss}_{S}(E,\Pi)}, one can define the ``tangent'' first Poisson cohomology by
    \begin{equation*}
        \mathscr{H}_{S}^{1}(E,\Pi) := \frac{\mathrm{Poiss}_{S}(E,\Pi)}{\mathrm{Ham}(E,\Pi)}.
    \end{equation*}
Then, it follows from Lemmas \ref{lema:PoissZ2} and \ref{lema:HamZ2} that we have a linear mapping
    \begin{equation*}
        \varrho: \mathscr{H}_{S}^{1}(E,\Pi) \longrightarrow \mathscr{H}^{1}_{\mathrm{rest}}(\PP_{\mathrm{aff}}),
    \end{equation*}
induced by the linearizing mapping \ec{[Z] \mapsto [Z^{(2)}]}. Here, \ec{\mathscr{H}^{1}_{\mathrm{rest}}(\PP_{\mathrm{aff}})} is the restricted first Poisson cohomology of the admissible Poisson algebra \ec{\PP_{\mathrm{aff}}}, defined in \eqref{eq:H1Rest}.

Moreover, if we also consider the subalgebra of all Poisson vector fields in \ec{\mathrm{Poiss}_{S}(E,\Pi)} whose restrictions to $S$ are Hamiltonian relative to $\psi$,
    \begin{equation*}
        \mathrm{Poiss}_{S}'(E,\Pi):= \left\{ Z \in \mathrm{Poiss}_{S}(E,\Pi) \mid Z|_{S}=\mathbf{i}_{\dd f}\psi,\ f \in \Cinf{S} \right\},
    \end{equation*}
we get a linear mapping
        \begin{equation*}
            \varrho': (\mathscr{H}_{S}^{1})'(E,\Pi) := \frac{\mathrm{Poiss}_{S}'(E,\Pi)}{\mathrm{Ham}(E,\Pi)} \longrightarrow (\mathscr{H}^{1}_{\mathrm{rest}})'(\PP_{\mathrm{aff}});
        \end{equation*}
since clearly \ec{\mathrm{Ham}(E,\Pi) \subseteq \mathrm{Poiss}_{S}'(E,\Pi)}. Here, \ec{(\mathscr{H}^{1}_{\mathrm{rest}})'(\PP_{\mathrm{aff}})} is a ``restricted'' first Poisson cohomology of \ec{\PP_{\mathrm{aff}}} defined in \eqref{eq:HrestPrime}.

We remark that in the general case when we start with a Poisson manifold \ec{(M,\Psi)} and a regular Poisson submanifold \ec{S \subset M} with the normal bundle $E$, the above arguments can be applied as follows: fixing transversal $\Sigma$ in \eqref{EcLL} and choosing an exponential mapping \ec{\mathbf{e}:E \rightarrow M} such that \ec{\dd_{S} \mathbf{e}(E)= \Sigma}, we define an admissible Poisson structure on \ec{\Cinf{\mathrm{aff}}(E)} by \eqref{EcBracketAff}, where \ec{\Pi=\mathbf{e}^{\ast}\Psi}.

\paragraph{The Normal Bundle of a Poisson Submanifold.} As we will see now, the normal bundle of every (embedded) Poisson submanifold admits a Poisson triple \cite{RuGaVo20}.

Suppose that \ec{E=\T_{S}M/\T{S}} is the \emph{normal bundle} of an embedded Poisson submanifold \ec{(S,\psi)} of a Poisson manifold \ec{(M,\Psi)}. Fix a splitting
    \begin{equation}\label{EcLL}
        \T_{S}M = \T{S} \oplus \Sigma,
    \end{equation}
where \ec{\Sigma \subset \T_{S}M} is a subbundle complementary to $\T{S}$. Here, \ec{\T_{S}M = \T{M}|_{S}}.

\begin{theorem}\label{teo:PoissonSubm}
Every subbundle $\Sigma$ in \eqref{EcLL} induces a Poisson triple \ec{([\,,\,]_{\mathrm{fib}}, \D^{\Sigma}, \K^{\Sigma})} associated to \ec{E \to (S,\psi)} and the corresponding admissible Poisson algebra \ec{\PP^{\Sigma} = (\Cinf{\mathrm{aff}}(E),\cdot, \newline \{,\}^{\Sigma})} is defined by the formula \eqref{EcBracketAff}. Moreover, $\PP^{\Sigma}$ is independent of the choice of $\Sigma$ up to isomorphism.
\end{theorem}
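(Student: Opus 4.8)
The plan is to obtain the Poisson triple $([\,,\,]_{\mathrm{fib}}, \D^{\Sigma}, \K^{\Sigma})$ from the cotangent Lie algebroid of the ambient manifold together with the algebraic dictionary of Theorem \ref{teo:correspondenceGPA-PT-LA}. First I would observe that $E^{\ast}=(\T_{S}M/\T S)^{\ast}$ is canonically the conormal bundle $N^{\ast}S=(\T S)^{\circ}\subseteq\T^{\ast}_{S}M$, and that the Poisson--submanifold hypothesis $\Psi^{\natural}(\T^{\ast}_{S}M)\subseteq\T S$ has two consequences: $\Psi^{\natural}$ vanishes on $N^{\ast}S$, and, since $\Psi|_{S}=\psi$, the restriction of the cotangent Lie algebroid $(\Gamma\,\T^{\ast}M,[\,,\,]_{\T^{\ast}M},\Psi^{\natural})$ of $(M,\Psi)$ to $S$ is a well--defined Lie algebroid on $\T^{\ast}_{S}M$ over $\Cinf{S}$, with anchor $\Psi^{\natural}|_{S}\colon\T^{\ast}_{S}M\to\T S$. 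It fits into a short exact sequence of Lie algebroids over $S$, $0\to N^{\ast}S\to\T^{\ast}_{S}M\to\T^{\ast}S\to 0$, where $N^{\ast}S$ carries the zero anchor (hence is a bundle of Lie algebras, giving the fiberwise bracket $[\,,\,]_{\mathrm{fib}}$ on $E^{\ast}$, which does not depend on $\Sigma$) and the quotient is the cotangent Lie algebroid of $(S,\psi)$.

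Next I would use the splitting. Dualizing $\T_{S}M=\T S\oplus\Sigma$ gives a vector--bundle splitting $\iota_{\Sigma}\colon\T^{\ast}S\hookrightarrow\T^{\ast}_{S}M$ of the above sequence (with image $\Sigma^{\circ}$), hence an identification $\T^{\ast}_{S}M\cong\T^{\ast}S\oplus E^{\ast}=A$, under which the restricted cotangent Lie algebroid becomes a Lie algebroid structure $\cSch{\,,\,}$ on $A$ over $P_{0}=\Cinf{S}$ whose anchor is $\varrho_{P_{0}}\circ\mathrm{proj}_{0}$ and for which $\mathrm{proj}_{0}\colon A\to\Omega^{1}_{P_{0}}=\Gamma\,\T^{\ast}S$ is a Lie algebroid morphism, i.e.\ \eqref{EcA} holds. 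By Theorem \ref{teo:correspondenceGPA-PT-LA} this $\cSch{\,,\,}$ corresponds to a Poisson triple, and by the formulas \eqref{EcAbracket1}--\eqref{EcKA} it is exactly $([\,,\,]_{\mathrm{fib}},\D^{\Sigma},\K^{\Sigma})$, where $\D^{\Sigma}_{\dd f}\eta$ and $\K^{\Sigma}(\dd f,\dd g)$ are the $N^{\ast}S$--components of $[\,\iota_{\Sigma}\dd f,\eta\,]$ and $[\,\iota_{\Sigma}\dd f,\iota_{\Sigma}\dd g\,]$ computed in the restricted cotangent bracket. In particular the Poisson--triple axioms \eqref{EcPT1}--\eqref{EcPT3} are automatic, being equivalent to the Lie algebroid axioms for $\cSch{\,,\,}$. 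Proposition \ref{prop:APAGeom} (equivalently Lemma \ref{LemaTripleToAlg} with Theorem \ref{teo:correspondenceGPA-PT-LA}) then produces the admissible Poisson algebra $\PP^{\Sigma}=(\Cinf{\mathrm{aff}}(E),\cdot,\{\,,\,\}^{\Sigma})$ with bracket given by \eqref{EcBracketAff}. Geometrically the same $\PP^{\Sigma}$ is recovered by choosing a tubular embedding $\mathbf{e}\colon E\to M$ with $\dd_{S}\mathbf{e}(E)=\Sigma$ and setting $\{\,,\,\}^{\Sigma}:=\mathrm{Aff}\circ\{\,,\,\}_{\mathbf{e}^{\ast}\Psi}$; since $\mathrm{Aff}$ depends only on the $1$--jet along $S$, which is pinned down by $\dd_{S}\mathbf{e}(E)=\Sigma$, this is consistent with the algebraic construction.

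For the independence statement, note that two complements $\Sigma,\Sigma'$ of $\T S$ in $\T_{S}M$ give two vector--bundle splittings $\iota_{\Sigma},\iota_{\Sigma'}$ of the \emph{same} short exact sequence, whose difference is a $\Cinf{S}$--linear map $\mu:=\iota_{\Sigma'}-\iota_{\Sigma}\colon\T^{\ast}S\to N^{\ast}S$, i.e.\ a $P_{0}$--linear mapping $\mu\colon\Omega^{1}_{P_{0}}\to P_{1}$. The two resulting identifications of $\T^{\ast}_{S}M$ with $A$ differ by the vector--bundle automorphism $\mathrm{id}_{A}+\mu\circ\mathrm{proj}_{0}$, which intertwines the two Lie algebroid structures on $A$; transcribed through \eqref{EcAbracket1}--\eqref{EcKA}, this says precisely that $([\,,\,]_{1}',\D',\K')$ and $([\,,\,]_{1},\D,\K)$ are related by the transition rules \eqref{EcPT1prima}--\eqref{EcPT3prima} with $\phi_{00}=\mathrm{id}_{P_{0}}$, $\phi_{11}=\mathrm{id}_{P_{1}}$ and $\phi_{10}=\mu\circ\dd$, that is, by the $\mu$--gauge transformation of Definition \ref{def:MuGauge} (consistently, $[\,,\,]_{\mathrm{fib}}$ is unchanged, as \eqref{EcPT1prima} predicts for $\phi_{11}=\mathrm{id}$). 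Hence Corollary \ref{cor:GaugeInduces} (or Theorem \ref{teo:GaugeEquiv}) gives a Poisson isomorphism $\PP^{\Sigma'}\cong\PP^{\Sigma}$, so $\PP^{\Sigma}$ is independent of $\Sigma$ up to isomorphism.

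I expect the main obstacle to be the very first step: proving that the cotangent Lie algebroid of $M$ genuinely \emph{restricts} to $\T^{\ast}_{S}M$ — equivalently, that the point values along $S$ of $[\,,\,]_{\T^{\ast}M}$ (and hence of $\D^{\Sigma}$ and $\K^{\Sigma}$) are independent of the chosen extensions of sections off $S$. This is exactly where the Poisson--submanifold hypothesis is essential: one uses that $\Psi^{\natural}$ annihilates conormal forms, that $\Psi^{\natural}$ maps all of $\T^{\ast}_{S}M$ into $\T S$, and that the pairing $\Psi(\alpha,\beta)$ of two forms whose anchors vanish along $S$ vanishes to second order on $S$ (so that the Koszul term $\dd\Psi(\alpha,\beta)$ dies on $S$). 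The remaining delicate point is bookkeeping: checking that the explicit formulas \eqref{EcAbracket1}--\eqref{EcKA} read off from the automorphism $\mathrm{id}_{A}+\mu\circ\mathrm{proj}_{0}$ reproduce \eqref{EcPT1prima}--\eqref{EcPT3prima} exactly.
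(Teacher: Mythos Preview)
The paper does not actually prove this theorem: it writes ``The proof of this theorem can be found in \cite{RuGaVo20}'' and then moves on. From the surrounding text and the Remark that follows, one can infer that the approach in \cite{RuGaVo20} is the one you mention in passing near the end of your second paragraph: one fixes a tubular embedding \ec{\mathbf{e}:E \to M} with \ec{\dd_{S}\mathbf{e}(E)=\Sigma}, pulls back $\Psi$ to $E$, and then defines the admissible bracket on \ec{\Cinf{\mathrm{aff}}(E)} by applying the linearization map $\mathrm{Aff}$ to the pullback Poisson bracket. The Poisson triple is then extracted via Lemma~\ref{lema:GAP-PT}, and independence of $\Sigma$ is handled by $\mu$--gauge transformations, as you do.

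Your primary route is genuinely different and closer in spirit to what the paper attributes to \cite{FeMa22} in the Remark right after the theorem: you construct the Poisson triple directly from the restricted cotangent Lie algebroid \ec{(\T_{S}^{\ast}M,[\,,\,]_{S},\Psi^{\natural}|_{S})} by splitting the short exact sequence \ec{0 \to N^{\ast}S \to \T_{S}^{\ast}M \to \T^{\ast}S \to 0} with $\Sigma$, and then read off \ec{([\,,\,]_{\mathrm{fib}},\D^{\Sigma},\K^{\Sigma})} via formulas \eqref{EcAbracket1}--\eqref{EcKA} and Theorem~\ref{teo:correspondenceGPA-PT-LA}. This is a clean, intrinsic argument that avoids choosing a tubular neighborhood altogether and makes the Poisson--triple axioms automatic (they are the Jacobi identity for the restricted bracket). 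It also makes the $\Sigma$--independence transparent, since two splittings differ by a \ec{\Cinf{S}}--linear map \ec{\mu:\T^{\ast}S \to N^{\ast}S}, which is exactly the data of a $\mu$--gauge transformation. The tubular--neighborhood approach, by contrast, is more concrete and makes the link with the first--jet picture and with computations in adapted coordinates (as in Example~\ref{exm:GoodGeom}) immediate, but requires verifying that the outcome is independent of the choice of $\mathbf{e}$ with fixed $\dd_{S}\mathbf{e}(E)=\Sigma$. Your identification of the delicate point --- the well--definedness of the restricted cotangent bracket along $S$ --- is the right place to spend care; the paper implicitly relies on this in the Remark when asserting that the two Lie algebroids $A$ and $\tilde{A}$ are isomorphic.
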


The proof of this theorem can be found in \cite{RuGaVo20}, where $\PP^{\Sigma}$ is called an \emph{infinitesimal Poisson algebra} of the Poisson submanifold \ec{(S,\psi)}. Alternatively, the infinitesimal Poisson algebra can be defined as follows \cite{Mar12}: since $S$ is embedded, we have that $S$ is a Poisson submanifold of $M$ if and only if the vanishing ideal \ec{I(S) = \left\{f \in \Cinf{M} \mid f|_{S} = 0\right\}} is also an ideal in the Lie algebra \ec{(\Cinf{M},\{,\}_{M})}. So, the infinitesimal Poisson algebra $\PP^{\Sigma}$ is naturally identified with the quotient Poisson algebra \ec{\Cinf{M} / I^{2}(S)}.

\begin{examplex}\label{exm:GoodGeom}
Consider the Poisson manifold \ec{M = \mathbb{R}^{3}_{w} \times \mathbb{R}^3_{z}} equipped with the Lie--Poisson bracket
    \begin{equation*}
        \{w_{i},w_{j}\} = \epsilon^{ijk}w_{k},\quad
        \{w_{i},z_{a}\} = \epsilon^{iab}z_{b},\quad
        \{z_{a},z_{b}\} = \varepsilon^{2}\, \epsilon^{abi}w_{i}, \quad \varepsilon \in \mathbb{R};
    \end{equation*}
for \ec{i,j,k,a,b = 1,2,3}. Then for every fixed \ec{\varepsilon}, there exists the following $3$--dimensional Poisson submanifold of \ec{M}:
    \begin{equation*}
        S = \big\{ (w,z)\in M \mid \varepsilon w+z=0 \big\}.
    \end{equation*}
Fixing the transversal $\Sigma$ generated by \ec{\{\partial/\partial{z_{a}}\}}, we choose a tubular neighborhood $U$ of $S$ as
\ec{U = \mathbb{R}^{3}_{w} \times \mathbb{R}^3_{z}} which is equipped with (adapted) coordinates \ec{x = w} and $y = \varepsilon w+z$. Then, the Poisson bivector field $\psi$ on \ec{S}  is determined by \ec{\psi^{ij}(x) = \epsilon^{ijk}x_{k}} and the corresponding Poisson triple \ec{([\,,\,]_{U^{\ast}}, \D^{\Sigma}, \K^{\Sigma})} is given by \ec{\lambda^{ab}_c=2 \varepsilon \epsilon^{abc}}, \ec{\D^{ia}_{b}=\epsilon^{iab}} and \ec{\K^{ij}_{a}=0}. In this case, the contravariant connection \ec{\D^{\Sigma}} is flat. From \eqref{EcBracketAff} it follows that the corresponding admissible Poisson structure on \ec{\Cinf{\mathrm{aff}}(U)} is given by
    \begin{equation*}
        \{f \oplus \eta, g\oplus\xi\}^{\Sigma} = \{f,g\}_{\mathfrak{so}(3)} \oplus
        \Big[ \epsilon^{iab} \big( \xi_{a}\tfrac{\partial f}{\partial x_{i}}-\eta_{a}\tfrac{\partial g}{\partial x_{i}} \big)
        + \epsilon^{ijk} x_{k} \big( \tfrac{\partial f}{\partial x_{i}}\tfrac{\partial \xi_{b}}{\partial x_{j}} - \tfrac{\partial g}{\partial x_{i}}\tfrac{\partial \eta_{b}}{\partial x_{j}} \big) + 2\varepsilon \epsilon^{acb}\eta_{a}\xi_{c} \Big]\dd{y}_{b},
    \end{equation*}
Here, \ec{\eta= \eta_{a}\dd{y_{a}}} and \ec{\xi= \xi_{a}\dd{y_{a}}}. Hence, \ec{(\Cinf{\mathrm{aff}}(U), \{\,,\,\}^{\mathrm{aff}})} is an admissible Poisson algebra.
\end{examplex}

\begin{remark}
Another approach to infinitesimal Poisson algebras of a Poisson submanifold \ec{(S,\psi)} of \ec{(M,\Psi)} was developed in the paper \cite{FeMa22}. The authors introduce the notion of a \textit{first order jet of a Poisson structure} at $S$, defined as a class of bivector fields \ec{\Pi \in \Gamma\,\wedge^{2}\T{M}}
tangent to $S$, modulo bivector fields that vanish to second order along $S$, and which satisfy the Jacobi identity up to second order. Moreover, it was shown that the first order jet of a Poisson structure at $S$ is naturally related with the restricted cotangent Lie algebroid \ec{A=(\T_{S}^{\ast}M, [\,,\,]_{S}, \rho=\Pi^{\sharp}|_{S})}. On the other hand, in the present paper, fixing a transversal subbundle $\Sigma$, we start with the admissible Poisson algebra \ec{\PP^{\Sigma}} which by Theorem \ref{teo:correspondenceGPA-PT-LA} induces the Lie algebroid \ec{\tilde{A}=(\T^{\ast}S \oplus E^{\ast},\cSch{\,,\,}^{\Sigma},\mathrm{pr}_{1}\circ\psi^{\sharp})} associated with the Poisson triple \ec{([\,,\,]_{\mathrm{fib}},\D^{\Sigma},\K^{\Sigma})}. The natural identification \ec{\T_{S}^{\ast}M \simeq \T^{\ast}S\oplus E^{\ast}} gives a Lie algebroid isomorphism between $A$ and $\tilde{A}$. One can show that the Jacobi identity for $\Pi$ up to second order is just equivalent to the structure equations \eqref{EcPT1}--\eqref{EcPT3} for the corresponding Poisson triple.
\end{remark}

Finally, we apply general results of Section \ref{sec:FirstCohomology} to describe the first cohomology of the infinitesimal Poisson algebra $\PP_{\mathrm{aff}}$ of a symplectic leaf, studied also in \cite{KaVo98}.

\paragraph{Infinitesimal Poisson Cohomology of a Symplectic Leaf.} Suppose that $S$ is a symplectic leaf of the Poisson manifold \ec{(M,\Psi)} and $E$ its normal bundle. Then, the dual $E^{\ast}$ is a locally trivial bundle of Lie algebras with typical fiber $\mathfrak{g}$, called the isotropy of $S$ \cite{Duf05,Mac95}. Fixing a transversal $\Sigma$ in \eqref{EcLL}, by Theorem \ref{teo:PoissonSubm}, we have an admissible Poisson algebra \ec{\PP=\PP^{\Sigma}} associated to the Poisson triple \ec{([\,,\,]_{\mathrm{fib}}, \D=\D^{\Sigma}, \K=\K^{\Sigma})}. By the infinitesimal Poisson cohomology \cite{KaVo98} of $S$ we just mean the cohomology of $\PP^{\Sigma}$. This is an infinitesimal ingredient of the germ Poisson cohomology at the symplectic leaf $S$ \cite{VeVo18}. Here, we formulate some results on the computing of \ec{\mathscr{H}^{1}(\PP^{\Sigma})}.

Recall that the Lie algebra \ec{\mathrm{Der}(E^{\ast},[\,,\,]_{\mathrm{fib}})} of \emph{Lie derivations} of \ec{(E^{\ast},[\,,\,]_{\mathrm{fib}})} consists of all first order differential operators \ec{\LL:\Gamma{E^{\ast}} \to \Gamma{E^{\ast}}} satisfying: there exists a vector field \ec{\ell \in \X{S}} such that
    \begin{equation*}
        \LL(f\eta) = f\LL\eta + \ell(f)\eta,
    \end{equation*}
and
    \begin{equation}\label{eq:PreserEast}
        \LL[\eta,\xi]_{\mathrm{fib}}=[\LL\eta,\xi]_{\mathrm{fib}}+ [\eta,\LL\xi]_{\mathrm{fib}},
    \end{equation}
for all \ec{f \in \Cinf{S}} and \ec{\eta,\xi \in \Gamma{E^{\ast}}}.

We define the following Lie subalgebra of \ec{\mathrm{Der}(E^{\ast},[\,,\,]_{\mathrm{fib}})}:
    \begin{equation*}
        \mathrm{PDer}(E^{\ast},[\,,\,]_{\mathrm{fib}}) := \big\{ \LL \in \mathrm{Der}(E^{\ast},[\,,\,]_{\mathrm{fib}}) \mid \ell \in \mathrm{Poiss}(S,\psi) \big\}.
    \end{equation*}

\begin{theorem}\label{teo:H1Geom}
Suppose that the isotropy $\mathfrak{g}$ is a perfect Lie algebra,
    \begin{equation}\label{eq:gPerfect}
        \mathfrak{g}=[\mathfrak{g},\mathfrak{g}].
    \end{equation}
Then, the first cohomology of the infinitesimal Poisson algebra $\PP$ is of the form
    \begin{equation}\label{eq:GeomSplit}
        \mathscr{H}^{1}\big( \PP \big) \simeq
        {\mathrm{H}_{\partial_{\D}}^{1}\big( \Gamma^{\ast}_{\PP} \big)} \oplus \frac{\mathfrak{M}(\PP)}{\mathscr{C}(\PP) + \mathrm{Inn}(E^{\ast},[\,,\,]_{\mathrm{fib}})}.
    \end{equation}
Here, \ec{\mathrm{H}_{\partial_{\D}}^{1}(\Gamma^{\ast}_{\PP})} is the first cohomology group of the coboundary operator \ec{\partial_{\D}} defined in \eqref{eq:PartialBar} and
    \begin{itemize}
      \item \ec{\mathfrak{M}(\PP)} is the Lie subalgebra consisting of all Lie derivations \ec{\LL \in \mathrm{PDer}(E^{\ast},[\,,\,]_{\mathrm{fib}})} for which there exist \ec{\theta \in \X{S} \otimes \Gamma{E^{\ast}}} such that
            \begin{align*}
                \pmb{\big[}\D_{\dd{f}},\LL\pmb{\big]} + \D_{\dd{\ell(f)}} &= [\theta(f),\cdot]_{\mathrm{fib}}, \\
                \big( \mathscr{L} \circ \K \big)(\dd{f},\dd{g}) - \K \big( \dd \ell(f),\dd{g} \big) - \K \big( \dd{f},\dd \ell(g) \big) &= - \big( \dd_{\D}\theta \big)(f,g),
            \end{align*}
          for \ec{f,g \in \Cinf{S}}; where \ec{\dd_{\D}} is the contravariant differential \eqref{eq:dDbar} on \ec{\X{S} \otimes \Gamma{E^{\ast}}} induced by $\D$ and $\psi$;

      \item \ec{\mathscr{C}(\PP):= \{\D_{\dd{f}} \mid f \in \Cinf{S}\} \subseteq \mathrm{PDer}(E^{\ast},[\,,\,]_{\mathrm{fib}})} is the $\mathbb{R}$--submodule of Lie derivations induced by $\D$;

      \item \ec{\mathrm{Inn}(E^{\ast},[\,,\,]_{\mathrm{fib}}) = \{\,[\eta, \cdot]_{\mathrm{fib}}\mid \eta \in \Gamma{E^{\ast}}\}} is the Lie ideal of inner derivations of \ec{(E^{\ast},[\,,\,]_{\mathrm{fib}})}.
    \end{itemize}
\end{theorem}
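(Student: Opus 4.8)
The plan is to recognize this statement as the geometric specialization of Theorem \ref{teo:H1MCI} combined with the splitting machinery of Section \ref{sec:FirstCohomology}, applied to the admissible Poisson algebra $\PP=\PP^{\Sigma}$ on $P=\Cinf{\mathrm{aff}}(E)$ whose Poisson triple is $([\,,\,]_{\mathrm{fib}},\D,\K)$ furnished by Theorem \ref{teo:PoissonSubm}. First I would translate the algebraic data: here $P_0=\Cinf{S}$, $P_1=\Gamma E^{\ast}$, the induced $P_0$-linear Lie bracket $[\,,\,]_1$ is the fiberwise bracket $[\,,\,]_{\mathrm{fib}}$, and $Z_{\PP}(P_1)$ is the $\Cinf{S}$-module of sections of the center subbundle of $E^{\ast}$. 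Under hypothesis \eqref{eq:gPerfect} the typical fiber $\mathfrak g$ is perfect, hence $\Gamma E^{\ast}=[\Gamma E^{\ast},\Gamma E^{\ast}]_{\mathrm{fib}}$ (perfectness is a pointwise/fiberwise condition that passes to sections since a perfect Lie algebra is spanned by brackets), so $(P_1,[\,,\,]_1)$ is a perfect Lie algebra in the sense of Lemma \ref{lema:H1Hrest}, giving $\mathscr{H}^{1}(\PP)=\mathscr{H}^{1}_{\mathrm{rest}}(\PP)$.

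Next I would invoke Theorem \ref{teo:RestrictedCoho} to obtain the short exact sequence \eqref{eq:exactsqc} for $\PP=\PP^{\Sigma}$, and then argue that $\ker{\mathrm J}=\{0\}$. For a symplectic leaf, $\PP_0=(\Cinf{S},\{\,,\,\}_{\psi})$ is of symplectic type, $\mathrm{Casim}(\PP_0)=\mathbb R$; by case \ref{KerJ0Symp} in the list following Proposition \ref{prop:H1KerJ} this forces $\ker{\mathrm J}=\{0\}$. Hence the sequence \eqref{eq:exactsqc} reads
\begin{equation*}
    0 \,\longrightarrow\, \mathrm{H}_{\partial_{\D}}^{1}\big( \Gamma^{\ast}_{\PP} \big)
    \,\longrightarrow\, \mathscr{H}^{1}_{\mathrm{rest}}(\PP)
    \,\longrightarrow\, \frac{\mathfrak{M}(\PP)}{\mathscr{C}(\PP) + \mathrm{Inn}(P_{1},[\,,\,]_{1})}
    \,\longrightarrow\, 0 .
\end{equation*}
It then remains to verify that this sequence \emph{splits}, which yields \eqref{eq:GeomSplit} once $\mathscr{H}^{1}=\mathscr{H}^{1}_{\mathrm{rest}}$ is substituted and the modules $\mathfrak{M}(\PP)$, $\mathscr{C}(\PP)$, $\mathrm{Inn}(E^{\ast},[\,,\,]_{\mathrm{fib}})$ are identified with their geometric descriptions — a direct unwinding of \eqref{eq:ThetaDf}–\eqref{eq:ThetaK} with $P_1=\Gamma E^{\ast}$, $\ell\in\mathrm{Poiss}(S,\psi)$, $\theta\in\X{S}\otimes\Gamma E^{\ast}=\mathrm{Der}(\Cinf{S};\Gamma E^{\ast})$, noting that a first-order differential operator $\LL$ with symbol $\ell$ is exactly a generalized $\ell$-derivation, and that $\LL$ deriving $[\,,\,]_{\mathrm{fib}}$ in the sense of \eqref{eq:X01Poiss2} is condition \eqref{eq:PreserEast}. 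The submodule conditions $\mathscr{C}(\PP)=\{\D_{\dd f}\}$ and $\mathrm{Inn}=\{[\eta,\cdot]_{\mathrm{fib}}\}$ come verbatim from Lemma \ref{lemma:MP}.

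For the splitting, the natural section $\mathfrak{M}(\PP)/(\mathscr{C}(\PP)+\mathrm{Inn})\to\mathscr{H}^{1}_{\mathrm{rest}}(\PP)$ would send the class of a Lie derivation $\LL$ (with chosen $\ell$, $\theta$) to the class of the derivation $X=\big(\begin{smallmatrix}\ell & 0\\ \theta & \LL\end{smallmatrix}\big)$ of $\PP$; Proposition \ref{prop:PoissDer} (or Corollary \ref{cor:PoissP1}) shows this $X$ is a Poisson derivation precisely because of \eqref{eq:ThetaDf}–\eqref{eq:ThetaK}, and well-definedness modulo $\mathrm{Ham}(\PP)$ follows since replacing $\LL$ by $\D_{\dd h}+\mathrm{ad}_\eta$ changes $X$ by the Hamiltonian derivation $X_{h\oplus\eta}$ of \eqref{EcHamDescription}. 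The only subtlety is that the assignment $\LL\mapsto\theta$ need not be linear globally unless one fixes the choices coherently; I expect this is where the real work lies. The clean way around it, exploiting that $S$ is a symplectic leaf, is to note that every $\ell\in\mathrm{Poiss}(S,\psi)=\mathrm{Ham}(S,\psi)$ is Hamiltonian, $\ell=\{h,\cdot\}_{\psi}$ with $h$ determined up to a constant (since $\mathrm{Casim}(\PP_0)=\mathbb R$), so one may canonically reduce to $\ell$ Hamiltonian and then, by Lemma \ref{lemma:AdX11} and the $(\mathscr{H}^{1}_{\mathrm{rest}})''$–$(\mathscr{H}^{1}_{\mathrm{rest}})'$ picture of Section \ref{sec:FirstCohomology}, peel off the $\D_{\dd h}$ part to land in the $P_0$-linear ideal $\mathfrak{M}_0(\PP)$ on which the choice of $\theta$ \emph{is} linear; this makes the section $R$-linear and gives the claimed direct-sum decomposition. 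In short: identify the data, kill $\ker{\mathrm J}$ via symplectic type, run Theorem \ref{teo:RestrictedCoho}, and construct the splitting using Hamiltonicity of $\ell$ together with Lemma \ref{lemma:AdX11}.
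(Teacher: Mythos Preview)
Your overall route matches the paper's proof exactly: show that $(\Gamma E^{\ast},[\,,\,]_{\mathrm{fib}})$ is perfect (the paper does this by citing the identity $\Gamma([E^{\ast},E^{\ast}]_{\mathrm{fib}})\simeq[\Gamma E^{\ast},\Gamma E^{\ast}]_{\mathrm{fib}}$ for locally trivial Lie bundles), invoke Lemma~\ref{lema:H1Hrest} to get $\mathscr{H}^{1}(\PP)=\mathscr{H}^{1}_{\mathrm{rest}}(\PP)$, kill $\ker\mathrm{J}$ via symplectic type (case~\ref{KerJ0Symp} after Proposition~\ref{prop:H1KerJ}), and then appeal to the splitting of the short exact sequence~\eqref{eq:exactsqc}. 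The paper's proof in fact stops right there: it simply \emph{asserts} that ``in the geometric framework the short exact sequence~\eqref{eq:exactsqc} splits'' (as announced at the start of the abelian-case paragraph in Section~\ref{sec:FirstCohomology}) and does not construct the section in the text.

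Where you go beyond the paper --- your attempted construction of the section --- there is a genuine error. You write that ``every $\ell\in\mathrm{Poiss}(S,\psi)=\mathrm{Ham}(S,\psi)$ is Hamiltonian'' because $S$ is symplectic. That equality is false in general: on a symplectic manifold $\mathrm{Poiss}(S,\psi)/\mathrm{Ham}(S,\psi)\simeq H^{1}_{\mathrm{dR}}(S)$, so a non-Hamiltonian symplectic vector field exists whenever $H^{1}_{\mathrm{dR}}(S)\neq 0$. Consequently your reduction of $\LL$ to the $P_{0}$-linear ideal $\mathfrak{M}_{0}(\PP)$ via Lemma~\ref{lemma:AdX11} does not go through, and the linearity of $\LL\mapsto\theta$ that you want to extract from it is unjustified at this level of generality. (Note that the paper only imposes $H^{1}_{\mathrm{dR}}(S)=0$ as an \emph{additional} hypothesis in the subsequent corollary, precisely to pass to $\mathfrak{M}_{0}$ and $\mathscr{C}_{0}$.) So your outline is correct up to the splitting, but the splitting argument needs a different mechanism --- one that does not rely on $\ell$ being globally Hamiltonian.
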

\begin{proof}
Consider the Lie algebra \ec{(P_1=\Gamma {E^{\ast}},[\,,\,]_{1}=[\,,\,]_{\mathrm{fib}})} of smooth sections of \ec{E^{\ast}} equipped with the pointwise Lie bracket. Since \ec{E^{\ast}} is a locally trivial Lie bundle with typical fiber $\mathfrak{g}$, we have the subbundle \ec{[E^{\ast},E^{\ast}]_{\mathrm{fib}}} of \ec{E^{\ast}} and the following relation (see, for example, \cite{Gun11}):
    \begin{equation}\label{eq:EPerfect}
        \Gamma{([E^{\ast},E^{\ast}]_{\mathrm{fib}})} \simeq [\Gamma {E^{\ast}},\Gamma {E^{\ast}}]_{\mathrm{fib}}.
    \end{equation}
By the assumption \eqref{eq:gPerfect}, we have \ec{[E^{\ast},E^{\ast}]_{\mathrm{fib}}=E^{\ast}} and together with \eqref{eq:EPerfect} this implies that \ec{\Gamma {E^{\ast}}} is perfect. Hence, taking into account that in the geometric framework the short exact sequence \eqref{eq:exactsqc} splits, the theorem follows from Lemma \ref{lema:H1Hrest} and the observation \ref{KerJ0Symp} after Proposition \ref{prop:H1KerJ}.
\end{proof}

Let \ec{\mathscr{Z}_{\mathfrak{g}}:=Z(E^{\ast})} be the subbundle of \ec{E^{\ast}} whose fiber over \ec{x\in S} is just the center of the Lie algebra \ec{E^{\ast}_{x}}. It is clear that \ec{\mathscr{Z}_{\mathfrak{g}}} is a locally trivial vector bundle with the center of \ec{\mathfrak{g}} as typical fiber. We note that \ec{Z(\Gamma{E^{\ast}},[\,,\,]_{\mathrm{fib}}) \simeq \Gamma(\mathscr{Z}_{\mathfrak{g}})} \cite{Gun11}.  In particular, by Example \ref{exm:SympType}, if $\mathfrak{g}$ is centerless then
    \begin{equation*}
        \mathrm{Casim}(\PP) \simeq \R{}.
    \end{equation*}

We also observe that the first term in the splitting \eqref{eq:GeomSplit} can be represented as
    \begin{equation*}
        \mathrm{H}_{\partial_{\D}}^{1}(\Gamma^{\ast}_{\PP} )=\frac{\mathfrak{Z}\big( \PP \big)}{\big\{ \D\eta \mid \eta \in \Gamma(\mathscr{Z}_{\mathfrak{g}}) \big\}},
    \end{equation*}
where \ec{\mathfrak{Z}(\PP) := \{W \in \X{S} \otimes \Gamma (\mathscr{Z}_{\mathfrak{g}}) \mid W\{f,g\}_{\psi}=\D_{\dd f}W(g) - \D_{\dd g}W(f),\ \text{for all}\ f,g\in \Cinf{S} \}}. So, in other words, the first cohomology of the infinitesimal Poisson algebra $\PP$ is given by special classes of \ec{\Gamma(\mathscr{Z}_{\mathfrak{g}})}--valued vector fields on $S$ and Lie derivations of \ec{(E^{\ast},[\,,\,]_{\mathrm{fib}})}.

\begin{corollary}
If the isotropy $\mathfrak{g}$ is perfect and centerless, in particular, a semisimple Lie algebra, then
    \begin{equation}\label{eq:H1Geometric}
        \mathscr{H}^{1}\big( \PP \big) \simeq
        \frac{\mathfrak{M}(\PP)}{\mathscr{C}(\PP) + \mathrm{Inn}(E^{\ast},[\,,\,]_{\mathrm{fib}})}.
    \end{equation}
Additionally, if the first de Rham cohomology of $S$ is trivial, we have
    \begin{equation}\label{eq:H1GeometricDeRhamTrivial}
        \mathscr{H}^{1}\big( \PP \big) \simeq
        \frac{\mathfrak{M}_{0}(\PP)}{\mathscr{C}_{0}(\PP) +\mathrm{Inn}(E^{\ast},[\,,\,]_{\mathrm{fib}})},
    \end{equation}
where \ec{\mathfrak{M}_{0}(\PP)} is the Lie ideal of \ec{\mathfrak{M}(\PP)} consisting of all $\Cinf{S}$--linear morphisms \ec{\LL:\Gamma E^{\ast} \to \Gamma E^{\ast}} satisfying the condition \eqref{eq:PreserEast} and for which there exists \ec{\theta \in \X{S} \otimes \Gamma{E^{\ast}}} such that
    \begin{equation*}
        \pmb{\big[}\D_{\dd{f}},\LL\pmb{\big]} = [\theta(f),\cdot]_{\mathrm{fib}}\quad \text{and} \quad
        \big( \mathscr{L} \circ \K \big)(\dd{f},\dd{g}) = - \big( \dd_{\D}\theta \big)(f,g),
    \end{equation*}
for all \ec{f,g \in \Cinf{S}}, and \ec{\mathscr{C}_{0}(\PP)=\{\D_{\dd_{k}}\mid k\in\mathrm{Casim}(S,\psi)\}}.
\end{corollary}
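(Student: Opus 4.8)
The plan is to derive both isomorphisms from the general algebraic results of Section~\ref{sec:FirstCohomology}, specialized to the data $P_{0}=\Cinf{S}$, $P_{1}=\Gamma E^{\ast}$, $[\,,\,]_{1}=[\,,\,]_{\mathrm{fib}}$ attached to the infinitesimal Poisson algebra $\PP=\PP^{\Sigma}$ of the symplectic leaf $S$ (Theorem~\ref{teo:PoissonSubm}). First I would record that the associated Lie algebra $(\Gamma E^{\ast},[\,,\,]_{\mathrm{fib}})$ is both perfect and centerless: perfectness follows, exactly as in the proof of Theorem~\ref{teo:H1Geom}, from the hypothesis $\mathfrak{g}=[\mathfrak{g},\mathfrak{g}]$ together with the bundle identification \eqref{eq:EPerfect}, which forces $[E^{\ast},E^{\ast}]_{\mathrm{fib}}=E^{\ast}$; centerlessness follows from $Z(\Gamma E^{\ast},[\,,\,]_{\mathrm{fib}})\simeq\Gamma(\mathscr{Z}_{\mathfrak{g}})$ and the fact that $\mathscr{Z}_{\mathfrak{g}}$ is the zero bundle when $Z(\mathfrak{g})=\{0\}$.

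Granting this, the first isomorphism \eqref{eq:H1Geometric} should be immediate from Theorem~\ref{teo:H1Geom}: in the splitting \eqref{eq:GeomSplit} the term $\mathrm{H}_{\partial_{\D}}^{1}(\Gamma^{\ast}_{\PP})$, which equals $\mathfrak{Z}(\PP)\big/\{\D\eta\mid\eta\in\Gamma(\mathscr{Z}_{\mathfrak{g}})\}$, vanishes once $\mathscr{Z}_{\mathfrak{g}}=0$. (Alternatively, one could invoke Theorem~\ref{teo:H1MCI} directly, since for a centerless $(P_{1},[\,,\,]_{1})$ one has $\ker{\mathrm{J}}=\{0\}$ and $\mathrm{H}_{\partial_{\D}}^{1}(\Gamma^{\ast}_{\PP})=\{0\}$ by the definition of the complex \eqref{eq:ComplexBar}.)

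For the refined statement, I would use that a symplectic leaf $S$ carries the nondegenerate Poisson structure $\psi$, so that $\PP_{0}=(\Cinf{S},\{\,,\,\}_{\psi})$ is of symplectic type and its Poisson cohomology is isomorphic, via $\psi^{\natural}$, to the de Rham cohomology of $S$; in particular $\mathscr{H}^{1}(\PP_{0})\simeq\mathrm{H}^{1}_{\mathrm{dR}}(S)$. Assuming $\mathrm{H}^{1}_{\mathrm{dR}}(S)=0$ gives $\mathscr{H}^{1}(\PP_{0})=\{0\}$, and then Theorem~\ref{teo:H1M0} applies (the Lie algebra $(\Gamma E^{\ast},[\,,\,]_{\mathrm{fib}})$ being perfect and centerless and $\mathscr{H}^{1}(\PP_{0})$ trivial), yielding $\mathscr{H}^{1}(\PP)\simeq\mathfrak{M}_{0}(\PP)\big/\big(\mathscr{C}_{0}(\PP)+\mathrm{Inn}(P_{1},[\,,\,]_{1})\big)$. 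The remaining work is a dictionary step: under $\mathrm{Der}(P_{0};P_{1})=\X{S}\otimes\Gamma E^{\ast}$, $\mathrm{Poiss}(\PP_{0})=\mathrm{Poiss}(S,\psi)$ and $\mathrm{Casim}(\PP_{0})=\mathrm{Casim}(S,\psi)$, one checks that the abstract modules $\mathfrak{M}_{0}(\PP)$, $\mathscr{C}_{0}(\PP)$, $\mathrm{Inn}(P_{1},[\,,\,]_{1})$ coincide with the geometric objects described in the statement, establishing \eqref{eq:H1GeometricDeRhamTrivial}. I expect the only genuinely delicate points to be this last translation and the passage from fiberwise to global conditions, i.e.\ justifying \eqref{eq:EPerfect} and $Z(\Gamma E^{\ast})\simeq\Gamma(\mathscr{Z}_{\mathfrak{g}})$ via the local triviality of the Lie algebra bundle $E^{\ast}$; everything else reduces to quoting Theorems~\ref{teo:H1Geom}, \ref{teo:H1MCI} and \ref{teo:H1M0}.
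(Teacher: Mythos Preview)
Your proposal is correct and follows essentially the same route as the paper's own proof, which simply invokes Theorem~\ref{teo:H1Geom} together with Theorem~\ref{teo:H1MCI} for \eqref{eq:H1Geometric}, and then the triviality of $\mathscr{H}^{1}(\PP_{0})$ (from $\mathrm{H}^{1}_{\mathrm{dR}}(S)=0$ for the symplectic leaf) together with Theorem~\ref{teo:H1M0} for \eqref{eq:H1GeometricDeRhamTrivial}. Your additional remarks on the perfect/centerless verification and the dictionary step are just the details the paper leaves implicit.
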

\begin{proof}
Taking into account Theorem \ref{teo:H1Geom}, relation \eqref{eq:H1Geometric} follows from Theorem \ref{teo:H1MCI} and relation \eqref{eq:H1GeometricDeRhamTrivial} follows from triviality of the first Poisson cohomology of the symplectic leaf $S$ and Theorem \ref{teo:H1M0}.
\end{proof}


\begin{thebibliography}{99}


\bibitem{Bur01}
\newblock H. Bursztyn,
\newblock \emph{Poisson {V}ector {B}undles, {C}ontravariant {C}onnections and {D}eformations}.
\newblock {Progr. Theoret. Phys. Supp.}, \textbf{144}, 026--037 (2001)
\newblock \href{https://doi.org/10.1143/PTPS.144.26}{doi.org/10.1143/PTPS.144.26}

\bibitem{Car03}
\newblock P. Caressa,
\newblock \emph{Examples of {P}oisson {M}odules, {I}}.
\newblock {Rend. Circ. Mat. Palermo}, \textbf{52}(3), 419--452  (2003)
\newblock \href{https://doi.org/10.1007/BF02872764}{doi.org/10.1007/BF02872764}

\bibitem{DuMi94}
\newblock M. Dubois-Violette, P.W. Michor,
\newblock \emph{D\'erivations et {C}alcul {D}iff\'erentiel {N}on {C}ommutatif. {II}}.
\newblock {C. R. Acad. Sci. Paris S\'er. I Math.}, \textbf{319}, 927--931  (1994)

\bibitem{Duf05}
\newblock J.-P. Dufour, N. T. Zung,
\newblock \emph{Poisson {S}tructures and their {N}ormal {F}orms}.
\newblock {Birkh\"{a}user Basel}, (2005)

\bibitem{FeMa22}
\newblock R. L. Fernandes, I. M\u{a}rcut,
\newblock \emph{Poisson Geometry around Poisson Submanifolds}.
\newblock {arXiv e-prints, arXiv-2205}, (2022)

\bibitem{GaVaVo12}
\newblock D. Garc\'ia-Beltr\'an, J. A. Vallejo, Y. Vorobjev,
\newblock \emph{On {L}ie {A}lgebroids and {P}oisson {A}lgebras}.
\newblock {SIGMA}, \textbf{8}, 001-014  (2012)
\newblock \href{https://doi.org/10.3842/SIGMA.2012.006}{doi.org/10.3842/SIGMA.2012.006}

\bibitem{Gun11}
\newblock H. G\"{u}ndo\u{g}an,
\newblock \emph{Classification and {S}tructure {T}heory of {L}ie {A}lgebras of {S}mooth {S}ections}.
\newblock {Logos Verlag Berlin GmbH.}, (2011)

\bibitem{Hue90}
\newblock J. Huebschmann,
\newblock \emph{Poisson {C}ohomology and {Q}uantization}.
\newblock {J. f\"ur die Reine und Angew. Math.}, \textbf{408}, 057--113  (1990)
\newblock \href{https://doi.org/10.1515/crll.1990.408.57}{doi.org/10.1515/crll.1990.408.57}

\bibitem{Hue98}
\newblock J. Huebschmann,
\newblock \emph{Lie-Rinehart algebras, Gerstenhaber algebras and Batalin-Vilkovisky algebras}.
\newblock {Annales de l’institut Fourier}, \textbf{48}(2), (1998)
\newblock \href{https://doi.org/10.1515/crll.1990.408.57}{doi.org/10.1515/crll.1990.408.57}


\bibitem{KaVo98}
\newblock V. Itskov, M. Karasev, Y. Vorobiev,
\newblock \emph{Infinitesimal {P}oisson {C}ohomology}.
\newblock{Amer. Math. Soc. Transl. Ser. 2}, \textbf{187}, 327--360 (1998)
\newblock \href{https://doi.org/10.1090/trans2/187/03}{doi.org/10.1090/trans2/187/03}

\bibitem{Kos90}
\newblock Y. Kosmann-Schwarzbach, F. Magri,
\newblock \emph{Poisson-{N}ijenhuis {S}tructures}.
\newblock {Ann. Inst. H. Poincar\'e Phys. Th\'eor.}, \textbf{53}(1), 035--081  (1990)

\bibitem{Lau12}
\newblock C. Laurent-Gengoux, A. Pichereau, P. Vanhaecke,
\newblock \emph{Poisson {S}tructures}.
\newblock Springer Science \& Business Media, (2012)

\bibitem{Mac95}
\newblock K. C. H. Mackenzie,
\newblock \emph{Lie Algebroids and Lie Pseudoalgebras}.
\newblock {Bull. London Math. Soc.}, \textbf{27}(2), 097--147  (1995)
\newblock \href{ https://doi.org/10.1112/blms/27.2.97}{doi.org/10.1112/blms/27.2.97}

\bibitem{Mar12}
\newblock I. M\u{a}rcut,
\newblock \emph{Formal {E}quivalence of {P}oisson {S}tructures around {P}oisson {S}ubmanifolds}.
\newblock {Pac. J. Math.}, \textbf{255}(2), 439--461  (2012)
\newblock \href{https://doi.org/10.2140/pjm.2012.255.439}{doi.org/10.2140/pjm.2012.255.439}

\bibitem{ReVoWe96}
\newblock N. Reshetikhin, A. A. Voronov, A. Weinstein,
\newblock \emph{Semiquantum {G}eometry}.
\newblock {J. Math. Sci.}, \textbf{82}(1), 3255–3267  (1996)

\bibitem{RuGaVo20}
\newblock J. C. Ru\'iz-Pantale\'on, D. Garc\'ia-Beltr\'an, Yu.  Vorobiev,
\newblock \emph{Infinitesimal {P}oisson {A}lgebras and {L}inearization of {H}amiltonian {S}ystems}.
\newblock {Ann. Glob. Anal. Geom.}, \textbf{58}(4), 415--431  (2020)
\newblock \href{https://doi.org/10.1007/s10455-020-09733-6}{doi.org/10.1007/s10455-020-09733-6}

\bibitem{Vais91}
\newblock I. Vaisman,
\newblock \emph{On the {G}eometric {Q}uantization of {P}oisson {M}anifolds}.
\newblock {J. Math. Phys.}, \textbf{32}(12), 3339-3345  (1991)
\newblock \href{https://doi.org/10.1063/1.529446}{doi.org/10.1063/1.529446}

\bibitem{Vais94}
\newblock I. Vaisman,
\newblock \emph{Lectures on the {G}eometry of {P}oisson {M}anifolds}.
\newblock {Birk\"{a}huser Basel}, \textbf{118}, (1994)

\bibitem{VeVo18}
\newblock  E. Velasco-Barreras, Y. Vorobiev,
\newblock \emph{On the {S}plitting of {I}nfinitesimal {P}oisson {A}utomorphisms around {S}ymplectic {L}eaves}.
\newblock {Differ. Geom. Appl.}, \textbf{59}, 12--34 (2018)
\newblock \href{https://doi.org/10.1016/j.difgeo.2018.03.002}{doi.org/10.1016/j.difgeo.2018.03.002}

\bibitem{ZhuGa20}
\newblock C. Zhu, W. Gaofeng,
\newblock \emph{Poisson {C}ohomology of {T}rivial {E}xtension {A}lgebras}.
\newblock {Bull. Iran. Math. Soc.}, \textbf{47}, 535--552  (2020)
\newblock \href{https://doi.org/10.1007/s41980-020-00397-3}{doi.org/10.1007/s41980-020-00397-3}

\bibitem{Zhu19}
\newblock C. Zhu, H. Li, Y. Li,
\newblock \emph{Poisson {D}erivations and the {F}irst {P}oisson {C}ohomology {G}roup on {T}rivial {E}xtension {A}lgebras}.
\newblock {Bull. Iran. Math. Soc.}, \textbf{45}, 1339--1352  (2019)
\newblock \href{https://doi.org/10.1007/s41980-018-00201-3}{doi.org/10.1007/s41980-018-00201-3}

\end{thebibliography}
\end{document}